\documentclass[11pt]{article}


\newcommand{\insieme}[1]{\left\{ #1 \right\}}
\usepackage{fullpage}
\usepackage{subcaption}
\usepackage{mathrsfs}  
\usepackage{bm}

\usepackage{tikz}
\usetikzlibrary{patterns}
\pgfdeclarepatternformonly{my crosshatch dots}{\pgfqpoint{-1pt}{-1pt}}{\pgfqpoint{6pt}{6pt}}{\pgfqpoint{8pt}{8pt}}%
{
    \pgfpathcircle{\pgfqpoint{0pt}{0pt}}{.5pt}
    \pgfpathcircle{\pgfqpoint{4pt}{4pt}}{.5pt}
    \pgfusepath{fill}
}


\pgfdeclarepatternformonly{nel}{\pgfqpoint{-1pt}{-1pt}}{\pgfqpoint{4pt}{4pt}}{\pgfqpoint{4pt}{4pt}}%
{
  \pgfsetlinewidth{0.3pt}
  \pgfpathmoveto{\pgfqpoint{0pt}{0pt}}
  \pgfpathlineto{\pgfqpoint{4pt}{4pt}}
  \pgfusepath{stroke}
}

\usepackage{enumerate}
\usepackage{amsmath,amsfonts,amssymb,amsthm, bbold}
\usepackage{svg}
\usepackage{graphics,esint}
\usepackage{epsfig}
\usepackage{xcolor}
\usepackage{xspace}
\definecolor{pingreen}{rgb}{0,39,14}

\setlength{\parindent}{0cm}
\usepackage{xfrac}
\usepackage{comment}
\usepackage{hyperref}
\usepackage{cleveref}

\crefname{section}{§}{§§}
\Crefname{section}{§}{§§}

\def\Mcal{\mathcal{M}}
\def\Wcal{\mathcal{W}}

\def\Ical{\mathcal{I}}

\def\rhs{r.h.s.\xspace}
\def\wrt{w.r.t. \xspace}


\DeclareMathOperator*{\argmin}{arg\,min}

\makeatletter
\newtheorem*{rep@theorem}{\rep@title}
\newcommand{\newreptheorem}[2]{%
\newenvironment{rep#1}[1]{%
 \def\rep@title{#2~\ref{##1}}%
 \begin{rep@theorem}}%
 {\end{rep@theorem}}}
\makeatother

\newtheorem{theorem}{Theorem}[section]

\newtheorem{lemma}[theorem]{Lemma}

\newtheorem{remark}[theorem]{Remark}

\newtheorem{proposition}[theorem]{Proposition}

\newtheorem{corollary}[theorem]{Corollary}

\newreptheorem{theorem}{Theorem}

\newcommand{\R}{\mathbb{R}}
\newcommand{\Z}{\mathbb{Z}}

\newcommand{\Fcal}{\mathcal{F}}
\newcommand{\Fcalbar}{\overline{\mathcal{F}}}
\newcommand{\FcalJ}{\tilde{\mathcal{F}}_{J,L,\eps}}
\newcommand{\Fcalt}{\mathcal{F}_{\tau,L,\eps}}

\newcommand{\Fcalo}{\mathcal{F}^{1d}_{\tau,L,\eps}}

\newcommand{\Khattau}{\widehat{K}_{\tau}}

\def\N{\mathbb N}
\def\eps{\varepsilon}

\def\per{\mathrm{Per}}

\def\eps{\varepsilon}

\def\d {\,\mathrm {d}}
\def\dx{\,\mathrm {d}x}
\def\dz{\,\mathrm {d}z}
\def\ds{\,\mathrm {d}s}

\def\dt{\,\mathrm {d}t}
\def\dy{\,\mathrm {d}y}

\def\loc{\mathrm{loc}}
\def\at{\alpha_{\eps,\tau}}
\def\Mi#1#2#3{\overline{\mathcal M}^{#3}_{\at}(u,x_{#3}^\perp,[#1,#2))}
\def\Mii#1#2{\overline{\mathcal M}^{#2}_{\at}(u,x_{#2}^\perp,#1)}

\def\Gcal#1{\overline{\mathcal{G}}^{#1}_{\at,\tau}(u,x_{#1}^\perp,[0,L))}

\newcommand{\ie}{i.e.,\xspace}

\numberwithin{equation}{section}

\usepackage{authblk}

\author[1]{Sara Daneri\thanks{sara.daneri@gssi.it}}
\author[2]{Alicja Kerschbaum\thanks{kerschbaum@math.fau.de}}
\author[3]{Eris Runa\thanks{eris.runa@gmail.com}}
\affil[1]{Gran Sasso Science Institute, L'Aquila, Italy}
\affil[2]{Friedrich-Alexander-Universit\"at Erlangen-N\"urnberg}
\affil[3]{Deutsche Bank, London, UK}

\title{One-dimensionality of the minimizers for a diffuse interface generalized antiferromagnetic model in general dimension}

\date{}

\begin{document}
\maketitle

\begin{abstract}
   In this paper we study a diffuse interface generalized antiferromagnetic model. 
   The functional describing the model contains a Modica-Mortola type local term and a nonlocal generalized antiferromagnetic term in competition. 
   The competition between the two terms results in a frustrated system which is believed to lead to the emergence of a wide variety of patterns. 
   The sharp interface limit of our model is considered  in~\cite{GR} and in~\cite{DR}. In the discrete setting it has been previously studied in~\cite{GLL, GLS, GS}.
   The model contains two parameters: $\tau$ and $\varepsilon$. The parameter $\tau$ represents the relative strength of the local term with respect to the nonlocal one, while the parameter $\varepsilon$ describes the transition scale in the Modica-Mortola type term. 
   If $\tau < 0$ one has that the only minimizers of the functional are constant functions with values in $\{0,1\}$. 
   In any dimension $d\geq1$ for small but positive $\tau$ and  $\varepsilon$, it is conjectured that the minimizers are non-constant one-dimensional periodic functions. 
   In this paper we are able to prove such a characterization of the minimizers, thus showing also the symmetry breaking in any dimension~$d >1$. 
\end{abstract}

\section{Introduction}
\label{sec:intro}

In this paper we consider the following mean field free energy functional.
For $L,J,\eps>0$, $d\geq1$, $p\geq{d+2}$, $u\in W^{1,2}_{\loc}(\R^d;[0,1])$ and $[0,L)^d$-periodic, define

\begin{equation}\label{E:F}
   \FcalJ(u):=\frac{J}{L^d}\Bigl[3\eps\int_{[0,L)^d}\|\nabla u(x)\|_1^2\dx+\frac{3}{\eps}\int_{[0,L)^d}W(u(x))\dx\Bigr]-\frac{1}{L^d}\int_{\R^d}\int_{[0,L)^d}|u(x+\zeta)-u(x)|^2K(\zeta)\dx\d\zeta,
\end{equation}
where, for $y=(y_1,\dots,y_d)\in\R^d$, $\|y\|_1=\sum_{i=1}^d|y_i|$, $W(t)=t^2(1-t)^2$ and $K(\zeta)=\frac{1}{(\|\zeta\|_1+1)^p}$. 

This type of local/nonlocal interaction functionals, with suitable choices of the kernel $K$, is used to model pattern formation in several contexts, among which thin-magnetic films~\cite{Seul476}, diblock copolymer melts \cite{OK} and colloidal systems \cite{BBCH,CCA,IR,GCLW,DR2,DR3}. 
Periodic patterns in the ground states are expected to emerge by the competition between the first term, short-range and attractive, and the second term, long-range and repulsive. 
Depending on the mutual strength between the two terms, modulated in this case by the constant $J$, different patterns are expected to occur. 
While pattern formation is observed in experiments and simulations~\cite{Seul476,MR2338353, BBCH,CCA,IR,GCLW}, a rigorous proof of the emergence of such phenomenon is still in many cases an open problem, due among others to the fact that minimizers display, in dimension $d\geq2$, less symmetries than the functional itself.
In the literature this phenomenon is called symmetry breaking.

Let 
\begin{equation}
   \label{eq:jc}
   J_c:=\int_{\R^d}|\zeta_1|K(\zeta)\d\zeta.
\end{equation}
One can show (see Lemma~\ref{lemma:jc}), that if $J\geq J_c$ then the minimizers of~\eqref{E:F} are the constant functions $u\equiv 0$ and $u\equiv 1$. 
We are interested in the structure of minimizers for $J\in[J_c-\tau,J_c)$ where $0<\tau\ll1$ and $0<\eps\ll1$. 
In analogy to what happens for the sharp interface limit of this problem (namely as $\eps\to0$), which was studied in~\cite{GR,DR} (and previously in the discrete in~\cite{GLL, GLS, GS}), it is conjectured that, for $\eps$ and $\tau$ sufficiently small, minimizers of~\eqref{E:F} are periodic one-dimensional functions, 
namely there exist  $g:\R\to \R$ and $h>0$ such that 
\begin{itemize}
   \item  the minimizers are functions of the form $u(x)=g(x_i)$ for some $i\in\{1,\dots d\}$ (one-dimensionality)
   \item for all $x_i\in\R$, $g(x_i+2h)=g(x_i)$ (periodicity)
   \item and there exists a translation parameter $\nu\in\R$ such that the following reflection property holds
\begin{equation}\label{eq:refl}
g(\nu+(2k+1)h+t)=1-g(\nu+(2k+1)h-t)\quad\text{ for all }k\in\N\cup\{0\},\,\, t\in[0,h].
\end{equation}

\end{itemize}

In this paper, we are able to prove the above conjecture on the one-dimensionality of minimizers for $\eps$ and $\tau$ small but positive, in general dimension.

In order to state our results properly, it is convenient to rescale the functional in order to have that the width of the admissible optimal periods for one-dimensional functions and their energy are of order $O(1)$.

For $\beta=p-d-1$, setting 
\begin{align*}
   &J=J_c-\tau=\int |\zeta_1|K(\zeta)\d\zeta-\tau,\quad x=\tau^{-1/\beta}\tilde x,\quad   \zeta=\tau^{-1/\beta}\tilde \zeta,\quad L=\tau^{-1/\beta}\tilde L,\\
   & \tilde u(\tilde x)=u(x),\quad\FcalJ( u)=\tau^{1+1/\beta}\mathcal{F}_{\tau,\tilde L,\eps}(\tilde u)
\end{align*}
and finally dropping the tildas, one has that the rescaled functional has the form
\begin{equation}
   \label{def:fcalt}
   \Fcalt(u)=\frac{1}{L^d}\Bigl[\mathcal M_{\alpha_{\eps,\tau}}(u,[0,L)^d)\Bigl(\int_{\R^d}K_\tau(\zeta)|\zeta_1|\d\zeta-1\Bigr)-\int_{\R^d}\int_{[0,L)^d}|u(x)-u(x+\zeta)|^2K_\tau(\zeta)\dx\d\zeta\Bigr],
\end{equation}
where for $\alpha>0$
\begin{equation}
   \label{eq:malpha}
   \mathcal M_{\alpha}(u,[0,L)^d)=3\alpha\int_{[0,L)^d}\|\nabla u(x)\|_1^2\dx+\frac{3}{\alpha}\int_{[0,L)^d}W(u(x))\dx,
\end{equation}
$\alpha_{\eps,\tau}=\eps\tau^{1/\beta}$ and 
\begin{equation}
   \label{def:Ktau}
   K_\tau(\zeta)=\frac{1}{(\|\zeta\|_1+\tau^{1/\beta})^p}.
\end{equation}

 For fixed $\tau > 0$ and $\eps>0$, consider first for all $L > 0$ the minimal value obtained by $\Fcal_{\tau,L,\eps}$ on $[0,L)^d$-periodic one-dimensional functions (denoted by $\mathcal U^{per}_L$)
and then the minimal among these values as $L$ varies in $(0,+\infty)$. We will denote this value by $C^*_{\tau,\eps}$, namely

\begin{equation}\label{eq:cstartau}
\begin{split}
C^*_{\tau,\eps} := \inf_{L>0} \  \inf_{u\in \mathcal U^{per}_L} \Fcal_{\tau,L,\eps}(u).  
\end{split}
\end{equation}

By the reflection positivity technique, in \cite{GLL1D} it is shown that such value is attained  by periodic one-dimensional functions with possibly infinite and not unique periods.

In Section~\ref{sec:1d} we prove that,  for $\tau$ and $\eps$ sufficiently small, there exist periodic functions of finite period $2h$ for which the property \eqref{eq:refl} holds and the energy value $C^*_{\tau,\eps}$ is attained.  We denote any of such finite optimal periods $2h$ (which may not be unique) as $2h^*_{\tau,\eps}$.

Our main result is the following

\begin{theorem}
\label{Thm:1}
   Let $L=2kh^*_{\tau,\eps}$, $k\in\N$. Then there exist ${\tau}_L>0$, $\eps_L>0$ such that, for any $0<\tau\leq{\tau}_L$ and $0<\eps\leq \eps_L$ the minimizers of~\eqref{def:fcalt} are one-dimensional periodic functions of period $2h^*_{\tau,\eps}$.
\end{theorem}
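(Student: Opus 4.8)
The plan is to reduce the $d$-dimensional minimization to the one-dimensional problem by a slicing argument combined with a rigidity statement: on each slice the energy controls (a lower bound given by) the one-dimensional functional $\Fcalo_{\tau,L,\eps}$, and equality forces the slice to be a one-dimensional optimizer of period $2h^*_{\tau,\eps}$. Concretely, first I would write the nonlocal interaction kernel $K_\tau(\zeta)=\frac{1}{(\|\zeta\|_1+\tau^{1/\beta})^p}$ in a form that decouples the direction $e_1$ from the orthogonal directions: for each $\zeta^\perp$ fixed, $K_\tau(\cdot,\zeta^\perp)$ is a one-dimensional kernel of the same type (with a shifted offset $\|\zeta^\perp\|_1+\tau^{1/\beta}$ in place of $\tau^{1/\beta}$). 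Integrating $\Fcalt(u)$ in the $x^\perp$ variables and applying Fubini, one sees that the full energy is an average over lines parallel to $e_1$ of one-dimensional energies of the restrictions $u(\cdot,x^\perp)$, \emph{plus} cross terms coming from interactions in the orthogonal directions; the Modica--Mortola term splits because $\|\nabla u\|_1^2 = (\sum_i |\partial_i u|)^2 \geq (\partial_1 u)^2$ and, more usefully, because the $\ell^1$ structure lets one bound the full gradient term from below by the sum of the directional ones. The key monotonicity input is that the one-dimensional sharp-interface/diffuse analysis already performed in Section~\ref{sec:1d} shows that the best one-dimensional energy is $C^*_{\tau,\eps}$, attained at period $2h^*_{\tau,\eps}$, and — crucially — that this minimum is \emph{strict} modulo the obvious symmetries (translations, reflections~\eqref{eq:refl}).

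Next I would set up the rigidity/stability estimate. Let $u$ be a minimizer of $\Fcalt$ on $[0,L)^d$ with $L=2kh^*_{\tau,\eps}$. By the slicing bound, $\Fcalt(u) \geq \fint_{[0,L)^{d-1}} \Fcalo_{\tau,L,\eps}\bigl(u(\cdot,x^\perp)\bigr)\,\mathrm d x^\perp - (\text{orthogonal interaction defect})$. The orthogonal defect is nonnegative and measures how much $u$ genuinely depends on $x^\perp$; one shows it is bounded below by a positive multiple of a suitable "transversal oscillation" of $u$. Since each slice energy is $\geq C^*_{\tau,\eps}$ and the competitor $g(x_1)$ (with $g$ a one-dimensional optimizer) achieves exactly $C^*_{\tau,\eps}$, minimality forces both (i) the orthogonal defect to vanish, hence $u(x)=g(x_1)$ up to choice of coordinate direction, and (ii) almost every slice to be a one-dimensional minimizer, hence of period $2h^*_{\tau,\eps}$ by the strict characterization from Section~\ref{sec:1d}. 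The roles of $\tau$ and $\eps$ small enter here: the one-dimensional strict-minimality and the quantitative lower bound on the orthogonal defect both degenerate as $\tau,\eps\to 0^+$ in a controlled way, and one needs $\tau\le\tau_L$, $\eps\le\eps_L$ (depending on $L$, i.e.\ on $k$) to keep the defect estimate strong enough to beat the finitely many competing length scales that fit in $[0,L)$.

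I expect the main obstacle to be precisely the quantitative rigidity in the orthogonal directions: showing that the part of the nonlocal energy associated to interactions with $\zeta^\perp\ne 0$, together with the transversal part of the Modica--Mortola term, is not merely nonnegative but is bounded below by something that vanishes only for $x^\perp$-independent $u$, with a constant that survives the rescaling. This is the heart of symmetry breaking and cannot follow from slicing alone; it requires exploiting the specific power-law decay $p\ge d+2$ of $K$ (this is where the dimensional restriction is used) to ensure the orthogonal interactions are strong enough relative to the extra Modica--Mortola cost that transversal modulation would save. A secondary difficulty is bookkeeping the period: \emph{a priori} a minimizer on $[0,L)^d$ need only be $L$-periodic, so one must rule out one-dimensional profiles whose fundamental period is $2h^*_{\tau,\eps}/m$ or an incommensurate competitor fitting in $L=2kh^*_{\tau,\eps}$; this is handled by the strict minimality of the period $2h^*_{\tau,\eps}$ among all periods dividing $L$, established in Section~\ref{sec:1d}, but it must be invoked carefully since $h^*_{\tau,\eps}$ itself moves with $\tau,\eps$.
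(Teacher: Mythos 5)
Your overall strategy (slice into a one\hyp{}dimensional energy plus a transversal ``defect'', show the defect vanishes only for one\hyp{}dimensional profiles, then invoke a one\hyp{}dimensional rigidity) is the correct skeleton and is indeed the one the paper follows. But two essential technical steps in your sketch are either wrong as stated or missing, and they are precisely the points the paper flags as the core difficulties.

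First, the Modica--Mortola decomposition. You propose splitting via $\|\nabla u\|_1^2\geq(\partial_1 u)^2$ (or $\geq\sum_i|\partial_i u|^2$) and then treating the slice energy as $\Fcalo(u_{x_1^\perp})$. This cannot work. On the one hand, the discarded transversal gradient energy $3\at\int(\|\nabla u\|_1^2-|\partial_1 u|^2)$ is exactly what must balance the transversal nonlocal contribution inside the stability estimate: if you throw it away before comparing with the $\Gcal{j}$ and $\overline{\mathcal I}^j$ terms, the resulting bound is too weak for $u$ genuinely oscillating in a transversal direction. On the other hand, the paper's decomposition (Proposition~\ref{prop:lowbound}, in particular \eqref{eq:3.1}--\eqref{eq:3.2}) is an \emph{equality} of the Modica--Mortola energy in which direction $e_i$ carries the weighted pair $|\partial_i u|\,\|\nabla u\|_1$ on the gradient part and $|\partial_i u|/\|\nabla u\|_1$ on the potential $W$; this is the unique weighting that makes each slice term a genuine one\hyp{}dimensional Modica--Mortola\hyp{}type functional to which Young's inequality and the transition function $\omega$ in \eqref{eq:momega} apply. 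As a consequence, the one\hyp{}dimensional functional that survives after the stability estimate is not $\Fcalo(u_{x_1^\perp})$ but the coefficient\hyp{}weighted $\Fcal^{1d}_{\tau,L,\eps}(\gamma,u_{x_1^\perp})$ with $\gamma=\|\nabla u\|_1/|\partial_1 u|\geq 1$; one still has to prove that $\gamma\equiv 1$ is optimal, which is the content of Section~\ref{subsec:micoef} (Theorem~\ref{thm:onedimmin}) and involves an Euler--Lagrange/free\hyp{}boundary analysis. This ``multidimensional optimization'' is singled out in the introduction (item (ii)) as a main novelty of the diffuse case over the sharp\hyp{}interface one, and your proposal bypasses it entirely.

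Second, you state that the orthogonal defect is ``nonnegative and measures how much $u$ genuinely depends on $x^\perp$''. This is false globally: take $u(x)=g(x_2)$ with $g$ a negative\hyp{}energy one\hyp{}dimensional minimizer. Then $\fint\Fcalo(u_{\cdot,x^\perp})=0$ while $\Fcalt(u)<0$, so your claimed defect inequality fails. The paper avoids this by a preliminary $\Gamma$\hyp{}convergence step (Corollary~\ref{cor:gammaconv} and Theorem~\ref{Thm:DR}) which guarantees, for $\tau,\eps$ small, that a minimizer is $L^1$\hyp{}close to a periodic union of stripes orthogonal to a definite direction $e_i$; only then does the stability estimate (Proposition~\ref{prop:stability}) deliver nonnegativity of the transversal part, and it requires a careful partition of slices into sets $A_0,\dots,A_3$, exploiting both the transversal Modica--Mortola lower bound and the blow\hyp{}up of the cross interaction when near a transition. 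Without the closeness\hyp{}to\hyp{}stripes input, your stability step has no chance of holding, and your $\tau_L,\eps_L$ smallness argument has nothing to latch onto.

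Your final remark on the period is essentially right: periodicity $2h^*_{\tau,\eps}$ of the one\hyp{}dimensional minimizer is obtained via reflection positivity precisely when $L$ is a multiple of $2h^*_{\tau,\eps}$, which is how the paper uses Section~\ref{sec:1d} (Proposition~\ref{lemma:period}).
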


\begin{remark}
	
The fact that $L$ is a multiple of one of the optimal periods $2h^*_{\tau,\eps}$  is due to the fact that the periodicity of minimizers of $\Fcal_{\tau,L, \varepsilon}$ among one-dimensional functions as proved in \cite{GLL1D} is known a priory only when $L$ is a multiple of $2h^*_{\tau,\eps}$.
In particular, if the periodicity of one-dimensional minimizers would hold for arbitrary $L$ as in the sharp interface limit of \eqref{def:fcalt} as $\eps\to0$, then our result would give one-dimensionality and periodicity of $[0,L)^d$-periodic minimizers for the functional \eqref{def:fcalt} in general dimension.
\end{remark}

 Moreover, it is not difficult to see that the results contained in this paper can be used to prove analogous results for the diffuse interface version of the model for colloidal systems considered in~\cite{DR2}.

\subsection{Scientific context}

For the sharp interface limit of $\Fcalt$ as $\eps\to0$, namely the functional  

\begin{equation}\label{E:S}
   \mathcal{F}_{\tau,L}(E):=\frac{1}{L^d}\Bigl[\per_1(E;[0,L)^d)\Bigl(\int_{\R^d}K_\tau(\zeta)|\zeta_1|\d\zeta-1\Bigr)-\int_{\R^d}\int_{[0,L)^d}|\chi_E(x)-\chi_E(x+\zeta)|K_\tau(\zeta)\dx\d\zeta\Bigr],
\end{equation}
 and for $d\geq2$, the fact that for $\tau$ sufficiently small minimizers are periodic unions of stripes of width $h_{\tau,L}$ has been shown in the discrete setting in~\cite{GS} and for  the continuous setting in~\cite{DR}. {In \cite{Ker} the results in \cite{DR} have been recently extended to a small range of exponents below $p=d+2$.}
 
 In particular, one has that $|h_{\tau,L}-h^\ast_{\tau}|\leq C/L$ where $h^\ast_{\tau}$ is the unique admissible width of stripes $S$ attaining the value
 
 \begin{equation}\label{eq:cstartau1}
 \begin{split}
 C^*_{\tau} := \inf_{L>0} \  \inf_{S \text{ per. stripes}} \Fcal_{\tau,L}(S).  
 \end{split}
 \end{equation}

A periodic union of stripes of width $h$ is by definition a set which, up to Lebesgue null sets, is of the form $V_i^\perp+\widehat E e_i$ for some $i\in\{1,\dots,d\}$, where $V_i^\perp$ is the $(d-1)$-dimensional subspace orthogonal to $e_i$ and $\widehat E\subset\R$ with $\widehat E=\bigcup_{k=0}^N(2kh+\nu,(2k+1)h+\nu)$ for some $\nu\in\R$ and some $N\in\N$.

\vspace{1mm}
Some of the most physically relevant exponents $p$ in the literature are $p=d+1$ (thin magnetic films), $p=d-2$ (diblock copolymer) and $p=d$ (3D micromagnetics).
To our knowledge, there are no results where pattern formation for such models is shown if $d\geq2$ and the domain is symmetric under permutation of coordinates. 
This is the most challenging setting to consider due to the phenomenon of symmetry breaking.  
For $p=d-2$ in two-dimensional thin domains one-dimensionality of minimizers is shown in~\cite{MS}, while in~\cite{PV} the authors show one-dimensionality in a suitable asymptotic limit.   
Another very important family of kernels which is physically relevant and widely used in the literature is the Yukawa or screened Coulomb kernel (commonly used to model pattern formation in colloidal suspensions and protein solutions).
In a recent paper~\cite{DR2} the authors show that in a certain regime global minimizers of the corresponding functionals are periodic unions of stripes.

As for the structure of minimizers of diffuse interface functionals of the type~\eqref{E:F}, the best results which have been obtained in the literature so far are the following.
In a low density regime and for the Ohta-Kawasaki kernel, properties of the shape of droplets of minimizers for $\eps\ll1$ and $d=2$ were deduced from the analysis of the sharp interface limit in~\cite{GMS} and~\cite{GMS2}, while results on the minimizers of~\eqref{E:F} for $d=1$ and more general reflection positive kernels were proved in~\cite{GLL1D}.

Evolution problems of gradient flow type related to functionals with attractive-repulsive nonlocal terms in competition, both in presence and in absence of diffusion, are also well studied (see e.g. \cite{CCH,CCP, CDFS,DRR,craig, CT}). In particular, one would like to show convergence of the gradient flows or of their deterministic particle  approximations to configurations which are periodic or close to periodic states.

Another interesting direction would be to extend our rigidity results to non-flat surfaces without interpenetration of matter as investigated for rod and plate theories  in \cite{KS, LMP, OR}.

   In this paper we are able to show one-dimensionality and periodicity of minimizers of~\eqref{E:F} for $\eps$ and $\tau$ sufficiently small (see~Theorem~\ref{Thm:1}).

Most of the lower bounds and the estimates that we find for penalizing deviations from the set of one-dimensional functions are obtained directly for the diffuse-interface functional~\eqref{E:F}, independently on its limit behaviour as $\eps\to0$.

\subsection{Some ideas of the proof}
   Let us now describe the main ideas of the proof of Theorem~\ref{Thm:1}. 
   For simplicity, we will assume that $d=2$.
   Very roughly speaking, we will find a lower bound (which is easier to work with) such that on one-dimensional functions $u$ both the original functional and the lower bound coincide and such that the lower bound is minimized on non-constant one-dimensional functions. 

   Let us now be more precise.
   Given a one-dimensional $u(x_1,x_2) = u_0(x_1)$ (resp. $u(x_1,x_2) = u_0(x_2)$) for some $u_{0}:\R\to [0,1]$, let us define
\begin{equation*}
   \begin{split}
      \Fcalo(u_0) := \mathcal F_{\tau,L, \varepsilon}(u).
   \end{split}
\end{equation*}
Notice that similarly to~\cite{DR} the functional $\Fcalo$ attains a negative value on its minimizers and thus also  $\Fcal_{\tau,L,\varepsilon}$ attains a negative value on optimal one-dimensional functions $u$. 

\begin{enumerate}
   \item [Step 1.]
     We will bound the original functional from below as follows
      \begin{equation}
         \label{eq:decintro}
         \begin{split}
            \Fcal_{\tau,L, \varepsilon}(u) \geq \Fcalbar^1_{\tau,L, \varepsilon}(u)  +  \Fcalbar^2_{\tau,L, \varepsilon}(u) + \mathcal I_{\tau,L}(u) + \Wcal_{\tau,L,\eps}(u),
         \end{split}
      \end{equation}
      where 
      \begin{itemize}
         \item The functional $\Fcalbar^i_{\tau,L, \varepsilon}$ accounts for the energy contribution in direction $e_i$.  Moreover, suppose that 
            \begin{equation*}
               \begin{split}
                  u(x_1,x_2) =u_0(x_1) \qquad\text{(resp.  $u(x_1,x_2) =u_0(x_2)$)}.
               \end{split}
            \end{equation*}
            Then 
            \begin{equation*}
               \begin{split}
                  \Fcalbar^2_{\tau,L,\varepsilon}(u) = 0 \qquad \text{(resp.  $\Fcalbar^1_{\tau,L,\varepsilon}(u) = 0 )$.}
               \end{split}
            \end{equation*}
         \item The cross interaction term $\mathcal I_{\tau,L}$  penalizes functions $u$ which are not one-dimensional. 

         \item The term $\Wcal_{\tau,L,\varepsilon}(u)$ is a correction term in the sense that, if  $u(x_1,x_2) = u_0(x_1)$ (resp.  $u(x_1,x_2) = u_0(x_2)$), then
            \begin{equation}
               \label{eq:ideaDefFcal1D}
               \begin{split}
                  \Fcalbar^1_{\tau,L,\varepsilon}(u) + \Wcal_{\tau,L,\varepsilon}( u ) = \Fcalo(u)
                  \qquad(\text{resp. }\Fcalbar^2_{\tau,L,\varepsilon}(u) + \Wcal_{\tau,L,\varepsilon}( u ) = \Fcalo(u)).
               \end{split}
            \end{equation}

      \end{itemize}


  \item [Step 2.] Using a $\Gamma$-convergence argument,  we reduce ourselves (up to taking $\tau,\varepsilon$ sufficiently small) to the situation where the minimizers are  $L^1$-close to the minimizers of the limit functional~\eqref{E:S}, namely to periodic unions of stripes. 
    Thus without loss of generality, let us assume that $u$ is close to the optimal union of stripes whose boundary is orthogonal to $e_1$. 

  \item [Step 3.] 
     We will then show (see~Proposition~\ref{prop:stability}), that if $u$ is sufficiently close to optimal periodic union of stripes with boundaries orthogonal to  $e_1$, then 
     \begin{equation}
        \label{eq:idea1}
        \begin{split}
           \Fcalbar^2_{\tau,L,\varepsilon}(u) + \Ical_{\tau,L}(u) \geq 0,
        \end{split}
     \end{equation}
     where in the above equality is achieved if and only if there exists $u_0$ such that $u(x_1,x_2)=  u_0(x_1)$. 
     Thus, we have that
     \begin{equation}
        \label{eq:step3}
        \begin{split}
           \Fcal_{\tau,L,\varepsilon}(u) \geq \Fcalbar^{1}_{\tau,L,\varepsilon}(u) + \Wcal_{\tau,L,\varepsilon}(u).
        \end{split}
     \end{equation}

      Such inequality is obtained through slicing, one-dimensional estimates and blow-up of the cross interaction term for deviations from one-dimensional profiles. 

   \item[Step 4.] For any $x_2\in[0,L)$ we notice that the slice in direction $e_1$ of the functional $\Fcalbar^{1}_{\tau,L,\varepsilon} + \Wcal_{\tau,L,\varepsilon}$ passing through $x_2$ can be rewritten in the form $F(\gamma,u(\cdot,x_2))$, where $\gamma\geq1$ and $\gamma=1$ a.e. if and only if $u(x_1,x_2)=u_0(x_1)$. By reflection positivity as in \cite{GLL1D}, we observe that when $L=2kh^*_{\tau,\eps}$ the one-dimensional functional $(\gamma,g)\mapsto F(\gamma,g)$ is minimized by functions $g$ satisfying \eqref{eq:refl} with $h=h^*_{\tau,\eps}$ and $\gamma$ satisfying $\gamma(\nu+x+kh^*_{\tau,\eps})=\gamma(\nu+(k+1)h^*_{\tau,\eps}-x)$. Then, with a delicate analysis of the Euler-Lagrange equations associated to $F(\gamma,g)$ we prove that in the above class such a functional is minimized by $\gamma=1$ a.e.. Thus there are minimizers of  $\Fcalbar^{1}_{\tau,L,\varepsilon} + \Wcal_{\tau,L,\varepsilon}$ of the form $u(x_1,x_2)=u_0(x_1)$.  
\end{enumerate}

Let us now discuss some main differences compared to~\cite{DR}.

\begin{enumerate}[(i).]
   \item In Step 1 it is fundamental that if $u(x,y) = u_0(x)$, then $\Fcalbar^2_{\tau,L,\varepsilon}(u)=0$ (and analogously $\Fcalbar^1_{\tau,L, \varepsilon}(u)=0$ if $u(x,y)=u_0(y)$). 
      The construction in~\cite{DR} deeply relies on the fact that the analogues of the  functionals $\Fcalbar^i_{\tau,L,\varepsilon}$ depend only on slices in direction~$e_i$. 
      Such construction cannot be mimicked when the $1$-perimeter is replaced with the Modica-Mortola term. 
      Thus a new decomposition is needed.
   \item \label{introii} Another crucial part in~\cite{DR} is the one-dimensional optimization. 
      Namely, once shown that the sum of the second and the third term in the \rhs of~\eqref{eq:decintro} (due to~\eqref{eq:idea1}) is positive, the remaining terms are minimized on optimal periodic stripes.
         In order to do so the authors in \cite{DR} use that the remaining terms depend only on the slices in direction $e_i$. More precisely in~\cite{DR} the \rhs of~\eqref{eq:step3} can be written as
         \begin{equation*}
            \begin{split}
               \frac{1}{L^{d-1}}\int_{[0,L)^{d-1}}  \Fcal^{1d}_{\tau,L}(u_{x^\perp_i}) \dx^\perp_i, 
            \end{split}
         \end{equation*}
         thus in order to minimize the remaining terms one needs to minimize the one-dimensional  problem which is well studied. 
         This is not true anymore for our decomposition, namely the \rhs of~\eqref{eq:step3} cannot be written as above since it depends on $\nabla u$. Thus in principle a multidimensional optimization is needed.
         We show that even in this setting one-dimensional functions are optimal (see Section~\ref{subsec:micoef}). In doing this we need to assume that $L$ is a multiple of an optimal period $2h^*_{\tau,\eps}$ in order to have that minimizers among one-dimensional functions satisfy \eqref{eq:refl} for $h=h^*_{\tau,\eps}$.

   \item In~\cite{GR,DR}, the cross interaction term $\mathcal I_{\tau,L}$ is clearly positive. 
      In this paper a careful inspection is needed to prove positivity (see Lemma~\ref{lemma:positivity}). 

   \item One other crucial difference is the possibility of appearance of oscillations which are small in amplitude. 
      In~\cite{DR}, being the functions valued in $\{ 0,1\}$, this issue is not present, and many of the arguments in~\cite{DR} use the fact that the amplitude of the oscillations is always $1$. This issue is not trivial, indeed one could for example devise non-physical potentials in the Modica-Mortola term for which, when close to $0$ or $1$, oscillating at small amplitude is more convenient than being flat.
      Thus minimizers would not be one-dimensional.
      In order to deal with this issue new estimates are needed. 

   \item Moreover, transitions from values close to $0$ to values close to $1$, which in~\cite{DR} are instantaneous, in this case could happen on ``large'' intervals. 
      Our estimates lead to the following structure for slices of minimizers in direction $e_i$: either constant functions or functions which have transitions from values close to $0$ and values close to $1$ in a finite number of small intervals, each surrounded by sufficiently large intervals where functions stay close to either $0$ or $1$. Such a picture, which resembles in some sense that of the slices of minimizers for the sharp interface problem, and which cannot be obtained by simple $\Gamma$-convergence arguments, allows us to show the blow-up of the cross interaction term $\mathcal I_{\tau,L}$ when close to stripes with boundaries orthogonal to $e_i$ and having oscillations in directions $e_j\neq e_i$.

   \item{ In Section~\ref{sec:1d} we prove that the one-dimensional minimizers on which the value $C^*_{\tau,\eps}$ is attained are periodic of finite (possibly non-unique) period  $2h^*_{\tau,\eps}$ satisfying \eqref{eq:refl} for $h=h^*_{\tau,\eps}$ for $\tau$ and $\eps$ sufficiently small.}
\end{enumerate}

\subsection{Structure of the paper}
In Section~\ref{sec:notation} we recall the main notation and the results obtained for the sharp interface problem~\eqref{E:S} in~\cite{DR}.

In Section~\ref{sec:dec} we introduce the main decomposition of the functional~\eqref{E:F}.

In Section~\ref{sec:1dest} we give some crucial one-dimensional estimates.

In Section~\ref{sec:stability} we prove the main stability estimate.

In Section~\ref{sec:1d} we consider the associated one-dimensional problem and, starting from the results on general diffuse interface functionals obtained in~\cite{GLL1D} we prove existence of a finite (possibly non-unique) optimal period $2h^*_{\tau,\eps}$ and optimal functions $g$ satisfying \eqref{eq:refl} for $h=h^*_{\tau,\eps}$. Moreover, in Theorem~\ref{thm:onedimmin} we prove a crucial  optimization result needed to show one-dimensionality of minimizers (see point (ii) above).

In Section~\ref{sec:proof1} we prove Theorem~\ref{Thm:1}.


\section{Notation and preliminary results}
\label{sec:notation}

In the following, let $\N=\{1,2,\dots\}$, $d\geq 1$. 
Let $(e_1,\dots,e_d)$ be the canonical basis in $\R^d$ and for $y\in\R^d$ let $y_i=\langle y,e_i\rangle$ and $y_i^\perp:=y-y_ie_i$, where $\langle\cdot,\cdot\rangle$ is the Euclidean scalar product. 
For $y\in\R^d$, we denote by $\|y\|_1=\sum_{i=1}^d|y_i|$ its $1$-norm and we define $\|y\|_\infty=\max_i|y_i|$.
With a slight abuse of notation, we will sometimes identify $y^\perp_i\in[0,L)^d$ with its projection on the subspace orthogonal to $e_i$ or as an element of $\R^{d-1}$. 


For $z\in[0,L)^d$ and $r>0$, we also define 
\begin{equation*}
   \begin{split}
      Q_r(z)=\{x\in\R^d:\,\|x-z\|_\infty\leq r\} \qquad\text{and}\qquad Q_{r}^{\perp}(x^\perp_{i}) = \{z^\perp_{i}:\, \|x^{\perp}_{i} - z^{\perp}_{i} \|_\infty \leq r  \}. 
   \end{split}
\end{equation*}


For every $i\in\{1,\dots,d\}$ and for all $x_i^\perp\in[0,L)^{d-1}$, we define the slices of $u$ in direction $e_i$ as
\[
   u_{x_i^\perp}:\R\to[0,1],\quad u_{x_i^\perp}(s):=u(s e_i+x_i^\perp).
\]

Notice that whenever $u\in W^{1,2}_{\loc}(\R^d;\R)$ then $u_{x_i^\perp}\in  W^{1,2}_{\loc}(\R; \R)$ for almost every $x^\perp_i$. 
We denote by $\partial_i$ the partial derivatives of a function with respect to $e_i$, $i\in\{1,\dots,d\}$.

Given a measurable set $A\subset\R^k$ with $k\in\{1,\dots,d\}$, we denote by $|A|$ its $k$-dimensional Lebesgue measure (or if A is contained in some $k$-dimensional plane of $\R^d$, its Hausdorff $k$-dimensional measure), being always clear from the context which will be the dimension $k$.

Moreover, let $\chi_A:\R^d\to\R$ be the function defined by
\begin{equation*}
   \chi_A(x)=\left\{\begin{aligned}
         &1 && &\text{if $x\in A$}\\
         &0 && &\text{if $x\in\R^d\setminus A$.}
      \end{aligned}\right.
\end{equation*}

A set $E\subset\R^d$ is of (locally) finite perimeter if the distributional derivative of $\chi_E$ is a (locally) finite measure.
We denote by $\partial E$ be the reduced boundary of $E$ and by $\nu^E$ the exterior normal to $E$.

Then one can define the $1$-perimeter of a set relative to $[0,L)^d$ as

\[
   \per_1(E,[0,L)^d):=\int_{\partial E\cap [0,L)^d}\|\nu^E(x)\|_1\d\mathcal H^{d-1}(x)
\]
where $\mathcal H^{d-1}$ is the $(d-1)$-dimensional Hausdorff measure.

By extending the classical Modica-Mortola result~\cite{MM} to the anisotropic norm $\|\cdot\|_1$, one has the following
\begin{theorem}
   \label{Thm:gammaconvper}
   As $\alpha\to 0$, the functionals $\mathcal M_\alpha(\cdot;[0,L)^d)$ defined in~\eqref{eq:malpha} $\Gamma$-converge in $BV([0,L)^d;[0,1])$ to the functional $\mathcal P_1(\cdot;[0,L)^d)$ defined as follows:
   \begin{equation}
     \label{eq:p}
      \mathcal P_1(u;[0,L)^d):=\left\{\begin{aligned}
            &\per_1(E;[0,L)^d) && &\text{if $u=\chi_E$}\\
            &+\infty && &\text{otherwise.} 
         \end{aligned}\right.
   \end{equation}
\end{theorem}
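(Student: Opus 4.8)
The plan is to prove the anisotropic Modica--Mortola $\Gamma$-convergence statement by reducing it, via a slicing argument, to the classical one-dimensional result of Modica and Mortola, taking advantage of the fact that the anisotropic Dirichlet term $\|\nabla u\|_1^2 = \bigl(\sum_i |\partial_i u|\bigr)^2$ does not decouple directly, but can be controlled from below by $\sum_i |\partial_i u|^2$ only at the cost of a dimensional constant. So instead of brute-force slicing I would argue as follows.

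\textbf{Liminf inequality.} Let $\alpha_k \to 0$ and $u_k \to u$ in $L^1([0,L)^d;[0,1])$ with $\liminf_k \mathcal M_{\alpha_k}(u_k;[0,L)^d) < +\infty$; pass to a subsequence realizing the liminf. First I would show $u = \chi_E$ for some set $E$ of finite $1$-perimeter: the bound $\frac{3}{\alpha_k}\int W(u_k)\,dx \le C$ forces $\int W(u_k) \to 0$, hence (along a further subsequence, using $u_k \to u$ a.e.) $W(u) = 0$ a.e., so $u \in \{0,1\}$ a.e., i.e. $u = \chi_E$. To get finite $1$-perimeter and the sharp lower bound, I would use the standard Modica--Mortola trick adapted to the $\|\cdot\|_1$ anisotropy: writing $\Phi(t) = \int_0^t \sqrt{W(s)}\,ds$ (so $\Phi(1)-\Phi(0) = \int_0^1 s(1-s)\,ds = 1/6$), Young's inequality gives, pointwise,
\begin{equation*}
   3\alpha_k \|\nabla u_k\|_1^2 + \frac{3}{\alpha_k} W(u_k) \ge 2\sqrt{3\alpha_k}\sqrt{\tfrac{3}{\alpha_k}}\,\|\nabla u_k\|_1 \sqrt{W(u_k)} = 6\,\|\nabla u_k\|_1\,\sqrt{W(u_k)} = 6\,\|\nabla (\Phi\circ u_k)\|_1.
\end{equation*}
Hence $\mathcal M_{\alpha_k}(u_k;[0,L)^d) \ge 6\int_{[0,L)^d} \|\nabla(\Phi\circ u_k)\|_1\,dx = 6\,\|D(\Phi\circ u_k)\|_1([0,L)^d)$, the anisotropic total variation. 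Since $\Phi\circ u_k \to \Phi\circ u = \tfrac16\chi_E$ in $L^1$, lower semicontinuity of the anisotropic total variation yields $\liminf_k \mathcal M_{\alpha_k}(u_k) \ge 6\cdot\tfrac16\|D\chi_E\|_1([0,L)^d) = \per_1(E;[0,L)^d)$, which is in particular finite. This is exactly $\mathcal P_1(u;[0,L)^d)$.

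\textbf{Limsup inequality (recovery sequence).} Here I would first reduce to $u = \chi_E$ with $E$ a bounded polyhedral (or smooth) set, using that such $E$ are dense in energy for $\per_1$ and the density/diagonal argument for $\Gamma$-$\limsup$; I would also use periodicity to work on $[0,L)^d$ throughout. For polyhedral $E$, let $d_E(x) = \dist_1(x,\partial E)$ be the signed $\|\cdot\|_1$-distance to $\partial E$ (negative inside $E$, say); this is $1$-Lipschitz in the $\|\cdot\|_1$ sense, so $\|\nabla d_E\|_1 = 1$ a.e. Define the optimal one-dimensional profile $\phi$ solving $\phi' = \sqrt{W(\phi)}$, $\phi(0) = 1/2$, which converges exponentially to $0$ and $1$ at $\mp\infty$, and set $u_k(x) = \phi\bigl(d_E(x)/\alpha_k\bigr)$ (truncated/interpolated near $0$ and $1$ in the usual way to make the tails exact). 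Then $u_k \to \chi_E$ in $L^1$, and by the coarea formula in the $\|\cdot\|_1$ metric,
\begin{equation*}
   \mathcal M_{\alpha_k}(u_k) = \int \Bigl(3\alpha_k |\phi'(d_E/\alpha_k)|^2 \tfrac{1}{\alpha_k^2} + \tfrac{3}{\alpha_k}W(\phi(d_E/\alpha_k))\Bigr)\|\nabla d_E\|_1^2\,dx = \int \tfrac{6}{\alpha_k}W(\phi(d_E/\alpha_k))\,dx,
\end{equation*}
using $\|\nabla d_E\|_1 = 1$ and $3|\phi'|^2 = 3W(\phi)$ so the two terms are equal. Changing variables via coarea $dx = d\mathcal H^{d-1}\lfloor\{d_E = t\}\,dt$ and using $\mathcal H^{d-1}(\{d_E = t\}) \to \per_1(E;[0,L)^d)$ as $t\to 0$ (true for polyhedral $E$ in the $\|\cdot\|_1$ metric), together with $\int_{\mathbb R}\tfrac{6}{\alpha_k}W(\phi(t/\alpha_k))\,dt = 6\int_{\mathbb R}W(\phi(s))\,ds = 6\int_{\mathbb R}\phi'(s)\sqrt{W(\phi(s))}\,ds = 6\int_0^1\sqrt{W}\,d\tau = 1$, gives $\limsup_k \mathcal M_{\alpha_k}(u_k) \le \per_1(E;[0,L)^d)$. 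A standard truncation-of-tails estimate handles the non-compactness of the profile.

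\textbf{Main obstacle.} The delicate point is the anisotropy: one must consistently replace the Euclidean norm and Euclidean distance by $\|\cdot\|_1$ and $\dist_1$, and verify that (a) the Young-inequality lower bound still produces the correct anisotropic total variation $\|D(\Phi\circ u)\|_1$, which relies on $\|\nabla v\|_1^2$ being the correct quantity paired with $W$ — here the constant $3$ in front of the gradient term and in front of $W$ is chosen precisely so that $\sqrt{3}\cdot\sqrt{3} = 3$ and the profile equation $\sqrt{3}|\phi'| = \sqrt{3W(\phi)}$ matches; and (b) for the recovery sequence, the anisotropic coarea formula $\int g(x)\|\nabla d_E\|_1\,dx = \int_{\mathbb R}\int_{\{d_E=t\}} g\,d\mathcal H^{d-1}\,dt$ and the convergence of the level-set measures to $\per_1$ both hold — these are classical facts for the distance to a polyhedral set measured in a norm, but should be cited (e.g. the anisotropic coarea / Cavalieri formula and the fact that $\per_1$ of a polyhedron is the sum of face areas weighted by $\|\nu\|_1$). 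Everything else is the verbatim Modica--Mortola argument; the only genuinely new bookkeeping is carrying the $\|\cdot\|_1$ anisotropy through each step, which is why the statement is phrased as ``extending the classical result to the anisotropic norm.''
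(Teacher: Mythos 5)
The paper itself gives no proof of Theorem~\ref{Thm:gammaconvper}; it simply cites it as the anisotropic extension of Modica--Mortola~\cite{MM}. Your approach (Young inequality plus $\Phi$-truncation for the liminf, optimal-profile recovery sequence via a signed distance function plus coarea for the limsup, density of polyhedra) is the standard one and is the right route. The liminf part is correct as written: $3\alpha\|\nabla u\|_1^2+\frac3\alpha W(u)\geq 6\sqrt{W(u)}\,\|\nabla u\|_1=6\|\nabla(\Phi\circ u)\|_1$, and $L^1$-lower semicontinuity of $v\mapsto\|Dv\|_1$ finishes it, with the constant $6\cdot\Phi(1)=1$ chosen precisely to make $\per_1$ appear with coefficient one.

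There is, however, a concrete error in the recovery sequence. You set $d_E$ to be the signed $\|\cdot\|_1$-distance and claim ``$d_E$ is $1$-Lipschitz in the $\|\cdot\|_1$ sense, so $\|\nabla d_E\|_1=1$ a.e.'' That implication is false: $f$ being $1$-Lipschitz with respect to a norm $N$ gives $\|\nabla f\|_{N^*}\leq 1$ a.e., where $N^*$ is the \emph{dual} norm. Since $\|\cdot\|_1^*=\|\cdot\|_\infty$, your $d_E$ satisfies $\|\nabla d_E\|_\infty=1$ a.e.\ (equivalently $\|\nabla d_E\|_1$ can be as large as $d$), and the announced identity $\mathcal M_{\alpha_k}(u_k)=\int\frac{6}{\alpha_k}W(\phi(d_E/\alpha_k))\,dx$ does not hold. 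One sees the problem explicitly for a flat face with Euclidean-unit normal $\nu$: the signed $\|\cdot\|_1$-distance there is $(x\cdot\nu)/\|\nu\|_\infty$, so $\nabla d_E=\nu/\|\nu\|_\infty$ and $\|\nabla d_E\|_1=\|\nu\|_1/\|\nu\|_\infty\neq 1$ generically.

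The fix is to take instead the signed $\|\cdot\|_\infty$-distance $d_\infty$ to $\partial E$. Then $d_\infty$ is $1$-Lipschitz for $\|\cdot\|_\infty$, so $\|\nabla d_\infty\|_1\leq 1$ a.e., with equality near a smooth or polyhedral boundary (near a face with unit normal $\nu$ one has $d_\infty(x)=(x\cdot\nu-c)/\|\nu\|_1$ and $\nabla d_\infty=\nu/\|\nu\|_1$). Setting $u_k=\phi(d_\infty/\alpha_k)$ with $\phi'=\sqrt{W(\phi)}$ one gets $\|\nabla u_k\|_1=\frac{|\phi'(d_\infty/\alpha_k)|}{\alpha_k}$, the two terms in $\mathcal M_{\alpha_k}$ are equal, and
\begin{equation*}
   \mathcal M_{\alpha_k}(u_k)=\int\frac{6}{\alpha_k}W\!\bigl(\phi(d_\infty/\alpha_k)\bigr)\,dx
   =\int_{\R}\frac{6}{\alpha_k}W\!\bigl(\phi(t/\alpha_k)\bigr)\Bigl(\int_{\{d_\infty=t\}}\frac{1}{|\nabla d_\infty|}\,d\mathcal H^{d-1}\Bigr)\,dt,
\end{equation*}
where near a face $\frac{1}{|\nabla d_\infty|}=\|\nu\|_1/|\nu|=\|\nu\|_1$, so the inner level-set integral converges to $\per_1(E;[0,L)^d)$ as $t\to0$, and the $t$-integral of the profile energy equals $6\int_0^1\sqrt{W}=1$; this gives the required limsup. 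One minor further slip: in your displayed formula for $\mathcal M_{\alpha_k}(u_k)$ the factor $\|\nabla d_E\|_1^2$ multiplies \emph{both} terms, but the $W$-term has no gradient and should carry no such factor; with $\|\nabla d_\infty\|_1=1$ this is harmless, but the intermediate line should be corrected. With those repairs your argument is a correct proof of the theorem.
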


Notice that the constant $3$ in~\eqref{E:F} is chosen in such a way that
\[
   6\int_0^1t(1-t)\dt=1,
\]
so that the constant in front of the $1$-perimeter in~\eqref{eq:p} is equal to $1$.

By continuity of the nonlocal term in~\eqref{E:F} with respect to $L^1$ convergence of functions valued in $[0,1]$, one has the following

\begin{corollary}
   \label{cor:gammaconv}
   As $\eps\to0$, the functionals $\Fcalt$ $\Gamma$-converge in $BV_{\loc}(\R^d;[0,1])$ to the functional
   \begin{equation}\label{eq:flim}
      \mathcal{F}_{\tau,L}(u):=\left\{
         \begin{aligned}
            &\frac{1}{L^d}\Bigl[\per_1(E;[0,L)^d)\Bigl(\int_{\R^d}K_\tau(\zeta)|\zeta_1|\d\zeta-1\Bigr)\\
            &\quad-\int_{\R^d}\int_{[0,L)^d}|\chi_E(x)-\chi_E(x+\zeta)|K_\tau(\zeta)\dx\d\zeta\Bigr] && &\text{if $u=\chi_E$}\\
            &+\infty && &\text{otherwise.}
         \end{aligned}
      \right.
   \end{equation}
\end{corollary}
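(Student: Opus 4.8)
The plan is to write $\Fcal_{\tau,L,\eps}$ as the sum of a local term, governed by Theorem~\ref{Thm:gammaconvper}, and a nonlocal term that is \emph{continuous} for $L^1$-convergence, and then invoke the elementary stability of $\Gamma$-convergence under continuous perturbations. Fix $\tau>0$ small, so that $a_\tau:=\int_{\R^d}K_\tau(\zeta)|\zeta_1|\,\d\zeta-1>0$ (by the rescaling $a_\tau=J_c/\tau-1$, which is positive for $\tau<J_c$), and set
\[
   \Ncal_\tau(u):=\frac{1}{L^d}\int_{\R^d}\int_{[0,L)^d}|u(x)-u(x+\zeta)|^2K_\tau(\zeta)\dx\d\zeta,
\]
so that $\Fcal_{\tau,L,\eps}(u)=\frac{a_\tau}{L^d}\,\mathcal M_{\at}(u,[0,L)^d)-\Ncal_\tau(u)$ with $\at=\eps\tau^{1/\beta}\to0$ as $\eps\to0$. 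Since every competitor is $[0,L)^d$-periodic with values in $[0,1]$, convergence in $L^1_{\loc}(\R^d)$ is equivalent to convergence in $L^1([0,L)^d)$, and periodic elements of $BV_{\loc}(\R^d;[0,1])$ are identified with $BV([0,L)^d;[0,1])$; under this identification Theorem~\ref{Thm:gammaconvper} gives $\frac{a_\tau}{L^d}\mathcal M_{\at}\xrightarrow{\Gamma}\frac{a_\tau}{L^d}\mathcal P_1$, since multiplication by the positive constant $a_\tau/L^d$ commutes with taking $\Gamma$-limits.

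Next I would establish the continuity of $\Ncal_\tau$ along $L^1$-converging sequences of periodic maps into $[0,1]$. For such $u,u_n$, setting $a=u_n(x)-u_n(x+\zeta)$ and $b=u(x)-u(x+\zeta)$ one has $|a|,|b|\le1$, hence $|a^2-b^2|\le 2|a-b|\le 2\big(|u_n(x)-u(x)|+|u_n(x+\zeta)-u(x+\zeta)|\big)$; integrating in $x$ over a period, using periodicity for the translated term, and then integrating in $\zeta$ yields
\[
   |\Ncal_\tau(u_n)-\Ncal_\tau(u)|\ \le\ \frac{4}{L^d}\,\|u_n-u\|_{L^1([0,L)^d)}\int_{\R^d}K_\tau(\zeta)\,\d\zeta,
\]
and $\int_{\R^d}K_\tau(\zeta)\,\d\zeta<\infty$ because $p\ge d+2>d$. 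Since $t^2=t$ for $t\in\{0,1\}$, the value $\Ncal_\tau(\chi_E)$ coincides with the nonlocal term of~\eqref{eq:flim}; combined with $a_\tau>0$ and the boundedness of $\Ncal_\tau$, this shows $\frac{a_\tau}{L^d}\mathcal P_1(u;[0,L)^d)-\Ncal_\tau(u)=\mathcal F_{\tau,L}(u)$ for every $u$ (both sides being $+\infty$ unless $u=\chi_E$ with $E$ of finite perimeter in $[0,L)^d$).

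With these two facts in hand the $\Gamma$-convergence is routine. For the $\liminf$ inequality, if $u_n\to u$ in $L^1_{\loc}$ then
\[
   \liminf_n\Fcal_{\tau,L,\eps_n}(u_n)\ \ge\ \liminf_n\frac{a_\tau}{L^d}\mathcal M_{\at}(u_n,[0,L)^d)-\lim_n\Ncal_\tau(u_n)\ \ge\ \frac{a_\tau}{L^d}\mathcal P_1(u;[0,L)^d)-\Ncal_\tau(u)=\mathcal F_{\tau,L}(u).
\]
For the $\limsup$ inequality there is nothing to prove when $\mathcal F_{\tau,L}(u)=+\infty$; when $u=\chi_E$ with $E$ of finite perimeter one takes a recovery sequence $u_n\to\chi_E$ for $\frac{a_\tau}{L^d}\mathcal M_{\at}$ provided by Theorem~\ref{Thm:gammaconvper}, which may be chosen $[0,L)^d$-periodic and valued in $[0,1]$, and the continuity of $\Ncal_\tau$ then gives $\limsup_n\Fcal_{\tau,L,\eps_n}(u_n)\le \frac{a_\tau}{L^d}\per_1(E;[0,L)^d)-\Ncal_\tau(\chi_E)=\mathcal F_{\tau,L}(\chi_E)$.

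I expect the only delicate point to be the bookkeeping between the periodic $BV_{\loc}(\R^d;[0,1])$ framework and $BV([0,L)^d;[0,1])$: namely, checking that the Modica--Mortola recovery sequences can indeed be taken $[0,L)^d$-periodic and $[0,1]$-valued, and that $L^1_{\loc}$-convergence of uniformly bounded periodic functions reduces to $L^1$-convergence on one period. Everything else is the standard stability of $\Gamma$-convergence under the addition of a continuously convergent term.
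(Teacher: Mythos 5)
Your argument is correct and is essentially the paper's own: the paper derives the corollary in one line by invoking Theorem~\ref{Thm:gammaconvper} together with the continuity of the nonlocal term under $L^1$-convergence of $[0,1]$-valued functions, which is exactly the stability-of-$\Gamma$-convergence-under-continuous-perturbations argument you spell out, including the same Lipschitz estimate $|a^2-b^2|\le 2|a-b|$ and the integrability of $K_\tau$.
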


The kernel $K_\tau$ is, as shown in~\cite{DR}, reflection positive, namely it satisfies the following property: the function
\begin{equation*}
   \begin{split}
      \widehat K_\tau(t) := \int_{\R^{d-1}} K_\tau(t, \zeta_2,\ldots,\zeta_d)  \d\zeta_2\cdots\d\zeta_d. 
   \end{split}
\end{equation*}

is the Laplace transform of a nonnegative function.

Regarding the limit functional~\eqref{eq:flim}, we recall the following results, obtained in~\cite{DR}.

\begin{theorem}[{\cite[Theorem~1.2]{DR}}]
   \label{Thm:DR}
   Let $d\geq1$, $p\geq d+2$, $L>0$. Then, there exists $\tilde{\tau}_L>0$ such that, for all $0<\tau\leq\tilde{\tau}_L$ the minimizers of the functional $\mathcal{F}_{\tau,L}$ in~\eqref{eq:flim} are periodic unions of stripes.
   The admissible width (which may not be unique) of these stripes is denoted by $h_{\tau,L}$.
\end{theorem}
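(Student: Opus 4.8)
\textbf{Proof proposal for Theorem~\ref{Thm:DR} (the final statement).}

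The plan is to follow the standard strategy for proving one-dimensionality (striped structure) of minimizers of sharp-interface nonlocal isoperimetric functionals of antiferromagnetic type, which consists of a slicing argument combined with a rigid one-dimensional optimization. First I would fix $L>0$ and argue that minimizers of $\mathcal F_{\tau,L}$ in~\eqref{eq:flim} exist: the functional is lower semicontinuous with respect to $L^1_{\loc}$-convergence (the perimeter term by standard BV compactness, the nonlocal term by dominated convergence since $K_\tau$ is integrable against $|\zeta_1|$ on the relevant region), and a priori bounds on the perimeter follow because for $\tau$ small the coefficient $\int K_\tau(\zeta)|\zeta_1|\d\zeta-1$ is a small positive number while the competitor given by the optimal periodic stripes has negative energy (this is exactly the content of~\eqref{eq:cstartau1} being negative). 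So a minimizer $E$ exists and has negative energy. The heart of the argument is then to decompose the functional direction by direction: writing, for each $i\in\{1,\dots,d\}$ and each slice $x_i^\perp$, the one-dimensional slice functional $\mathcal F^{1d}_{\tau,L}(\chi_{E_{x_i^\perp}})$ obtained by restricting the perimeter and the nonlocal kernel to the $e_i$-direction, one has an identity of the form
\begin{equation*}
   \mathcal F_{\tau,L}(E) = \frac{1}{L^{d-1}}\int_{[0,L)^{d-1}}\sum_{i=1}^d \frac{1}{d}\,\mathcal F^{1d}_{\tau,L}(\chi_{E_{x_i^\perp}})\,\d x_i^\perp + (\text{cross-interaction term}),
\end{equation*}
where the cross term is nonnegative and vanishes precisely when $E$ is a union of stripes. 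The mechanism here is reflection positivity of $K_\tau$ (stated right before the theorem), which lets one bound the genuinely multidimensional nonlocal interaction from below by an average of one-dimensional interactions; this is the place where the hypothesis $p\ge d+2$ and the explicit form of $K_\tau$ matter.

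Concretely, the steps I would carry out are: (1) reduce to $\tau$ small by a compactness/$\Gamma$-convergence argument — as $\tau\to 0$ the functional $\mathcal F_{\tau,L}$ converges (after suitable normalization) to a limit whose unique minimizers up to symmetry are known to be the periodic stripes of a fixed optimal width, so that for $\tau$ below a threshold $\tilde\tau_L$ any minimizer is $L^1$-close to a stripe configuration in some coordinate direction $e_i$; fix that direction, say $e_1$. (2) Perform the slicing decomposition above and invoke reflection positivity to show the cross-interaction term is $\ge 0$ with equality iff $E$ is $e_1$-striped. (3) For each fixed $x_1^\perp$, apply the one-dimensional rigidity result — that $\mathcal F^{1d}_{\tau,L}$ is minimized exactly by the periodic one-dimensional configuration of width $h_{\tau,L}$ satisfying the reflection property~\eqref{eq:refl} — to conclude that each slice $E_{x_1^\perp}$, being a minimizer of the slice functional (since the cross term is minimized at $0$ and each slice can be varied independently), must be such a periodic configuration. (4) Finally observe that if every slice in direction $e_1$ is a periodic union of intervals of width $h_{\tau,L}$ and the cross term forces these slices to coincide (vanishing of the cross term rules out relative shifts between slices), then $E$ is globally a periodic union of stripes of width $h_{\tau,L}$.

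The main obstacle, and the step requiring the most care, is step (2)–(3): establishing the slicing lower bound with the correct sharp constants so that the one-dimensional problem appears \emph{exactly} (not just up to error terms) in the decomposition, and simultaneously showing that the cross-interaction remainder is nonnegative with the stated equality cases. This is delicate because the nonlocal kernel couples all directions, and one must exploit reflection positivity precisely — splitting $K_\tau(\zeta)$ via $\widehat K_\tau$ being a Laplace transform of a nonnegative measure — to dominate the off-axis interactions. A secondary subtlety is that the one-dimensional minimizer width $h_{\tau,L}$ depends on $L$, so one must ensure the slice problems for different $x_1^\perp$ are genuinely the same problem with the same optimal width; this is automatic here since $\mathcal F^{1d}_{\tau,L}$ does not depend on $x_1^\perp$. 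The $\Gamma$-convergence reduction in step (1) is routine given the known structure of the $\tau\to 0$ limit, and step (4) is an elementary rigidity observation once steps (2)–(3) are in place.
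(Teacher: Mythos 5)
This statement is Theorem~1.2 of \cite{DR}: the present paper only cites it and does not reprove it, so there is no internal proof to compare against, and the benchmark has to be the argument in \cite{DR} itself (of which the diffuse-interface version carried out in this paper, Propositions~\ref{prop:lowbound} and~\ref{prop:stability} and Section~\ref{sec:proof1}, is a close structural analogue).

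Measured against that, your outline has the right ingredients — slicing decomposition, reflection positivity of $K_\tau$, a $\Gamma$-convergence reduction to $L^1$-closeness to stripes, and one-dimensional rigidity — but step~(3) contains a genuine gap. You justify that each slice $E_{x_1^\perp}$ of a minimizer must itself minimize the one-dimensional functional by saying that ``the cross term is minimized at $0$ and each slice can be varied independently.'' The slices cannot be varied independently: perturbing $E_{x_1^\perp}$ for a single $x_1^\perp$ perturbs every $e_j$-slice, $j\neq 1$, passing through the altered set, and so changes both the $j\neq 1$ one-dimensional terms and the cross-interaction term in an uncontrolled way. Moreover the decomposition is a lower bound, not the identity you wrote (compare the $\geq$ in Proposition~\ref{prop:lowbound}, with equality only on one-dimensional competitors), so one cannot localize equality slice by slice for a general minimizer. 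The argument in \cite{DR} never claims that individual slices are minimizers. Instead, after the $\Gamma$-convergence reduction, one proves a \emph{stability estimate}: when $E$ is $L^1$-close to an $e_1$-striped set, the sum of the $j\neq 1$ one-dimensional contributions and the cross-interaction is nonnegative, vanishing iff $E$ does not depend on $x_j$. This is not a consequence of positivity of the cross term alone; the closeness to stripes is what makes the cross term blow up and dominate the possibly negative $j\neq 1$ one-dimensional terms. Discarding those contributions yields
\[
\mathcal F_{\tau,L}(E)\;\geq\;\frac{1}{L^{d-1}}\int_{[0,L)^{d-1}} \mathcal F^{1d}_{\tau,L}\bigl(\chi_{E_{x_1^\perp}}\bigr)\,\d x_1^\perp\;\geq\;\min \mathcal F^{1d}_{\tau,L},
\]
and since the entire chain is an equality for the optimal periodic stripe set, minimality of $E$ forces equality everywhere; tracing the equality cases back gives one-dimensionality and the period. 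Your step~(4) is essentially that tracing-back, but it rests on a step~(3) whose stated justification does not hold as written.
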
 

Moreover, for fixed $\tau > 0$, consider first for all $L > 0$ the minimal value obtained by $\mathcal F_{\tau,L}$ on $[0,L)^d$-periodic stripes 
and then the minimal among these values as $L$ varies in $(0,+\infty)$.
By the reflection positivity technique, this value is attained on periodic stripes.
Let $h^\ast_{\tau}$ be any admissible value for the width of such optimal stripes. 

In~\cite{DR} the following theorems have been proved:

\begin{theorem}[{\cite[Theorem~1.1]{DR}}]
   \label{Thm:DRunique}
   Let $d\geq1$, $p\geq d+2$. Then there exists $\check{\tau}>0$ s.t. whenever $0< \tau < \check{\tau}$, $h^*_\tau$ is unique. 
\end{theorem}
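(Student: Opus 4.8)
\textbf{Proof proposal for Theorem~\ref{Thm:DRunique} (uniqueness of $h^*_\tau$).}

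The plan is to exploit the fact that, by the rescaling performed in~\cite{DR}, the optimal period $h^*_\tau$ is determined by a \emph{one-dimensional} variational problem: on periodic stripes the functional $\mathcal F_{\tau,L}$ decouples into a slice problem, so that $C^*_\tau$ of~\eqref{eq:cstartau1} equals the infimum over $h>0$ of an explicit energy density $e_\tau(h)$ associated to a single period of stripes of width $h$. Concretely, for a periodic union of stripes of width $h$ one computes $\per_1$ per unit volume (which is $1/h$) and the nonlocal double integral per unit volume as an explicit function of $h$ built from $\widehat K_\tau$; this gives $e_\tau(h) = \frac1h\bigl(\int K_\tau|\zeta_1|\,\d\zeta - 1\bigr) - g_\tau(h)$ for an explicit $g_\tau$. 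First I would write $e_\tau$ in closed form and reduce the claim to: \emph{the function $h\mapsto e_\tau(h)$ has a unique minimizer on $(0,\infty)$ for $\tau$ small.}

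Next, I would perform the change of variables that makes the $\tau$-dependence transparent. Because $K_\tau(\zeta) = (\|\zeta\|_1 + \tau^{1/\beta})^{-p}$, substituting $h = \tau^{-1/\beta} s$ (and rescaling $\zeta$) turns $e_\tau(h)$, up to an overall positive factor, into a function of $s$ that converges, as $\tau\to 0$, to the limiting energy density $e_0(s)$ governing the \emph{homogeneous} kernel $\|\zeta\|_1^{-p}$ — this is exactly the scaling already exploited in the rescaled functional~\eqref{def:fcalt}. For the homogeneous kernel one can compute $e_0(s)$ essentially explicitly (it is a combination of a term linear in $1/s$ and a term of the form $c\, s^{\beta-1}$ coming from the tail of the kernel, where $\beta = p-d-1 > 0$), and check directly that $e_0$ is strictly convex in a suitable variable, hence has a unique nondegenerate minimizer $s^*$ with $e_0''(s^*) > 0$. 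I would then establish $C^1_{\loc}$ (indeed $C^2_{\loc}$) convergence $e_\tau(\tau^{-1/\beta}\cdot) \to e_0(\cdot)$ on compact subsets of $(0,\infty)$, using dominated convergence and the smooth dependence of $K_\tau$ on $\tau$ away from $\zeta = 0$, together with a uniform tail bound (valid since $p \geq d+2$) to control the behavior as $s\to\infty$, and a coercivity/blow-up estimate as $s\to 0$ ensuring no minimizing sequence escapes to $0$.

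With these ingredients the conclusion is standard: uniqueness and nondegeneracy of the minimizer of $e_0$, combined with $C^2_{\loc}$ convergence and the a priori confinement of minimizers of $e_\tau$ to a fixed compact interval $[s_-, s_+]\subset(0,\infty)$ (independent of small $\tau$), force $e_\tau$ to have, for $\tau$ below some threshold $\check\tau$, a unique critical point in that interval — hence a unique minimizer — which translates back to uniqueness of $h^*_\tau = \tau^{-1/\beta} s^*_\tau$. The main obstacle I anticipate is the confinement step: one must rule out that, for a sequence $\tau_k\to 0$, the rescaled optimal widths drift to $0$ or to $\infty$. The escape to $\infty$ is controlled because the perimeter term $\sim 1/h$ is then negligible while the nonlocal gain $g_\tau(h)$ saturates (here $p\geq d+2$, i.e.\ $\beta\geq 1$, is what guarantees the relevant integrals converge and the energy density stays bounded away from its infimum); the escape to $0$ is controlled because the perimeter term blows up faster than any possible nonlocal gain. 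Making both of these quantitative and uniform in $\tau$ — so that the same compact interval works for all small $\tau$ — is the technical heart, and it relies on the explicit monotonicity properties of $\widehat K_\tau$ and the assumption $p\geq d+2$.
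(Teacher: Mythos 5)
This statement is imported verbatim from \cite[Theorem~1.1]{DR}; the present paper does not prove it, so there is no in-paper argument to compare against. Judged on its own merits, your high-level plan — reduce to the scalar energy density $e_\tau(h)$ of a periodic stripe pattern, identify a $\tau\to0$ limit $e_0$, verify that $e_0$ has a unique nondegenerate minimizer, and transfer uniqueness back to small $\tau$ via $C^2_{\loc}$ convergence plus a priori confinement of minimizers — is a reasonable reconstruction of the kind of analysis done in \cite{DR}, and you correctly identify where $p\geq d+2$ (equivalently $\beta\geq1$) enters: integrability of $|\zeta_1|K_\tau$ and of the nonlocal tail, needed both for $e_0$ to be well defined and for the confinement estimate at $h\to\infty$.

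Two concrete issues, though. First, the change of variables $h=\tau^{-1/\beta}s$ is misdirected. The width $h^*_\tau$ in the statement is the optimal width for the \emph{already rescaled} functional $\mathcal F_{\tau,L}$ with kernel $K_\tau(\zeta)=(\|\zeta\|_1+\tau^{1/\beta})^{-p}$; by construction it is $O(1)$ as $\tau\to0$, and $K_\tau$ converges pointwise to the homogeneous kernel $\|\cdot\|_1^{-p}$ as $\tau\to0$ with no further rescaling. Rescaling $\zeta$ by $\tau^{\pm1/\beta}$ simply undoes the rescaling and returns the non-homogeneous kernel $(\|\cdot\|_1+1)^{-p}$, not $\|\cdot\|_1^{-p}$. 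The correct move is to take $\tau\to0$ directly in $e_\tau(h)$ with $h$ fixed. Second, and more substantively, the claim that the limiting density $e_0$ is ``strictly convex in a suitable variable'' is doing all the work in your argument but is asserted, not checked. Because of the cancellation between the diverging coefficient $\int K_\tau|\zeta_1|\,\d\zeta-1$ and the diverging self-interaction part of the nonlocal term, $e_0$ is a nontrivial function of $h$ (involving an infinite sum of cross-stripe interaction terms), and its convexity is not apparent a priori; moreover the borderline case $p=d+2$ (the degenerate exponent $\beta=1$) is included in the hypotheses and is precisely the one where naive scaling heuristics degenerate. You should either compute $e_0$ explicitly and verify the monotonicity/convexity of $e_0'$ in the claimed range, or point to an alternative uniqueness mechanism; without that, the argument has a genuine gap at its core.
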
 

\begin{theorem}[{\cite[Theorem~1.3]{DR}}]
   \label{Thm:DRclose}
   There exist $\tilde{\tau}>0$ with $\tilde{\tau} \leq \min\{\tilde{\tau}_L, \check \tau\}$ and a constant $C$ such that for every $0<\tau\leq\tilde{\tau}$, one has that any admissible  width $h_{\tau,L}$ of minimizers of $\mathcal F_{\tau,L}$ satisfies
   \begin{equation*}
      \label{eq:DRclose}
      |h_\tau^*-h_{\tau,L}|\leq \frac CL.
   \end{equation*}

\end{theorem}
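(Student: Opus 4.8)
\textbf{Proof proposal for Theorem~\ref{Thm:DRclose}.}

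The plan is to exploit the scaling relation between the finite-$L$ problem \eqref{eq:cstartau1} and the infinite-volume value $C^*_\tau$, together with the uniqueness of $h^*_\tau$ from Theorem~\ref{Thm:DRunique} and a quantitative strict-convexity (or nondegeneracy) property of the one-dimensional energy near its minimizing period. First I would reduce everything to the one-dimensional functional: by Theorem~\ref{Thm:DR}, for $\tau\leq\tilde\tau_L$ the minimizer of $\mathcal F_{\tau,L}$ is a periodic union of stripes orthogonal to some $e_i$, so its energy equals the energy of a one-dimensional periodic stripe pattern on $[0,L)$ of some width $h_{\tau,L}$; moreover, for the pattern to be $[0,L)$-periodic we need $L=2N h_{\tau,L}$ for an integer $N$, i.e. $h_{\tau,L}\in\{L/2, L/4, L/6,\dots\}$. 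Let $e(h)$ denote the energy per unit length of the infinite periodic stripe pattern of width $h$ (the integrand defining $C^*_\tau$ before taking the infimum over $L$ and over stripe patterns). The finite-$L$ restriction forces $h_{\tau,L}$ to be the minimizer of $h\mapsto e(h)$ over the discrete set $\{L/(2N):N\in\N\}$ rather than over all of $(0,\infty)$, and $C^*_\tau=\min_{h>0}e(h)=e(h^*_\tau)$.

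The key analytic input I would establish (or extract from \cite{DR}) is a \emph{quadratic lower bound} near the minimum: there exist $\tau$-uniform constants $c,c'>0$ and $\delta>0$ such that
\begin{equation*}
e(h)-e(h^*_\tau)\geq c\,(h-h^*_\tau)^2\qquad\text{for }|h-h^*_\tau|\leq\delta,
\end{equation*}
together with the fact that $e(h)-e(h^*_\tau)\geq c'$ outside this neighborhood; both should follow from the explicit structure of $e$ (an algebraic/exponential expression in $h$ coming from $\widehat K_\tau$ and $\per_1$) and from Theorem~\ref{Thm:DRunique}, which guarantees $e$ has a unique, hence nondegenerate for small $\tau$, minimizer. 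Granting this, choose $N^*$ to be the nearest integer to $L/(2h^*_\tau)$; then $L/(2N^*)$ lies within $O(1/N^*)=O(h^*_\tau/L)$ of $h^*_\tau$, so the candidate stripe pattern of width $L/(2N^*)$ has energy at most $e(h^*_\tau)+C/L^2$. Since $h_{\tau,L}$ is the actual minimizer over the admissible discrete set, $e(h_{\tau,L})\leq e(L/(2N^*))\leq e(h^*_\tau)+C/L^2$. For $L$ large this forces $h_{\tau,L}$ into the neighborhood $|h-h^*_\tau|\leq\delta$ (using the $c'$ bound outside), and there the quadratic lower bound gives $c\,(h_{\tau,L}-h^*_\tau)^2\leq C/L^2$, whence $|h_{\tau,L}-h^*_\tau|\leq C'/L$. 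For small $L$ the estimate is trivial after enlarging the constant, since $h_{\tau,L}$ and $h^*_\tau$ both lie in a bounded interval of admissible widths; one sets $\tilde\tau\leq\min\{\tilde\tau_L,\check\tau\}$ so that both Theorem~\ref{Thm:DR} and Theorem~\ref{Thm:DRunique} apply.

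The main obstacle I anticipate is the quantitative nondegeneracy of $e$ at $h^*_\tau$ with constants \emph{uniform in $\tau$} (or at least controlled as $\tau\to0$ in a way compatible with the stated $C/L$ bound with $C$ independent of $\tau$). One must check that $e''(h^*_\tau)$ does not degenerate as $\tau\to0$; this requires analyzing the $\tau$-dependence of the explicit energy $e(h)=\per\text{-term}\cdot(\int K_\tau|\zeta_1|-1)-\text{(nonlocal stripe interaction)}$, using the rescaling built into $K_\tau$ in \eqref{def:Ktau}, and invoking the uniform estimates on $h^*_\tau$ and its defining equation already present in \cite{DR}. A secondary point requiring care is that the set of admissible widths is $\{L/(2N)\}$, not a uniform grid of spacing $1/L$; near $h^*_\tau$ the spacing of this set is $\approx 2h^*_\tau/L$, which is still $O(1/L)$ for $\tau$ in a fixed range, so the nearest-admissible-width argument goes through, but one should record that the constant $C$ then depends on an upper bound for $h^*_\tau$.
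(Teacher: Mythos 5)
This statement is quoted from \cite{DR} (their Theorem~1.3) and is not proved in the present paper; the comparison is therefore against the argument one would find in \cite{DR}. Your proposal captures the standard strategy used there: (i) by Theorem~\ref{Thm:DR} the minimizer is a constant-width stripe pattern, so the problem reduces to minimizing the one-dimensional energy-per-unit-length $e(h)$ over the discrete admissible set $\{L/(2N): N\in\N\}$; (ii) the infinite-volume minimizer $h^*_\tau$ is unique (Theorem~\ref{Thm:DRunique}) and the claim follows from a quantitative nondegeneracy of $e$ at $h^*_\tau$ together with the fact that the nearest admissible width to $h^*_\tau$ is within $O(1/L)$. You also correctly flag the only nontrivial analytic input: Theorem~\ref{Thm:DRunique} as stated only gives uniqueness of the minimizer, not $e''(h^*_\tau)>0$, so the quadratic lower bound $e(h)-e(h^*_\tau)\geq c\,(h-h^*_\tau)^2$ with $c>0$ uniform for $\tau\leq\tilde\tau$ genuinely has to be extracted from the explicit form of $e$; if the minimum were degenerate the argument would only give a slower rate. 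In the rescaled variables of this paper $h^*_\tau$ stays in a compact subset of $(0,\infty)$ and $e$ depends smoothly on $(\tau,h)$, so this uniformity is as you expect.

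One small numerical slip, which does not affect the conclusion: the spacing of the admissible set $\{L/(2N)\}$ near $h^*_\tau$ is
\begin{equation*}
\frac{L}{2N}-\frac{L}{2(N+1)}=\frac{L}{2N(N+1)}\approx \frac{2(h^*_\tau)^2}{L},
\end{equation*}
i.e.\ of order $(h^*_\tau)^2/L$, not $h^*_\tau/L$ as written. Consequently the nearest admissible width is within $O((h^*_\tau)^2/L)$ of $h^*_\tau$, the energy excess is $O((h^*_\tau)^4/L^2)$, and the quadratic bound yields $|h_{\tau,L}-h^*_\tau|\leq C(h^*_\tau)^2/L$, with the constant absorbing the (bounded) factors $(h^*_\tau)^2$ and $\sqrt{1/c}$. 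This is still $O(1/L)$ with a $\tau$-uniform constant, exactly as claimed.

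Two points you handled correctly and which are worth making explicit in a written-out version: (a) the theorem allows the minimizing width $h_{\tau,L}$ to be non-unique, but since any such width achieves the minimum of $e$ over the grid, the same energy bound and hence the same distance estimate apply to each; (b) the identity $\Fcal_{\tau,L}(S_h)=e(h)$ for $L=2Nh$ (i.e.\ the per-unit-length energy of a stripe pattern is independent of the number $N$ of periods) is what makes the reduction to $e$ on the grid legitimate, and it follows from periodicity of the stripe pattern in the integral over $[0,L)^d$.
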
 

\begin{theorem}[{\cite[Theorem~1.4]{DR}}]
   \label{T:1.3}
   Let $d\geq1$, $p\geq d+2$ and $h^{*}_{\tau}$ be the optimal stripes' width for fixed $\tau$ sufficiently small.
   Then there exists $\tilde\tau_{0}$, such that for every $\tau< \tilde\tau_{0}$, one has that for every $k\in \N$ and  $L = 2k h_{\tau}^{*}$, the minimizers $E_{\tau}$ of $\mathcal F_{\tau,L}$ are optimal stripes of width $h_{\tau}^{*}$.
\end{theorem}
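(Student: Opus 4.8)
The plan is to reduce the statement to a one–variable discrete optimization over admissible stripe widths, and then to read off the answer from the uniqueness of the globally optimal width. First I would invoke Theorem~\ref{Thm:DR}: for $\tau$ below a suitable threshold every minimizer of $\Fcal_{\tau,L}$ is a periodic union of stripes, and by the invariance of $\Fcal_{\tau,L}$ under permutations of the coordinates I may assume the stripes are orthogonal to $e_1$, of some width $h_{\tau,L}$. Since such a configuration is $2h_{\tau,L}$-periodic in the direction $e_1$ and $[0,L)^d$-periodic, necessarily $L\in 2h_{\tau,L}\N$; conversely, for each $m\in\N$ there is, up to translation and choice of the distinguished direction, a unique periodic union of stripes on $[0,L)^d$ of width $L/(2m)$.

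Next I would observe that the energy of a periodic union of stripes depends only on its width, not on the torus on which it is considered. Writing $S_h$ for the periodic union of stripes of width $h$ in direction $e_1$ and setting $\phi_\tau(h):=\Fcal_{\tau,2h}(S_h)$, a direct computation shows $\Fcal_{\tau,L}(S_h)=\phi_\tau(h)$ for every $L\in 2h\N$: the perimeter contributes $\per_1(S_h;[0,L)^d)/L^d=1/h$, while in the nonlocal term the integrand $|\chi_{S_h}(x)-\chi_{S_h}(x+\zeta)|$ depends only on $x_1,\zeta_1$ and is $2h$-periodic in $x_1$, so its average over $[0,L)$ in $x_1$ equals its average over $[0,2h)$. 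Combining this with the first step, for $\tau$ below the rigidity threshold one gets
\begin{equation*}
  \min \Fcal_{\tau,L} \;=\; \min_{m\in\N}\phi_\tau\!\Bigl(\tfrac{L}{2m}\Bigr),
\end{equation*}
and the width of every minimizer is $L/(2m_0)$ for some $m_0$ attaining this minimum.

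I would then note that, since $\{L/(2m):L>0,\ m\in\N\}=(0,\infty)$, the definition of $C^*_\tau$ gives $C^*_\tau=\inf_{h>0}\phi_\tau(h)$; by the reflection positivity argument recalled above this infimum is attained, and by Theorem~\ref{Thm:DRunique} it is attained at a \emph{unique} point $h^*_\tau$, so $\phi_\tau(h)>\phi_\tau(h^*_\tau)=C^*_\tau$ for all $h\neq h^*_\tau$. The conclusion is then immediate: if $L=2kh^*_\tau$ then $L/(2k)=h^*_\tau$, so $m=k$ is admissible and $\phi_\tau(L/(2k))=C^*_\tau$, whereas for every $m\in\N$ with $m\neq k$ one has $L/(2m)=kh^*_\tau/m\neq h^*_\tau$ and hence $\phi_\tau(L/(2m))>C^*_\tau$. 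Thus the minimum above is attained only at $m_0=k$, i.e.\ $h_{\tau,L}=h^*_\tau$, and all minimizers of $\Fcal_{\tau,L}$ are periodic unions of stripes of width $h^*_\tau$; one takes $\tilde\tau_0$ no larger than $\check\tau$ and than the rigidity threshold.

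The genuinely nontrivial ingredients are Theorem~\ref{Thm:DR} (stripe rigidity) and Theorem~\ref{Thm:DRunique} (uniqueness of the optimal width); granting these, the three steps above are bookkeeping. The point in that bookkeeping which I expect to be the main obstacle is that a \emph{single} threshold $\tilde\tau_0$ is claimed for all $k\in\N$, while $L=2kh^*_\tau\to\infty$ as $k\to\infty$; this forces one to use the rigidity statement in Theorem~\ref{Thm:DR} with a threshold uniform in $L$, which is available because the rigidity estimates there are local (the energy is controlled cell by cell), so that the dependence of $\tilde\tau_L$ on $L$ is inessential. Alternatively, for $L=2kh^*_\tau$ one can fold the torus $[0,2kh^*_\tau)^d$ down to the fixed bounded torus $[0,2h^*_\tau)^d$ by the chessboard estimate coming from reflection positivity, apply Theorem~\ref{Thm:DR} there, and transfer the conclusion back via the equality case of that estimate.
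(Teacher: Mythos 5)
This statement is not proved in the paper at all: Theorem~\ref{T:1.3} is quoted verbatim from \cite{DR} (it is Theorem~1.4 there), and the present paper simply imports it as a black box. So there is no in-paper argument to compare against, and your proposal has to stand or fall on its own.

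As far as the bookkeeping goes, your reduction is essentially correct: by Theorem~\ref{Thm:DR} a minimizer of $\mathcal F_{\tau,L}$ (for $\tau$ small) is a periodic union of stripes of some width $L/(2m)$; a stripe's energy density $\phi_\tau(h)=\mathcal F_{\tau,2h}(S_h)$ depends only on its width; by Theorem~\ref{Thm:DRunique} the infimum $C^*_\tau=\inf_h\phi_\tau(h)$ is attained only at $h=h^*_\tau$; and for $L=2kh^*_\tau$ the choice $m=k$ realizes $C^*_\tau$ while every $m\neq k$ gives strictly larger energy. You also correctly identify that the real content of the theorem, given the other two, is the claim of a \emph{single} threshold $\tilde\tau_0$ working for every $k$, whereas $\tilde\tau_L$ in Theorem~\ref{Thm:DR} is a priori $L$-dependent and $L=2kh^*_\tau\to\infty$ with $k$.

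But you do not actually close that gap. The first proposed fix---that the rigidity estimates in \cite{DR} are ``local, controlled cell by cell'' and hence $\tilde\tau_L$ is uniform in $L$---is an assertion about the internal structure of another paper's proof that you neither verify nor cite, and Theorem~\ref{Thm:DR} as stated gives you no such uniformity. The second proposed fix, folding $[0,2kh^*_\tau)^d$ down to $[0,2h^*_\tau)^d$ by a chessboard estimate, does not work as stated: the reflection-positivity/chessboard machinery in this model lives on one-dimensional slices and produces lower bounds in terms of reflected \emph{local} one-dimensional profiles. It does not turn a general $[0,L)^d$-periodic set into a $[0,2h^*_\tau)^d$-periodic one, and even if one had a $d$-dimensional chessboard bound, applying the rigidity theorem on each small cell and reassembling would be precisely the new argument that needs to be supplied, not a consequence of ``the equality case.'' (Note also that $[0,2h^*_\tau)^d$ is not a fixed bounded torus: $h^*_\tau$ depends on $\tau$.) So as written the proof relies on an unproved uniformity; it would need either a genuinely uniform-in-$L$ version of Theorem~\ref{Thm:DR} or an appeal to the actual proof of \cite[Theorem~1.4]{DR}.
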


\section{Decomposition of the functional}
\label{sec:dec}

The main goal of this section is to prove the following proposition
\begin{proposition}
  \label{prop:lowbound}
 The following lower bound for the functional $\Fcalt$ holds
\begin{align}
\label{eq:3.13}
\Fcalt(u)\geq&\frac{1}{L^d} \sum_{i=1}^d\Bigl\{\int_{[0,L)^{d-1}}\Bigl[-\Mi{0}{L}{i}+\Gcal{i}\Bigr]\dx_i^{\perp}+\mathcal I^i_{\tau,L}(u)\Bigr\}\notag\\
&+\frac{1}{L^d}\mathcal W_{\tau,L,\eps}(u),\qquad
\end{align}
where 
\begin{align}
\Mi{s}{t}{i}&:=3\at\int_{[s,t]\cap\{\nabla u(\cdot e_i+x_i^\perp)\neq0\}}|\partial_iu_{x_i^\perp}(\rho)|\|\nabla u(\rho e_i+x_i^\perp)\|_1\d\rho\notag\\
&+\frac{3}{\at}\int_{[s,t]\cap\{\nabla u(\cdot e_i+x_i^\perp)\neq0\}}W(u_{x_i^\perp}(\rho)))\frac{|\partial_iu_{x_i^\perp}(\rho)|}{\|\nabla u(\rho e_i+x_i^\perp)\|_1}\d\rho,
\label{eq:3.2}
\end{align}
\begin{equation}
\label{def:Gcal}
\begin{split}
\Gcal{i}=&\Mi{0}{L}{i}\int_{\R}|\zeta_i|\widehat K_\tau(\zeta_i)\d\zeta_i - 
\\
&-\int_{\R}\int_0^L|u_{x_i^\perp}(x_i)-u_{x_i^\perp}(x_i+\zeta_i e_i)|^2\widehat K_\tau(\zeta_i)\dx_i\d\zeta_i,
\end{split}
\end{equation}

\begin{equation}
\label{eq:itl}
\mathcal I^i_{\tau,L}(u)=\frac1d\int_{\{\zeta_i>0\}}\int_{[0,L)^d}[(u(x+\zeta_i e_i)-u(x))-(u(x+\zeta)-u(x+\zeta_i^\perp))]^2K_\tau(\zeta)\dx\d\zeta
\end{equation}
and 
\begin{equation}
\label{def:Wcal}
\mathcal W_{\tau,L,\eps}(u)=\frac{3}{\at}\int_{\{\nabla u=0\}\cap[0,L)^d}W(u(x))\dx.
\end{equation}
Moreover, equality holds in~\eqref{eq:3.13} whenever the function $u$ is one-dimensional, namely whenever there exists $g\in W^{1,2}_{\mathrm{loc}}(\R;[0,1])$ $L$-periodic  and $i\in\{1,\dots,d\}$ such that $u(x)=g(x_i)$.
\end{proposition}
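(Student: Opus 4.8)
The plan is to split $\Fcalt$ into a Modica--Mortola (local) part and a nonlocal part, decompose each one \emph{exactly} using the $\|\cdot\|_1$ structure and the symmetries of $K_\tau$, and then reassemble.

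\textbf{Local part.} Since $\|\nabla u\|_1=\sum_{i=1}^d|\partial_i u|$, one has the pointwise identities $\|\nabla u\|_1^2=\sum_i|\partial_i u|\,\|\nabla u\|_1$ and, on $\{\nabla u\neq 0\}$, $W(u)=\sum_i W(u)\,|\partial_i u|/\|\nabla u\|_1$, while the potential energy carried by $\{\nabla u=0\}$ is by definition $\mathcal W_{\tau,L,\eps}(u)$. Slicing the $i$-th summand in direction $e_i$ via Fubini identifies it with $\int_{[0,L)^{d-1}}\Mi{0}{L}{i}\dx_i^\perp$ — the restriction to $\{\nabla u\neq 0\}$ in~\eqref{eq:3.2} is harmless for the first integrand and is exactly what separates off $\mathcal W_{\tau,L,\eps}(u)$ from the second — so that $\mathcal M_{\at}(u,[0,L)^d)=\sum_{i=1}^d\int_{[0,L)^{d-1}}\Mi{0}{L}{i}\dx_i^\perp+\mathcal W_{\tau,L,\eps}(u)$.

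\textbf{Nonlocal part (the crux).} The key claim is the identity
$$\int_{\R^d}\!\int_{[0,L)^d}|u(x)-u(x+\zeta)|^2K_\tau\,\dx\,\d\zeta=\sum_{i=1}^d\int_{\R^d}\!\int_{[0,L)^d}|u(x)-u(x+\zeta_i e_i)|^2K_\tau\,\dx\,\d\zeta-\sum_{i=1}^d\mathcal I^i_{\tau,L}(u).$$
To prove it one exploits two symmetries of the data: $K_\tau$ depends only on $\|\zeta\|_1$ and hence is invariant under every reflection $\zeta_j\mapsto-\zeta_j$, and $u$ is $[0,L)^d$-periodic so the $x$-integrals are invariant under arbitrary translations of $x$. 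Combining reflections $\zeta_j\mapsto-\zeta_j$ with the translations $x\mapsto x+\zeta_j e_j$ one rewrites each term above as an integral over the positive orthant $\{\zeta_1,\dots,\zeta_d>0\}$, on which, for fixed $x$ and $\zeta$, all integrands are quadratic expressions in the $2^d$ corner values $u(x+\sum_{j\in S}\zeta_j e_j)$, $S\subseteq\{1,\dots,d\}$; the identity then reduces to an elementary pointwise identity between quadratic forms, with the factor $1/d$ in $\mathcal I^i_{\tau,L}$ being precisely the normalization that makes the combinatorics close. (In $d=2$, with $A=u(x)$, $B=u(x+\zeta_1e_1)$, $C=u(x+\zeta_2e_2)$, $D=u(x+\zeta)$ and $p=B-A$, $p'=D-C$, $q=C-A$, $q'=D-B$, one reduces to the quadrant $\{\zeta_1,\zeta_2>0\}$ and checks by direct expansion that $p^2+p'^2+q^2+q'^2-(B+C-A-D)^2=(D-A)^2+(B-C)^2$.) Integrating out the $d-1$ transverse components of $\zeta$ finally turns each $\int_{\R^d}\int_{[0,L)^d}|u(x)-u(x+\zeta_i e_i)|^2K_\tau$ into $\int_{[0,L)^{d-1}}\bigl(\int_\R\int_0^L|u_{x_i^\perp}(x_i)-u_{x_i^\perp}(x_i+\zeta_i e_i)|^2\widehat K_\tau(\zeta_i)\,\dx_i\,\d\zeta_i\bigr)\dx_i^\perp$, the one-dimensional nonlocal energy that appears in $\Gcal{i}$. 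I expect this identity to be the main obstacle: keeping track of which reflection/translation is applied on which subregion, and verifying the quadratic-form identity with exactly the weight $1/d$.

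\textbf{Recombination and equality.} Because $\int_{\R^d}K_\tau(\zeta)|\zeta_i|\,\d\zeta=\int_\R|\zeta_i|\widehat K_\tau(\zeta_i)\,\d\zeta_i$ is the same number $\kappa$ for all $i$, one has $(\kappa-1)\Mi{0}{L}{i}-\int_\R\int_0^L|u_{x_i^\perp}(x_i)-u_{x_i^\perp}(x_i+\zeta_ie_i)|^2\widehat K_\tau(\zeta_i)=-\Mi{0}{L}{i}+\Gcal{i}$. Substituting the two decompositions into $\Fcalt(u)=\tfrac1{L^d}\bigl[(\kappa-1)\mathcal M_{\at}(u,[0,L)^d)-\int_{\R^d}\int_{[0,L)^d}|u(x)-u(x+\zeta)|^2K_\tau\bigr]$ and collecting terms reproduces the axis terms $-\Mi{0}{L}{i}+\Gcal{i}$ and the cross terms $\mathcal I^i_{\tau,L}$ of~\eqref{eq:3.13} exactly; the only term requiring an inequality is the local remainder, which comes out as $(\kappa-1)\mathcal W_{\tau,L,\eps}(u)$ and hence is $\ge\mathcal W_{\tau,L,\eps}(u)$ since $\kappa=\int K_\tau|\zeta_1|\,\d\zeta\ge 2$ in the regime $0<\tau\ll 1$ and $\mathcal W_{\tau,L,\eps}(u)\ge0$; this gives~\eqref{eq:3.13}. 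Finally, if $u(x)=g(x_i)$ for some $i$, then along any direction $e_j$ with $j\neq i$ the function $u$ is constant, so $\Mi{0}{L}{j}=0$, the $j$-th one-dimensional nonlocal term vanishes and hence $\Gcal{j}=0$, and each $\mathcal I^j_{\tau,L}(u)=0$ because the two $e_j$-increments in its integrand coincide; thus the decomposition collapses to the trivial one-dimensional splitting of $\Fcalt$ along the single active direction, i.e.\ to an equality.
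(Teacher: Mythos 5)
Your local decomposition is exactly the paper's: the pointwise identity $\|\nabla u\|_1^2=\sum_i|\partial_iu|\,\|\nabla u\|_1$, the separation of $\{\nabla u=0\}$ into $\mathcal W_{\tau,L,\eps}$, and Fubini. The recombination algebra (matching $(\kappa-1)\Mi{0}{L}{i}-\text{(1D nonlocal)}_i$ with $-\Mi{0}{L}{i}+\Gcal{i}$) is also correct, and your observation that the $\mathcal W$ term really comes out multiplied by $\kappa-1$ rather than $1$ is sharp: it means the stated inequality needs $\kappa\geq 2$ (true only for $\tau$ small) and the claimed \emph{equality} for one-dimensional $u$ actually requires $\mathcal W_{\tau,L,\eps}(u)=0$.

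The genuine gap is in your nonlocal ``identity.'' What you claim,
\[
\int_{\R^d}\!\int_{[0,L)^d}|u(x)-u(x+\zeta)|^2K_\tau
=\sum_{i=1}^d\int_{\R^d}\!\int_{[0,L)^d}|u(x)-u(x+\zeta_ie_i)|^2K_\tau-\sum_{i=1}^d\Ical^i_{\tau,L}(u),
\]
is \emph{false} for $d\geq 3$; what is true (and what the paper proves) is only the inequality ``$\leq$.'' The paper's argument is not a symmetric pointwise quadratic-form identity on the $2^d$ corners of the cube: it iterates the elementary split $(a+b)^2=a^2+b^2+2ab$ along a \emph{chain} of edges $x\to x+\zeta_ie_i\to x+\zeta_ie_i+\zeta_{k_1}e_{k_1}\to\dots\to x+\zeta$, each step producing one one-dimensional term (by translating back via periodicity) and one cross term. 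Every cross term is nonnegative after a reflection-plus-periodicity argument (Lemma~\ref{lemma:positivity}). Only the \emph{first} cross term (the one with $a=u(x)-u(x+\zeta_ie_i)$, $b=u(x+\zeta_ie_i)-u(x+\zeta)$) becomes $\Ical^i_{\tau,L}$; the remaining $d-2$ cross terms (the paper's \eqref{eq:i2}) are simply \emph{discarded}, and this is exactly where the inequality arises. For $d=2$ there are no remaining cross terms, which is why the $d=2$ quadratic identity you verify (with $A,B,C,D$) does close up exactly; for $d\geq 3$ the discarded terms are of the form $\tfrac12\int_{\{\zeta_{k}>0\}}[(u(\tilde x+\zeta_ke_k)-u(\tilde x))-(u(\tilde x+\zeta_ke_k+\zeta_le_l)-u(\tilde x+\zeta_le_l))]^2K_\tau$, a second mixed difference squared, which is strictly positive for a generic $u$ depending on two coordinates. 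So your proposal, as written, would have you proving an identity that does not hold; the step ``verify the quadratic-form identity with exactly the weight $1/d$'' that you flag as the main obstacle cannot be completed. The fix is the paper's: establish nonnegativity of every cross term (Lemma~\ref{lemma:positivity}), drop all but the first, and accept an inequality in place of an identity (which then also simplifies the equality discussion, since for one-dimensional $u$ all cross terms vanish identically).
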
 

In particular, since showing that the minimizers for the \rhs of~\eqref{eq:3.13} are one-dimensional implies that the minimizers for $\Fcal_{\tau,L,\varepsilon}$ are one-dimensional, this allows us to reduce ourselves to prove one-dimensionality of the minimizers for the lower bound functional.

We will need the following preliminary lemma.

\begin{lemma}
   \label{lemma:positivity}
   Let $u\in W^{1,2}_{\loc}(\R^d;[0,1])$ be a $[0,L)^d$-periodic function.
   Then, for all $j,j_1,\dots,j_k\in\{1,\dots,d\}$ with $j\neq j_1\neq\dots j_k$,
   \begin{align}
      -\int_{\R^d}&\int_{[0,L)^d}(u(x)-u(x+\zeta_j e_j))(u(x+\zeta_j e_j)-u(x+\zeta_je_j+\zeta_{j_1}e_{j_1}+\dots+\zeta_{j_k}e_{j_k}))K_\tau(\zeta)\dx\d\zeta=\notag\\
      &=\frac12\int_{\{\zeta_j>0\}}{\int_{[0,L)^d}}\Bigl[(u(x+\zeta_j e_j)-u(x))\notag\\
      &-(u(x+\zeta_je_j+\zeta_{j_1}e_{j_1}+\dots+\zeta_{j_k}e_{j_k})-u(x+\zeta_{j_1}e_{j_1}+\dots+\zeta_{j_k}e_{j_k}))\Bigr]^2K_\tau(\zeta)\dx\d\zeta. \label{eq:pos}
   \end{align}
\end{lemma}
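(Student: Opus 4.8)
The identity is a purely algebraic ``polarization'' statement once the integration domain is symmetrized, so the plan is to reduce the left-hand side to a sum of squares by exploiting the symmetry $\zeta\mapsto-\zeta$ of the kernel $K_\tau$ and the $[0,L)^d$-periodicity of $u$. First I would abbreviate $w:=\zeta_{j_1}e_{j_1}+\dots+\zeta_{j_k}e_{j_k}$ (the ``transverse'' increment) and set $a(x,\zeta):=u(x+\zeta_je_j)-u(x)$ and $b(x,\zeta):=u(x+\zeta_je_j+w)-u(x+w)$; note both $a$ and $b$ are increments in the $e_j$-direction, one through $x$ and one through $x+w$. The left-hand side is then $-\int_{\R^d}\int_{[0,L)^d} a(x,\zeta)\,b(x,\zeta)\,K_\tau(\zeta)\dx\d\zeta$ (after checking the sign: $u(x)-u(x+\zeta_je_j)=-a$ and the second factor is exactly $b$, so the product of the two factors is $-ab$ and with the overall minus sign in front we get $+ab$ — I would double-check this against the displayed formula and adjust the bookkeeping accordingly, but the structure is $\pm\int ab\,K_\tau$).

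Next I would split $\int_{\R^d}=\int_{\{\zeta_j>0\}}+\int_{\{\zeta_j<0\}}$ (the hyperplane $\{\zeta_j=0\}$ is null) and in the second integral perform the change of variables $\zeta\mapsto-\zeta$ together with the substitution $x\mapsto x+\zeta_je_j$ wait — more carefully: under $\zeta\to-\zeta$ one has $K_\tau(-\zeta)=K_\tau(\zeta)$, and $a(x,-\zeta)=u(x-\zeta_je_j)-u(x)$; then translating $x\mapsto x+\zeta$ (legitimate by $[0,L)^d$-periodicity of $u$, so the $x$-integral over $[0,L)^d$ is unchanged) converts the $\{\zeta_j<0\}$ contribution into an integral over $\{\zeta_j>0\}$ whose integrand is $a$ and $b$ evaluated at shifted base points. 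The point is that after this folding, the two halves combine so that the cross term $-ab$ becomes (up to the factor $\tfrac12$) a perfect square $\tfrac12(a-b)^2$ plus terms $\tfrac12 a^2+\tfrac12 b^2$ that must be shown to cancel against the ``diagonal'' contributions produced by the folding — or, more likely, the cleaner route is to expand the target right-hand side $\tfrac12\int_{\{\zeta_j>0\}}(a-b)^2K_\tau = \tfrac12\int_{\{\zeta_j>0\}}(a^2-2ab+b^2)K_\tau$ and show that $\int_{\{\zeta_j>0\}}a^2K_\tau=\int_{\{\zeta_j>0\}}b^2K_\tau$ (by the periodicity translation $x\mapsto x+w$, since $b(x,\zeta)=a(x+w,\zeta)$ and $w$ depends only on $\zeta^\perp_j$) and that $\int_{\{\zeta_j>0\}}a^2K_\tau$ equals $-\int_{\R^d}\int ab\,K_\tau$ minus... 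I would instead verify directly that $\int_{\R^d}\int_{[0,L)^d}a^2K_\tau\dx\d\zeta = \int_{\R^d}\int_{[0,L)^d}b^2K_\tau\dx\d\zeta$ and that the full $\R^d$-integral of $ab$ equals twice the $\{\zeta_j>0\}$-integral of $ab$ (again by $\zeta\to-\zeta$ plus a periodicity translation), so that $-\int_{\R^d}\int ab\,K_\tau = -2\int_{\{\zeta_j>0\}}\int ab\,K_\tau = \int_{\{\zeta_j>0\}}\int(\tfrac12 a^2+\tfrac12 b^2 - ab)\cdot 2 \cdot\tfrac12 \cdots$; the precise constant is pinned down by matching, and the clean final form is exactly $\tfrac12\int_{\{\zeta_j>0\}}\int (a-b)^2K_\tau$.

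The key inputs are therefore: (i) $K_\tau(-\zeta)=K_\tau(\zeta)$, immediate from~\eqref{def:Ktau}; (ii) $[0,L)^d$-periodicity of $u$, which makes all base-point translations $x\mapsto x+v$ harmless; and (iii) the bookkeeping that the transverse shift $w$ involves only coordinates $j_1,\dots,j_k$ all distinct from $j$, so that $b(x,\zeta)=a(x+w,\zeta)$ genuinely holds. The main obstacle is purely organizational: keeping the four base points ($x$, $x+\zeta_je_j$, $x+w$, $x+\zeta_je_j+w$) and the two regions $\{\zeta_j\gtrless0\}$ straight while performing the reflections and translations, so that the $a^2$ and $b^2$ terms materialize with the right coefficient $\tfrac12$ to assemble $\tfrac12(a-b)^2$. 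I would present it as a short chain of equalities: symmetrize over $\zeta$, translate $x$, collect, complete the square. No delicate analysis is needed — everything is Fubini plus changes of variables that are justified because the integrand is nonnegative or absolutely integrable (the latter following from $u\in[0,1]$ bounded and $K_\tau\in L^1(\R^d)$, since $p\geq d+2>d$).
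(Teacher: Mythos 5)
Your high-level strategy — symmetrize in $\zeta_j$ using $K_\tau(-\zeta)=K_\tau(\zeta)$ and periodicity, then complete the square with the help of $\int a^2 K_\tau=\int b^2 K_\tau$ — is exactly the paper's route, and your identification of the two key inputs (symmetry of $K_\tau$, periodicity of $u$, and $b(x,\zeta)=a(x+w,\zeta)$) is correct. However, there is a genuine gap in the initial reduction, and it is not the sign flip you flag but a structural mismatch. The second factor of the integrand on the left-hand side is
\[
u(x+\zeta_je_j)-u(x+\zeta_je_j+w),
\]
which is an increment in the \emph{transverse} direction $w$, not the $e_j$-increment $b(x,\zeta)=u(x+\zeta_je_j+w)-u(x+w)$. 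These are different objects and are not equal up to sign, so the claim that the left-hand side ``is $\pm\int_{\R^d}\int ab\,K_\tau$'' is false, and the plan of deducing the result from $-\int_{\R^d}\int ab\,K_\tau=-2\int_{\{\zeta_j>0\}}\int ab\,K_\tau$ together with $\int a^2=\int b^2$ cannot be made to work: those identities do not recover the left-hand side.

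The fix is to fold \emph{before} attempting any algebraic identification. After flipping both signs inside the product, the $\{\zeta_j<0\}$ contribution, under $\zeta_j\mapsto-\zeta_j$ and $x\mapsto x+\zeta_je_j$, produces the integrand $a\cdot(u(x+w)-u(x))$ over $\{\zeta_j>0\}$. Since $(u(x+w)-u(x))-(u(x+\zeta_je_j+w)-u(x+\zeta_je_j))=a-b$, the two halves combine to
\[
\int_{\{\zeta_j>0\}}\int_{[0,L)^d}a(x,\zeta)\bigl(a(x,\zeta)-b(x,\zeta)\bigr)K_\tau(\zeta)\,\dx\,\d\zeta,
\]
\emph{not} to any multiple of $\int ab\,K_\tau$. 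From this intermediate form, the elementary identity $a(a-b)=\tfrac12\bigl[(a-b)^2+a^2-b^2\bigr]$ together with your (correct) observation that $\int_{\{\zeta_j>0\}}a^2K_\tau=\int_{\{\zeta_j>0\}}b^2K_\tau$ (by translating $x\mapsto x-w$) gives $\tfrac12\int_{\{\zeta_j>0\}}(a-b)^2K_\tau$, which is the right-hand side. So the ingredients you list are exactly the right ones; what breaks in your write-up is only the premise that the left-hand side already has the shape $\pm\int ab\,K_\tau$, and this would need to be replaced by the ``fold first'' step above before the completion of the square.
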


\begin{proof}[Proof of Lemma~\ref{lemma:positivity}: ]
   One has that
  \begin{align}
         -&\int_{\R^d}\int_{[0,L)^d}(u(x)-u(x+\zeta_j e_j))(u(x+\zeta_j e_j)-u(x+\zeta_je_j+\zeta_{j_1}e_{j_1}+\dots+\zeta_{j_k}e_{j_k}))K_\tau(\zeta)\dx\d\zeta=\notag\\
         &=-\int_{\{\zeta_j>0\}\cup\{\zeta_j<0\}}\int_{[0,L)^d}(u(x+\zeta_j e_j)-u(x))(u(x+\zeta_je_j+\zeta_{j_1}e_{j_1}+\dots+\zeta_{j_k}e_{j_k})-u(x+\zeta_j e_j))K_\tau(\zeta)\dx\d\zeta\notag\\
         &=\int_{\{\zeta_j>0\}}\int_{[0,L)^d}(u(x+\zeta_j e_j)-u(x))\bigl[-u(x+\zeta_je_j+\zeta_{j_1}e_{j_1}+\dots+\zeta_{j_k}e_{j_k})+u(x+\zeta_j e_j)\notag\\
         &+u(x+\zeta_{j_1}e_{j_1}+\dots+\zeta_{j_k}e_{j_k})-u(x)\bigr]K_\tau(\zeta)\dx\d\zeta,\notag
         \end{align}
         where in the last equation we used the periodicity of $u$ when integrating on $\{\zeta_j<0\}$.

         Moreover,
         \begin{align}
         &\int_{\{\zeta_j>0\}}\int_{[0,L)^d}(u(x+\zeta_j e_j)-u(x))\bigl[-u(x+\zeta_je_j+\zeta_{j_1}e_{j_1}+\dots+\zeta_{j_k}e_{j_k})+u(x+\zeta_j e_j)\notag\\
         &+u(x+\zeta_{j_1}e_{j_1}+\dots+\zeta_{j_k}e_{j_k})-u(x)\bigr]K_\tau(\zeta)\dx\d\zeta\notag\\
         &=\int_{\{\zeta_j>0\}}\int_{[0,L)^d}(u(x+\zeta_j e_j)-u(x))\Bigl[(u(x+\zeta_j e_j)-u(x))\notag\\
         &-(u(x+\zeta_je_j+\zeta_{j_1}e_{j_1}+\dots+\zeta_{j_k}e_{j_k})-u(x+\zeta_{j_1}e_{j_1}+\dots+\zeta_{j_k}e_{j_k}))\Bigr]K_\tau(\zeta)\dx\d\zeta\notag\\
         &=\frac12\int_{\{\zeta_j>0\}}\int_{[0,L)^d}\Bigl[(u(x+\zeta_j e_j)-u(x))^2\notag\\
         &-(u(x+\zeta_je_j+\zeta_{j_1}e_{j_1}+\dots+\zeta_{j_k}e_{j_k})-u(x+\zeta_{j_1}e_{j_1}+\dots+\zeta_{j_k}e_{j_k}))^2\Bigr]K_\tau(\zeta)\dx\d\zeta,\notag\\
         &+\frac12\int_{\{\zeta_j>0\}}\int_{[0,L)^d}\Bigl[(u(x+\zeta_j e_j)-u(x))\notag\\
         &-(u(x+\zeta_je_j+\zeta_{j_1}e_{j_1}+\dots+\zeta_{j_k}e_{j_k})-u(x+\zeta_{j_1}e_{j_1}+\dots+\zeta_{j_k}e_{j_k}))\Bigr]^2K_\tau(\zeta)\dx\d\zeta\notag
         \end{align}
         where in the last equation we used the identity $a(a-b)=\frac12[a^2-b^2+(a-b)^2]$ with $a=u(x+\zeta_j e_j)-u(x)$ and $b=u(x+\zeta_je_j+\zeta_{j_1}e_{j_1}+\dots+\zeta_{j_k}e_{j_k})-u(x+\zeta_{j_1}e_{j_1}+\dots+\zeta_{j_k}e_{j_k})$.
         Thus, since  for $a,b$ as above it holds $\int a^2=\int b^2$, we conclude that
         \begin{align}
         -&\int_{\R^d}\int_{[0,L)^d}(u(x)-u(x+\zeta_j e_j))(u(x+\zeta_j e_j)-u(x+\zeta_je_j+\zeta_{j_1}e_{j_1}+\dots+\zeta_{j_k}e_{j_k}))K_\tau(\zeta)\dx\d\zeta=\notag\\
         &=\frac12\int_{\{\zeta_j>0\}}\int_{[0,L)^d}\Bigl[(u(x+\zeta_j e_j)-u(x))\notag\\
       &-(u(x+\zeta_je_j+\zeta_{j_1}e_{j_1}+\dots+\zeta_{j_k}e_{j_k})-u(x+\zeta_{j_1}e_{j_1}+\dots+\zeta_{j_k}e_{j_k}))\Bigl]^2K_\tau(\zeta)\dx\d\zeta.\label{eq:freccia3}
      \end{align}

\end{proof}

\begin{proof}[Proof of Proposition~\ref{prop:lowbound}]

Let us now start to rewrite the terms defining $\Fcal_{\tau,L, \varepsilon}$ in a way that will allow us to recover the lower bound \eqref{eq:3.13}.


First we notice that the Modica-Mortola term $\mathcal M_{\at}(\cdot,[0,L)^d)$ can be decomposed in the following way

\begin{align}\label{eq:3.1}
   \mathcal M_{\at}(u,[0,L)^d)=\sum_{i=1}^d\int_{[0,L)^{d-1}} \overline {\mathcal M}^{i}_{\at}(u,x_i^\perp,[0,L))\dx_i^\perp +\mathcal W_{\tau,L,\eps}(u).
\end{align}

To obtain~\eqref{eq:3.1} it is sufficient to notice that $\|\nabla u\|_1=\sum_i|\partial_iu|$ and use the Fubini Theorem \wrt the coordinate directions $e_i$, $i\in\{1,\dots,d\}$.

{As for the nonlocal term, using the elementary equality }

\begin{equation}\label{eq:elem}
   (a+b)^2=a^2+b^2+2ab
\end{equation}
 with $a=u(x)-u(x+\zeta_i e_i)$, $b=u(x+\zeta_i e_i)-u(x+\zeta)$  one has that

    \begin{align}
       -\int_{\R^d}\int_{[0,L)^d}|u(x)&-u(x+\zeta)|^2K_\tau(\zeta)\dx\d\zeta=-\int_{\R^d}\int_{[0,L)^d}|u(x)-u(x+\zeta_ie_i)|^2K_\tau(\zeta)\dx\d\zeta\notag\\
       &-2\int_{\R^d}\int_{[0,L)^d}(u(x)-u(x+\zeta_i e_i))(u(x+\zeta_i e_i)-u(x+\zeta))K_\tau(\zeta)\dx\d\zeta\notag\\
       &-\int_{\R^d}\int_{[0,L)^d}|u(x+\zeta)-u(x+\zeta_ie_i)|^2K_\tau(\zeta)\dx\d\zeta.\label{eq:dec2d}
    \end{align}
    Then one decomposes further the third term in the \rhs of~\eqref{eq:dec2d} using the elementary equality~\eqref{eq:elem} with $a=u(x+ \zeta_ie_i)-u(x+\zeta_ie_i+\zeta_{k_1}e_{k_1})$ and $b=u(x+\zeta_ie_i+\zeta_{k_1}e_{k_1})-u(x+\zeta)$, where $k_1\in\{1,\dots,d\}$ is the first index such that $k_1\neq i$. In this way, by periodicity of $u$
    \begin{align}
    -\int_{\R^d}&\int_{[0,L)^d}|u(x)-u(x+\zeta)|^2K_\tau(\zeta)\dx\d\zeta=-\int_{\R^d}\int_{[0,L)^d}|u(x)-u(x+\zeta_ie_i)|^2K_\tau(\zeta)\dx\d\zeta\notag\\
    &-\int_{\R^d}\int_{[0,L)^d}|u(x)-u(x+\zeta_{k_1}e_{k_1})|^2K_\tau(\zeta)\dx\d\zeta\notag\\
    &-2\int_{\R^d}\int_{[0,L)^d}(u(x)-u(x+\zeta_i e_i))(u(x+\zeta_i e_i)-u(x+\zeta))K_\tau(\zeta)\dx\d\zeta\notag\\
    &-2\int_{\R^d}\int_{[0,L)^d}(u(x+\zeta_ie_i)-u(x+\zeta_i e_i+\zeta_{k_1}e_{k_1}))(u(x+\zeta_i e_i+\zeta_{k_1}e_{k_1})-u(x+\zeta))K_\tau(\zeta)\dx\d\zeta\notag\\
    &-\int_{\R^d}\int_{[0,L)^d}|u(x+\zeta)-u(x+\zeta_ie_i+\zeta_{k_1}e_{k_1})|^2K_\tau(\zeta)\dx\d\zeta.\label{eq:dec2dbis}
    \end{align}
    Iterating this procedure on the last term of the r.h.s.\ of~\eqref{eq:dec2dbis} in the remaining $d-2$ coordinates $k_2,\dots,k_{d-1}\neq i$ and using the periodicity of $u$ one obtains
    \begin{align}
       &-\int_{\R^d}\int_{[0,L)^d}|u(x)-u(x+\zeta)|^2K_\tau(\zeta)\dx\d\zeta=-\sum_{i=1}^d\int_{\R^d}\int_{[0,L)^d}|u(x)-u(x+\zeta_ie_i)|^2K_\tau(\zeta)\dx\d\zeta\label{eq:i0}\\
       &-2\int_{\R^d}\int_{[0,L)^d}(u(x)-u(x+\zeta_i e_i))(u(x+\zeta_i e_i)-u(x+\zeta))K_\tau(\zeta)\dx\d\zeta\label{eq:i1}\\
       &-2\sum_{j=1 }^{d-1}\int_{\R^d}\int_{[0,L)^d}(u(x+\zeta_ie_i+\hat{\zeta}_{i,{j-1}})-u(x+\zeta_i e_i+\hat{\zeta}_{i,j}))(u(x+\zeta_i e_i+\hat{\zeta}_{i,j})-u(x+\zeta))K_\tau(\zeta)\dx\d\zeta\label{eq:i2}
    \end{align}
    where $\hat{\zeta}_{i,0}=0$ and for $j\geq 1$
    \[
       \hat{\zeta}_{i,j}=\zeta_1e_1+\dots\zeta_{i-1}e_{i-1}+\zeta_{i+1}e_{i+1}+\dots\zeta_{j+1}e_{j+1}.
    \]

By periodicity,~\eqref{eq:i1} and~\eqref{eq:i2} rewrite in the form~\eqref{eq:pos} and therefore by Lemma \ref{lemma:positivity} are nonnegative.
Thus, neglecting the positive term~\eqref{eq:i2}, summing the terms of~\eqref{eq:i0} and~\eqref{eq:i1} over $i\in\{1,\dots,d\}$ and dividing by $d$ one obtains

\begin{align}
   - \int_{\R^d}\int_{[0,L)^d}&|u(x)-u(x+\zeta)|^2K_\tau(\zeta)\dx\d\zeta\geq-\sum_{i=1}^d\int_{\R^d}\int_{[0,L)^d}|u(x)-u(x+\zeta_i e_i)|^2K_\tau(\zeta)\dx\d\zeta\notag\\
   & +\frac{1}{d}\sum_{i=1}^d\int_{\{\zeta_i>0\}}{\int_{[0,L)^d}}[(u(x+\zeta_i e_i)-u(x))-(u(x+\zeta)-u(x+\zeta_i^\perp))]^2K_\tau(\zeta)\dx\d\zeta.\label{eq:3.15}
\end{align}

Hence, applying the decomposition \eqref{eq:3.1} and \eqref{eq:3.15} to the definition of $\Fcal_{\tau,L, \varepsilon}$ given in \eqref{def:fcalt}, Proposition~\ref{prop:lowbound} is proved.

\end{proof}

   Given the numerous slicing arguments, it will be useful
   to define the slicing of $\mathcal I^i_{\tau,L}$ as follows
   \begin{equation*}
      \mathcal I^i_{\tau,L}(u)=\int_{[0,L)^{d-1}}\overline{\mathcal I}^i_{\tau}(u,x_i^\perp,[0,L))\dx_i^\perp,
   \end{equation*}
   where
   \begin{equation}
      \label{eq:slicingMathCalI}
      \overline{\mathcal I}^i_{\tau}(u,x_i^\perp,[0,L)) :=\frac{1}{d}\int_0^L\int_{\{\zeta_i>0\}}[(u(x+\zeta_i e_i)-u(x))-(u(x+\zeta)-u(x+\zeta_i^\perp))]^2K_\tau(\zeta)\d\zeta\dx_i
   \end{equation}
   and where $x = x_ie_i + x_i^\perp$.

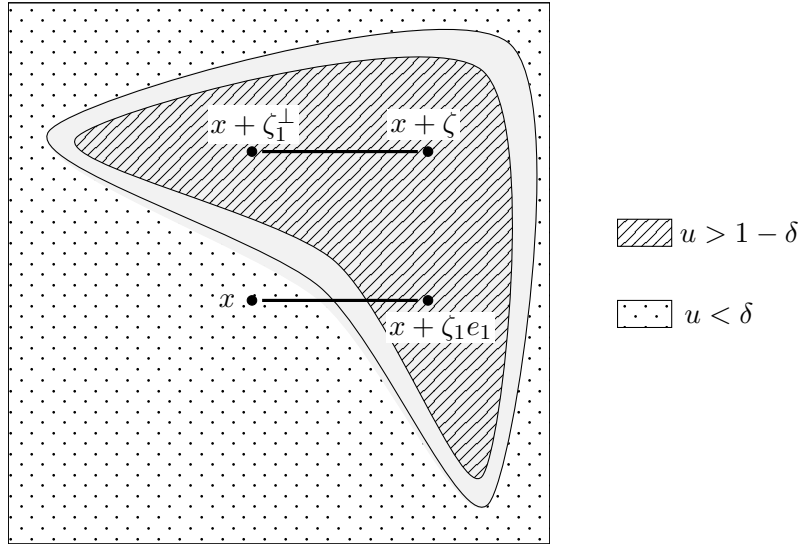
\begin{figure}
   \centering
\begin{tikzpicture}[scale=1.8]

	\node at (-4,.2) {$$};

	\draw [ pattern=nel] (2.5,.2) rectangle (2.9,.4);
	\node at (3.4,.3) {$u>1-\delta$};

	\draw [ pattern=my crosshatch dots] (2.5,-.2) rectangle (2.9,-.4);
	\node at (3.25,-.3) {$u < \delta$};

	\clip (-2, -2) rectangle (2, 2);
	\draw (-2,-2) rectangle (2,2);

	\draw [ pattern=my crosshatch dots] (-2,-2) rectangle (2,2);

	\filldraw [white!95!black] plot[smooth cycle] coordinates {
		(-1.7,1.05)
		(1.7,1.7)
		(1.55,-1.7)
		(.3,-.2)
	};

	\draw [black] plot[smooth cycle] coordinates {
		(-1.7,1.05)
		(1.7,1.7)
		(1.55,-1.7)
		(.3,-.1)
	};

	\draw [ pattern=nel] plot[smooth cycle] coordinates {
		(-1.5,1)
		(1.5,1.5)
		(1.5,-1.5)
		(.4,.1)
	};

	\node (XX) at  (-.2 ,-.2) {};
	\node (XY) at  (1.1 ,-.2) {};
	\node (XZ) at (-.2 ,.9) {};
	\node (ZZ) at (1.1 ,.9) {};
	\filldraw (XX) circle  (1pt);
	\filldraw (XY) circle  (1pt);
	\filldraw (XZ) circle  (1pt);
	\filldraw (ZZ) circle  (1pt);
	\draw[line width =1.2pt] (XX) -- (XY);
	\draw[line width = 1.2pt] (XZ) -- (ZZ);

	\node[inner sep=1pt, fill =white, left]  (X) at (-0.3,-0.2) {$x$};
	\node[inner sep=1pt, fill =white, left]  (Z1) at (1.6, -.40 ) {$x + \zeta_1 e_1$};
	\node[inner sep=1pt, fill =white, left]  (Z2) at (0.15, 1.1 ) {$x + \zeta^\perp_1$};
	\node[inner sep=1pt, fill =white, left]  (Z2) at (1.34, 1.1 ) {$x + \zeta$};

\end{tikzpicture}
\caption{\small{In this configuration, $u(x_1+\zeta_1e_1)-u(x)\geq1-2\delta$, while $|u(x+\zeta)-u(x+\zeta_1^\perp)|\leq\delta$. Hence,
$\Bigl[(u(x_1+\zeta_1e_1)-u(x))-(u(x+\zeta)-u(x+\zeta_1^\perp))\Bigr]^2K_\tau(\zeta)\geq(1-3\delta)^2\frac{1}{(\|\zeta\|_1+\tau^{1/\beta})^p}$.}}
\label{fig:1}
\end{figure}

\begin{remark}
  \label{rem:cross} The integrand in the cross interaction term $\Ical^i_{\tau,L}$ penalizes whenever the function $u$ is non-constant in more than one coordinate direction, \ie~whenever the function  $u$ is not ``one-dimensional''.
  For example, a configuration penalized by  $\Ical^i_{\tau,L}$  is depicted in Figure~\ref{fig:1}.
\end{remark}


\section{One-dimensional estimates}
\label{sec:1dest}

By Young inequality one has the following property
\begin{align}
   \Mi{s}{t}{i}&\geq6\int_s^t|\partial_iu_{x_i^\perp}(\rho)|\sqrt{W(u_{x_i^\perp}(\rho))}\d\rho\notag\\
   &=\int_s^t|D(\omega\circ u_{x_i^\perp})(\rho)|\d\rho\notag\\
   &\geq|\omega(u_{x_i^\perp}(s))-\omega(u_{x_i^\perp}(t))|,\label{eq:momega}
\end{align}

where $\omega:[0,1]\to[0,1]$ is defined by
\begin{equation}
   \label{eq:stimaOmega}
   \omega(t)=\int_0^t6\sqrt{W(s)}\ds=3t^2-2t^3.
\end{equation}

Notice that $\omega(t)$ is the optimal transition energy from $0$ to $t$ for the Modica Mortola term.  
The following remark contains an estimate relating $\omega$ and the square of the distance which will be used in Lemma~\ref{lemma:gpos} and in Proposition~\ref{prop:stability}.

\begin{remark}
   \label{lemma:omegaab}
   The optimal energy transition function  $\omega$ satisfies the following inequality: for $a,b\in[0,1]$ with $a=b+t$, $t>0$

   \begin{equation}\label{eq:omegaab}
      \frac{\omega(a)-\omega(b)}{|a-b|^2}=\frac{6b(1-b-t)}{t}+3-2t\geq3-2t\geq1.
   \end{equation}
   and in the last inequality equality holds if and only if $a=1$ and  $b=0$. The proof follows immediately from the definition of $\omega$ in~\eqref{eq:stimaOmega}.
\end{remark}

   In the following remark we collect a simple fact on periodic functions.

   \begin{remark}
      \label{lemma:simple_lemma_periodicity} 
      Let $f$ be an $L$-periodic function and $\zeta>0$. Due to the $L$-periodicity and to Fubini Theorem we have that
      \begin{equation*}
            \begin{split}
            \int_{0}^L \int_{t}^{t+\zeta} f(s) \ds \dt = 
            \zeta \int_{0}^L f(s) \ds. 
         \end{split}
      \end{equation*}
Indeed, for any $L$-periodic function $h$ we have that
      \begin{equation}\label{eq:hper}
         \begin{split}
            \int_{0}^L h(t) \dt = \int_{0}^L h(t+\zeta) \dt. 
         \end{split}
      \end{equation}
      Thus by~\eqref{eq:hper} with $h(t)=\int_{[t-\zeta,t]}f(s)\ds$ and Fubini Theorem we have that
      \begin{equation*}
         \begin{split}
            \int_{0}^L \int_{t}^{t+\zeta} f(s) \ds \dt   &= 
            \int_{0}^L \int_{t-\zeta}^{t} f(s) \ds \dt   \\
            &= \int_{0}^L \int_{0}^L \chi_{[t-\zeta,t]}(s)  f(s) \dt\ds \\   & =  \int_{0}^L \int_{0}^L \chi_{[s,s+\zeta]}(t)  f(s) \dt \ds  = \zeta\int_{0}^{L} f(s)\ds.
         \end{split}
      \end{equation*}
    \end{remark}


In the following lemma we prove the nonnegativity of $\Gcal{i}$.

\begin{lemma}
   \label{lemma:gpos}
   For any $\zeta_i\in\R$, 
   \begin{equation}
      \label{eq:partpos0}
      \begin{split}
         |\zeta_i|\Mi{0}{L}{i} & =\int_{0}^L \Mi{x_i}{x_i+\zeta_i}{i}\dx_{i}\\ &\geq\int_{0}^L|\omega(u_{x_{i}^{\perp}} (x_i +  \zeta_i)) - \omega(u_{x_{i}^{\perp}}(x_{i}))|\dx_i.
      \end{split}
   \end{equation}
   In particular, $\Gcal{i}\geq0$ and equality holds if and only if $u_{x_i^\perp}$ is constant.
\end{lemma}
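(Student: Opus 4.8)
The plan is to prove the pointwise-in-$\zeta_i$ inequality~\eqref{eq:partpos0} first, and then to integrate against the kernel $\widehat K_\tau$ to deduce $\Gcal{i}\geq0$. For the first displayed equality in~\eqref{eq:partpos0}, I would fix $x_i^\perp$ and apply Remark~\ref{lemma:simple_lemma_periodicity} to the $L$-periodic nonnegative integrand appearing in the definition~\eqref{eq:3.2} of $\Mi{\cdot}{\cdot}{i}$: writing $\Mi{s}{t}{i}=\int_s^t \varphi(\rho)\,\d\rho$ for a suitable $L$-periodic $\varphi\geq0$ (the density being $3\at|\partial_iu_{x_i^\perp}|\|\nabla u\|_1+\frac{3}{\at}W(u_{x_i^\perp})\frac{|\partial_iu_{x_i^\perp}|}{\|\nabla u\|_1}$ on $\{\nabla u\neq0\}$), the identity $\int_0^L\int_{x_i}^{x_i+\zeta_i}\varphi(\rho)\,\d\rho\,\d x_i=\zeta_i\int_0^L\varphi=\zeta_i\Mi{0}{L}{i}$ is exactly Remark~\ref{lemma:simple_lemma_periodicity} for $\zeta_i>0$; for $\zeta_i<0$ one uses the analogous statement with the roles reversed, which accounts for the $|\zeta_i|$. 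The inequality in~\eqref{eq:partpos0} is then immediate from~\eqref{eq:momega} applied on the interval $[x_i,x_i+\zeta_i]$ (or $[x_i+\zeta_i,x_i]$), integrated in $x_i$.

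For the conclusion $\Gcal{i}\geq0$, I would start from the definition~\eqref{def:Gcal}, multiply~\eqref{eq:partpos0} by $\widehat K_\tau(\zeta_i)\geq0$ and integrate over $\zeta_i\in\R$, obtaining
\begin{equation*}
\Mi{0}{L}{i}\int_\R|\zeta_i|\widehat K_\tau(\zeta_i)\,\d\zeta_i\geq\int_\R\int_0^L|\omega(u_{x_i^\perp}(x_i+\zeta_i))-\omega(u_{x_i^\perp}(x_i))|\,\d x_i\,\widehat K_\tau(\zeta_i)\,\d\zeta_i.
\end{equation*}
It then suffices to show the pointwise bound $|\omega(a)-\omega(b)|\geq|a-b|^2$ for $a,b\in[0,1]$, with equality iff $\{a,b\}=\{0,1\}$; this is precisely Remark~\ref{lemma:omegaab} (the chain $\frac{\omega(a)-\omega(b)}{|a-b|^2}\geq3-2t\geq1$). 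Applying it with $a=u_{x_i^\perp}(x_i+\zeta_i)$, $b=u_{x_i^\perp}(x_i)$ gives $|\omega(u_{x_i^\perp}(x_i+\zeta_i))-\omega(u_{x_i^\perp}(x_i))|\geq|u_{x_i^\perp}(x_i+\zeta_i)-u_{x_i^\perp}(x_i)|^2$, and integrating against $\widehat K_\tau$ shows the right-hand side above dominates $\int_\R\int_0^L|u_{x_i^\perp}(x_i)-u_{x_i^\perp}(x_i+\zeta_i)|^2\widehat K_\tau(\zeta_i)\,\d x_i\,\d\zeta_i$, which is exactly the subtracted term in~\eqref{def:Gcal}. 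Hence $\Gcal{i}\geq0$.

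For the equality characterization, the argument tracks the inequalities backwards. If $\Gcal{i}=0$, then for $\widehat K_\tau\otimes\mathcal L^1$-a.e.\ pair $(\zeta_i,x_i)$ one must have both equality in~\eqref{eq:momega} (so the optimal transition cost is realized, forcing monotonicity of $u_{x_i^\perp}$ between $x_i$ and $x_i+\zeta_i$, hence no oscillation) and equality in Remark~\ref{lemma:omegaab} (forcing $\{u_{x_i^\perp}(x_i),u_{x_i^\perp}(x_i+\zeta_i)\}=\{0,1\}$ whenever the two values differ). Since $\widehat K_\tau>0$ on all of $\R$ and $u_{x_i^\perp}$ is (a.e.\ equal to) a continuous $W^{1,2}_{\loc}$ function, the only way to reconcile "any two values at distance $|\zeta_i|$ are either equal or $\{0,1\}$, for a.e.\ $\zeta_i$" with continuity is that $u_{x_i^\perp}$ is constant; conversely if $u_{x_i^\perp}$ is constant both $\Mi{0}{L}{i}$ and the nonlocal slice vanish, so $\Gcal{i}=0$. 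I expect the main obstacle to be this last rigidity step: extracting "constant" from the a.e.\ equality conditions requires a small but careful continuity/connectedness argument (one cannot have a single jump between $0$ and $1$ because the set of admissible $\zeta_i$ on which the constraint bites has full measure, and an intermediate value would then be forced to be simultaneously equal to $0$ and to $1$ for suitable translates), rather than anything from the routine integral manipulations.
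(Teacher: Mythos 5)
Your proof follows the paper's argument step for step: the equality and inequality in~\eqref{eq:partpos0} come from Remark~\ref{lemma:simple_lemma_periodicity} and~\eqref{eq:momega}, nonnegativity of $\Gcal{i}$ from multiplying by $\widehat K_\tau$ and invoking Remark~\ref{lemma:omegaab}, and the equality characterization from the rigidity of~\eqref{eq:omegaab}, which forces $|u_{x_i^\perp}(x_i)-u_{x_i^\perp}(x_i+\zeta_i)|\in\{0,1\}$ and hence, by continuity of the $W^{1,2}_{\loc}$ slice, that $u_{x_i^\perp}$ is constant. The paper phrases the final continuity step more tersely than your intermediate-value elaboration, but the reasoning is the same.
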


\begin{proof}

   By using the definition of $\overline{\mathcal M}^{ i}_{\alpha_\varepsilon, \tau}$, Remark~\ref{lemma:simple_lemma_periodicity} and~\eqref{eq:momega},  one obtains~\eqref{eq:partpos0}.
   
   Finally, thanks to~\eqref{eq:omegaab},
   \begin{equation}
      \label{eq:gstrf2}
      |\omega(u_{x_i^\perp}(x_i))-\omega(u_{x_i^\perp}(x_i+\zeta_i ))|\geq |u_{x_i^\perp}(x_i)-u_{x_i^\perp}(x_i+\zeta_i)|^2,
   \end{equation}
   which, combined with~\eqref{eq:partpos0}, proves the nonnegativity of $\Gcal{i}$ (see~\eqref{def:Gcal}).
   {
     To prove strict positivity when $u_{x_i^\perp}$ is not constant it is sufficient to notice that, by~\eqref{eq:omegaab},~\eqref{eq:gstrf2} is a strict inequality unless $|u_{x_{i}^{\perp}}(x_i )  - u_{x_{i}^{\perp}}(x_i +\zeta_i)| \in \{ 0,1\}$.
   }
   However, since $u_{x_i^\perp}\in W^{1,2}_{\loc}$, we have that in this case it has necessarily to hold
   \begin{equation*}
      \begin{split}
         |u_{x_{i}^{\perp}}(x_i )  - u_{x_{i}^{\perp}}(x_i +\zeta_i)| =0,
      \end{split}
   \end{equation*}
   i.e. $u_{x_i^\perp}$ is constant.
   Moreover, if  $u_{x_i^\perp}$ is constant, then equality holds also in~\eqref{eq:partpos0} (cf.~\eqref{eq:3.2} for the definition of $\overline{\Mcal}^{i}_{\alpha_\varepsilon, \tau}$), hence $\Gcal{i}=0$.
\end{proof}

   In particular, the following lemma holds

   \begin{lemma}
      \label{lemma:jc}
      If $J\geq J_c$, where $J_c$ is defined in~\eqref{eq:jc}, then minimizers of~\eqref{E:F} are either $u\equiv1$ or $u\equiv0$.
   \end{lemma}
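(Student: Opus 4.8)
The plan is to show that when $J\geq J_c$ the functional $\FcalJ$ defined in~\eqref{E:F} is bounded below by $0$ on all admissible $u$, and that this value is attained precisely by the two constants. First I would rescale, or equivalently work directly with~\eqref{E:F}: the key observation is that the argument of Lemma~\ref{lemma:gpos}, which establishes $\Gcal{i}\geq0$, applies verbatim (up to the trivial change of variable relating $\FcalJ$ to $\Fcalt$ and $J_c$ to the constant $1$ appearing in~\eqref{def:fcalt}). Indeed, applying the slicing decomposition~\eqref{eq:3.1} of the Modica--Mortola term together with the nonlocal decomposition leading to~\eqref{eq:3.15}, one gets, for $J=J_c$,
\begin{align*}
   \FcalJ(u)\geq\frac{1}{L^d}\sum_{i=1}^d\int_{[0,L)^{d-1}}\Gcal{i}\dx_i^\perp+\frac{1}{L^d}\Ical_{\tau,L}(u)+\frac{1}{L^d}\Wcal_{\tau,L,\eps}(u),
\end{align*}
which (for this particular value of $J$) has all four terms nonnegative: the $\Gcal{i}$ terms by Lemma~\ref{lemma:gpos}, the cross term $\Ical_{\tau,L}$ by Lemma~\ref{lemma:positivity}, and $\Wcal_{\tau,L,\eps}(u)=\frac{3}{\at}\int_{\{\nabla u=0\}}W(u)\geq0$ trivially since $W\geq0$. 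Hence $\FcalJ(u)\geq0$ for all admissible $u$, with $\FcalJ(u)=0$ on the constants $u\equiv0$ and $u\equiv1$ (for which the local term vanishes because $\nabla u\equiv0$ and $W(0)=W(1)=0$, and the nonlocal term vanishes because $u$ is constant).

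Next I would argue that for $J>J_c$ strictly, and also for $J=J_c$, these two constants are the \emph{only} minimizers. Tracking the equality cases: $\FcalJ(u)=0$ forces $\Wcal_{\tau,L,\eps}(u)=0$, so $W(u(x))=0$ for a.e.\ $x$ with $\nabla u(x)=0$, i.e.\ $u\in\{0,1\}$ a.e.\ on $\{\nabla u=0\}$; it forces $\Ical_{\tau,L}(u)=0$; and it forces $\Gcal{i}=0$ for a.e.\ slice, which by the strict-positivity part of Lemma~\ref{lemma:gpos} means every slice $u_{x_i^\perp}$ is constant, for every $i$. But a function all of whose one-dimensional slices (in every coordinate direction) are constant is itself constant; combined with $u\in\{0,1\}$ a.e.\ (which now holds everywhere since $\nabla u\equiv0$) we conclude $u\equiv0$ or $u\equiv1$. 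For $J>J_c$ one writes $\FcalJ=\Fcal^{(J_c)}+\frac{(J-J_c)}{L^d}\cdot\frac{3\eps}{\cdots}$ — more precisely the extra local term $\frac{(J-J_c)}{L^d}\bigl[3\eps\int\|\nabla u\|_1^2+\frac{3}{\eps}\int W(u)\bigr]$, which is nonnegative and vanishes only on the constants in $\{0,1\}$; since the constants already minimize $\Fcal^{(J_c)}$, they minimize $\FcalJ$ and any other minimizer would have to annihilate the extra term as well.

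The main subtlety — and the step I would be most careful about — is the passage from ``every coordinate slice of $u$ is constant'' to ``$u$ is constant,'' done at the level of $W^{1,2}_{\loc}$ functions rather than continuous ones. The clean way is: $\Gcal{i}=0$ for a.e.\ $x_i^\perp$ gives $\partial_i u=0$ in $L^2_{\loc}$ for each $i$ (this is already implicit in the proof of Lemma~\ref{lemma:gpos}, where constancy of the slice follows from $u_{x_i^\perp}\in W^{1,2}_{\loc}$), hence $\nabla u=0$ a.e., hence $u$ is a.e.\ equal to a constant on the connected periodic domain. Then the $\Wcal$ term (now integrated over the whole cube) forces that constant into $\{0,1\}$. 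I would also note the minor point that one must check the decomposition leading to the displayed lower bound is an \emph{equality} when $u$ is one-dimensional (Proposition~\ref{prop:lowbound}), so that no minimizing mass is lost — but since we only need the inequality direction for the lower bound and we exhibit explicit minimizers achieving $0$, this is automatic.
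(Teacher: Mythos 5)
Your proposal is correct and follows essentially the same route as the paper: bound $\FcalJ$ below by $\FcalJ$ at $J=J_c$, then apply the decomposition of Proposition~\ref{prop:lowbound} (at $\tau=1$, so $\alpha_{\eps,1}=\eps$ and $K_1=K$) to get a sum of nonnegative terms $\Gcal{i}$, $\Ical^i_{1,L}$, $\Wcal$, and track the equality cases via Lemma~\ref{lemma:gpos}. Your explicit treatment of the equality cases (all slices constant forces $\nabla u=0$ a.e., hence $u$ constant, and $\Wcal=0$ then forces $u\in\{0,1\}$) and of the extra local term for $J>J_c$ matches the paper's argument in substance, merely spelling it out in a bit more detail.
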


   \begin{proof}
      Let us initially observe that for $J\geq J_c$ it holds
      \begin{equation}\label{eq:jjc}
      \tilde{\mathcal F}_{J,L,\varepsilon} \geq \tilde{\mathcal  F}_{J_c,L,\varepsilon}.
      \end{equation} 

      Let us now recall that for $\tau=1$ one has that $\alpha_{\varepsilon,\tau}= \alpha_{\varepsilon,1}= \varepsilon$ and  $K_{\tau}(\zeta)= K_{1}(\zeta) = K(\zeta)$. Moreover, from  the definition of $J_c$ one has that
      \begin{equation*}
         \begin{split}
            \tilde{\mathcal F}_{J_c, L ,\varepsilon} (u) =& \frac{1}{L^d}\int_{\R^d} \int_{[0,L)^{d}} |\zeta_1| 
            \Big( 3\varepsilon\|\nabla u(x)\|_1^2 + \frac{3W(u(x))}{\varepsilon}  \Big)\  K(\zeta)  \dx \d\zeta 
            \\ &-\frac{1}{L^d}\int_{\R^d}\int_{[0,L)^{d}} |u(x+\zeta)-u(x)|^2 K(\zeta)\dx\d\zeta
         \end{split}
      \end{equation*}
      Recalling the definition of $\mathcal I^i_{1,L}$, as in~\eqref{eq:3.15}   we have that
      \[
         -\int_{\R^d}\int_{[0,L)^d}|u(x)-u(x+\zeta)|^2K(\zeta)\dx\d\zeta\geq\sum_{i=1}^d\Bigl\{
         -\int_{\R^d}\int_{[0,L)^d}|u(x)-u(x+\zeta_ie_i)|^2K(\zeta)\dx\d\zeta+\mathcal I^i_{1,L}(u)\Bigr\}.
      \]
      and using the definition of $\overline{\mathcal G}^i_{\varepsilon,1}$,  we have that
      \begin{equation*}
         \begin{split}
            \tilde{\mathcal F}_{J_c, L ,\varepsilon} (u)
            \geq \frac{1}{L^d}\sum_{i=1}^d\Bigl[\int_{[0,L)^{d-1}}\overline{\mathcal G}^i_{\varepsilon,1}(u, x^\perp_i, [0,L))\dx_{i}^\perp + \mathcal I^i_{1,L}(u)\Bigr]+\frac{1}{L^d} \mathcal W_{\tau,L,\eps}(u).
         \end{split}
      \end{equation*}

         By Lemma~\ref{lemma:gpos} and by definition  $\overline{\mathcal{G}}^{i}_{\eps,1}$, $\mathcal I^i_{1,L}$ and $\mathcal W_{\tau,L,\eps}$ are nonnegative.
         On the one hand, $\mathcal I^i_{1,L}(u)=0$ if and only if $u$ is one-dimensional.
         On the other hand, by Lemma~\ref{lemma:gpos}, one has that $\overline{\mathcal{G}}^{i}_{\eps,1}$ is zero if and only if $u_{x_i^\perp}$ is constant and $\Wcal_{\tau,L,\eps}(u)$ is zero if and only if $W(u)\chi_{\{\nabla u=0\}}\equiv0$. Hence $\tilde{\Fcal}_{J_c,L,\eps}$ is minimized by the constant functions $u\equiv0$ and $u\equiv1$. Since on such functions also $\tilde{\Fcal}_{J,L,\varepsilon}$ vanishes and~\eqref{eq:jjc} holds, the lemma is proved.
      
   \end{proof}


\section{Stability estimates}
\label{sec:stability}

In this section we assume that the $[0,L)^d$-periodic function $u$ is such that  $u\in W^{1,2}_{\loc}(\R^d;[0,1])$ and
\begin{equation}\label{eq:us}
   \|u-\chi_S\|_{L^1([0,L)^d)}\leq \bar{\sigma},
\end{equation}
for some $\bar{\sigma}>0$ sufficiently small (to be chosen later), where $S$ is a periodic union of stripes with boundaries orthogonal to $e_i$ and of width $h>0$.
As we will see in the proof of Theorem~\ref{Thm:1}, this is going to be the case for minimizers of $\Fcalt$ when $\eps,\tau$ are sufficiently small, due to Corollary~\ref{cor:gammaconv} and Theorem~\ref{Thm:DR}.

The main result of this section is the following stability estimate
\begin{proposition}
   \label{prop:stability}
   There exist $\bar{\sigma}>0$ and ${\tau'}>0$ such that, if~\eqref{eq:us} holds for  $u\in W^{1,2}_{\loc}(\R^d;[0,1])$ $[0,L)^d$-periodic function and $S$ periodic union of stripes with boundaries orthogonal to $e_i$ and of width $h>0$, then for all $j\in\{1,\dots,d\}$, $j\neq i$ and for all $0<\tau\leq\tau'$
   \begin{equation}
      \label{eq:propstability}
      \int_{[0,L)^{d-1}}\Bigl[-\Mi{0}{L}{j}+\Gcal{j}+\overline{\mathcal I}^j_{\tau}(u,x_i^\perp,[0,L))\Bigr]\dx_j^{\perp}\geq0
   \end{equation}	
   and equality holds if and only if $u$ does not depend on $x_j$.
\end{proposition}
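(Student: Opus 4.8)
\textbf{Proof plan for Proposition~\ref{prop:stability}.}

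The plan is to fix the index $j\neq i$ and an arbitrary slice $x_j^\perp$, and to extract from the slicewise quantities $-\Mii{[0,L)}{j}+\Gcal{j}+\overline{\mathcal I}^j_{\tau}(u,x_j^\perp,[0,L))$ enough ``cross'' information to dominate the single negative contribution. The starting point is Lemma~\ref{lemma:gpos}, which already gives $\Gcal{j}\geq0$ with equality exactly when the slices $u_{x_j^\perp}$ are constant, and Lemma~\ref{lemma:positivity}, which expresses $\overline{\mathcal I}^j_\tau$ as a manifestly nonnegative quadratic form in second differences of $u$. So the whole negativity to be absorbed sits inside $-\Mii{[0,L)}{j}$, which after using $\|\nabla u\|_1=\sum_k|\partial_k u|$ on the set $\{\nabla u\neq0\}$ splits into a ``pure $e_j$'' piece and ``mixed'' pieces involving $|\partial_k u|$ for $k\neq j$; on one-dimensional functions (depending only on $x_i$, with $i\neq j$) the whole $\Mii{[0,L)}{j}$ vanishes, which is the equality case we must match.

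The key steps, in order, are: (1) Use the hypothesis \eqref{eq:us} together with the structure of $S$ to localize: for most slices $x_j^\perp$, the slice $u_{x_j^\perp}$ stays uniformly close to $\chi_{\widehat E}(x_i)$, a genuine stripe profile in the $x_i$ variable, so that on a large portion of $[0,L)$ the function $u$ is within $\delta$ of $0$ or of $1$, with only a controlled number of ``transition regions'' per period (this is the refined one-dimensional picture emphasized in point (v) of the introduction, which must be established quantitatively, not just via $\Gamma$-convergence). (2) On a transition region in direction $e_i$, where $u$ jumps from near $0$ to near $1$, a nonzero $\partial_j u$ forces the second-difference integrand in $\overline{\mathcal I}^j_\tau$ — schematically $[(u(x+\zeta_j e_j)-u(x))-(u(x+\zeta)-u(x+\zeta_j^\perp))]^2$ — to be bounded below by a positive constant times $K_\tau(\zeta)$ over a set of $\zeta$'s of definite measure (this is exactly the mechanism sketched in Figure~\ref{fig:1}): integrating $K_\tau$ over such a cone gives a positive lower bound of order $1$ uniformly in $\tau\leq\tau'$. (3) Bound $\Mii{[0,L)}{j}$ from above by the same kind of quantity: by Young's inequality as in \eqref{eq:momega}, $\Mii{s}{t}{j}$ is comparable to $\int |\omega(u_{x_j^\perp})'|$, whose total variation over a slice is controlled by the number and amplitude of the $e_j$-oscillations, which in turn — because $u$ is close to the $e_j$-independent set $S$ — can occur only inside the (few, small) transition regions. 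Thus the negative term is, up to a universal constant, dominated by the same cross term that step (2) bounds below. (4) Choose $\bar\sigma$ small and $\tau'$ small so that the constant from step (2) beats the constant from step (3), integrate over $x_j^\perp\in[0,L)^{d-1}$, and discard the nonnegative $\Gcal{j}$ to conclude \eqref{eq:propstability}.

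For the equality case: if $u$ does not depend on $x_j$ then $\partial_j u\equiv0$, so $\Mii{[0,L)}{j}=0$ and $\overline{\mathcal I}^j_\tau=0$, while $\Gcal{j}\geq0$; but one must also note that in the one-dimensional direction relevant to $S$, namely $i\neq j$, the $e_i$-slices need not be constant, so $\Gcal{j}$ is evaluated on slices $u_{x_j^\perp}$ which \emph{do} vary — hence one cannot simply say $\Gcal{j}=0$. Rather, the bound is saturated because $-\Mii{[0,L)}{j}+\Gcal{j}=0$ identically when $\partial_j u\equiv0$: indeed both reduce, slicewise in $x_j$, to the one-dimensional expression $\Gcal{j}$ evaluated on a constant-in-$x_j$ function, where the Modica–Mortola term contributes nothing in direction $e_j$ and the nonlocal $e_j$-difference also vanishes, so the two cancel termwise. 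Conversely, if $u$ depends on $x_j$, then either some $e_j$-slice is non-constant — forcing $\overline{\mathcal I}^j_\tau>0$ on a set of positive measure by the blow-up argument of step (2), strictly — and the chain of inequalities is strict. I expect the main obstacle to be step (1)–(2): obtaining the quantitative ``few small transitions surrounded by large flat regions'' structure for slices of near-minimizers (rather than merely $L^1$-closeness) and then turning a nonzero $\partial_j u$ into a uniform-in-$\tau$ positive lower bound on the cross integral, since the kernel $K_\tau$ degenerates as $\tau\to0$ and the argument must be careful that the relevant cone of directions $\zeta$ still carries order-one mass. This is precisely the new difficulty, absent in \cite{DR}, caused by small-amplitude oscillations and non-instantaneous transitions, and handling it is where the bulk of Sections~\ref{sec:1dest}–\ref{sec:stability} must do the work.
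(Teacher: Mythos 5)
Your plan has a genuine gap, and it sits precisely where you plan to ``discard the nonnegative $\Gcal{j}$'': the term $\Gcal{j}$ is not a disposable bonus but carries most of the weight in the proof, because the cross term alone \emph{cannot} dominate $-\Mii{[0,L)}{j}$ on general slices. The issue is your step (3). You claim that $\Mii{[0,L)}{j}$ is ``comparable to $\int|\omega(u_{x_j^\perp})'|$'', and that because $u$ is $L^1$-close to the $e_j$-independent set $S$, the total variation (and hence the $e_j$-oscillations) ``can occur only inside the (few, small) transition regions''. Neither assertion is correct. Young's inequality \eqref{eq:momega} gives only $\Mii{s}{t}{j} \geq \int|\omega(u_{x_j^\perp})'|$; equality requires the optimal Modica--Mortola profile and in general the Modica--Mortola energy can be arbitrarily larger than the total variation of $\omega\circ u_{x_j^\perp}$. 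More seriously, $L^1$-closeness of $u$ to $\chi_S$ does not control the total variation of the $e_j$-slices: a slice can oscillate rapidly with small amplitude near $0$ or $1$ — staying $L^1$-close to a constant and contributing essentially nothing to the cross term $\overline{\Ical}^j_\tau$ (the second differences are then uniformly small) — while still having a positive Modica--Mortola energy $\Mii{[0,L)}{j}$. This is exactly the new difficulty flagged in point (iv) of the introduction, and it is why the paper does \emph{not} bound $\Mii{[0,L)}{j}$ by the cross term alone.

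The paper's proof instead partitions $[0,L)^{d-1}$ into four sets $A_0,\ldots,A_3$ according to the structure of the slice $u_{x_j^\perp}$, and it is $\Gcal{j}$, via the singularity of $\widehat K_\tau$ as $\tau\to0$, that beats $\Mii{0}{L}{j}$ on $A_1$ (slices with no ``sharp'' transition, see Lemma~\ref{lemma:estimate2} and Remark~\ref{rem:5.3}) and $A_2$ (slices where the Modica--Mortola mass concentrates in a small interval, see Corollary~\ref{cor:claim2}). The cross term $\overline{\Ical}^j_\tau$ is used only on $A_3$, the slices that \emph{do} make a sharp transition, where the geometric picture you describe in step (2) (and Figure~\ref{fig:1}) applies and $\bar\sigma$-closeness to $S$ supplies the necessary definite-measure cone of $\zeta$'s (Lemma~\ref{lemma:int}). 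So your step (2) captures the right mechanism for $A_3$, but you have no mechanism at all for $A_1$ and $A_2$. A secondary, smaller slip is in your equality discussion: if $u$ does not depend on $x_j$, then the $e_j$-slices $u_{x_j^\perp}$ \emph{are} constant (the variation of $u$ is in $x_i$, which is one of the frozen coordinates inside $x_j^\perp$), so in fact $\Gcal{j}=0$ by Lemma~\ref{lemma:gpos}; your worry that ``one cannot simply say $\Gcal{j}=0$'' comes from conflating the direction of slicing in the superscript of $\Gcal{j}$ with the direction of the stripes.
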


Before going into the details of Proposition~\ref{prop:stability}, we collect some useful Lemmas. 
It might be convenient for the reader to start from the proof of Proposition~\ref{prop:stability} in page~\pageref{proof:prop:stability}, and return to the statements below when needed.

		\begin{lemma}
			\label{lemma:estimate2}
			Let $\delta_0,\delta>0$ and  let $u\in W^{1,2}_{\mathrm{loc}}(\R^d;[0,1])$ be a $[0,L)^d$-periodic function, $x_j^\perp\in[0,L)^{d-1}$ such that whenever  $|s-t| < \delta_{0}$ it holds $|u_{x_j^\perp}(s)- u_{x_j^\perp}(t)| \leq 1-\delta$. Then one has that
			
			\begin{equation}
			\label{eq:lemma:estimate2_1}
			\begin{split}
			\Gcal{j} \geq   \Big( \int_{-\delta_0}^{\delta_0} |\zeta_j| \widehat K_{\tau}(\zeta_j)\d\zeta_j \Big )  \frac{2\delta}{1+2\delta}  \Mi{0}{L}{j}
			\end{split}
			\end{equation}
			where equality holds if and only if $u_{x_j^\perp}$ is constant.
			\end{lemma}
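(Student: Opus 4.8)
The plan is to bound $\Gcal{j}$ from below by restricting the $\zeta_j$-integral in the nonlocal term to the small interval $(-\delta_0,\delta_0)$ and then exploiting the hypothesis that $u_{x_j^\perp}$ cannot jump by more than $1-\delta$ on scales below $\delta_0$. Recall from~\eqref{def:Gcal} that
\begin{equation*}
   \Gcal{j}=\Mi{0}{L}{j}\int_{\R}|\zeta_j|\widehat K_\tau(\zeta_j)\d\zeta_j-\int_{\R}\int_0^L|u_{x_j^\perp}(x_j)-u_{x_j^\perp}(x_j+\zeta_j)|^2\widehat K_\tau(\zeta_j)\dx_j\d\zeta_j.
\end{equation*}
The key point from Lemma~\ref{lemma:gpos} (inequality~\eqref{eq:partpos0} combined with~\eqref{eq:gstrf2}) is that for \emph{every} $\zeta_j$ we have the pointwise/integrated bound $|\zeta_j|\Mi{0}{L}{j}\geq\int_0^L|u_{x_j^\perp}(x_j+\zeta_j)-u_{x_j^\perp}(x_j)|^2\dx_j$, so the contribution of $|\zeta_j|\geq\delta_0$ to $\Gcal{j}$ is already nonnegative and can be discarded. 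Hence it suffices to gain a quantitative improvement on the range $|\zeta_j|<\delta_0$.

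On that range I would use Remark~\ref{lemma:omegaab}: writing $a=u_{x_j^\perp}(x_j+\zeta_j)$, $b=u_{x_j^\perp}(x_j)$ (WLOG $a\geq b$, $t=a-b$), the hypothesis gives $t\leq 1-\delta$, so~\eqref{eq:omegaab} yields $\omega(a)-\omega(b)\geq(3-2t)|a-b|^2\geq(1+2\delta)|a-b|^2$. Therefore $|a-b|^2\leq\frac{1}{1+2\delta}(\omega(a)-\omega(b))$, and equivalently
\begin{equation*}
   \omega(a)-\omega(b)-|a-b|^2\geq\frac{2\delta}{1+2\delta}(\omega(a)-\omega(b))\geq\frac{2\delta}{1+2\delta}|a-b|^2.
\end{equation*}
Combining this with~\eqref{eq:partpos0} of Lemma~\ref{lemma:gpos} applied with the extra factor: for $|\zeta_j|<\delta_0$, $|\zeta_j|\Mi{0}{L}{j}\geq\int_0^L|\omega(u_{x_j^\perp}(x_j+\zeta_j))-\omega(u_{x_j^\perp}(x_j))|\dx_j\geq(1+2\delta)\int_0^L|u_{x_j^\perp}(x_j+\zeta_j)-u_{x_j^\perp}(x_j)|^2\dx_j$, so that $\int_0^L|u_{x_j^\perp}(x_j+\zeta_j)-u_{x_j^\perp}(x_j)|^2\dx_j\leq\frac{1}{1+2\delta}|\zeta_j|\Mi{0}{L}{j}$, i.e. the deficit is at least $\frac{2\delta}{1+2\delta}|\zeta_j|\Mi{0}{L}{j}$ on this range. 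Multiplying by $\widehat K_\tau(\zeta_j)\geq 0$ and integrating over $\zeta_j\in(-\delta_0,\delta_0)$ gives exactly~\eqref{eq:lemma:estimate2_1}.

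For the equality case, note that every inequality used is an equality precisely when $u_{x_j^\perp}$ is constant: if $u_{x_j^\perp}$ is constant then $\Mi{0}{L}{j}=0$ and both sides of~\eqref{eq:lemma:estimate2_1} vanish; conversely, if equality holds, then (since the discarded $|\zeta_j|\geq\delta_0$ part and the gap in~\eqref{eq:omegaab} are both nonnegative) we must have equality in~\eqref{eq:partpos0}, which by Lemma~\ref{lemma:gpos} forces $u_{x_j^\perp}$ to be constant. I do not expect a serious obstacle here; the only mild care is to make sure the restriction to $|\zeta_j|<\delta_0$ is done cleanly and that one invokes the already-established $\Gcal{j}\geq 0$ structure rather than re-deriving it, and that the chain $\omega(a)-\omega(b)\geq(1+2\delta)|a-b|^2$ is correctly aligned with the direction of~\eqref{eq:omegaab} (which requires $t\le 1-\delta$, exactly the hypothesis).
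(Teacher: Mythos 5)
Your proof is correct and follows essentially the same argument as the paper's: split the $\zeta_j$-integral at $|\zeta_j|=\delta_0$, discard the nonnegative contribution from $|\zeta_j|\geq\delta_0$ (via Lemma~\ref{lemma:gpos}), and on $|\zeta_j|<\delta_0$ use Remark~\ref{lemma:omegaab} together with the hypothesis $|a-b|\leq 1-\delta$ to improve the constant to $1+2\delta$ in $\omega(a)-\omega(b)\geq(1+2\delta)|a-b|^2$, which yields the deficit $\geq\frac{2\delta}{1+2\delta}|\zeta_j|\,\Mii{[0,L)}{j}$; integrating against $\widehat K_\tau$ gives~\eqref{eq:lemma:estimate2_1}. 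The equality discussion also matches the paper's.
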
 
			
			\begin{remark}
            \label{rem:5.3}
              In particular, since for $p\geq d+1$ the following holds
              \[
                \lim_{\tau\to0}\int_{-\delta_0}^{\delta_0} |\zeta_j| \widehat K_{\tau}(\zeta_j)\d\zeta_j=+\infty.
              \]
              then  by choosing $\tau_1 := \tau_1(\delta,\delta_0)$ sufficiently small the inequality~\eqref{eq:lemma:estimate2_1} implies that, if $u_{x_j^\perp}$ is not constant, one has that 
              \begin{equation}
                \label{eq:lemma_estimate2_2}
                \begin{split}
                  \Gcal{j} > \Mi{0}{L}{j}.
                \end{split}
              \end{equation}
	\end{remark}
		
		\begin{proof}[Proof of Lemma~\ref{lemma:estimate2}]
			
			For any  $|\zeta_j| < \delta_{0}$, by using Remark~\ref{lemma:omegaab} and the hypothesis of the lemma, we have that
			\begin{equation*}
			\begin{split}
			\frac{1}{1+2\delta}|\omega(u_{x_j^\perp}(x_j + \zeta_j))-\omega(u_{x_j^\perp}(x_j))|\geq|u_{x_j^\perp}(x_j+\zeta_j)-u_{x_j^\perp}(x_j)|^2.
			\end{split}
			\end{equation*}
			Thus by using~\eqref{eq:partpos0} and the above, for any $|\zeta_j | < \delta_0 $ we have that
			\begin{equation*}
			\begin{split}
			\frac{|\zeta_j|}{1+2\delta}\Mi{0}{L}{j} - \int_{0}^L |u_{x_j^\perp}(x_j + \zeta_j) - u_{x_j^\perp}(x_j) |^2 \dx_j \geq 0.
			\end{split}
			\end{equation*}
			On the other hand, by~\eqref{eq:partpos0} and Remark~\ref{lemma:omegaab}, for any $\zeta_j$  we have
			\begin{equation}\label{eq:poselse}
			|\zeta_j|\Mi{0}{L}{j} - \int_{0}^L |u_{x_j^\perp}(x_j + \zeta_j) - u_{x_j^\perp}(x_j) |^2\dx_j  \geq 0.
			\end{equation}
			Hence, we have that
			\begin{equation*}
			\begin{split}
			\Gcal{j} \geq   \Big( \int_{-\delta_0}^{\delta_0} |\zeta_j| \widehat K_{\tau}(\zeta_j)\d\zeta_j \Big )  \frac{2\delta}{1+2\delta}  \Mi{0}{L}{j}
			\end{split}
			\end{equation*}
			Since in~\eqref{eq:poselse} equality holds for all $|\zeta_j|\geq\delta_0$ if only if $u_{x_j^\perp}$ is constant and in this case by Lemma~\ref{lemma:gpos} both $\Gcal{j}$ and $\Mi{0}{L}{j}$ are zero, the lemma is proved.
			
		\end{proof}



\begin{lemma}
	\label{lemma:estimate1}
	Let $\Upsilon>1$, let $u\in W^{1,2}_{\mathrm{loc}}(\R^d;[0,1])$ be a $[0,L)^d$-periodic function and let $\{ I_1,\ldots,I_N\}$ be disjoint closed intervals with $I_k \subset [0,L]$  and such that $\Mii{I_k}{j} \geq \Upsilon$.  
	Then 
	\begin{equation}
	\label{eq:lemma:estimate1_1}
	\begin{split}
	\Gcal{j} \geq \frac{\Upsilon-1}{\Upsilon}\sum_{k=1}^N \bar C_{\tau}(|I_k|) \Mii{I_k}{j},
	\end{split}
	\end{equation}
	where  
	\begin{equation*}
	\begin{split}
	\bar C_{\tau}(\eta)=  \eta \int_{\{|\zeta_j|>2\eta\}} \widehat{K}_{\tau}(\zeta_j) \d\zeta_j.
	\end{split}
	\end{equation*}
\end{lemma}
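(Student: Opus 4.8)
The plan is to run the same mechanism as in Lemma~\ref{lemma:estimate2}, but localized to the intervals $I_k$ rather than to the scale $\delta_0$, and exploiting the lower bound $\Mii{I_k}{j}\geq\Upsilon$ instead of a pointwise amplitude bound. The starting point is the identity~\eqref{eq:partpos0} from Lemma~\ref{lemma:gpos}, which expresses $|\zeta_j|\Mi{0}{L}{j}$ as $\int_0^L\overline{\mathcal M}^j_{\at}(u,x_j^\perp,[x_j,x_j+\zeta_j])\dx_j$ and bounds this below by $\int_0^L|\omega(u_{x_j^\perp}(x_j+\zeta_j))-\omega(u_{x_j^\perp}(x_j))|\dx_j$, which in turn (by Remark~\ref{lemma:omegaab}) dominates $\int_0^L|u_{x_j^\perp}(x_j+\zeta_j)-u_{x_j^\perp}(x_j)|^2\dx_j$. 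So for every $\zeta_j$ the bracket appearing in $\Gcal{j}$ — see~\eqref{def:Gcal} — has a nonnegative integrand after integrating in $x_j$; the whole game is to recover a definite fraction of $\sum_k \Mii{I_k}{j}$ from the ``slack'' in this chain of inequalities.

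The key step is the following localized estimate. Fix $k$ and suppose $|\zeta_j|>2|I_k|$, so that for each $x_j$ at least one of the two points $x_j$, $x_j+\zeta_j$ lies outside $I_k$ (indeed, translating $I_k$ by $\zeta_j$ produces a set disjoint from $I_k$). Restricting the transition-energy integral $\overline{\mathcal M}^j_{\at}(u,x_j^\perp,[x_j,x_j+\zeta_j])$ to whichever portion of $[x_j,x_j+\zeta_j]$ crosses $I_k$, and using that this portion already carries mass $\geq\Upsilon$ when the interval $[x_j,x_j+\zeta_j]$ contains $I_k$, one should extract, after integrating $x_j$ over a full period and using Remark~\ref{lemma:simple_lemma_periodicity} (periodicity plus Fubini), the bound
\begin{equation*}
|\zeta_j|\,\Mi{0}{L}{j}-\int_0^L|u_{x_j^\perp}(x_j+\zeta_j)-u_{x_j^\perp}(x_j)|^2\dx_j\;\geq\;\frac{\Upsilon-1}{\Upsilon}\,|I_k|\,\Mii{I_k}{j}
\end{equation*}
valid for each $k$ and each $|\zeta_j|>2|I_k|$; since the intervals $I_k$ are disjoint and contribute to disjoint pieces of $\overline{\mathcal M}^j_{\at}$, these lower bounds for different $k$ with $|\zeta_j|>2|I_k|$ can be added. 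The factor $\tfrac{\Upsilon-1}{\Upsilon}$ appears exactly as in Lemma~\ref{lemma:estimate2}: on the interval $I_k$ one writes $\omega$-mass $=\,$mass of $\overline{\mathcal M}^j_{\at}$ and, since the $\omega$–quadratic comparison of Remark~\ref{lemma:omegaab} gives $\omega(a)-\omega(b)\geq|a-b|^2$ with strict gain whenever the total variation is large, a total transition mass $\geq\Upsilon$ on $I_k$ forces the difference $(\omega\text{-mass})-(\text{square term})$ on that interval to be at least $\tfrac{\Upsilon-1}{\Upsilon}$ times the $\overline{\mathcal M}^j_{\at}$–mass of $I_k$. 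Multiplying by $\widehat K_\tau(\zeta_j)$ and integrating over $\{|\zeta_j|>2|I_k|\}$ yields the coefficient $\bar C_\tau(|I_k|)=|I_k|\int_{\{|\zeta_j|>2|I_k|\}}\widehat K_\tau(\zeta_j)\d\zeta_j$, and summing over $k$ gives~\eqref{eq:lemma:estimate1_1}.

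The main obstacle I expect is the bookkeeping in the localized estimate: one must be careful that, for a given $\zeta_j$, the interval $[x_j,x_j+\zeta_j]$ contains $I_k$ for a set of $x_j$ of measure exactly $|\zeta_j|-|I_k|$, and that restricting $\overline{\mathcal M}^j_{\at}$ to $[x_j,x_j+\zeta_j]\cap I_k$ and then integrating in $x_j$ and using periodicity reproduces $(|\zeta_j|-|I_k|)\,\Mii{I_k}{j}$ rather than $|\zeta_j|\,\Mii{I_k}{j}$ — it is precisely this defect $|\zeta_j|-|I_k|$, compared against the square term which sees at most $|\zeta_j|$, together with the $\Upsilon$–gain on $I_k$, that produces the stated constant; getting these two reductions to combine cleanly into $\tfrac{\Upsilon-1}{\Upsilon}|I_k|$ (as opposed to something like $\tfrac{\Upsilon-1}{\Upsilon}(|\zeta_j|-|I_k|)$ which would then need the restriction $|\zeta_j|>2|I_k|$ to be absorbed) is the delicate point. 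Everything else — nonnegativity of the leftover integrand for $|\zeta_j|\leq 2|I_k|$, which is discarded, and the final integration against $\widehat K_\tau$ — is routine given Lemma~\ref{lemma:gpos} and Remark~\ref{lemma:omegaab}.
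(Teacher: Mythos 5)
Your outline matches the paper's proof in its broad architecture: you correctly start from the identity~\eqref{eq:partpos0}, recognize that the slack in the chain $\Mi{s}{s+\zeta_j}{j}\geq|\omega(u_{x_j^\perp}(s))-\omega(u_{x_j^\perp}(s+\zeta_j))|\geq|u_{x_j^\perp}(s)-u_{x_j^\perp}(s+\zeta_j)|^2$ must be localized to the $I_k$'s, correctly invoke disjointness of the $I_k$ to sum, and correctly identify the final bookkeeping (the measure $|\zeta_j|-|I_k|$ of admissible $s$ and the restriction $|\zeta_j|>2|I_k|$) that produces $\bar C_\tau(|I_k|)$. Your worry about $\tfrac{\Upsilon-1}{\Upsilon}(|\zeta_j|-|I_k|)$ versus $\tfrac{\Upsilon-1}{\Upsilon}|I_k|$ is resolved exactly as you guess: the paper keeps $\min\{|\zeta_j|-|I_k|,L\}$, integrates against $\widehat K_\tau$, and only then discards $|\zeta_j|\in(|I_k|,2|I_k|]$ and uses $|\zeta_j|-|I_k|\geq|I_k|$ on $\{|\zeta_j|>2|I_k|\}$.

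However, the mechanism you propose for extracting the factor $\tfrac{\Upsilon-1}{\Upsilon}$ is not the one that works, and as stated it is incorrect. You attribute the gain to Remark~\ref{lemma:omegaab}, claiming $\omega(a)-\omega(b)\geq|a-b|^2$ ``with strict gain whenever the total variation is large.'' The ratio $\tfrac{\omega(a)-\omega(b)}{|a-b|^2}$ in that remark depends only on the endpoint \emph{values} $a,b$, not on any transition mass accumulated between them, and there is no hypothesis in this lemma (unlike in Lemma~\ref{lemma:estimate2}, which assumes $|u_{x_j^\perp}(s)-u_{x_j^\perp}(t)|\leq1-\delta$) forcing that ratio away from $1$. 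The paper's actual argument is more elementary and does not use the $\omega$-comparison at all for this step: whenever $[s,t]\supset I_k$ one has $\Mi{s}{t}{j}\geq\Mii{I_k}{j}\geq\Upsilon$, hence $\tfrac{1}{\Upsilon}\Mi{s}{t}{j}\geq1\geq(u_{x_j^\perp}(s)-u_{x_j^\perp}(t))^2$ because $u\in[0,1]$. Splitting $\Mi{s}{t}{j}=\tfrac{\Upsilon-1}{\Upsilon}\Mi{s}{t}{j}+\tfrac{1}{\Upsilon}\Mi{s}{t}{j}$ and using this on the second piece gives $\Mi{s}{t}{j}-(u_{x_j^\perp}(s)-u_{x_j^\perp}(t))^2\geq\tfrac{\Upsilon-1}{\Upsilon}\Mi{s}{t}{j}\geq\tfrac{\Upsilon-1}{\Upsilon}\sum_{k:I_k\subset[s,t]}\Mii{I_k}{j}$. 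The observation you open the key step with — that for $|\zeta_j|>2|I_k|$ at least one of $x_j,x_j+\zeta_j$ lies outside $I_k$ — is also a red herring; it plays no role. So the plan is salvageable but the specific derivation of $\tfrac{\Upsilon-1}{\Upsilon}$ needs to be replaced by the algebraic splitting above.
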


\begin{proof}
  
	Given that the intervals $I_k$ are disjoint, we have that for all $s<t\in[0,L]$
	\begin{equation*}
	\begin{split}
	\Mi{s}{t}{j} \geq \sum_{k:I_k \subset [s,t]} \Mii{I_k}{j}
	\end{split}
	\end{equation*}
	
	
	Moreover, for all $s,t$ such that $\Mi{s}{t}{j} \geq \Upsilon$ one has that
	\begin{align}
	\Mi{s}{t}{j} & = \frac{\Upsilon-1}{\Upsilon} \Mi{s}{t}{j}  + \frac{1}{\Upsilon} \Mi{s}{t}{j}
	\notag\\ & \geq \frac{\Upsilon-1}{\Upsilon}  \Mi{s}{t}{j} + 1
	\notag\\ & \geq \frac{\Upsilon-1}{\Upsilon}  \Mi{s}{t}{j} + (u_{x_j^\perp}(s)  - u_{x_j^\perp}(t))^2, \label{ineq:1}\\ 
   & \geq \frac{\Upsilon-1}{\Upsilon}  \sum_{k:\ I_k \subset [s,t]}\Mii{I_k}{j} + (u_{x_j^\perp}(s)  - u_{x_j^\perp}(t))^2,\label{eq:ineqgamma}
	\end{align}
	where to obtain~\eqref{ineq:1} we have used that $u_{x_j^\perp}\in[0,1]$ and thus $(u_{x_j^\perp}(s) - u_{x_j^\perp}(t))^2\leq 1$.
	
	
	Recalling~\eqref{eq:partpos0} and using~\eqref{eq:ineqgamma} we have that
	\begin{equation*}
	\begin{split}
	\Gcal{j} &\geq \int_{0}^L \int_{\R} \Mi{s}{s+\zeta_j}{j} \widehat K_{\tau}(\zeta_j) \d\zeta_j\ds \\
	&-\int_0^L\int_{\R}|u_{x_j^\perp}(s)-u_{x_j^\perp}(s+\zeta_j)|^2\widehat K_{\tau}(\zeta_j)\d\zeta_j\ds \\
	&\geq \frac{\Upsilon-1}{\Upsilon}\int_{0}^L \int_{0}^{+\infty}\sum_{k: I_k \subset [s,s+\zeta_j] } \Mii{I_k}{j} \widehat K_{\tau}(\zeta_j) \d\zeta_j\ds\\
	&\geq \frac{\Upsilon-1}{\Upsilon} \sum_{k} D_k \,\, \Mii{I_k}{j},
	\end{split}
	\end{equation*}
	where in the last inequality we have exchanged sum and integral and used the notation
	\begin{equation*}
	\begin{split}
	D_k = \int\int_{\{ (s,\zeta_j)\in [0,L)\times[0,+\infty):\ I_k \subset [s,s+\zeta_j]\}} \widehat K_\tau(\zeta_j) \ds  \d\zeta_j. 
	\end{split}
	\end{equation*}
	Due to the periodicity of $u$, we may assume without loss of generality that $I_{k} = [L,L+\eta]$, $\eta=|I_k|$. 
Fixing $\zeta_j$ with $|\zeta_j|>\eta$ we have that
	\begin{equation*}
	\begin{split}
	|\{s\in [0,L]: [s,s+\zeta_j] \supseteq I_k\}| = \min\{|\zeta_j|-\eta,L\}.
	\end{split}
	\end{equation*}
	hence,
	\begin{equation*}
	\begin{split}
	D_k = \int_{|\zeta_j| > \eta} \min\{|\zeta_j|-\eta,L\}\,\, \widehat K_{\tau}(\zeta_j) \d\zeta_j > 
	\eta\int_{|\zeta_j| > 2\eta} \widehat K_{\tau}(\zeta_j) \d\zeta_j,
	\end{split}
	\end{equation*}
	which yields the desired result. 
	
\end{proof}

As a consequence of~Lemma~\ref{lemma:estimate1}, one has the following
\begin{corollary}
   \label{cor:claim2}
	For all $\Upsilon>1$, ${\eta}>0$, whenever 
	$t_{1}\leq\ldots\leq t_{N}$ with $t_1 = 0 $, $t_N = L$ satisfy
	\begin{equation*}
	\begin{split}
	&\Mi{t_{k}}{t_{k+1}}{j} = \Upsilon\qquad\text{for $k=2,\ldots,N-2$},\qquad\text{and}\\
   &\Mi{t_{k}}{t_{k+1}}{j} \leq \Upsilon\qquad\text{for $k=1,N-1$},
	\end{split}
	\end{equation*}
	then
	\begin{align}
	- \Mi{0}{L}{j} + \Gcal{j} \geq -\Upsilon\frac{L}{{\eta}}+\sum_{k\in G_{{\eta}}}\Bigl(\frac{\Upsilon-1}{\Upsilon}\bar C_\tau(|[t_k,t_{k+1}]|)-1\Bigr)\Upsilon,\label{eq:firstineq}
	\end{align}
	where $G_{{\eta}} = \{ k:\ 2\leq k \leq N-2\text{ and  }|t_{k}  - t_{k+1}| \leq {\eta}\}$.

	In particular, there exist $\bar{\tau}>0$ and $\bar{\eta}>0$ such that for every $0<\tau\leq\bar \tau$
		\begin{equation}
	\label{eq:lowbound0}
	-\Mi{0}{L}{j}+\Gcal{j}\geq-\Upsilon\frac{L}{\bar{\eta}}.
	\end{equation}

Moreover, there exist $\tau_2>0$, $\eta_0>0$ such that  if $0<\tau\leq\tau_2$ and  $|t_{k+1}-t_k|<\eta_0$ for some $k$, then 
\begin{equation}
  \label{eq:-m+g>0}
	- \Mi{0}{L}{j} + \Gcal{j} >0.
\end{equation}
	\end{corollary}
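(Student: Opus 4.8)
The plan is to prove the three displayed inequalities in turn. The inequality \eqref{eq:firstineq} is pure bookkeeping on top of Lemma~\ref{lemma:estimate1}; the bounds \eqref{eq:lowbound0} and \eqref{eq:-m+g>0} require in addition a separate estimate for intervals that carry a large mass on a very small length, and this last point is the real difficulty.

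\emph{Proof of \eqref{eq:firstineq}.} Using additivity of $\overline{\mathcal M}^j_{\at}$ in the integration interval, write $\Mi{0}{L}{j}=\sum_{k=1}^{N-1}\Mi{t_k}{t_{k+1}}{j}$. Among the indices $2\le k\le N-2$, those with $|t_{k+1}-t_k|>\eta$ index pairwise disjoint subintervals of $[0,L]$, so there are at most $L/\eta$ of them; since each of those (and each boundary interval) has mass $\le\Upsilon$, one gets $\Mi{0}{L}{j}\le\Upsilon\bigl(L/\eta+|G_\eta|\bigr)$ up to the harmless contribution of the two boundary pieces. On the other hand Lemma~\ref{lemma:estimate1}, applied to the disjoint family $\{[t_k,t_{k+1}]:k\in G_\eta\}$ (each with $\Mi{t_k}{t_{k+1}}{j}=\Upsilon$), gives $\Gcal{j}\ge\frac{\Upsilon-1}{\Upsilon}\sum_{k\in G_\eta}\bar C_\tau(|t_{k+1}-t_k|)\,\Upsilon$. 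Subtracting the two bounds yields \eqref{eq:firstineq}.

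\emph{Proof of \eqref{eq:lowbound0}.} From \eqref{eq:firstineq} one would be done as soon as every summand is nonnegative, i.e. as soon as $\bar C_\tau(|t_{k+1}-t_k|)\ge\Upsilon/(\Upsilon-1)$ for all $k\in G_{\bar\eta}$. Using the explicit kernel $\widehat K_\tau(t)=c_{d,p}(|t|+\tau^{1/\beta})^{-(p-d+1)}$ one checks that $\bar C_\tau(\ell)=\ell\int_{\{|\zeta_j|>2\ell\}}\widehat K_\tau\,\d\zeta_j$ is of order $\ell^{\,2-(p-d+1)}$ (which $\to+\infty$ as $\ell\to0$, since $p-d+1\ge3$) for $\ell$ above the scale $\tau^{1/\beta}$, but drops to $0$ for $\ell\ll\tau^{1/\beta}$. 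Choosing $\bar\eta$ small and then $\bar\tau$ small handles all pieces of length $\gtrsim\tau^{1/\beta}$; for the exceptionally short ones I would argue directly from \eqref{eq:poselse}: for every $\zeta_j$, since the integrand is $\le1$,
\[
|\zeta_j|\,\Mi{0}{L}{j}-\int_0^L|u_{x_j^\perp}(x_j)-u_{x_j^\perp}(x_j+\zeta_j)|^2\,\d x_j\ \ge\ \bigl(|\zeta_j|\,\Mi{0}{L}{j}-L\bigr)^+ .
\]
Integrating against $\widehat K_\tau$ and keeping only $|\zeta_j|\ge 2L/\Mi{0}{L}{j}$ gives $\Gcal{j}\ge\tfrac12\Mi{0}{L}{j}\int_{\{|\zeta_j|\ge 2L/\Mi{0}{L}{j}\}}|\zeta_j|\widehat K_\tau\,\d\zeta_j$, and inserting the kernel yields $-\Mi{0}{L}{j}+\Gcal{j}\ge -M+\kappa\,M^{\,p-d}L^{-(p-d-1)}$ with $M:=\Mi{0}{L}{j}$ and $\kappa=\kappa(d,p)>0$. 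The right‑hand side, minimized in $M\ge0$, equals $-C(d,p)\,L$ for some $C(d,p)>0$, so the choice $\bar\eta:=\Upsilon/C(d,p)$ (with $\bar\tau$ small enough for the above estimate) proves \eqref{eq:lowbound0}; note this last argument is independent of the partition and in fact gives \eqref{eq:lowbound0} on its own.

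\emph{Proof of \eqref{eq:-m+g>0}.} By the optimization just performed, $-\Mi{0}{L}{j}+\Gcal{j}>0$ whenever $\Mi{0}{L}{j}$ exceeds an explicit threshold $M_1(L,d,p)$, so it remains to treat $\Mi{0}{L}{j}\le M_1$; there the partition has boundedly many pieces, and the hypothesis gives a piece $[t_k,t_{k+1}]$ of length $<\eta_0$ carrying the full mass $\Upsilon$. If $|t_{k+1}-t_k|\gtrsim\tau^{1/\beta}$, Lemma~\ref{lemma:estimate1} applied to this single piece gives $\Gcal{j}\ge(\Upsilon-1)\bar C_\tau(|t_{k+1}-t_k|)$, which exceeds $M_1$ once $\eta_0$ is small and $\tau\le\tau_2$ correspondingly small, since $\bar C_\tau$ is of order $|t_{k+1}-t_k|^{\,2-(p-d+1)}$ there. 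If $|t_{k+1}-t_k|$ is even smaller, then $u_{x_j^\perp}$ has oscillation $\le(|t_{k+1}-t_k|\,\Upsilon/3\at)^{1/2}$ on $[t_k,t_{k+1}]$, hence is essentially constant there, and the macroscopic structure of the slice is decided by its complement; one then splits according to whether $u_{x_j^\perp}$ has a jump of size $\ge1-\delta$ on scale $\delta_0$ — in the negative case Remark~\ref{rem:5.3} already gives $\Gcal{j}>\Mi{0}{L}{j}$, in the positive case one combines the bound of the previous step with the additional positivity produced by the short piece. This treatment of short‑but‑heavy pieces — the small‑amplitude‑oscillation effect absent from the sharp‑interface analysis of \cite{DR}, where $\overline{\mathcal M}^j_{\at}$ can be large merely because of rapid oscillation (possibly of the gradient transverse to the slice) while $\bar C_\tau$ is not — is the main obstacle, and is precisely why Lemma~\ref{lemma:estimate1} has to be supplemented throughout by the $\omega$‑based estimate \eqref{eq:poselse}.
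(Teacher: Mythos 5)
Your \eqref{eq:firstineq} is the paper's argument verbatim. For \eqref{eq:lowbound0} you depart from the paper: you bypass the partition entirely and deduce, from \eqref{eq:poselse} and $|u|\leq 1$, the global estimate $\Gcal{j}\geq\int_{\R}\bigl(|\zeta_j|M-L\bigr)^+\widehat K_\tau(\zeta_j)\,\d\zeta_j$ with $M=\Mi{0}{L}{j}$, then optimize in $M$. That step is correct and arguably cleaner: as you rightly note, $\eta\mapsto\bar C_\tau(\eta)=\eta\int_{|\zeta_j|>2\eta}\widehat K_\tau$ is not monotone (it vanishes as $\eta\to0$ at fixed $\tau$), so the paper's literal step ``choose $\bar\eta$ with $\bar C_\tau(\bar\eta)$ large'' does not by itself control the terms $\bar C_\tau(|t_{k+1}-t_k|)$ in \eqref{eq:firstineq} when $|t_{k+1}-t_k|\ll\tau^{1/\beta}$.

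The gap is in \eqref{eq:-m+g>0}, in exactly the regime you flag: $\Mi{0}{L}{j}\leq M_1$ together with $|t_{k+1}-t_k|\ll\tau^{1/\beta}$. Splitting on whether the slice has a near-unit jump at scale $\delta_0$ only disposes of the first alternative via Remark~\ref{rem:5.3}; in the second, ``combine the bound of the previous step with the additional positivity produced by the short piece'' is not an argument: Lemma~\ref{lemma:estimate1} applied to the short piece gives $\Gcal{j}\gtrsim(\Upsilon-1)\bar C_\tau(|t_{k+1}-t_k|)$, which by the very non-monotonicity you noted tends to $0$, while a jump elsewhere increases $\Mi{0}{L}{j}$ and, through the nonlocal term in \eqref{def:Gcal}, decreases $\Gcal{j}$, so no net gain is visible. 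What actually closes this case is a monotonicity hidden by the way Lemma~\ref{lemma:estimate1} is stated: the quantity arising in its proof, $D_k=\int_{|\zeta_j|>\ell}\min\{|\zeta_j|-\ell,L\}\,\widehat K_\tau(\zeta_j)\,\d\zeta_j$ with $\ell=|I_k|$, is \emph{decreasing} in $\ell$ (shrinking $I_k$ enlarges the set of pairs $(s,\zeta_j)$ with $I_k\subset[s,s+\zeta_j]$), whereas its lower bound $\bar C_\tau(\ell)$ is not. Working with $D_k$ instead of $\bar C_\tau(|I_k|)$, one gets for any $\ell\leq\eta_0$ that $\Gcal{j}\geq(\Upsilon-1)D_k\geq(\Upsilon-1)\int_{2\eta_0<|\zeta_j|<L}\tfrac{|\zeta_j|}{2}\widehat K_\tau\,\d\zeta_j$, which for $\tau$ small is of order $\eta_0^{-(p-d-1)}$ and can be made to dominate $\Upsilon L/\bar\eta$; this yields \eqref{eq:-m+g>0} uniformly over arbitrarily short pieces, with no oscillation or jump analysis. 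This monotonicity of $D_k$ is the ingredient your proposal is missing.
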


\begin{proof} 
   Let $G^c_\eta$ be the complementary set, namely $G^c_{{\eta}} = \{ k:\ 2\leq k \leq N-2\text{ and  }|t_{k}  - t_{k+1}| > {\eta}\}$. 
   By~\eqref{eq:lemma:estimate1_1} one has that
	\begin{align}
	-\Mi{0}{L}{j}+\Gcal{j}&\geq -\sum_{k\in G_{{\eta}}^c}\Mi{t_k}{t_{k+1}}{j}\notag\\
	&+\sum_{k\in G_{{\eta}}}\Bigl(\frac{\Upsilon-1}{\Upsilon}\bar C_\tau(|[t_k,t_{k+1}]|)-1\Bigr)\Mi{t_{k}}{t_{k+1}}{j}\notag. 
	\end{align}
	Then,~\eqref{eq:firstineq} follows from the following two facts: there are at most $L/{\eta}$ intervals in $G_{{\eta}}^c$, on which $\Mi{t_k}{t_{k+1}}{j}\leq\Upsilon$, and for $k\in G_{{\eta}}$ one has that $\Mi{t_k}{t_{k+1}}{j}=\Upsilon$.
	
   Since
		\begin{equation}
	\label{eq:limCtau}
	\begin{split}
	\lim_{\eta\downarrow 0}\lim_{\tau\downarrow 0 }\bar C_{\tau}(\eta) = +\infty,
	\end{split}
	\end{equation}

	we can choose $\bar{\tau},\bar{\eta}$ such that for every $\tau< \bar{\tau}$ it holds
	\begin{equation*}
	\begin{split}
	\Bigl(\frac{\Upsilon-1}{\Upsilon}\bar C_{\tau}({\bar \eta})-1\Bigr)\Upsilon  >0
	\end{split}
	\end{equation*}
	and in particular 
	\begin{equation*}
	-\Mi{0}{L}{j}+\Gcal{j}\geq-\Upsilon\frac{L}{\bar{\eta}}.
	\end{equation*}
	Moreover, again by~\eqref{eq:limCtau}, there exist $\eta_0<\bar{\eta}$, $\tau_2<\bar{\tau}$ such that for  $\eta_0$, $\tau<\tau_2$ then
		\begin{equation*}
	\begin{split}
	\Bigl(\frac{\Upsilon-1}{\Upsilon}C_{\tau}({\eta_0})-1\Bigr)\Upsilon  >\Upsilon \frac{L}{\bar{\eta}},
	\end{split}
	\end{equation*}
and thus~\eqref{eq:-m+g>0} follows from~\eqref{eq:firstineq} as soon as there is an interval of size smaller than $\eta_0$ on which the Modica-Mortola term is greater than $\Upsilon$. 
	
	\end{proof}

\begin{lemma}
   \label{lemma:int}
   Let $S$ be a periodic union of stripes with boundaries orthogonal to $e_i$ and $u\in W^{1,2}_{\mathrm{loc}}(\R^d;[0,1])$ be a $[0,L)^d$-periodic function such that~\eqref{eq:us} holds.
   Then, for any $\alpha>0$ and  $|s_0-t_0|\leq\alpha$, if $\bar{\sigma}$ is sufficiently small and $j\neq i$
   \begin{equation}
   \label{eq:gstr3}
   \begin{split}
   \int_{\{ |\zeta_{j}^\perp| < \alpha \}} \int_{s_{0}-\alpha}^{s_{0}} \int_{t_0-x_j}^{t_0-x_j + \alpha}\!\! \Big\{\frac14- \big[ u(x_j^\perp + \zeta_j^\perp + x_j e_j ) - u(x_j^\perp + \zeta_j^\perp + x_je_j+ \zeta_j e_j)  \big]\Big\}^2 \d\zeta_j\dx_j\d\zeta_j^\perp > \frac{1}{8}\alpha^{d+1}.
   \end{split}
   \end{equation}
   \end{lemma}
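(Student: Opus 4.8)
The plan is to exploit the closeness $\|u-\chi_S\|_{L^1}\le\bar\sigma$ to show that for most $x_j^\perp$ the slice $x_j\mapsto u(x_j^\perp+x_je_j)$ is essentially constant, so that the bracket $u(x_j^\perp+\zeta_j^\perp+x_je_j)-u(x_j^\perp+\zeta_j^\perp+x_je_j+\zeta_je_j)$ is small on a set of large measure; wherever the bracket is at most $1/4$ in absolute value, the integrand $\{\tfrac14-[\cdots]\}^2$ is at least $\tfrac{1}{16}$ when the bracket is $\le 0$, and more importantly it is bounded below by a fixed constant on the bulk of the domain of integration. Concretely, since $S$ has stripes orthogonal to $e_i$ with $i\ne j$, the function $\chi_S$ does not depend on $x_j$ at all; hence $\chi_S(x_j^\perp+\zeta_j^\perp+x_je_j)-\chi_S(x_j^\perp+\zeta_j^\perp+x_je_j+\zeta_je_j)=0$ identically. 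So the entire bracket for $u$ is controlled by the two "errors" $u-\chi_S$ evaluated at the two points, and $\{\tfrac14-[\cdots]\}^2\ge \tfrac{1}{16}$ minus a term linear in these errors.

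The key steps, in order: First, fix the small cube of integration. The outer variables range over $\zeta_j^\perp$ with $|\zeta_j^\perp|<\alpha$ (a set of measure $\gtrsim\alpha^{d-1}$ — precisely $(2\alpha)^{d-1}$ if we use the $\ell^\infty$ cube, or some fixed multiple of $\alpha^{d-1}$), over $x_j\in[s_0-\alpha,s_0]$ (length $\alpha$), and over $\zeta_j\in[t_0-x_j,t_0-x_j+\alpha]$ (length $\alpha$), for a total measure comparable to $\alpha^{d+1}$; if all three ranges are taken as $\ell^\infty$-cubes the measure is exactly $(2\alpha)^{d-1}\cdot\alpha\cdot\alpha$, which for $d\ge 2$ exceeds $\tfrac14\alpha^{d+1}$, but I will need to track the exact normalization the authors use. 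Second, bound the integrand from below pointwise: writing $a=u(x_j^\perp+\zeta_j^\perp+x_je_j)$ and $b=u(x_j^\perp+\zeta_j^\perp+x_je_j+\zeta_je_j)$, we have $a,b\in[0,1]$ and $\chi_S$ takes the same value $v\in\{0,1\}$ at both argument points (independence of $x_j$), so $|a-b|\le|a-v|+|b-v|$; therefore $\{\tfrac14-(a-b)\}^2\ge \tfrac{1}{16}$ whenever $a-b\le 0$, and in general $\{\tfrac14-(a-b)\}^2\ge\tfrac{1}{16}-\tfrac12|a-b|\ge\tfrac{1}{16}-\tfrac12(|a-v|+|b-v|)$, using $(\tfrac14-x)^2\ge\tfrac{1}{16}-\tfrac12 x$ for all real $x$ (indeed $(\tfrac14-x)^2-\tfrac1{16}+\tfrac12 x = x^2\ge 0$). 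Third, integrate this lower bound: the constant $\tfrac{1}{16}$ contributes $\tfrac{1}{16}$ times the measure of the domain, i.e.\ $\gtrsim\tfrac{1}{16}\alpha^{d+1}$, while the error contributes at most $\tfrac12$ times $\int\!\!\int\!\!\int(|a-v|+|b-v|)$, which by a change of variables (absorbing $\zeta_j^\perp$ and $\zeta_j$ into translations) is bounded by $C\,\alpha\cdot\|u-\chi_S\|_{L^1([0,L)^d)}\le C\alpha\bar\sigma$ up to the small localisation; more carefully, after translating, each of $\int|a-v|$ and $\int|b-v|$ over the $(\zeta_j^\perp,\zeta_j)$-slab of fixed $x_j$ is at most the $L^1$ distance of $u$ to $\chi_S$ restricted to a region of measure $\lesssim\alpha^{d}$, hence $\le\bar\sigma$, and then the remaining $x_j$-integral over an interval of length $\alpha$ gives a factor $\alpha$, so the total error is $\le C\alpha\bar\sigma$ with $C$ depending only on $d$. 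Fourth, choose $\bar\sigma$ so small that $C\alpha\bar\sigma\le\tfrac{1}{16}\alpha^{d+1}-\tfrac18\alpha^{d+1}=\tfrac{1}{16}\alpha^{d+1}$ — but that requires $\bar\sigma\lesssim\alpha^{d}$, which is fine for fixed $\alpha$; actually in the application $\alpha$ is itself bounded (it will be of order the interval lengths $\eta$), so one fixes $\bar\sigma$ after $\alpha$.

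I expect the main obstacle to be the bookkeeping of the change of variables and the exact measure of the domain of integration. The estimate $(\tfrac14-x)^2\ge\tfrac1{16}-\tfrac12 x$ is trivial, and the independence of $\chi_S$ on $x_j$ is the conceptual heart of the argument, but one must be careful that the $\zeta_j$-range $[t_0-x_j,\,t_0-x_j+\alpha]$ genuinely shifts with $x_j$, so that the second argument $x_j^\perp+\zeta_j^\perp+x_je_j+\zeta_je_j$ sweeps over $x_j^\perp+\zeta_j^\perp+$ (the interval $[t_0,t_0+\alpha]$) $e_j$ independently of $x_j$ — this is precisely what makes the triple integral of $|b-v|$ collapse (after fixing $x_j$, the map $\zeta_j\mapsto x_j+\zeta_j$ identifies that integral with an integral of $|u-\chi_S|$ over a box of size $\sim\alpha^d$ not depending on $x_j$, so the $x_j$-integral just multiplies by $\alpha$). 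Once this is observed, the inequality $\bar\sigma$-dependence is transparent. A minor additional care is that the $\zeta_j$ variable can take both signs and $\widehat K_\tau$, $K_\tau$ do not appear here — the kernel has already been dropped in passing to this geometric lemma — so there is no integrability subtlety; the only quantitative input is $\bar\sigma$ small relative to $\alpha^d$, which is harmless since $\alpha$ will be fixed before $\bar\sigma$ in the calling proof.
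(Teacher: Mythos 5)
Your proof takes essentially the same approach as the paper's: the key observation that $\chi_S$ does not vary in the $e_j$-direction (since $j\neq i$), so the bracket is controlled by $|u-\chi_S|$ at the two points; the elementary pointwise bound $(\tfrac14-b)^2\ge\tfrac1{16}-\tfrac12|b|$; the Fubini/change-of-variables step that collapses the $\zeta_j$-shift so the error term is $\lesssim\alpha\,\bar\sigma$; and the final choice of $\bar\sigma$ small relative to $\alpha^d$. The paper packages the error estimate as a separate claim (eq.~(5.14) in the source) proved by contradiction and then states the conclusion is ``immediate,'' whereas you carry the computation through explicitly, but the strategy is identical.

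One small slip: in your Step 4 you write $\tfrac1{16}\alpha^{d+1}-\tfrac18\alpha^{d+1}=\tfrac1{16}\alpha^{d+1}$, which is a sign error ($\tfrac1{16}-\tfrac18=-\tfrac1{16}$); what you need is $\tfrac1{16}|\Omega|-\tfrac18\alpha^{d+1}$ where $|\Omega|$ is the actual measure of the integration domain. With the $\ell^\infty$ normalization you mention, $|\Omega|=2^{d-1}\alpha^{d+1}$, so the slack $\tfrac1{16}|\Omega|-\tfrac18\alpha^{d+1}=\tfrac{2^{d-1}-2}{16}\alpha^{d+1}$ is strictly positive only for $d\ge3$ and vanishes exactly at $d=2$; the paper glosses over the same borderline case. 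This does not affect the use of the lemma in Proposition~5.1, where only some positive multiple of $\alpha^{d+1}$ is needed, but your write-up should track $|\Omega|$ correctly rather than substituting $\alpha^{d+1}$ in its place.
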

 
 \begin{proof}
  First of all, we claim that for every $\mu>0$ there exists $\hat{\sigma}$ such that if the assumptions of the lemma hold with $\bar{\sigma}<\hat \sigma$ one has that
\begin{equation}\label{eq:int1}
   \begin{split}
      \frac{1}{\alpha^d}\int_{\{ |\zeta|_1 < \alpha \}}  \Big| u(x_j^\perp+x_je_j+ \zeta_j^\perp  ) - u(x_j^\perp + \zeta_j^\perp + x_je_j +\zeta_j e_j)\Big| \d\zeta \leq \mu. 
    \end{split}
\end{equation}
Indeed, suppose that the claim is false.
In this case there exists $\mu_0>0$ such that 
\begin{equation*}
   \begin{split}
      \mu_0 \alpha^d &< \int_{\{ |\zeta|_1 \leq \alpha\}}  \Big| u(x_j^\perp+ \zeta_j^\perp +x_je_j ) - u(x_j^\perp + \zeta_j^\perp + x_je_j +\zeta_j e_j) \Big| \d\zeta 
      \\ &\leq \int_{[0,L)^d}  \Big|u(x_j^\perp+ \zeta_j^\perp +x_je_j ) - u(x_j^\perp + \zeta_j^\perp + x_je_j +\zeta_j e_j) \Big|  \d\zeta.
   \end{split}
\end{equation*}
Given that for  $j\neq i$ $\chi_S(x_j^\perp+ \zeta_j^\perp +x_je_j ) - \chi_S(x_j^\perp + \zeta_j^\perp + x_je_j +\zeta_j e_j)=0$ and~\eqref{eq:us} holds, for  $\bar{\sigma}\downarrow0$ the \rhs~of the above converges to $0$ and then we obtain a contradiction.

The inequality~\eqref{eq:gstr3} is an immediate consequence of~\eqref{eq:int1} provided $\mu$ is sufficiently small.

\end{proof}


\begin{proof}[{Proof of Proposition~\ref{prop:stability}}]
\label{proof:prop:stability}

   Given $j\neq i$,  we want to show that
   \begin{equation}
      \label{eq:propstability1}
      \int_{[0,L)^{d-1}}\Bigl[-\Mi{0}{L}{j}+\Gcal{j}+\overline{\mathcal I}^j_{\tau}(u,x_i^\perp,[0,L))\Bigr]\dx_j^{\perp}\geq0.
   \end{equation}	
   We will show that the integrand of~\eqref{eq:propstability1}, namely
   \begin{equation}\label{eq:integrand0}
      -\Mi{0}{L}{j}+\Gcal{j}+\overline{\mathcal I}^j_{\tau}(u, x_j^\perp,[0,L))
   \end{equation}
   is non-negative and equal to $0$ if and only if $u$ does not depend on $x_j$. 

    We will use a partition $[0,L)^{d-1} = A_0 \cup A_1 \cup A_2 \cup A_3$, and show for each $x_j^\perp \in A_k$ with $k=1,2,3$ the expression in~\eqref{eq:integrand0} is strictly positive  and for $k=0$ the expression in~\eqref{eq:integrand0} is non-negative.

In order to define the sets $A_k$, let us introduce
   \begin{equation*}
      B_{x_j^\perp}:=\Bigl\{(s,t):\,s\in[0,L),\ t>s,  \ \ \Mi{s}{t}{j}\geq\frac{17}{16}\Bigr\}, 
   \end{equation*}
   and, for some $\delta>0$ sufficiently small (to be fixed later independently of $\eps$ and $\tau$),
   \begin{equation*}
      D_{x_j^\perp}:=\{(s,t):\,s\in[0,L), t\in\R, |u_{x_j^\perp}(s)-u_{x_j^\perp}(t)|\geq 1-2\delta\}.
   \end{equation*}
   Moreover, define
   \begin{align}
      b(x_j^\perp)&:=\inf\{|s-t|: (s,t)\in B_{x_j^\perp}\},\notag\\
      d(x_j^\perp)&:=\inf\{|s-t|: (s,t)\in D_{x_j^\perp}\},
   \end{align}
   and we set them equal to $+\infty$ if the corresponding sets are empty.

   Then fix $\delta_0,\eta_0>0$ and partition $[0,L)^{d-1}$ as follows
   \begin{equation*} 
      [0,L)^{d-1}=A_0\cup A_1\cup A_2\cup A_3 
   \end{equation*}
   where
   \begin{align}
      A_0 & := \insieme{x_j^\perp\in[0,L)^{d-1}:\ \text{ $u_{x_j^\perp}$ is constant}}\\
      A_1=A_1(\delta_0,\delta,\eta_0)&:=\{x_j^\perp\in[0,L)^{d-1}\setminus A_0:  \,b(x_j^\perp)\geq\eta_0,\,d(x_j^\perp)\geq\delta_0\}\\
      A_2=A_2(\eta_0)&:=\{x_j^\perp\in[0,L)^{d-1}:\ \,b(x_j^\perp)<\eta_0\}\\
      A_3=A_3(\delta_0,\delta,\eta_0)&:=\{x_j^\perp\in[0,L)^{d-1}:\ \, b(x_j^\perp)\geq\eta_0,\, d(x_j^\perp)\leq\delta_0\}.
   \end{align}

   In the proof we will show the following: for every $k=1,2,3$, {provided $\eta_0,\delta_0,\delta$, $\bar{\sigma}$ and $\tau'$ are small enough} it holds (\textbf{Claim $A_k$})
   \begin{equation}
      \label{eq:claimAK}
      \begin{split}
         -\Mi{0}{L}{j}+\Gcal{j} + \overline{\Ical}_\tau^j(u, x_{j}^\perp, [0,L))> 0, \quad\forall\, x_j^\perp \in A_k.
      \end{split}
   \end{equation}
   When $u_{x_j^\perp}$ is constant, namely $x_j^\perp\in A_0$, then~\eqref{eq:integrand0} reduces to $\overline{\mathcal I}_\tau^j(u, x_{j}^\perp, [0,L))$. 
   Thus its nonnegativity is trivial and follows from the definitions of the terms involved. 
   Moreover, if we show (\textbf{Claim $A_k$}) for $k=1,2,3$ it follows immediately that~\eqref{eq:propstability1} is an equality if and only if $|A_1|=|A_2|=|A_3|=0$. Indeed, in this case $u$ does not depend on $x_j$ and thus even on $A_0$ one has that
   $\overline{\mathcal I}_\tau^j(u, x_{j}^\perp, [0,L))=0$.

   Let us also recall that $\Gcal{j}$ and $\overline{\Ical}_\tau^{j}(u,x_j^\perp,[0,L))$ are nonnegative.
   The term $\Gcal{j}$ will be used to balance $-\Mi{0}{L}{j}$ for $x_j^\perp\in A_1\cup A_2$, while the term $\overline{\Ical}_\tau^{j}(u,x_j^\perp,[0,L))$ will be used only to prove~\eqref{eq:claimAK} for $k=3$.	The only set on which we will use the fact that $j\neq i$ and~\eqref{eq:us} holds is $A_3$.

   Let us now be more precise on how the parameters $\eta_0,\delta_0,\delta, \bar{\sigma}$ and $\tau'$ will be chosen:
   \begin{itemize}
      \item The parameter $\eta_0$ is chosen such that~\eqref{eq:-m+g>0} holds under the assumptions of Corollary~\ref{cor:claim2} with $\Upsilon=\frac{17}{16}$.
     \item The parameter $\delta$ is chosen such that the last inequality in~\eqref{eq:5.28bis} holds.
           One possible choice is  $\delta \leq  2 ^{-10}$.
      \item We choose $\bar{\eta}$ as in Corollary~\ref{cor:claim2} with $\Upsilon=\frac{17}{16}$ such that~\eqref{eq:lowbound0} holds.
      \item We choose $\alpha>0$ such that $\alpha<\eta_0/3$ and~\eqref{eq:5.31bis} holds.
      \item The parameter $\delta_0$ is chosen to satisfy $\delta_0\leq\alpha$.
      \item The parameter $\bar{\sigma}$ is such that~\eqref{eq:gstr3} holds for $\alpha$ as above.
      \item Finally one chooses $\tau'=\min\{\tau_1,\bar \tau,\tau_2,\tau_3\}$ where: for $\tau\leq\tau_1$~\eqref{eq:lemma_estimate2_2} holds, for $\tau\leq\bar{\tau}$ one has~\eqref{eq:lowbound0}, for $\tau\leq\tau_2$ one has~\eqref{eq:-m+g>0} and for $\tau\leq\tau_3$ one has that $\tau^{1/\beta}\leq\alpha$.
   \end{itemize}

   \vskip 0.2 cm

   \textbf{Claim $A_1$}

   By definition of $A_1$, for all $x_j^\perp\in A_1$ and $x_j\in[0,L)$, $|\zeta_j|<\delta_0$
   \[
      |u_{x_j^\perp}(x_j)-u_{x_j^\perp}(x_j+\zeta_j)|\leq 1-\delta.
   \]
   Therefore, the slices in $A_1$ are characterized by having phase transitions from values close to $0$ to values close to $1$ which are not ``sharp'' (i.e.\ require at least an interval of length $\delta_0$).

   In this case we are in the situation analysed in Lemma~\ref{lemma:estimate2}. Hence,~\eqref{eq:claimAK} holds provided $0<\tau\leq\tau_1$, where $\tau_1(\delta,\delta_0)$ is chosen as in Remark~\ref{rem:5.3}, namely so that~\eqref{eq:lemma_estimate2_2} holds.

   \vskip 0.3 cm	
   \textbf{Claim $A_2$} 

   In order to prove Claim $A_2$, take   $\Upsilon = 17/16$ and choose $\eta_0$, $0<\tau\leq\tau_2$ as in Corollary~\ref{cor:claim2} for such $\Upsilon$. By the assumptions on $x_j^\perp\in A_2$, it is not difficult to see that we can find $0=t_1\leq\ldots\leq t_N=L$ such that
   \begin{equation*}
      \begin{split}
         \Mi{t_k}{t_{k+1}}{j} &= \frac{17}{16} \qquad \text{ for $k=2,\ldots,N-2$ },\\ 
         \Mi{t_k}{t_{k+1}}{j} &\leq \frac{17}{16} \qquad \text{ for $k=1,N-1$ }
      \end{split}
   \end{equation*}
   and there exists $k\in\{2,N-2\}$ such that $|t_k - t_{k+1} | < \eta_0$. 
   Hence, the assumptions of  Corollary~\ref{cor:claim2} are satisfied and by~\eqref{eq:-m+g>0} we have the desired claim.

   \vskip 0.3 cm	
   \textbf{Claim $A_3$}

   We want to show that
   \begin{equation*}
      \begin{split}
         -\Mi{0}{L}{j}+\Gcal{j} + \overline{\Ical}_\tau^j(u, x_{j}^\perp, [0,L))> 0, \quad\forall\, x_j^\perp \in A_3.
      \end{split}
   \end{equation*}
   \vskip 0.2 cm	

      Before going into the details, let us give an idea of the proof. 
      The situation considered in this case is analogous to the image depicted in Figure~\ref{fig:2}. 
      \begin{figure}
         \centering
         \begin{tikzpicture}[scale=1.8]

   \node at (-4,.2) {$$};

   \draw [ pattern=nel] (2.5,.2) rectangle (2.9,.4);
   \node at (3.4,.3) {$u>1-\delta$};

   \draw [ pattern=my crosshatch dots] (2.5,-.2) rectangle (2.9,-.4);
   \node at (3.25,-.3) {$u < \delta$};

   \clip (-2, -2) rectangle (2, 2);
   \draw (-2,-2) rectangle (2,2);

   \draw [ pattern=nel] plot[smooth cycle] coordinates {
      (-10,-6)
      (2.799192683765379e-17, -6.0)
      (0.01485926758218416, -5.8788)
      (0.029802476129068323, -5.7576)
      (0.04384699405808514, -5.6364)
      (0.0559955074330209, -5.5152)
      (0.06530500655134194, -5.3939)
      (0.07092281960965897, -5.2727)
      (0.07219600833034359, -5.1515)
      (0.06869358295874177, -5.0303)
      (0.06026157231880619, -4.9091)
      (0.04705546034081432, -4.7879)
      (0.02955587737666491, -4.6667)
      (0.00856520998754399, -4.5455)
      (-0.014836187482785533, -4.4242)
      (-0.03925519342852494, -4.303)
      (-0.0631679184552604, -4.1818)
      (-0.08492559369072711, -4.0606)
      (-0.10286963457235164, -3.9394)
      (-0.11543800537165687, -3.8182)
      (-0.12127579603494786, -3.697)
      (-0.11934364478568121, -3.5758)
      (-0.10900505129307546, -3.4545)
      (-0.09015089265242522, -3.3333)
      (-0.06320858297707038, -3.2121)
      (-0.02919943178797972, -3.0909)
      (0.010270665565306968, -2.9697)
      (0.05305444561442353, -2.8485)
      (0.09654760201882771, -2.7273)
      (0.1378141963165086, -2.6061)
      (0.17377714882154155, -2.4848)
      (0.20130062758891, -2.3636)
      (0.2175932284922374, -2.2424)
      (0.22031594558372125, -2.1212)
      (0.20784609690826522, -2.0)
      (0.179488834670059, -1.8788)
      (0.13564846390887017, -1.7576)
      (0.07793831702019156, -1.6364)
      (0.009208315401522893, -1.5152)
      (-0.06659212849276086, -1.3939)
      (-0.144343750349391, -1.2727)
      (-0.21844291431753837, -1.1515)
      (-0.2829669290522049, -1.0303)
      (-0.33223458209119067, -0.9091)
      (-0.3613532988139411, -0.7879)
      (-0.36675797520087017, -0.6667)
      (-0.3466752423695679, -0.5455)
      (-0.30139699467754216, -0.4242)
      (-0.23356877544692262, -0.303)
      (-0.1478364781357881, -0.1818)
      (-0.05060097411974193, -0.0606)
      (0.05060097411974193, 0.0606)
      (0.1478364781357881, 0.1818)
      (0.23356877544692262, 0.303)
      (0.30139699467754216, 0.4242)
      (0.3466752423695679, 0.5455)
      (0.36675797520087017, 0.6667)
      (0.3613532988139411, 0.7879)
      (0.33223458209119067, 0.9091)
      (0.2829669290522049, 1.0303)
      (0.21844291431753837, 1.1515)
      (0.144343750349391, 1.2727)
      (0.06659212849276086, 1.3939)
      (-0.009208315401522893, 1.5152)
      (-0.07793831702019156, 1.6364)
      (-0.13564846390887017, 1.7576)
      (-0.179488834670059, 1.8788)
      (-0.20784609690826522, 2.0)
      (-0.22031594558372125, 2.1212)
      (-0.2175932284922374, 2.2424)
      (-0.20130062758891, 2.3636)
      (-0.17377714882154155, 2.4848)
      (-0.1378141963165086, 2.6061)
      (-0.09654760201882771, 2.7273)
      (-0.05305444561442353, 2.8485)
      (-0.010270665565306968, 2.9697)
      (0.02919943178797972, 3.0909)
      (0.06320858297707038, 3.2121)
      (0.09015089265242522, 3.3333)
      (0.10900505129307546, 3.4545)
      (0.11934364478568121, 3.5758)
      (0.12127579603494786, 3.697)
      (0.11543800537165687, 3.8182)
      (0.10286963457235164, 3.9394)
      (0.08492559369072711, 4.0606)
      (0.0631679184552604, 4.1818)
      (0.03925519342852494, 4.303)
      (0.014836187482785533, 4.4242)
      (-0.00856520998754399, 4.5455)
      (-0.02955587737666491, 4.6667)
      (-0.04705546034081432, 4.7879)
      (-0.06026157231880619, 4.9091)
      (-0.06869358295874177, 5.0303)
      (-0.07219600833034359, 5.1515)
      (-0.07092281960965897, 5.2727)
      (-0.06530500655134194, 5.3939)
      (-0.0559955074330209, 5.5152)
      (-0.04384699405808514, 5.6364)
      (-0.029802476129068323, 5.7576)
      (-0.01485926758218416, 5.8788)
      (-2.799192683765379e-17, 6.0)
      (-10,6)
   };
   \draw [ pattern=my crosshatch dots] plot[smooth cycle] coordinates {
      (10,-6)
      (0.20000000000000004, -6.0)
      (0.21485926758218418, -5.8788)
      (0.22980247612906834, -5.7576)
      (0.24384699405808516, -5.6364)
      (0.2559955074330209, -5.5152)
      (0.2653050065513419, -5.3939)
      (0.27092281960965897, -5.2727)
      (0.27219600833034363, -5.1515)
      (0.2686935829587418, -5.0303)
      (0.2602615723188062, -4.9091)
      (0.24705546034081433, -4.7879)
      (0.2295558773766649, -4.6667)
      (0.208565209987544, -4.5455)
      (0.18516381251721448, -4.4242)
      (0.16074480657147508, -4.303)
      (0.1368320815447396, -4.1818)
      (0.1150744063092729, -4.0606)
      (0.09713036542764837, -3.9394)
      (0.08456199462834314, -3.8182)
      (0.07872420396505216, -3.697)
      (0.0806563552143188, -3.5758)
      (0.09099494870692455, -3.4545)
      (0.10984910734757479, -3.3333)
      (0.13679141702292963, -3.2121)
      (0.17080056821202028, -3.0909)
      (0.210270665565307, -2.9697)
      (0.25305444561442353, -2.8485)
      (0.29654760201882774, -2.7273)
      (0.3378141963165086, -2.6061)
      (0.37377714882154156, -2.4848)
      (0.40130062758891005, -2.3636)
      (0.4175932284922374, -2.2424)
      (0.42031594558372126, -2.1212)
      (0.40784609690826523, -2.0)
      (0.379488834670059, -1.8788)
      (0.33564846390887015, -1.7576)
      (0.27793831702019156, -1.6364)
      (0.2092083154015229, -1.5152)
      (0.13340787150723915, -1.3939)
      (0.055656249650609, -1.2727)
      (-0.01844291431753836, -1.1515)
      (-0.08296692905220487, -1.0303)
      (-0.13223458209119066, -0.9091)
      (-0.16135329881394112, -0.7879)
      (-0.16675797520087016, -0.6667)
      (-0.14667524236956792, -0.5455)
      (-0.10139699467754215, -0.4242)
      (-0.033568775446922605, -0.303)
      (0.052163521864211915, -0.1818)
      (0.14939902588025808, -0.0606)
      (0.2506009741197419, 0.0606)
      (0.3478364781357881, 0.1818)
      (0.4335687754469226, 0.303)
      (0.5013969946775422, 0.4242)
      (0.546675242369568, 0.5455)
      (0.5667579752008702, 0.6667)
      (0.5613532988139411, 0.7879)
      (0.5322345820911907, 0.9091)
      (0.4829669290522049, 1.0303)
      (0.4184429143175384, 1.1515)
      (0.344343750349391, 1.2727)
      (0.26659212849276087, 1.3939)
      (0.19079168459847712, 1.5152)
      (0.12206168297980845, 1.6364)
      (0.06435153609112984, 1.7576)
      (0.020511165329941017, 1.8788)
      (-0.00784609690826521, 2.0)
      (-0.02031594558372124, 2.1212)
      (-0.01759322849223738, 2.2424)
      (-0.0013006275889100027, 2.3636)
      (0.02622285117845846, 2.4848)
      (0.062185803683491414, 2.6061)
      (0.1034523979811723, 2.7273)
      (0.1469455543855765, 2.8485)
      (0.18972933443469303, 2.9697)
      (0.22919943178797975, 3.0909)
      (0.2632085829770704, 3.2121)
      (0.2901508926524252, 3.3333)
      (0.3090050512930755, 3.4545)
      (0.3193436447856812, 3.5758)
      (0.3212757960349479, 3.697)
      (0.3154380053716569, 3.8182)
      (0.30286963457235166, 3.9394)
      (0.2849255936907271, 4.0606)
      (0.26316791845526044, 4.1818)
      (0.23925519342852494, 4.303)
      (0.21483618748278555, 4.4242)
      (0.19143479001245603, 4.5455)
      (0.1704441226233351, 4.6667)
      (0.1529445396591857, 4.7879)
      (0.13973842768119382, 4.9091)
      (0.13130641704125823, 5.0303)
      (0.12780399166965642, 5.1515)
      (0.12907718039034105, 5.2727)
      (0.13469499344865807, 5.3939)
      (0.14400449256697911, 5.5152)
      (0.15615300594191486, 5.6364)
      (0.17019752387093168, 5.7576)
      (0.18514073241781584, 5.8788)
      (0.19999999999999998, 6.0)
      (10,6)
   };
   \draw  (0,-.-6) -- (0,6);
   \filldraw (0,1.5) circle  (.6pt);
   \filldraw (0,0) circle    (.6pt);
   \filldraw (0,-.25) circle (.6pt);
   \filldraw (0,-1.18) circle (.6pt);
   \draw (0,1.5) -- (0,0);
   \node[inner sep=1pt, fill =white, left]  at (-0.14,1.5) { \Large $t_1$};
   \node[inner sep=1pt, fill =white, left]  at (-0.14,0.1) { \Large $t_0$};
   \node[inner sep=1pt, fill =white, left]  at (0.4,-0.25) { \Large $s_0$};
   \node[inner sep=1pt, fill =white, left]  at (0.4,-1.16) { \Large $s_1$};

\end{tikzpicture}
         \caption{In the above figure we depict a typical situation for $x_{j}^\perp \in A_{3}$.}
         \label{fig:2}
      \end{figure}
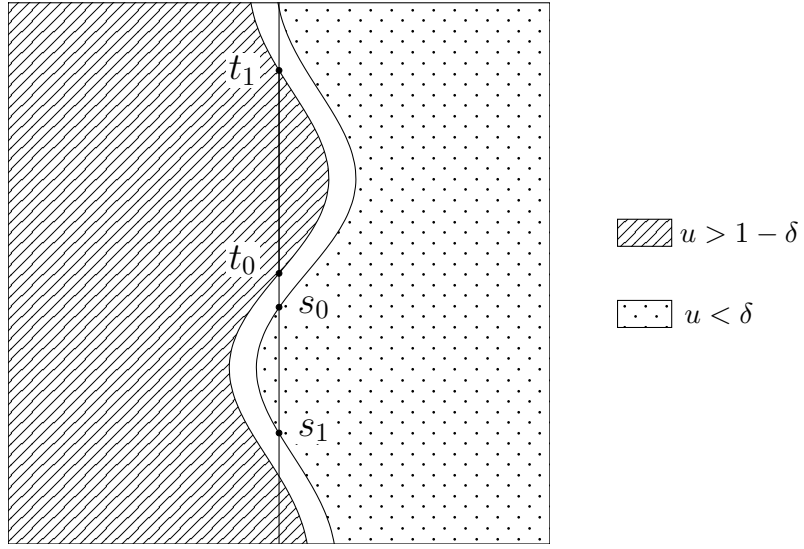
      Namely, there will be at least two  points $s_0$ and $t_0$ on the slice in direction $e_j$ orthogonal to $e_i$ where the function $u_{x_j^\perp}$ crosses the two thresholds $\delta$ and $1-\delta$ (see Figure~\ref{fig:2}). This transition, due to the definition of the set $A_3$, has to be ``almost sharp'' (\ie~happening in an interval $[s_0,t_0]$ of length controlled by $\delta_0$).
      On the other hand (see~Figure~\ref{fig:3}), the condition on the Modica-Mortola term ($b(x_{j}^\perp) > \eta_0$) together with the requirement that $\delta_0<\eta_0/3$ imposes that in a neighbourhood of size roughly $\eta_{0}$ around the transition, the function $u_{x_j^\perp}$ will be close to  $\delta$ on one side of the transition (interval $[s_0-\eta_{0}+\delta_0,s_0]$) and close to $1-\delta$ on the other side  of the transition (interval $[t_0,t_0 + \eta_0 - \delta_0]$). 

      Moreover, given that the function $u$ is close to a union of stripes with boundaries orthogonal to $e_i$, for most of the $\zeta^\perp_{j}$   the slice  $u_{x_j^\perp+\zeta_j^\perp}$ will take either values close to $1-\delta$ or values close to $\delta$. 
      Therefore the integrand of the cross interaction term $\overline{\mathcal I}_\tau^j(u,x_j^\perp,[0,L))$, which is given by
      \begin{equation}\label{eq:f}
         \begin{split}
            f(u, x_{j}^{\perp},x_{j}, \zeta_{j}^\perp,\zeta_j) := &\Big[\big(u(x_{j}^{\perp}+x_j e_j)-u(x_{j}^{\perp} +x_j e_j+\zeta_j e_j)\big)\\
            &-\big(u(x_{j}^\perp+\zeta_{j}^\perp + x_j e_j)-u(x_{j}^\perp+\zeta_j^\perp  +x_j e_j+\zeta_j e_j)\big)\Big]^2,
         \end{split}
      \end{equation}
  will be bigger than a given positive constant whenever $x_j\in[s_1,s_0]$, $x_j+\zeta_j\in[t_0,t_1]$ and either $u_{x_j^\perp+\zeta_j^\perp}\geq1-\delta$ or $u_{x_j^\perp+\zeta_j^\perp}\leq\delta$.
  Since this happens for most $\zeta$ in a neighbourhood of $0$ (being $\delta_0$ and $\bar\sigma$ in~\eqref{eq:us} small) and
  given that the kernel $K_{\tau}(\cdot)$ converges to a singular kernel $\zeta\to \frac{1}{|\zeta|^p}$,
      for $\tau$ sufficiently small the cross interaction term will be large implying~{\bf Claim $A_3$}.

      \vskip 0.2 cm	

      Let us now proceed with the formal proof.  Recall that our goal is to show that
   there exist $\delta_0>0$, $\delta>0$, $\tau_3>0$ and $\bar\sigma>0$ small enough such that if $x_j^\perp\in A_3(\delta_0,\delta,\eta_0)$ and $0<\tau\leq\tau_3$, $\|u-\chi_S\|_{L^1}\leq\bar{\sigma}$, then 
   \begin{equation*}
      -\Mi{0}{L}{j}+\Gcal{j}+\overline{\mathcal I}^j_{\tau}(u,x_j^\perp,[0,L))>0.
   \end{equation*} 

   By definition of $A_3$, for every $s\in[0,L)$, $t\in\R$ with $|s-t|<\eta_0$,$\Mi{s}{t}{j}\leq\frac{17}{16}$. Moreover by the second condition on $A_{3}$, there exist $s_0\in[0,L)$, $t_0\in\R$, $t_0>s_0$ with $|s_0-t_0|\leq\delta_0$ and $|u_{x_j^\perp}(s_0)-u_{x_j^\perp}(t_0)|\geq1-2\delta$. (see~also~Figure~\ref{fig:3}).  W.l.o.g., assume that $u_{x_j^\perp}(t_0)\geq1-\delta$ and $u_{x_j^\perp}(s_0)\leq\delta$. In particular, choosing $\delta_0\leq\eta_0/3$ by~\eqref{eq:momega} and Remark~\ref{lemma:omegaab}
   \begin{align*}
      \Mi{t_0}{t_0+\eta_0-\delta_0}{j}&\leq\Mi{s_0}{s_0+\eta_0}{j}-\Mi{s_0}{t_0}{j}\notag\\
      &\leq\frac{17}{16}-(u_{x_j^\perp}(s_0)-u_{x_j^\perp}(t_0))^2\notag\\
      &\leq\frac{1}{16}+4\delta
   \end{align*}
   Thus for every $t\in [t_{0},t_{0} + \eta_0 - \delta_{0} ]$ applying again~\eqref{eq:momega} and Lemma~\ref{eq:omegaab}, one has that if $\delta$ is small enough
   \begin{align}
         u_{x_j^\perp}(t) &\geq u_{x_j^\perp}(t_0) -\sqrt{\Mi{t_0}{t_0+\eta_0-\delta_0}{j}}\notag\\
         &\geq1-\delta-\frac14-2\sqrt{\delta}\geq\frac58.\label{eq:5.28bis}
   \end{align}
   Similarly, for $s\in[s_0 - \eta_0 +\delta_{0}, s_{0}]$
   \begin{figure}
      \centering
      \begin{tikzpicture}[scale=2]

   \draw[thick] (-2,-0.3) --  (2.2,-0.3);
   \draw[thick, dashed] (-2,.2) --  (2.2,.2);
   \draw[thick, dashed] (-2,1.8) -- (2.2,1.8);
   \draw[thick, dashed] (0.14,1.8) -- (.14,-.3);
   \node at (-2.4,1.8) {$1-\delta$};
   \node at (-2.3,.2) {$\delta$};
   \filldraw (-0.12,-.3) circle (1pt);
   \draw[thick, dashed] (-0.12,.2) -- (-0.12,-.3) ;
   \node at (-0.1,-.5) {$s_0$};
   \filldraw (0.14,-0.3) circle (1pt);
   \node at (.20,-.5) {$t_0$};
   \filldraw (1.7,-0.3) circle (1pt);
   \node at (1.7,-.5) {$t_0+ \eta_0 -\delta_0$};
   \filldraw (-1.4,-0.3) circle (1pt);
   \draw[thick, dashed] (-1.4,.25) -- (-1.4,-.3);
   \draw[thick, dashed] (1.7,1.8) -- (1.7,-.3);
   \node at (-1.4,-.5) {$s_0- \eta_0 +\delta_0$};
   \clip (-2, -1) rectangle (2, 2.6);

   \draw[thick]  plot[smooth] coordinates {
(-4.0, 0.6462)
(-3.95, 0.6243)
(-3.9, 0.6108)
(-3.85, 0.6089)
(-3.8, 0.6154)
(-3.75, 0.6218)
(-3.7, 0.62)
(-3.65, 0.6064)
(-3.6, 0.5846)
(-3.55, 0.5628)
(-3.5, 0.5492)
(-3.45, 0.5474)
(-3.4, 0.5538)
(-3.35, 0.5603)
(-3.3, 0.5585)
(-3.25, 0.5449)
(-3.2, 0.5231)
(-3.15, 0.5012)
(-3.1, 0.4877)
(-3.05, 0.4859)
(-3.0, 0.4923)
(-2.95, 0.4988)
(-2.9, 0.4969)
(-2.85, 0.4834)
(-2.8, 0.4615)
(-2.75, 0.4397)
(-2.7, 0.4262)
(-2.65, 0.4243)
(-2.6, 0.4308)
(-2.55, 0.4372)
(-2.5, 0.4354)
(-2.45, 0.4218)
(-2.4, 0.4)
(-2.35, 0.3782)
(-2.3, 0.3646)
(-2.25, 0.3628)
(-2.2, 0.3692)
(-2.15, 0.3757)
(-2.1, 0.3738)
(-2.05, 0.3603)
(-2.0, 0.3385)
(-1.95, 0.3166)
(-1.9, 0.3031)
(-1.85, 0.3012)
(-1.8, 0.3077)
(-1.75, 0.3141)
(-1.7, 0.3123)
(-1.65, 0.2988)
(-1.6, 0.2769)
(-1.55, 0.2551)
(-1.5, 0.2415)
(-1.45, 0.2397)
(-1.4, 0.2462)
(-1.35, 0.2526)
(-1.3, 0.2508)
(-1.25, 0.2372)
(-1.2, 0.2154)
(-1.15, 0.1936)
(-1.1, 0.18)
(-1.05, 0.1782)
(-1.0, 0.1846)
(-0.95, 0.1911)
(-0.9, 0.1892)
(-0.85, 0.1757)
(-0.8, 0.1538)
(-0.75, 0.132)
(-0.7, 0.1185)
(-0.65, 0.1166)
(-0.6, 0.1231)
(-0.55, 0.1296)
(-0.5, 0.1278)
(-0.45, 0.1144)
(-0.4, 0.093)
(-0.35, 0.0723)
(-0.3, 0.0619)
(-0.25, 0.0685)
(-0.2, 0.0975)
(-0.15, 0.1693)
(-0.1, 0.3092)
(-0.05, 0.5816)
(-0.0, 1.0)
(0.05, 1.4184)
(0.1, 1.6908)
(0.15, 1.8307)
(0.2, 1.9025)
(0.25, 1.9315)
(0.3, 1.9381)
(0.35, 1.9277)
(0.4, 1.907)
(0.45, 1.8856)
(0.5, 1.8722)
(0.55, 1.8704)
(0.6, 1.8769)
(0.65, 1.8834)
(0.7, 1.8815)
(0.75, 1.868)
(0.8, 1.8462)
(0.85, 1.8243)
(0.9, 1.8108)
(0.95, 1.8089)
(1.0, 1.8154)
(1.05, 1.8218)
(1.1, 1.82)
(1.15, 1.8064)
(1.2, 1.7846)
(1.25, 1.7628)
(1.3, 1.7492)
(1.35, 1.7474)
(1.4, 1.7538)
(1.45, 1.7603)
(1.5, 1.7585)
(1.55, 1.7449)
(1.6, 1.7231)
(1.65, 1.7012)
(1.7, 1.6877)
(1.75, 1.6859)
(1.8, 1.6923)
(1.85, 1.6988)
(1.9, 1.6969)
(1.95, 1.6834)
(2.0, 1.6615)
(2.05, 1.6397)
(2.1, 1.6262)
(2.15, 1.6243)
(2.2, 1.6308)
(2.25, 1.6372)
(2.3, 1.6354)
(2.35, 1.6218)
(2.4, 1.6)
(2.45, 1.5782)
(2.5, 1.5646)
(2.55, 1.5628)
(2.6, 1.5692)
(2.65, 1.5757)
(2.7, 1.5738)
(2.75, 1.5603)
(2.8, 1.5385)
(2.85, 1.5166)
(2.9, 1.5031)
(2.95, 1.5012)
(3.0, 1.5077)
(3.05, 1.5141)
(3.1, 1.5123)
(3.15, 1.4988)
(3.2, 1.4769)
(3.25, 1.4551)
(3.3, 1.4415)
(3.35, 1.4397)
(3.4, 1.4462)
(3.45, 1.4526)
(3.5, 1.4508)
(3.55, 1.4372)
(3.6, 1.4154)
(3.65, 1.3936)
(3.7, 1.38)
(3.75, 1.3782)
(3.8, 1.3846)
(3.85, 1.3911)
(3.9, 1.3892)
(3.95, 1.3757)
   };

\end{tikzpicture}
      \caption{
         In the above figure we depict a typical situation for a slice in $A_{3}$, with $|s_{0}-t_{0}| \leq \delta_0$, $\Mi{s_0}{t_0 + \eta_0 - \delta_0}{j}\leq \frac{17}{16}$ and $\Mi{s_0-\eta_0+\delta_0}{t_0}{j}\leq \frac{17}{16}$. 
      }
      \label{fig:3}
   \end{figure}
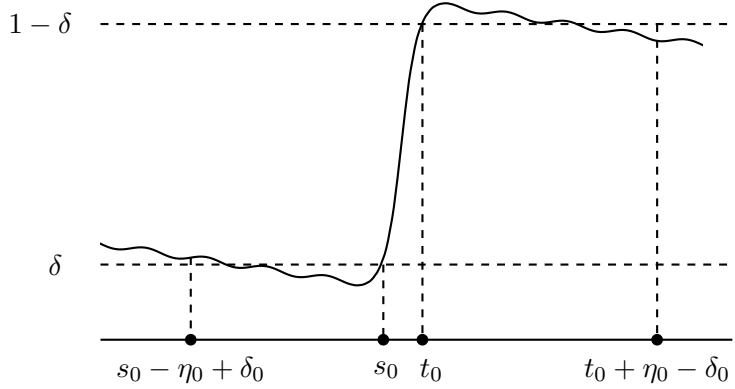

   \begin{equation*}
      \begin{split}
         u_{x_j^\perp}(s) \leq \frac38.
      \end{split}
   \end{equation*}

   Hence, for every $s\in[s_{0}-\eta_0 + \delta_{0},s_{0}]$ and every $t\in [t_{0}, t_{0}+\eta_0 - \delta_{0}] $ we have that  
   \begin{equation}\label{eq:gstr1}
      |u_{x_j^\perp}(t) - u_{x_j^\perp}(s)| \geq \frac14.
   \end{equation}

   Recalling~\eqref{eq:f}
   and using~\eqref{eq:gstr1} we have that
   \begin{equation*}
      \begin{split} 
         \int_{0}^L\int_{\R}&f(u, x_{j}^{\perp},x_{j}, \zeta_{j}^\perp,\zeta_j) \, K_\tau(\zeta)\d\zeta_j\dx_j\geq
         \int_{s_0 - \eta_0 +\delta_0}^{s_0}\int_{t_0-x_j}^{t_{0}-x_j+\eta_0 - \delta_0}
         f(u, x_{j}^{\perp},x_{j}, \zeta_{j}^\perp,\zeta_j)
         K_\tau(\zeta)\d\zeta_j\dx_j\\&
         \geq 
         \int_{s_0 - \eta_0 +\delta_0}^{s_0}\int_{t_0-x_j}^{t_{0}-x_j+\eta_0 - \delta_0}\Big(\frac{1}{4}- \big[ u(x_j^\perp + \zeta_j^\perp + x_j e_j ) - u(x_j^\perp + \zeta_j^\perp + x_je_j+ \zeta_j e_j)  \big]\Big)^2 K_{\tau}(\zeta)\d\zeta_j\dx_j\\ 
         &\geq
         \int_{s_0 - \alpha}^{s_0}\int_{t_0-x_j}^{t_{0}-x_j+\alpha}\Big(\frac14 - \big[ u(x_j^\perp + \zeta_j^\perp + x_j e_j ) - u(x_j^\perp + \zeta_j^\perp +  x_je_j+\zeta_j e_j)  \big]\Big)^2 K_{\tau}(\zeta)\d\zeta_j\dx_j
      \end{split}
   \end{equation*}
   where $\alpha < \eta_0 - \delta_0\leq\frac23\eta_0$ (to be chosen later). Integrating over $\zeta^\perp_j < \varepsilon$, and using the notation~\eqref{eq:slicingMathCalI}     we have that
   \begin{equation*}
      \begin{split}
         \overline{\Ical}^j_{\tau}&(u,x_j^\perp,[0,L)) \geq\\
         &\geq \frac{1}{d}\int_{\{ |\zeta_{j}^\perp| < \alpha \}} \int_{s_{0}-\alpha}^{s_{0}} \int_{t_0-x_j}^{t_0-x_j + \alpha} \Big(\frac14- \big[ u(x_j^\perp + \zeta_j^\perp + x_j e_j ) - u(x_j^\perp + \zeta_j^\perp +  x_je_j+\zeta_j e_j)  \big]\Big)^2 K_{\tau}(\zeta)\d\zeta \dx
         \\ &\geq\frac{1}{d}
         \int_{\{ |\zeta_{j}^\perp| < \alpha \}} \int_{s_{0}-\alpha}^{s_{0}} \int_{t_0-x_j}^{t_0-x_j + \alpha}  \frac{\Big(\frac14- \big[ u(x_j^\perp + \zeta_j^\perp + x_j e_j ) - u(x_j^\perp + \zeta_j^\perp +x_je_j+  \zeta_j e_j)  \big]\Big)^2}{(3\alpha+\delta_0 + \tau^{1/\beta})^{p}}\d\zeta \dx
      \end{split}
   \end{equation*}
   since $|\zeta_j|\leq2\alpha+\delta_0$ in the above integral. 

   By Lemma~\ref{lemma:int}, if $\delta_0\leq\alpha$ and  $\bar{\sigma}$ in~\eqref{eq:us} is  sufficiently small one has that
   \begin{equation*}
      \begin{split}
         \int_{\{ |\zeta_{j}^\perp| < \alpha \}} \int_{s_{0}-\alpha}^{s_{0}} \int_{t_0-x_j}^{t_0-x_j + \alpha} \Big(\frac14- \big[ u(x_j^\perp + \zeta_j^\perp + x_j e_j ) - u(x_j^\perp + \zeta_j^\perp +x_je_j + \zeta_j e_j)  \big]\Big)^2 \d\zeta_j\dx_j\d\zeta_j^\perp > \frac{1}{8}\alpha^{d+1}.
      \end{split}
   \end{equation*}
   Moreover assuming that $0<\tau\leq\tau_3$ is such that  $\alpha \geq \tau_3^{1/\beta}$ one has that ${1}/{(\tau^{1/\beta} + 3\alpha+\delta_0)^p} \geq 1/(5\alpha)^{p}$. 
   Thus since $p\geq d+2$, one has that $\overline{\Ical}^j_{\tau}(u, x_{j}^\perp, [0,L)) \geq \frac{1}{d5^p\alpha}$. 

   To conclude it is sufficient to observe that by Corollary~\ref{cor:claim2}, provided $\bar{\eta}$ is small enough
   \begin{equation*}
      -\Mi{0}{L}{j}+\Gcal{j}+\overline{\mathcal I}^j_{\tau}(u,x_j^\perp,[0,L))\geq  -  \frac{17}{16}\frac{L}{\bar{\eta}}  + \frac{1}{d5^p\alpha}, 
   \end{equation*} 
  thus by taking $\alpha $ such that 
   \begin{equation}\label{eq:5.31bis}
   -\frac{17}{16}\frac{L}{\bar{\eta}}+\frac{1}{d5^p\alpha}>0
   \end{equation} we have the desired claim. 
\end{proof}


\section{One-dimensional problem}
\label{sec:1d}

Let $L>0$, let $u\in W^{1,2}_{\loc}(\R^d;[0,1])$ be an $L$-periodic  one-dimensional function, namely $u(x) = g(x_i)$ for some $i \in \{1, \dots, d\}$ with $g$ $L$-periodic and $\gamma\geq1$ a measurable $L$-periodic function. We define the following one-dimensional functional

 \begin{align}\label{eq:functgamma0}
	\Fcal^{1d}_{\tau,L,\eps}(\gamma,g)=\frac{3(C_\tau-1)}{L}\int_0^L\Bigl[\eps\tau^{1/\beta}|g'(x)|^2\gamma(x)+\frac{W(g(x))}{\eps\tau^{1/\beta}\gamma(x)}\Bigr]\dx-\frac1L\int_0^L\int_{ \R}|g(x)-g(x+z)|^2\widehat K_\tau(z)\dz\dx,
\end{align} 
where 
\begin{equation}
	\label{eq:ctau}
	C_\tau=\int_{\R}\widehat{K}_\tau(\rho)|\rho|\d\rho.
\end{equation}

Notice that $\Fcalt^{1d}(1,g) = \Fcalt(u)$. We will need to consider the functional for general $\gamma\geq1$ in the proof of Theorem \ref{Thm:1}. In Section \ref{subsec:micoef} we will show that when $L=2kh^*_{\tau,\eps}$ the functional in \eqref{eq:functgamma0} is minimized when $\gamma=1$ a.e. (see Theorem \ref{thm:onedimmin}).

   For any $h > 0$, let $\mathcal{C}_h = \{g \in W^{1,2}([0,h];[0,1]): g \geq \frac{1}{2}, g(0)=g(h)=1/2 \}$.    We define also $\Gamma_h=\{\gamma:\R\to[1,+\infty]:\,
   \gamma(x+kh)=\gamma((k+1)h-x),\quad\forall\,k\in\N\}$.
   
    Given $g\in\mathcal C_h$ let us define the periodic reflection of $g$ as follows
  \begin{equation*}
      \begin{split}
         \varphi_{h}[g](x) = \begin{cases}
            g(x) &\text{if }x\in [0,h]\\
            1- g(2h- x) &\text{if }x\in [h, 2h]\\
            \varphi_h[g](y) &\text{where $y\in[0,2h]$ such that }x = y + 2kh\text{ with $k\in\Z$}.
         \end{cases}
      \end{split}
   \end{equation*}
Moreover, for all $\gamma:[0,h]\to[1,+\infty]$, let
\begin{equation*}
	\varphi_h[\gamma](x)= \begin{cases}
		\gamma(x) &\text{if }x\in [0,h]\\
		\gamma(x+kh)=\gamma((k+1)h-x)\quad&\forall\,k\in\N.
	\end{cases}
\end{equation*}

   Whenever clear from the context we will drop the $h$ from the index and write $\varphi[g](x)$ instead of $\varphi_{h}[g](x)$ and $\varphi[\gamma](x)$ instead of $\varphi_{h}[\gamma](x)$.


   \subsection{{Existence of an optimal period}}
   \label{ssec:1d}

Using the identity $|g(s) - g(s + \rho)|^2 = |(g(s) -\frac{1}{2})  - (g(s + \rho) - \frac{1}{2})|^2   $ we rearrange the above functional in the following way. 

Expanding the quadratic part in the nonlocal term, we have that
\begin{equation*}
	\begin{split}
		\Fcal_{\tau,L,\varepsilon}^{1d}(\gamma, g) = &\frac{3(C_\tau-1)}{L}\int_{0}^{L} \eps\tau^{1/\beta}\gamma(x)|g'(x)|^{2} \dx+ \frac{3(C_\tau-1)}{L}\int_{0}^{L}\frac{W(g(x))}{\eps\tau^{1/\beta}\gamma(x)}\dx\\
		&
		- 2 \int_{0}^{L}\int_{\R} \Bigl|g(x)-\frac{1}{2}\Bigr|^{2}  \widehat K_\tau(x-y)\dy\dx \\ &+ \frac{2}{L}\int_{0}^{L}\int_{\R}\Bigl(g(x)-\frac{1}{2}\Bigr)\Bigl(g(y)-\frac{1}{2}\Bigr)\widehat K_\tau(x-y)\dy\dx,\label{eq:dec12}
	\end{split}
\end{equation*}
where in the above  we have used
\begin{equation*}
	\begin{split}
		\int_{0}^{L}\int_{\R} \Bigl(g(y)-\frac12\Bigr)^{2}\widehat K_\tau(x-y) \dy\dx = \int_{0}^{L}\int_{\R}\Bigl( g(x)-\frac12\Bigr)^{2}\widehat K_\tau(x-y) \dy\dx.
	\end{split}
\end{equation*}
Indeed, by the periodicity of $g$ we have that
\begin{equation*}
	\begin{split}
		\int_{0}^{L}\int_{\R} \Bigl( g(y) - \frac{1}{2} \Bigr)^{2}\widehat K_\tau(x-y) \dy\dx &= \sum_{k\in \Z}\int_{0}^{L}\int_{0}^{L} \Bigl( g(y+kL) - \frac{1}{2} \Bigr)^{2}\widehat K_\tau(x-y - kL) \dy\dx \\
		\\ &= \sum_{k\in \Z}\int_{0}^{L}\int_{0}^{L} \Bigl( g(y) - \frac{1}{2} \Bigr)^{2}\widehat K_\tau(x-y - kL) \dy\dx
		\\ &= \sum_{k\in \Z}\int_{0}^{L}\int_{0}^{L} \Bigl( g(x)-\frac{1}{2} \Bigr)^{2}\widehat K_\tau(x-y - kL) \dy\dx
		\\ &= \int_{0}^{L}\int_{\R} \Bigl(g(x) - \frac{1}{2}\Bigr)^{2}\widehat K_\tau(x-y) \dy\dx
	\end{split}
\end{equation*}

For simplicity of notation let us denote the local part in \eqref{eq:dec12} as $A(\gamma,g,x)$. Thus the functional is written as
\begin{equation*}
	\begin{split}
		\Fcal_{\tau,L, \varepsilon}^{1d}(\gamma, g) = \frac{1}{L}\int_{0}^{L} A(\gamma, g,x)\dx + \frac{2}{L}\int_{0}^{L}\int_{\R}\Bigl(g(x) - \frac{1}{2}\Bigr) \Bigl(g(y) - \frac{1}{2}\Bigr) \widehat K_\tau(x-y)\dx\dy
	\end{split}
\end{equation*}
where  the function $A(\gamma,g,x)$ depends only on the values  $\gamma(x)$ and $g(x)$. 

Similarly to \cite{GLL1D} one has that 

\begin{align}\label{eq:barctaueps}
	\bar C^*_{\tau,\eps}&=\inf_L\inf_{\gamma,g\text{ $L$-per.}}\Fcal_{\tau,L, \varepsilon}^{1d}(\gamma,g)\notag\\
	&=\lim_{N\to\infty}\inf_{g\in W^{1,2}([-NL,NL]),\gamma\geq1}\bar\Fcal^{1d}_{\tau,[-NL,NL], \varepsilon}(\gamma,g)\notag\\
	&=\lim_{N\to\infty}\inf_{g\in W^{1,2}([-NL,NL]),g(-NL)=g(NL)=\frac12,\gamma\geq1}\bar\Fcal^{1d}_{\tau,[-NL,NL], \varepsilon}(\gamma,g),
\end{align}
where
\begin{align}\label{eq:NL}
	\bar\Fcal^{1d}_{\tau,[-NL,NL], \varepsilon}(\gamma,g)=\frac{1}{2NL}\int_{-NL}^{NL} A(\gamma, g,x)\dx + \frac{1}{NL}\int_{-NL}^{NL}\int_{-NL}^{NL}\Bigl(g(x) - \frac{1}{2}\Bigr) \Bigl(g(y) - \frac{1}{2}\Bigr) \widehat K_\tau(x-y)\dx\dy
\end{align}
Thus if one is interested to find functions ${\gamma},g$ which realize the value $\bar C^*_{\tau,\eps}$ then one can consider free boundary conditions or Dirichlet boundary conditions instead of $L$-periodicity. In particular, free boundary conditions will be convenient in the following argument, involving right and left reflections of the functions $g$ and $\gamma$ where periodicity is not preserved.

Similarly to \cite{GLL1D} one can show, by using the reflection positivity of the kernel $\widehat K_\tau$, that the following holds:

\begin{align}\label{eq:per0}
	\bar C^*_{\tau,\eps}=\inf_h\bar C_{\tau,\eps}(h), \quad\text{where}\quad \bar C_{\tau,\eps}(h)=\inf_{g\in \mathcal C_h,\gamma\in\Gamma_h}\lim_{N\to+\infty}\bar\Fcal^{1d}_{\tau,[-NL,NL], \varepsilon}(\varphi[\gamma],\varphi[g]).
\end{align}

Let us resume, for completeness, the main ideas of the proof of \eqref{eq:per0}.

Let $x_{0}$ be such that $g(x_{0}) = 1/2$.  We define the left and right reflections $\theta^{l}_{x_{0}}$ and $\theta^{r}_{x_{0}}$ in the following way
\begin{equation}
	\label{def:lr_odd_reflections}
	\begin{split}
		\theta^{l}_{x_{0}} g(x) &= \begin{cases}
			g(x) &\text{if $x \leq x_0$}\\
			1-g(2x_{0}- x) &\text{otherwise}
		\end{cases} \qquad\text{and}\qquad
		\bar\theta^{l}_{x_0}\gamma(x) = \begin{cases}
			\gamma(x)  &\text{if }x\leq x_{0}\\
			\gamma(2x_{0}- x)  &\text{otherwise}
		\end{cases}\\
		\theta^{r}_{x_{0}} g(x) &= \begin{cases}
			g(x) &\text{if $x \geq x_0$}\\
			1-g(2x_{0}- x) &\text{otherwise}
		\end{cases} \qquad\text{and}\qquad
		\bar\theta^{r}_{x_0}\gamma(x) = \begin{cases}
			\gamma(x)  &\text{if }x\geq x_{0}\\
			\gamma(2x_{0}- x)  &\text{otherwise.}
		\end{cases}
	\end{split}
\end{equation}

The main property of a reflection positive kernel is that either the right or the left reflection does not increase the energy of the nonlocal part. Namely
\begin{align}
		\int_{-NL}^{NL}\int_{-NL}^{NL} \Bigl(g(x)-\frac12\Bigr)&\Bigl(g(y)-\frac12\Bigr)\widehat K_\tau(x-y)\dx\dy \geq\notag\\
		&\geq
		\frac{1}{2} \int_{-NL}^{2 x_{0}+NL}\int_{-NL}^{2x_0+NL} \Bigl(\theta^{l}_{x_{0}} g(x)-\frac12\Bigr) \Bigl(\theta^{l}_{x_{0}} g(y)-\frac12\Bigr) \widehat K_\tau(x-y) \dx\dy\notag\\
		& +\frac{1}{2} \int_{2x_{0}-NL}^{NL}\int_{2x_0-NL}^{NL} \Bigl(\theta^{r}_{x_{0}} g(x)-\frac12\Bigr) \Bigl(\theta^{r}_{x_{0}} g(y)-\frac12\Bigr)\widehat  K_\tau(x-y)\dx\dy.	\label{eq:basic_rp_inequality_nl}
\end{align}

For the local part it is not difficult to see that, due to the locality of the function $A(\gamma,g,x)$ and  the symmetry w.r.t. $1/2$ of the double well potential, it holds
\begin{equation}
	\int_{-NL}^{NL}A(\gamma,g,x)\dx=\frac12\int_{-NL}^{2x_0+NL} A(\bar \theta ^l_{x_0}\gamma, \theta^l_{x_0}g, x)\dx+\frac12\int_{2x_0-NL}^{NL} A(\bar \theta ^r_{x_0}\gamma, \theta^r_{x_0}g, x)\dx.
\end{equation}

Suppose now that $g = (g_{1},g_{2},\ldots, g_{m})$, $\gamma=(\gamma_1,\ldots,\gamma_m)$. With the above notation we mean that there exist $x_{1}<\dots< x_{m+1}$ such that $g(x_{i})=1/2$ and for every $x\in (x_{k},x_{k+1})$ either $g(x)\geq 1/2$ or $g(x)\leq 1/2$ (and $g_k=g_{|_{[x_k,x_{k+1}]}}$, $\gamma_k=\gamma_{|_{[x_k,x_{k+1}]}}$). Then, the following \emph{chessboard estimate} holds
\begin{equation}\label{eq:chess}
	\bar \Fcal^{1d}_{\tau,[x_1,x_{m+1}],\eps}(\gamma,g)\geq\sum_{k=1}^m|x_{k+1}-x_k|\lim_{N\to\infty}\bar \Fcal^{1d}_{\tau,[-NL,NL],\eps}(\varphi[g_k], \varphi[\gamma_k]).
\end{equation}
The proof of \eqref{eq:chess} can be obtained by induction directly from \eqref{eq:basic_rp_inequality_nl} and the analogous inequality (which in that case is indeed an equality) for the local term in \eqref{eq:NL} for the reflections of the functions $g_k,\gamma_k$. 

Once \eqref{eq:barctaueps} is given, the proof of \eqref{eq:per0} reduces to show that 
\begin{equation}
	\liminf_{N\to\infty}\inf_{g\in W^{1,2}([-NL,NL]),g(-NL)=g(NL)\geq\frac12,\gamma\geq1}\bar \Fcal^{1d}_{\tau,[-NL,NL],\eps}(\gamma,g)\geq \inf_h\bar C_{\tau,\eps}(h).
\end{equation}

For this purpose, the chessboard estimate \eqref{eq:chess} is the fundamental ingredient. Indeed, given $N$ and $L$, let $g$ be any function in $W^{1,2}([-NL,NL])$ with $g(-NL)=g(NL)=\frac12$ and let us denote by $x_1=-NL<\dots<x_m=NL$ the points such that $g(x_k)=\frac12$ (if $g$ is identically equal to $\frac12$ one an interval $[a,b]$, let $x_k=a$ and $x_{k+1}=b$). Let  $g_k=g_{|_{[x_k,x_{k+1}]}}$, $\gamma_k=\gamma_{|_{[x_k,x_{k+1}]}}$. By construction, either $g_k\geq\frac12$ or $g_k\leq\frac12$. By the chessboard estimate \eqref{eq:chess}
	\begin{equation}\label{eq:chess1}
		\bar \Fcal^{1d}_{\tau,[-NL,NL],\eps}(\gamma,g)\geq\sum_{k=1}^m|x_{k+1}-x_k|\lim_{\bar N\to\infty}\bar \Fcal^{1d}_{\tau,[-\bar NL,\bar NL],\eps}(\varphi[g_k], \varphi[\gamma_k]).
	\end{equation}
Then notice that, for every $k$,
\begin{equation}
	\lim_{\bar N\to\infty}\bar \Fcal^{1d}_{\tau,[-\bar NL,\bar NL],\eps}(\varphi[g_k], \varphi[\gamma_k])\geq \inf_h\bar C_{\tau,\eps}(h),
\end{equation}  
since $g_k\in\mathcal C_{|x_{k+1}-x_k|}$ and $\gamma_k\in\Gamma_{|x_{k+1}-x_k|}$.

By \eqref{eq:per0},  $\bar C^*_{\tau,\eps}$ is attained on functions $g$ of the following type: either always bigger or always smaller than $1/2$ or periodic of some finite period $2h^*_{\tau,\eps}$ obtained reflecting functions $g\in\mathcal C_{h^*_{\tau,\eps}}$,
namely of the form $\varphi_{h^*_{\tau,\eps}}[g]$. Moreover, $\gamma\in\Gamma_{h^*_{\tau,\eps}}$. In particular, if $L=2kh^*_{\tau,\eps}$ for some $k\in\N$ then minimizers $g$ of $\Fcal^{1d}_{\tau,L,\eps}$ are either always bigger or always smaller than $1/2$ or periodic of period $2h^*_{\tau,\eps}$ of the form $\varphi_{h^*_{\tau,\eps}}[g]$.
In principle, the admissible periods $2h^*_{\tau,\eps}$ for such minimizers might not be unique.

If $\tau$ is sufficiently small, in our case we are indeed able to exclude the first scenario (namely non-existence of a finite period with minimizers $g$ always above or below $1/2$). More precisely, we have  the following 

\begin{proposition}\label{lemma:period}
	If $\tau$ is sufficiently small, a function $g$ satisfying 
	\[
	g\geq\frac12\quad\text{or}\quad g\leq\frac12
	\]
	cannot be a minimizer in~\eqref{eq:barctaueps}.
	
	In particular,  there exists $\bar{\eps}$ and $\bar{\tau}>0$ such that for any $0<\eps\leq\bar{\eps}$, $0<\tau\leq\bar{\tau}$, $k\in\N$ and $L=2kh^*_{\tau,\eps}$ then minimizers $(\gamma,g)$ of $\Fcal^{1d}_{\tau,L, \varepsilon}$  among $L$-periodic functions are such that: $g$ is periodic of period $2h^*_{\tau,\eps}$ and of the form $\varphi[g_{h^*_{\tau,\eps}}]$ for some $g_{h^*_{\tau,\eps}}\in\mathcal C_{h^*_{\tau,\eps}}$, while $\gamma\in\Gamma_{h^*_{\tau,\eps}}$.
\end{proposition}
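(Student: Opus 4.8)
The plan is to reduce the whole statement to the first assertion. Indeed, once we know that a minimizer cannot satisfy $g\ge\tfrac12$ or $g\le\tfrac12$, the ``in particular'' part is immediate from the structure result recalled in the discussion preceding the statement (which follows from \eqref{eq:per0} and the chessboard estimate \eqref{eq:chess}): for $L=2kh^*_{\tau,\eps}$ every minimizer $(\gamma,g)$ of $\Fcal^{1d}_{\tau,L,\eps}$ among $L$-periodic functions is either of the excluded non-oscillating type or of the form $g=\varphi_{h^*_{\tau,\eps}}[g_{h^*_{\tau,\eps}}]$ with $g_{h^*_{\tau,\eps}}\in\mathcal C_{h^*_{\tau,\eps}}$ and $\gamma\in\Gamma_{h^*_{\tau,\eps}}$, so ruling out the first option leaves exactly the claimed form (and forces $h^*_{\tau,\eps}<+\infty$). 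To prove the first assertion one shows that $\Fcal^{1d}_{\tau,L,\eps}(\gamma,g)\ge0$ for every $\gamma\ge1$ and every $L$-periodic $g$ with $g\ge\tfrac12$ or $g\le\tfrac12$ (the same computation applies to the truncated functionals entering \eqref{eq:barctaueps}, up to boundary terms negligible as $N\to\infty$), while $\bar C^*_{\tau,\eps}<0$; since a minimizer realizes $\bar C^*_{\tau,\eps}$, it cannot be of that form. First I would eliminate $\gamma$: by Young's inequality $\eps\tau^{1/\beta}\gamma(x)|g'(x)|^2+\frac{W(g(x))}{\eps\tau^{1/\beta}\gamma(x)}\ge 2|g'(x)|\sqrt{W(g(x))}=\tfrac13|(\omega\circ g)'(x)|$, with $\omega$ as in \eqref{eq:stimaOmega}, so from \eqref{eq:functgamma0}
\[
   \Fcal^{1d}_{\tau,L,\eps}(\gamma,g)\ \ge\ \frac{C_\tau-1}{L}\int_0^L|(\omega\circ g)'(x)|\dx-\frac1L\int_0^L\int_\R|g(x)-g(x+z)|^2\widehat K_\tau(z)\dz\dx .
\]

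Next comes the key estimate, valid precisely under the sign condition on $g$. If $g\ge\tfrac12$ (the case $g\le\tfrac12$ being symmetric) then $|g(x)-g(y)|\le\tfrac12$ for all $x,y$; by Remark~\ref{lemma:omegaab}, $\omega(a)-\omega(b)=\bigl(\tfrac{6b(1-a)}{a-b}+3-2(a-b)\bigr)(a-b)^2\ge2(a-b)^2$ whenever $0\le b\le a\le1$ and $a-b\le\tfrac12$ (here $1-a\ge0$ makes the first summand nonnegative and $a-b\le\tfrac12$ gives $3-2(a-b)\ge2$). Hence $|g(x)-g(x+z)|^2\le\tfrac12|\omega(g(x))-\omega(g(x+z))|\le\tfrac12\int_x^{x+z}|(\omega\circ g)'(s)|\ds$ for $z>0$ (symmetrically for $z<0$), and integrating in $x\in[0,L)$ and using Remark~\ref{lemma:simple_lemma_periodicity} with $f=|(\omega\circ g)'|$ gives $\int_0^L|g(x)-g(x+z)|^2\dx\le\tfrac{|z|}{2}\int_0^L|(\omega\circ g)'(x)|\dx$; multiplying by $\widehat K_\tau(z)\ge0$, integrating in $z$, and recalling $C_\tau=\int_\R|z|\widehat K_\tau(z)\dz$ (see \eqref{eq:ctau}) yields $\int_0^L\int_\R|g(x)-g(x+z)|^2\widehat K_\tau(z)\dz\dx\le\tfrac{C_\tau}{2}\int_0^L|(\omega\circ g)'(x)|\dx$. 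Combining with the previous display,
\[
   \Fcal^{1d}_{\tau,L,\eps}(\gamma,g)\ \ge\ \frac1L\Bigl(\frac{C_\tau}{2}-1\Bigr)\int_0^L|(\omega\circ g)'(x)|\dx .
\]

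To close, one uses that $C_\tau\to+\infty$ as $\tau\to0$ (this is where $p\ge d+1$ enters; cf.\ Remark~\ref{rem:5.3}), so there is $\bar\tau>0$ with $C_\tau\ge2$ for $0<\tau\le\bar\tau$, whence $\Fcal^{1d}_{\tau,L,\eps}(\gamma,g)\ge0$ for all such $(\gamma,g)$, uniformly in $\eps$. On the other hand, as in \cite{DR} and \cite{GLL1D}, for $\eps,\tau$ sufficiently small the one-dimensional diffuse functional attains a strictly negative value, so $\bar C^*_{\tau,\eps}\le C^*_{\tau,\eps}<0$ (the first inequality because $\gamma=1$ is admissible and $\Fcal^{1d}_{\tau,L,\eps}(1,g)=\Fcal_{\tau,L,\eps}(u)$). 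Therefore no $g$ with $g\ge\tfrac12$ or $g\le\tfrac12$ can realize $\bar C^*_{\tau,\eps}$, which is the first assertion, and the ``in particular'' part follows as explained. The main obstacle is getting the sharp constant $2$ in the estimate above: the generic bound $\omega(a)-\omega(b)\ge|a-b|^2$ (which only uses $3-2t\ge1$) merely gives $\Fcal^{1d}_{\tau,L,\eps}(\gamma,g)\ge-\frac1L\int_0^L|(\omega\circ g)'|$, which is useless, whereas the sign condition forces $|a-b|\le\tfrac12$ and hence $3-2|a-b|\ge2$, turning the coefficient positive; everything else is bookkeeping, modulo the two external inputs $\bar C^*_{\tau,\eps}<0$ and $C_\tau\to\infty$ taken from \cite{DR} and Remark~\ref{rem:5.3}.
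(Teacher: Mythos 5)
Your argument is correct and is essentially the same as the paper's: Young's inequality to eliminate $\gamma$, the observation that $g\ge\tfrac12$ (or $g\le\tfrac12$) forces $|g(x)-g(y)|\le\tfrac12$ and hence, via Remark~\ref{lemma:omegaab}, the improved bound $\omega(a)-\omega(b)\ge2(a-b)^2$, combined with the blow-up $C_\tau\to\infty$ and the strict negativity of the optimal value. The paper organizes the bookkeeping as a two-term split $3(C_\tau-1)=3(\tfrac12C_\tau-1)+\tfrac32C_\tau$ with each piece shown nonnegative, whereas you combine everything into a single lower bound $\tfrac1L(\tfrac{C_\tau}{2}-1)\int_0^L|(\omega\circ g)'|$, but these are the same estimate; you also correctly fix the constant in $6|g'|\sqrt{W(g)}=|(\omega\circ g)'|$, which appears with a typo in the paper.
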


\begin{proof} Assume $g\in W^{1,2}_{\mathrm{loc}}(\R;[0,1])$ $L$-periodic satisfies $g\geq1/2$. Hence, for all $x,y\in \R$ it holds  $|g(x)-g(y)|\leq1/2$ and by~\eqref{eq:omegaab}
	\begin{equation}
      \label{eq:ineqomega}
	\frac12[\omega(g(x))-\omega(g(y))]\geq|g(x)-g(y)|^2.
	\end{equation}
Now observe that
  \begin{equation}\label{eq:partpos2}
|g'(s)|^2\gamma(s)+\frac{W(g(s))}{\gamma(s)}\geq2|g'(s)|\sqrt{W(g(s))}=|(\omega\circ g)'|(s).
\end{equation}
	Therefore, as in Step 3 of Proposition~\ref{prop:stability}
	\begin{align}
	\Fcalo(\gamma,g)&=\frac3L\Bigl(\frac12C_\tau-1\Bigr)\int_{0}^{L} \eps\tau^{1/\beta}\gamma(x)|g'(x)|^{2} +\frac{W(g(x))}{\eps\tau^{1/\beta}\gamma(x)}\dx\label{eq:term1}\\
	&+\frac{3}{2L}C_\tau\int_{0}^{L} \eps\tau^{1/\beta}\gamma(x)|g'(x)|^{2} +\frac{W(g(x))}{\eps\tau^{1/\beta}\gamma(x)}\dx-\frac1L\int_{\R}\int_0^L|g(x)-g(x+z)|^2\widehat K_\tau(z)\dx\dz\label{eq:term2bis1}
	\end{align}
	where~\eqref{eq:term1} is positive if $\tau>0$ is small (since $C_\tau\sim\frac{1}{\tau}$) and~\eqref{eq:term2bis1} is positive by~\eqref{eq:partpos2} and~\eqref{eq:ineqomega} as in \eqref{eq:partpos0}.

	\end{proof}


\subsection{Minimal coefficients}
\label{subsec:micoef}

Let us now consider the functional introduced in \eqref{eq:functgamma0}. Namely,
\begin{align}
  \label{eq:f1diml}
 \Fcalt^{1d}(\gamma, g) := \frac{3(C_\tau- 1)}{L} \int_{0}^{L} \Big(\varepsilon\tau^{1/\beta} \gamma(x) |g'(x)|^{2} + \frac{W(g(x))}{\gamma(x)\varepsilon\tau^{1/\beta}} \Big ) \dx\notag\\ - \frac1L\int_{0}^{L}\int_{\R} (g(x) - g(y))^2\Khattau(x-y)\dy\dx,
\end{align}
where $\gamma:\R\to[1,+\infty]$ is a measurable $L$-periodic function and $g\in W^{1,2}_{\mathrm{loc}}(\R;[0,1])$ is an $L$-periodic function such that $\gamma |g'|^2\in L^1_{\mathrm{loc}}(\R)$.
  Thus we have that  $\Fcalt^{1d}(1,g)=\Fcalt^{1d}(g)$.

{As proved in Section \ref{ssec:1d}, whenever $L=2kh^*_{\tau,\eps}$, for some $k\in\N$ and for some admissible optimal period $h^*_{\tau,\eps}$, minimizers $(\gamma,g)$  are periodic of period $2h^*_{\tau,\eps}$.

Moreover, the following holds: for every $k\in\N\cup \{0\}$, $x\in[0,h^*_{\tau,\eps}]$
}
\begin{align}
\gamma(x+kh^*_{\tau,\eps})&=\gamma((k+1)h^*_{\tau,\eps}-x),\label{eq:gammarefl}\\
g((2k+1)h^*_{\tau,\eps}+x)&=1-g((2k+1)h^*_{\tau,\eps}-x).\label{eq:grefl}
\end{align} 
In particular, $g$ is of the form $\varphi[g]$ for some profile $g\in\mathcal C_{h^*_{\tau,\eps}}$ and $\gamma\in\Gamma_{h^*_{\tau,\eps}}$.

From now onwards we fix one of the optimal periods $2h^*_{\tau,\eps}$ and we set for  simplicity of notation $h=h^*_{\tau,\eps}$.
In order to study minimizers of~\eqref{eq:f1diml} we can thus reduce to study the following functional

\begin{equation} 
\label{eq:onedimmin}
\begin{split}
F_{\tau,\eps}(\gamma, g) := \frac{3(C_\tau- 1)}{2h}  \int_{0}^{2h}\Big(\varepsilon\tau^{1/\beta} \gamma(x) |g'(x)|^{2} + \frac{W(g(x))}{\gamma(x)\varepsilon\tau^{1/\beta}} \Big) \dx\\ - \frac{1}{2h}\int_{0}^{2h}\int_{\R} (g(x) - \varphi[g](y))^2\Khattau(x-y)\dy\dx,
\end{split}
\end{equation}

  where $\gamma:\R\to[1,+\infty]$ is a $2h$-periodic measurable function such that $\gamma\in\Gamma_h$ as in~\eqref{eq:gammarefl}  and $g\in \mathcal C_h$ is such that $\gamma |g'|^2\in L^1([0,h])$. In particular, $F_{\tau,\eps}(\gamma,g)=\Fcal^{1d}_{\tau,2h,\eps}(\gamma,g)$. {We omit the explicit dependency of $F_{\tau,\eps}$ on $h$ since $h=h^*_{\tau,\eps}$ will be fixed for the rest of this section.   }

The function $\varphi[g]$ is defined through reflection of $g$ as in~\eqref{eq:grefl}. From now onwards we will set for simplicity of notation $\varphi[g]=g$, being clear that in this section we will consider reflections of functions in $\mathcal C_h$.

The aim of this section is to prove the following 
\begin{theorem}\label{thm:onedimmin}
  Let $F_{\tau,\eps}=F_{\tau,\eps}(\gamma,g)$ be the functional in~\eqref{eq:onedimmin}.
  Then there exist $\eps_2>0$, $\tau_2>0$ such that whenever $0<\eps\leq\eps_2$ and $0<\tau\leq\tau_2$ it holds $\inf_{g\in \mathcal C_h}F_{\tau,\eps}(\gamma,g) \geq\inf_{g\in \mathcal C_h}F_{\tau,\eps}(1,g)$ for all $\gamma\geq1$ and $g$ as above.
\end{theorem}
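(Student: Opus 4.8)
The plan is to prove the stronger statement that every minimizer $(\gamma^*,g^*)$ of $F_{\tau,\eps}$ over $\Gamma_h\times\mathcal C_h$ satisfies $\gamma^*=1$ a.e.; this yields the theorem at once, since then $\inf_{g\in\mathcal C_h}F_{\tau,\eps}(1,g)\le F_{\tau,\eps}(1,g^*)=F_{\tau,\eps}(\gamma^*,g^*)=\min F_{\tau,\eps}\le\inf_{g\in\mathcal C_h}F_{\tau,\eps}(\gamma,g)$ for every admissible $\gamma\ge1$. Existence of a minimizer follows from the direct method: for $\eps,\tau>0$ fixed one may restrict a priori to the pointwise optimal weight $\gamma=\max\{1,\sqrt{W(g)}/(\eps\tau^{1/\beta}|g'|)\}$ (this only lowers $F_{\tau,\eps}$ and stays in $\Gamma_h$, since $g\in\mathcal C_h$ is reflected), after which the local density is a continuous coercive integrand in $g\in W^{1,2}$ and the nonlocal term is $L^2$-continuous; positivity of the local term, hence coercivity, holds for $\tau$ small because $C_\tau\sim c/\tau\to+\infty$. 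Throughout write $\lambda:=\eps\tau^{1/\beta}$, $\mu:=3(C_\tau-1)>0$ and $C_\tau':=\int_\R\widehat K_\tau<+\infty$ (finite since $p\ge d+2$).

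\emph{Euler--Lagrange analysis.} Varying $\gamma$ pointwise gives $\gamma^*(x)=\max\{1,\sqrt{W(g^*(x))}/(\lambda|g^{*\prime}(x)|)\}$; set $E:=\{\gamma^*>1\}=\{\lambda|g^{*\prime}|<\sqrt{W(g^*)}\}$. On $E$ one has $\lambda\gamma^*g^{*\prime}=\mathrm{sgn}(g^{*\prime})\sqrt{W(g^*)}$ and $W'(g^*)/(\lambda\gamma^*)=W'(g^*)|g^{*\prime}|/\sqrt{W(g^*)}$, whence $-2\lambda(\gamma^*g^{*\prime})'+W'(g^*)/(\lambda\gamma^*)\equiv0$ on $E$: the \emph{local} part of the Euler--Lagrange equation in $g$ vanishes there. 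Since the $\gamma$-derivative also vanishes on $E$ by optimality, testing the $g$-variation with functions supported in the interior of $E$ leaves only the nonlocal contribution, so
\[
C_\tau'\,g^*(x)=(\widehat K_\tau*g^*)(x)\qquad\text{for a.e. }x\in E. \tag{$\star$}
\]

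\emph{Main perturbation.} Assume $|E|>0$. By elliptic regularity on $E^c$ (where $g^*$ solves a nondegenerate ODE with bounded nonlocal forcing) and by $(\star)$ on $E$ (where $g^*=C_\tau'^{-1}\widehat K_\tau*g^*\in C^1$), the set $\{0<\lambda|g^{*\prime}|<\sqrt{W(g^*)}\}$ is open and $g^*$ is strictly monotone on each of its components; if such a component meets $[0,h]$, fix a closed $I=[a,a+\delta]\subset[0,h]$ inside it (using the reflection symmetry of $g^*$ otherwise), noting $g^*(I)\subset(0,1)$. Replace $g^*$ on $I$ by $\hat g$ which performs the monotone transition from $g^*(a)$ to $g^*(a+\delta)$ at the equipartition rate $\lambda|\hat g'|=\sqrt{W(\hat g)}$ on $[a,a+\delta']$ — which fits, $\delta'\le\delta$, because $\lambda\int_I|g^{*\prime}|/\sqrt{W(g^*)}\le|I|=\delta$ — followed by a plateau at $g^*(a+\delta)$ on $[a+\delta',a+\delta]$; call $g_1\in\mathcal C_h$ (one has $g_1\ge1/2$ since $g^*(a),g^*(a+\delta)\ge1/2$) the result and $\gamma_1$ its optimal weight ($=\gamma^*$ off $I$, $=1$ on $[a,a+\delta']$, $=+\infty$ on the plateau). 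Then the local energy does not increase: on $I$ it equals $\tfrac{\mu}{6h}|\omega(g^*(a+\delta))-\omega(g^*(a))|\le\tfrac{\mu}{6h}\int_I|(\omega\circ g^*)'|$, the right-hand side being the local energy of $(\gamma^*,g^*)$ on $I$ by monotonicity of $g^*$ on $I$ and optimality of $\gamma^*$. For the nonlocal term, with $\phi:=g_1-g^*$ supported in $I$ and $\not\equiv0$, and using that it is exactly quadratic,
\[
\Delta(\text{nonloc})=4\int_I\phi(x)\bigl(C_\tau'g^*(x)-(\widehat K_\tau*g^*)(x)\bigr)\,dx+\int_\R\!\int_\R(\phi(x)-\phi(y))^2\widehat K_\tau(x-y)\,dx\,dy ,
\]
the first integral vanishing by $(\star)$ and the second being strictly positive. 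Since $F_{\tau,\eps}=\text{local}-\tfrac1{2h}(\text{nonloc})$, this gives $F_{\tau,\eps}(\gamma_1,g_1)<F_{\tau,\eps}(\gamma^*,g^*)$, contradicting minimality; hence $E$ is, up to a null set, contained in $\{g^{*\prime}=0\}$.

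\emph{Degenerate case, and conclusion.} It remains to exclude that $g^*$ has a plateau at an interior value $c\in(0,1)$ (there the optimal weight is $+\infty$, so it lies in $E$). On such a plateau $(\star)$ forces $\widehat K_\tau*g^*$ to be constant; inserting the reflection‑positive representation $\widehat K_\tau(t)=\int_0^\infty e^{-s|t|}\psi(s)\,ds$ and applying the strong maximum principle to the Yukawa operators $-\partial_x^2+s^2$ one propagates this constancy and concludes $g^*$ is constant — contradicting Proposition~\ref{lemma:period}; equivalently, one may invoke the structural description of minimizing slices (transitions confined to small intervals separated by near‑extremal plateaus) together with the fact that an interior plateau can be traded, at no local cost, for extra spread in the nonlocal term. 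Either way $\gamma^*=1$ a.e., and the theorem holds with $\eps_2,\tau_2$ any thresholds ensuring $C_\tau>1$ and the reflection structure of one‑dimensional minimizers (Proposition~\ref{lemma:period}). The genuinely delicate point — the main obstacle — is precisely this last step: where $g^{*\prime}$ vanishes, the first‑variation argument degenerates (any excursion from a plateau costs first order in amplitude in the local term but gains only second order in the nonlocal term), so ruling out interior plateaus needs the maximum‑principle propagation or the finer structural information on minimizers rather than a soft comparison.
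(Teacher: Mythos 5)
Your proposal takes a genuinely different route from the paper. You observe that at a minimizer the pointwise-optimal $\gamma^*$ makes the \emph{local} part of the Euler--Lagrange equation in $g$ vanish identically on $E=\{\gamma^*>1\}$ (a nice envelope-theorem calculation), so the remaining nonlocal first variation must vanish there too, i.e.\ $I_{g^*}\equiv 0$ on $E$. Since the nonlocal term is exactly quadratic, this kills the linear part of any perturbation supported in $E$, so a second-order comparison strictly decreases $F_{\tau,\eps}$ — a clean and elementary mechanism that bypasses the paper's quantitative sign analysis of $I_g$ (Lemmas~\ref{lemma:I}--\ref{lemma:1-ctau}). The paper instead proves the strict inequality $\eps\tau^{1/\beta}\gamma^2|g'|^2>W(g)/(\eps\tau^{1/\beta})$ directly from the integrated Euler--Lagrange identity~\eqref{eq:g2} by showing $\int\gamma g' I_g<0$, which requires precise control of where $I_g$ is negative and by how much, and uses the $\Gamma$-convergence localization (Lemma~\ref{lemma:gammaconvx}) to confine the potentially-positive part near $g=\tfrac12$.

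However, there is a genuine gap in the \emph{degenerate/plateau case} $E\subset\{g^{*\prime}=0\}$, and you are candid that this is the obstacle. The argument you sketch does not close it: $(\star)$ on a plateau at level $c\in(\tfrac12,1)$ only says $I_{g^*}=0$ there, i.e.\ $\int(g^*(y)-c)\widehat K_\tau(y-x)\,\mathrm dy=0$; since $g^*$ necessarily takes values both above and below $c$ (it runs from $\tfrac12$ to $1$), there is no immediate sign obstruction, and the kernel $\widehat K_\tau(t)\sim(|t|+\tau^{1/\beta})^{-q}$ is not analytic at $t=0$, so analytic continuation is not available either; the Yukawa-decomposition/maximum-principle propagation is not spelled out in a way that actually rules the plateau out. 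The paper handles exactly this difficulty differently: it regularizes with the functionals $\hat F_m$ in~\eqref{eq:fm}, derives the EL equations as a limit (Proposition~\ref{prop:gammalim}), and then proves $\gamma<+\infty$ by a self-contained continuity-and-iteration argument (Corollary~\ref{cor:mon}): wherever $\gamma=+\infty$ and $g<1$, the continuous quantity $\eps\tau^{1/\beta}\gamma^2|g'|^2-W(g)/(\eps\tau^{1/\beta})$ is strictly negative, which forces $\gamma=+\infty$ and $g'=0$ on a neighbourhood; iterating makes $g$ globally constant, giving $F_{\tau,\eps}=0$ and contradicting negativity of the minimum. Something comparable is needed in your scheme. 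A secondary concern: your derivation of $(\star)$ and the openness of $\{0<\lambda|g^{*\prime}|<\sqrt{W(g^*)}\}$ presuppose enough regularity of $g^{*\prime}$ and a valid first-variation in $g$ despite the degeneracy $\gamma^*=+\infty$; the paper's $\hat F_m$ scheme is designed precisely to make those steps rigorous, and simply invoking the direct method with the pointwise-optimal $\gamma$ sweeps this under the rug.
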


In particular, $\bar C^*_{\tau,\eps}=C^*_{\tau,\eps}$, where $C^*_{\tau,\eps}$ was defined in \eqref{eq:cstartau} and $\bar C^*_{\tau,\eps}$ was defined in \eqref{eq:barctaueps}.

The proof will look for a contradiction in the coexistence of an Euler-Lagrange equation for the minimizer $g$ of $F_{\tau,\eps}(\gamma,\cdot)$ among functions in $\mathcal C_h$ (i.e., \eqref{eq:g1}) and an Euler-Lagrange equation for the optimal coefficient $\gamma$ (i.e., \eqref{eq:gamma1}-\eqref{eq:gamma3}) unless $\gamma\equiv1$ on $\{g\neq1\}$. However, due to the fact that a minimizing $\gamma$ can a priori take the value $+\infty$ on a set of positive measure, one cannot  immediately derive Euler-Lagrange equations for $F_{\tau,\eps}(\gamma,\cdot)$ in $g$. Thus, we introduce a family of auxiliary functionals which are finite only on $L^2$ integrable coefficients $\gamma$ and for which we can derive Euler-Lagrange equations.
The minimizers of such functionals will converge to a minimizer of the original functional $F_{\tau,\eps}$, which will satisfy Euler-Lagrange equations as a consequence  of a limiting procedure (Proposition~\ref{prop:gammalim}).
Finally, in Theorem~\ref{thm:pos}, we will prove that such a minimizer satisfies $\gamma=1$ a.e..
For the proof of Theorem~\ref{thm:pos} we will need Proposition~\ref{prop:gammalim}, Lemma~\ref{lemma:I},  Lemma~\ref{lemma:gammaconvx}, Lemma~\ref{lemma:1-ctau} and their Corollaries.

Let us then introduce the following auxiliary functional: for any $\gamma\geq1$, $g\in\mathcal C_h$, $\gamma |g'|^2\in L^1$, $m\geq 1$, let

\begin{align}
\hat F_m(\gamma,g)&=F_{\tau,\eps}(\gamma,g)+\frac{1}{4h}\int_0^{2h}(\gamma(x)-m)^2_+\dx,\label{eq:fm}
\end{align} 



Let moreover 
\begin{equation}
   \label{eq:defI_g}
    I_g(x)=\int_{\R}(g(y)-g(x))\widehat K_\tau(y-x)\dy.
\end{equation}

Notice that	
\begin{equation}\label{eq:0l}
\begin{split}
\int_{0}^{+\infty}\Khattau(z)\dz = 
\frac{\tilde{C_1}}{\tau \tau^{1/\beta}}\qquad\text{and}\qquad C_{\tau} =  \frac{\tilde{C}_{2}}{\tau},
\end{split}
\end{equation}
for some $\tilde{C_1}, \tilde{C}_2 > 0$, where $C_\tau$ has been defined in~\eqref{eq:ctau}.

We will prove the following
\begin{proposition}
   \label{prop:gammalim}
	Let $(\gamma_m,g_m)$ be minimizers of $\hat F_m$. Then, there exist $(\gamma,g)$ such that $\gamma\geq1$, $\gamma g'\in L^2$, $g\in W^{1,2}$ and, up to subsequences, the following  holds
	\begin{align}
	F_{\tau,\eps}(\gamma,g)&\leq\lim_{m\to\infty}\hat F_m(\gamma_m,g_m)\label{eq:conv1}\\
	g_m&\rightharpoonup g \text{ in $W^{1,2}$}\label{eq:conv2}\\
	\frac{1}{\gamma_m}&\rightharpoonup \frac1\gamma\text{ in $L^\infty$}\label{eq:conv3}\\
	\gamma_mg'_m&\rightarrow \gamma g' \text{in $L^2(\{g\neq1\})$. }\label{eq:conv4}
	\end{align}
	The pair  $(\gamma,g)$ is a minimizer of $F_{\tau,\eps}$ and satisfies
		\begin{align}
			3(C_\tau-1)\eps\tau^{1/\beta}(\gamma g')'&\geq3(C_\tau-1)\frac{W'(g)}{2\gamma\eps\tau^{1/\beta}}+2{I_g}\label{eq:g1ineq}
	\end{align}
    where equality holds when $g\neq1$, namely
		\begin{align}
	3(C_\tau-1)\eps\tau^{1/\beta}(\gamma g')'&=3(C_\tau-1)\frac{W'(g)}{2\gamma\eps\tau^{1/\beta}}+2{I_g}\quad\text{on $\{g\neq1\}$}\label{eq:g1}
	\end{align}
    and
		\begin{align}
	3(C_\tau-1)\Bigl[\eps\tau^{1/\beta}\gamma^2|g'|^2-\frac{W(g)}{\eps\tau^{1/\beta}}\Bigr]\Big|_{\xi_1}^{\xi_2}&=4\int_{\xi_1}^{\xi_2}\gamma(x)g'(x)I_g(x)\dx\quad\forall\,\xi_1<\xi_2\text{ s.t. }(\xi_1,\xi_2)\subset\{g\neq1\}.\label{eq:g2}
	\end{align}
Moreover, the following holds
	\begin{align}
	\eps\tau^{1/\beta}|g'(x)|^2&\geq \frac{W(g(x))}{\eps\tau^{1/\beta}} \quad\text{ if $\gamma(x)=1$}\label{eq:gamma1}\\
	\eps\tau^{1/\beta}\gamma(x)^2|g'(x)|^2&=\frac{W(g(x))}{\eps\tau^{1/\beta}}\quad \text{ if $1<\gamma(x)<+\infty$}\label{eq:gamma2}\\
	\gamma(x) g'(x)&=0 \quad\text{ if $\gamma(x)=+\infty$ and $g(x)\neq1$}.\label{eq:gamma3}
	\end{align}
\end{proposition}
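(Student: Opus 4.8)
The plan is to run the direct method for each auxiliary functional $\hat F_m$, extract limits as $m\to\infty$, and obtain the Euler--Lagrange relations by passing to the limit in the $g$-equation for $g_m$ (which is legitimate because $\gamma_m$ is finite a.e.) and, for the $\gamma$-relations, by pointwise minimization. It is convenient to work with $\eta=1/\gamma\in(0,1]$: the integrand $\eps\tau^{1/\beta}|g'|^2/\eta$ is jointly convex in $(g',\eta)$ (perspective function), the term $W(g)\eta/(\eps\tau^{1/\beta})$ is continuous under uniform convergence of $g$, the nonlocal term is continuous, and $\eta\mapsto(1/\eta-m)_+^2$ is convex; hence $\hat F_m$ is weakly lower semicontinuous, and it is coercive ($\gamma\ge1$ bounds $\|g\|_{W^{1,2}}$ through the gradient term, finiteness of the penalty bounds $\gamma$ in $L^2$), so a minimizer $(\gamma_m,g_m)$ exists.

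Next, uniform bounds. Testing $\hat F_m$ against $(1,g_0)$ for a fixed $g_0\in\mathcal C_h$ gives $\hat F_m(\gamma_m,g_m)\le F_{\tau,\eps}(1,g_0)=:C_0$, independent of $m$; since the nonlocal term is bounded in modulus by $\|\widehat K_\tau\|_{L^1}$, this yields uniform bounds on $\int_0^{2h}\gamma_m|g_m'|^2$, on $\int_0^{2h}W(g_m)/\gamma_m$ and on $\int_0^{2h}(\gamma_m-m)_+^2$. In particular $g_m$ is bounded in $W^{1,2}$, hence converges uniformly along a subsequence to some $g\in\mathcal C_h$ (this is \eqref{eq:conv2}), and $1/\gamma_m\le1$ converges weakly-$*$ in $L^\infty$ to some $1/\gamma$ with $\gamma\ge1$, possibly $+\infty$ on a set (this is \eqref{eq:conv3}). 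Since $\gamma_m$ is finite a.e., the first variations of $\hat F_m$ in $\gamma$ and in $g$ are admissible: in $\gamma$ one gets, pointwise, $\eps\tau^{1/\beta}\gamma_m^2|g_m'|^2-W(g_m)/(\eps\tau^{1/\beta})+\gamma_m^2(\gamma_m-m)_+/(3(C_\tau-1))=0$ on $\{\gamma_m>1\}$ and $\eps\tau^{1/\beta}|g_m'|^2\ge W(g_m)/(\eps\tau^{1/\beta})$ on $\{\gamma_m=1\}$; in particular $\gamma_m^2(\gamma_m-m)_+\le 3(C_\tau-1)\|W\|_\infty/(\eps\tau^{1/\beta})$ pointwise, so $(\gamma_m-m)_+=O(m^{-2})$ and $\int(\gamma_m-m)_+^2\to0$, while $\gamma_m^2|g_m'|^2\le W(g_m)/(\eps\tau^{1/\beta})^2$ on $\{\gamma_m>1\}$; combined with $|g_m'|$ bounded in $L^2$ on $\{\gamma_m=1\}$, this shows $\gamma_m g_m'$ is bounded in $L^2$, hence $\gamma_m g_m'\rightharpoonup\xi$ in $L^2$ along a further subsequence. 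In $g$ one gets $3(C_\tau-1)\eps\tau^{1/\beta}(\gamma_m g_m')'=3(C_\tau-1)W'(g_m)/(2\gamma_m\eps\tau^{1/\beta})+2I_{g_m}$ on $\{g_m<1\}$, with the corresponding inequality on $\{g_m=1\}$.

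Now I pass to the limit. On the open set $\{g<1\}$ the right-hand side of the $g$-equation for $g_m$ is bounded in $L^\infty_{\mathrm{loc}}$ (as $W'$ is bounded, $\gamma_m\ge1$ and $|I_{g_m}|\le\|\widehat K_\tau\|_{L^1}$), so $(\gamma_m g_m')'$ is bounded in $L^\infty_{\mathrm{loc}}(\{g<1\})$, $\gamma_m g_m'$ is bounded in $W^{1,\infty}_{\mathrm{loc}}$ and converges in $C^0_{\mathrm{loc}}(\{g<1\})$; combined with the equipartition bound near $\{g=1\}$, where $W(g_m)\to0$ uniformly, this upgrades to strong convergence $\gamma_m g_m'\to\xi$ in $L^2(\{g\ne1\})$, and writing $g_m'=(1/\gamma_m)(\gamma_m g_m')$ and using strong $\times$ weak-$*$ convergence one identifies $\xi=\gamma g'$ a.e. on $\{g<1\}$; since $g'=0$ a.e. on $\{g=1\}$, this is \eqref{eq:conv4} and $\gamma g'\in L^2$. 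These convergences give \eqref{eq:conv1} (nonlocal term continuous, $\int W(g_m)/\gamma_m\to\int W(g)/\gamma$, penalty $\to0$, gradient term lower semicontinuous), and comparing with arbitrary competitors---first bounded $\tilde\gamma$, via $\hat F_m(\gamma_m,g_m)\le\hat F_m(\tilde\gamma,\tilde g)=F_{\tau,\eps}(\tilde\gamma,\tilde g)$ for $m$ large, then truncating $\min(\tilde\gamma,M)$ and using monotone convergence---shows $(\gamma,g)$ is a minimizer of $F_{\tau,\eps}$. Then \eqref{eq:gamma1}--\eqref{eq:gamma3} follow directly from minimality by minimizing $\eps\tau^{1/\beta}\gamma|g'|^2+W(g)/(\eps\tau^{1/\beta}\gamma)$ pointwise over $\gamma\in[1,+\infty]$ (the functional being local and decoupled in $\gamma$), equations \eqref{eq:g1ineq}--\eqref{eq:g1} follow by passing to the limit in the $g$-equation for $g_m$, and \eqref{eq:g2} is the first integral obtained by multiplying \eqref{eq:g1} by $\gamma g'$ and integrating, using $\gamma g'\in W^{1,\infty}_{\mathrm{loc}}(\{g\ne1\})$.

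The main obstacle is exactly the strong convergence $\gamma_m g_m'\to\gamma g'$ in $L^2(\{g\ne1\})$ together with the identification of $\xi$: interior elliptic regularity for the $g_m$-equation gives it on compact subsets of $\{g<1\}$, but the uniform control of $\gamma_m g_m'$ near $\{g=1\}$---needed to rule out concentration of $\int|g_m'|^2$ on the portion of $\{\gamma_m=1\}$ on which $g_m$ is close to $1$---relies crucially on the equipartition inequality on $\{\gamma_m>1\}$ and on the precise algebraic form of the penalty $(\gamma-m)_+^2$ (which forces $(\gamma_m-m)_+=O(m^{-2})$ pointwise), and is the technically delicate step.
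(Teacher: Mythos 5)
Your proposal follows the paper's proof quite closely in overall structure: uniform energy bounds by comparison with $(1,g_0)$, weak $W^{1,2}$ extraction of $g$, weak-$*$ extraction of $1/\gamma$ in $L^\infty$, passage to the limit in the Euler--Lagrange equation for $g_m$, minimality by truncating $\gamma$, and the pointwise optimization in $\gamma$ for~\eqref{eq:gamma1}--\eqref{eq:gamma3}. One genuinely different (and worthwhile) technical ingredient you add is deriving the pointwise Euler--Lagrange condition in $\gamma$ for $\hat F_m$ directly: this gives $(\gamma_m-m)_+=O(m^{-2})$ pointwise and, via the identity $\eps\tau^{1/\beta}\gamma_m^2|g_m'|^2\le W(g_m)/(\eps\tau^{1/\beta})$ on $\{\gamma_m>1\}$, an explicit $L^2([0,2h])$ bound on $\gamma_m g_m'$, whereas the paper obtains the $L^2$-compactness of $\gamma_m g_m'$ only implicitly through the $g$-equation. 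That is a cleaner way to set up the extraction of $\gamma g'$.

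However, there is a gap in your treatment of the step you correctly identify as the crux, namely upgrading the local strong convergence $\gamma_m g_m'\to\xi$ on compact subsets of $\{g<1\}$ to convergence in $L^2(\{g\ne1\})$. You argue that concentration of $\int|g_m'|^2$ on the set $\{\gamma_m=1\}\cap\{g_m\text{ close to }1\}$ is ruled out ``relying crucially on the equipartition inequality on $\{\gamma_m>1\}$.'' But that inequality lives on the complementary set and gives no pointwise control of $|g_m'|$ where $\gamma_m=1$; there the only pointwise information from the $\gamma$-variation is the reverse inequality $\eps\tau^{1/\beta}|g_m'|^2\ge W(g_m)/(\eps\tau^{1/\beta})$, which is useless as an upper bound. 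So as written the argument does not preclude a fixed amount of $\int|g_m'|^2$ concentrating in $\{1-\delta<g<1\}\cap\{\gamma_m=1\}$. To close this you would need to exploit the $g$-equation rather than the $\gamma$-equation: on $\{g_m<1\}$ the derivative $(\gamma_m g_m')'$ is bounded in $L^\infty$ uniformly in $m$ and $\delta$ (as the right-hand side of~\eqref{eq:g1m} is), and together with the global $L^2$ bound on $\gamma_m g_m'$ this gives a uniform $L^\infty$ bound on $\gamma_m g_m'$ near $\partial\{g=1\}$, from which the $\delta$-smallness of $\int_{\{1-\delta<g<1\}}|\gamma_m g_m'|^2$ follows. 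This is in fact the mechanism the paper invokes (``the r.h.s. of~\eqref{eq:6.38} bounded in $L^2$ as $\delta\to0$''), and it is the $g$-equation, not the equipartition relation, that carries it.
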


Theorem~\ref{thm:onedimmin}  follows then immediately  from Proposition~\ref{prop:gammalim} and the following

\begin{theorem}\label{thm:pos}
	The functions $(\gamma,g)$ as in Proposition~\ref{prop:gammalim} satisfy
	\begin{equation}\label{eq:ineqstrict}
	\eps\tau^{1/\beta}\gamma^2(x)|g'(x)|^2>\frac{W(g(x))}{\eps\tau^{1/\beta}}\quad \forall\,x\text{ s.t. $g(x)\neq1$}.
	\end{equation}
	In particular, $\gamma= 1$ a.e..
\end{theorem}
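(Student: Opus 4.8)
The plan is to argue by contradiction, playing the Euler--Lagrange equation \eqref{eq:g1} for $g$ against the pointwise optimality conditions \eqref{eq:gamma1}--\eqref{eq:gamma3} for $\gamma$ from Proposition~\ref{prop:gammalim}. Throughout set $a=\eps\tau^{1/\beta}$ and $c=3(C_\tau-1)$, which is strictly positive once $\tau$ is small (cf.\ \eqref{eq:0l} and Lemma~\ref{lemma:1-ctau}). First I would record the trichotomy implied by \eqref{eq:gamma1}--\eqref{eq:gamma3}: for a.e.\ $x$ with $g(x)\neq1$, either (i) $\gamma(x)=1$ and $a^2|g'(x)|^2\geq W(g(x))$; or (ii) $1<\gamma(x)<\infty$, in which case $a^2\gamma(x)^2|g'(x)|^2=W(g(x))$, hence $g'(x)\neq0$ and $\gamma(x)=\sqrt{W(g(x))}\,/\,(a|g'(x)|)$ is the unconstrained optimal coefficient; or (iii) $\gamma(x)=+\infty$, in which case $\gamma(x)g'(x)=0$, so $g'(x)=0$ in the sense of \eqref{eq:conv4} while $W(g(x))>0$ since $g\geq\frac12$. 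Write $\Omega_1$ and $\Omega_2$ for the sets on which (ii), respectively (iii), occurs; note that $\{g'=0\}\cap\{g\neq1\}$ is contained, up to a null set, in $\Omega_2$, since wherever $g'=0$ and $g\neq1$ the pointwise minimization in $\gamma$ is attained only at $+\infty$.

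The key step is a cancellation on $\Omega_1$. On each interval of monotonicity of $g$ inside $\Omega_1$ one has $a\gamma|g'|=\sqrt{W(g)}$ by \eqref{eq:gamma2}, and a short computation then gives $c\,a\,(\gamma g')'=c\,W'(g)/(2a\gamma)$; substituting into \eqref{eq:g1} forces $I_g\equiv0$ a.e.\ on $\Omega_1$, where $I_g$ is defined in \eqref{eq:defI_g}. On $\Omega_2$, along any subinterval $\gamma g'\equiv0$ and $W'(g)/(2a\gamma)=0$, so \eqref{eq:g1} again collapses to $I_g\equiv0$; with the help of the auxiliary Lemmas~\ref{lemma:I}, \ref{lemma:gammaconvx} and their corollaries --- which encode the rigidity of the nonlocal operator $I_g$ coming from the reflection positivity and strict positivity of $\widehat K_\tau$ --- one first rules out $|\Omega_2|>0$ and then concludes that $I_g$ vanishes on all of $\Omega_1\cup\Omega_2$. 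Consequently, if $|\{\gamma\neq1\}\cap\{g\neq1\}|>0$ then $I_g$ would vanish on a set of positive measure, which by Lemma~\ref{lemma:I} is possible only if $g$ is constant; this contradicts the fact that a minimizer of $F_{\tau,\eps}$ is non-constant (it has negative energy, whereas every constant profile has non-negative energy; see also Proposition~\ref{lemma:period}). Hence $\gamma=1$ a.e.\ on $\{g\neq1\}$, and in particular $g'\neq0$ a.e.\ there.

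It remains to upgrade the bound $a^2|g'|^2\geq W(g)$ from case (i) to the strict inequality \eqref{eq:ineqstrict}. With $\gamma=1$ a.e.\ on $\{g\neq1\}$, put $\Psi:=a|g'|^2-W(g)/a\geq0$ there; differentiating the first integral \eqref{eq:g2} with $\gamma=1$ gives $c\,\Psi'=4\,g'\,I_g$ a.e.\ on $\{g\neq1\}$, so on $\{\Psi=0\}\cap\{g\neq1\}$ one has $g'I_g=0$, and since $g'\neq0$ a.e.\ there, $I_g$ would vanish on $\{\Psi=0\}\cap\{g\neq1\}$; by Lemma~\ref{lemma:I} that set must be null, i.e.\ $\Psi>0$ a.e., which is \eqref{eq:ineqstrict}. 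Finally, \eqref{eq:ineqstrict} is incompatible with alternatives (ii) and (iii), so $\gamma=1$ a.e.\ on $\{g\neq1\}$; on $\{g=1\}$ one has $g'=0$ a.e.\ and $W(g)=0$, so the local energy density there vanishes for every value of $\gamma$ and we may take $\gamma=1$ as well. Thus $\gamma=1$ a.e.

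The main obstacle is the rigidity step for $I_g$: passing from ``$I_g\equiv0$ on a set of positive measure'' to ``$g$ constant'', and, relatedly, excluding the degenerate set $\Omega_2$ where $g$ would be flat at a value different from $1$ --- on such a set the local energy vanishes, so it cannot be ruled out by a naive energy comparison, and one must instead exploit quantitatively that $\widehat K_\tau$ is a positive multiple of the Laplace transform of a nonnegative function, together with the $\tau$-asymptotics \eqref{eq:0l} of $C_\tau$ and of $\int_0^{+\infty}\widehat K_\tau$. It is precisely here that the smallness of $\eps$ and $\tau$ and the auxiliary Lemmas enter.
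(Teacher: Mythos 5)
Your reduction of the problem to the behaviour of $I_g$ on the set where $\gamma>1$ is sound and in fact matches the paper's starting point: by \eqref{eq:gamma2} and the differentiated identity \eqref{eq:g2}, wherever $1<\gamma<\infty$ one indeed has $I_g=0$ (the paper states the contrapositive, ``$\gamma=1$ whenever $I_g\neq 0$''), and Corollary~\ref{cor:mon} already disposes of the case $\gamma=+\infty$. The gap is in the step that you yourself flag as the main obstacle: you invoke Lemma~\ref{lemma:I} to claim that ``$I_g\equiv 0$ on a set of positive measure'' forces $g$ to be constant. Lemma~\ref{lemma:I} contains no such rigidity statement — it only gives (a) strict negativity of $I_g$ where $g\geq 1-\bar c_1\tau\tau^{1/\beta}$, i.e.\ where $g$ is very close to $1$, and (b) the upper bound \eqref{eq:stimai} for $I_g$; neither implies that the zero set of $I_g$ has measure zero for nonconstant $g\in\mathcal{S}_h$. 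Indeed the critical region is the interval $[0,x_\delta]$ (with $g\in[\tfrac12,\tfrac12+\delta]$), where $I_g$ need not have a sign — note $I_g(0)=0$ by the antisymmetry of $g-\tfrac12$ — so no rigidity of the convolution operator alone can rule out $\{\gamma>1\}$ having positive measure there, and such a principle would require proof anyway.

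What the paper actually does is different in kind: it does not try to make $I_g<0$ everywhere on $\{g\neq1\}$, and it does not argue that $\{I_g=0\}$ is null. Instead it first combines Lemma~\ref{lemma:I}, Corollary~\ref{cor:ineg}, Lemma~\ref{lemma:gammaconvx} and Lemma~\ref{lemma:1-ctau} (plus a Modica--Mortola profile comparison) to show $I_g<0$ on all of $[x_\delta,\bar x_1]$, hence $\gamma=1$ there. Then it supposes, for contradiction, that $\Psi(\hat x)=\eps\tau^{1/\beta}\gamma^2|g'|^2-\tfrac{W(g)}{\eps\tau^{1/\beta}}$ vanishes at some $\hat x\in[0,x_\delta]$, integrates \eqref{eq:g2} from $\hat x$ to $\bar x_1$ to get the balance inequality \eqref{eq:posneg}, and then derives a contradiction by a \emph{quantitative} comparison: the negative contribution $-\int_{x_\delta}^{\bar x_1} g' I_g\gtrsim \tfrac{1}{\tau\tau^{1/\beta}}$ coming from the region $g\in[\tfrac34,1-c_1\tau]$ dominates, for $\delta$ small, the positive contribution $\int_0^{x_\delta}(g' I_g)_+\lesssim \tfrac{\delta}{\tau\tau^{1/\beta}}$ coming from \eqref{eq:0l}. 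This size comparison is the ingredient your proposal is missing, and it is what permits the conclusion on $[0,x_\delta]$ where pointwise sign information on $I_g$ is unavailable. (Also note: the paper proves the strict inequality \eqref{eq:ineqstrict} for the minimizing pair $(\gamma,g)$ directly, without presupposing $\gamma=1$, and then reads off $\gamma=1$ from \eqref{eq:gamma1}--\eqref{eq:gamma2}; your proposal reverses this order, which only works if the missing rigidity holds.)
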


Indeed, the fact that $\gamma=1$ a.e. follows from \eqref{eq:ineqstrict} in the following way: by~\eqref{eq:gamma1} and~\eqref{eq:gamma2}, whenever the strict inequality~\eqref{eq:ineqstrict} holds then $\gamma=1$. We will be able to show that the set $\{g=1\}$ is an interval (Corollary \ref{cor:mon}), thus $g'=0$ a.e. on the set $\{g=1\}$. In particular, one can choose arbitrarily the value of $\gamma$ (e.g. $\gamma=1$) on $\{g=1\}$ without changing the value of the functional.

To prove Theorem~\ref{thm:pos} we adopt the following strategy.
By~\eqref{eq:g2} with $\xi_2\in\partial\{g\neq1\}$  and $\xi_1=x\in[0,\xi_2)$ such that $(x,\xi_2)\subset \{g\neq 1\}$, one gets (since $W(g(\xi_2))=W(1)=0$)
\begin{align}
\eps\tau^{1/\beta}\gamma^2(x)|g'(x)|^2-\frac{W(g(x))}{\eps\tau^{1/\beta}}&= \eps\tau^{1/\beta}\gamma^2(\xi_2)|g'(\xi_2)|^2-\frac{4}{3(C_\tau-1)}\int_{x}^{\xi_2}\gamma(z)g'(z)I_g(z)\dz\notag\\
&\geq-\frac{4}{3(C_\tau-1)}\int_{x}^{\xi_2}\gamma(z)g'(z)I_g(z)\dz.
\end{align}

Thus, $\eps\tau^{1/\beta}\gamma^2(x)|g'(x)|^2-\frac{W(g(x))}{\eps\tau^{1/\beta}}>0$ whenever e.g. $g'>0$ and $I_g<0$ on the interval $[x,\xi_2]$ (respectively  whenever $g'<0$ and $I_g<0$ on $[\xi_2,x]$ if $x\in(\xi_2,h]$).

Concerning the sign of $g'$, from Proposition~\ref{prop:gammalim} one has the following

\begin{corollary}
  \label{cor:mon}
	The functions $(\gamma,g)$ in Proposition~\ref{prop:gammalim} satisfy the following conditions: $\gamma<+\infty$ and there exist $\bar x_1\leq\bar x_2\in [0,h]$ s.t.    $g$ is strictly monotone increasing on $[0,\bar x_1]$, $g=1$ on $[\bar x_1,\bar x_2]$  and  $g$ is strictly monotone decreasing on $[\bar x_2,h]$.
\end{corollary}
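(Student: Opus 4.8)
I work with the pair $(\gamma,g)$ produced by Proposition~\ref{prop:gammalim}: it minimizes the functional $F_{\tau,\eps}$ of~\eqref{eq:onedimmin}, with $\gamma\ge1$, $g\in W^{1,2}$ and $\gamma g'\in L^2(\{g\ne1\})$, and it satisfies the Euler--Lagrange identity~\eqref{eq:g1} and the first integral~\eqref{eq:g2} on $\{g\ne1\}$ together with the pointwise relations~\eqref{eq:gamma1}--\eqref{eq:gamma3}. I use freely that $g$ is continuous (one--dimensional Sobolev embedding); that by~\eqref{eq:grefl}--\eqref{eq:gammarefl} the fluctuation $g-\tfrac12$ is odd about $0$ and about $h$, is $2h$-periodic, and $\gamma\in\Gamma_h$; that $g\ge\tfrac12$ on $[0,h]$ since $g\in\mathcal C_h$; that $W$ is strictly decreasing on $[\tfrac12,1]$, indeed $W'(t)=2t(1-t)(1-2t)<0$ for $t\in(\tfrac12,1)$; and that the minimal value $F_{\tau,\eps}(\gamma,g)$ is negative (it is at most $C^{*}_{\tau,\eps}$, which is negative). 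The plan is to prove, in order: (A)~$g>\tfrac12$ on $(0,h)$; (B)~$\max_{[0,h]}g=1$; (C)~$g$ has neither a strict interior local minimum on $(0,h)$ with value in $(\tfrac12,1)$ nor a flat piece at a value in $(\tfrac12,1)$; (D)~assembling (A)--(C) one obtains the interval structure of $\{g=1\}$, strict monotonicity on either side, and $\gamma<+\infty$ a.e.

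\textbf{Claims (A) and (B).}
For (A): an interior zero $x_0\in(0,h)$ of $g-\tfrac12$ is a $\tfrac12$-crossing, so the chessboard estimate~\eqref{eq:chess} splits $[0,h]$ at $x_0$ into two admissible reflected profiles of strictly smaller period, and optimality of the period $2h=2h^{*}_{\tau,\eps}$ from Section~\ref{ssec:1d} forces a contradiction; hence $g>\tfrac12$ on $(0,h)$. For (B): suppose $M:=\max_{[0,h]}g$ satisfied $\tfrac12<M<1$ (the lower bound being (A)). The affine stretching $\tilde g:=\tfrac12+(g-\tfrac12)/(2M-1)$ still lies in $\mathcal C_h$, keeps $\gamma$ admissible, and — since $\tilde g-\tfrac12=(g-\tfrac12)/(2M-1)$ with $2M-1<1$ — multiplies both the nonlocal term and the gradient part of the local term by the single factor $(2M-1)^{-2}>1$ while not increasing the potential part ($W$ is monotone on each of $[0,\tfrac12]$ and $[\tfrac12,1]$, and $\tilde g$ is pushed away from $\tfrac12$). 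Writing $F_{\tau,\eps}=\mathrm{Grad}+\mathrm{Pot}-\mathrm{NL}$ with the three pieces nonnegative, negativity of $F_{\tau,\eps}(\gamma,g)$ gives $\mathrm{Grad}-\mathrm{NL}<-\mathrm{Pot}\le0$, whence $F_{\tau,\eps}(\gamma,\tilde g)\le(2M-1)^{-2}(\mathrm{Grad}-\mathrm{NL})+\mathrm{Pot}<(\mathrm{Grad}-\mathrm{NL})+\mathrm{Pot}=F_{\tau,\eps}(\gamma,g)$, contradicting minimality; so $M=1$.

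\textbf{Claims (C) and (D).}
Claim (C) is the heart of the proof. If $g$ had a strict interior local minimum at $x_0\in(0,h)$ with $g(x_0)=m\in(\tfrac12,1)$, then on the connected component $(a,b)\ni x_0$ of $\{g<\mu\}$ for a suitable level $\mu\in(m,1]$ attained at $a$ and $b$, replacing $g$ by the constant $\mu$ on $[a,b]$ (and $\gamma$ by a large value there) strictly lowers the local part — the gradient contribution disappears and, as $W$ decreases on $[\tfrac12,1]$, the potential one drops as well. The delicate point is that this must beat the change of the nonlocal term, which a crude comparison controls only for dips of width comparable to the Modica--Mortola scale $\eps\tau^{1/\beta}$; for wide, shallow dips — the small--amplitude oscillations emphasised in the introduction — one must combine the comparison with the first integral~\eqref{eq:g2} and with the one--dimensional and blow-up estimates of Sections~\ref{sec:1dest}--\ref{sec:stability}. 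The exclusion of flat pieces at values in $(\tfrac12,1)$ is analogous: on such a piece~\eqref{eq:gamma1}--\eqref{eq:gamma3} force $\gamma=+\infty$, so~\eqref{eq:g1} forces $I_g\equiv0$ there, whence the first-order variation of the nonlocal term under a small bump vanishes while the potential strictly gains (because $W'<0$ on $(\tfrac12,1)$); together with a convolution-regularity argument — the identity $I_g=0$ on $\{g'=0,\,g\ne1\}$ self-improves the regularity of $g$ on that set and forces it to consist of intervals — this yields $|\{g'=0,\,g\ne1\}\cap[0,h]|=0$. Granting (C): $\{g=1\}\cap[0,h]$ is a closed interval $[\bar x_1,\bar x_2]$ (any gap would contain a strict interior local minimum), nonempty by (B); on $(0,\bar x_1)$ one has $g\in(\tfrac12,1)$ with no interior local extremum (a local minimum would be one of the configurations excluded in (C), and a local maximum would be followed by one) and no flat piece, so $g$ is strictly increasing on $[0,\bar x_1]$, and symmetrically strictly decreasing on $[\bar x_2,h]$; finally $\gamma<+\infty$ a.e., since~\eqref{eq:gamma2}--\eqref{eq:gamma3} make $\gamma$ finite wherever $g'\ne0$, the set $\{g'=0,\,g\ne1\}\cap[0,h]$ is null by the above, and on $\{g=1\}$ one has $g'=0$ a.e.\ and may take $\gamma\equiv1$.

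\textbf{Main obstacle.}
The substance lies entirely in Claim (C): ruling out small--amplitude oscillations near the values $0$ and $1$, which is precisely the phenomenon absent from the sharp--interface analysis of~\cite{DR}, where profiles are $\{0,1\}$-valued. A flattening competitor by itself controls only dips on the Modica--Mortola length scale $\eps\tau^{1/\beta}$, so it must be interlocked with the Euler--Lagrange identities~\eqref{eq:g1}--\eqref{eq:g2} — exploiting $W'<0$ on $(\tfrac12,1)$ together with a sign analysis of the nonlocal quantity $I_g$ of~\eqref{eq:defI_g} driven by the odd symmetry of $g-\tfrac12$ — and with the one--dimensional estimates developed in Sections~\ref{sec:1dest}--\ref{sec:stability}. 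Making the variational comparison and the pointwise equations match up, uniformly as $\eps,\tau\to0$, is where the real difficulty lies.
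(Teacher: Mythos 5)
Your proposal has a genuine gap at precisely the point you yourself flag as ``the heart of the proof.'' Step (C) — excluding interior local extrema of $g$ at levels in $(\tfrac12,1)$ — is never actually carried out. You sketch a flattening-competitor argument, note that it controls only dips of width $O(\eps\tau^{1/\beta})$, say it must be ``interlocked'' with~\eqref{eq:g2} and with the estimates of \Cref{sec:1dest,sec:stability}, and then explicitly concede that ``making the variational comparison and the pointwise equations match up, uniformly as $\eps,\tau\to0$, is where the real difficulty lies.'' That is a description of an obstacle, not a proof that overcomes it. Steps~(A) and~(B) are correct as far as they go (the affine rescaling in~(B) is a nice observation), but neither is needed for the corollary, and neither resolves~(C).

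The paper's actual argument for~(C) takes a completely different and much more elementary route — it is a local propagation argument, not a variational one, and it uses no $\Gamma$-convergence, no blow-up estimates, and no competitor constructions. The mechanism is: by~\eqref{eq:g1}, $\gamma g'$ is $W^{1,2}$, hence continuous, on $\{g\neq1\}$; at an interior local extremum with $g<1$, $g'=0$, and then~\eqref{eq:gamma1}--\eqref{eq:gamma2} force $\gamma=+\infty$ there (otherwise $0=\gamma^2|g'|^2\ge W(g)/(\eps\tau^{1/\beta})^2>0$). At any point with $\gamma=+\infty$ and $g<1$, the first-integral quantity $\eps\tau^{1/\beta}\gamma^2|g'|^2-W(g)/(\eps\tau^{1/\beta})$ equals $-W(g)/(\eps\tau^{1/\beta})<0$; at any point with $\gamma<+\infty$ and $g\neq1$ it is $\ge0$ by~\eqref{eq:gamma1}--\eqref{eq:gamma2}. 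Since this quantity is continuous on $\{g\neq1\}$ (again from~\eqref{eq:g1}--\eqref{eq:g2}), negativity propagates to a neighbourhood, forcing $\gamma\equiv+\infty$ and $g'\equiv0$ there, whence the quantity is constant, and iterating shows $g$ is constant with $F_{\tau,\eps}(\gamma,g)=0$ — contradicting negativity of the minimum. From this one simultaneously gets $\gamma<+\infty$, $g'\neq0$ on $\{g\neq1\}$, and hence that $\{g=1\}$ is a (possibly degenerate) interval with strict monotonicity on either side. You should replace your~(C) with this sign/continuity argument; as written, the proposal does not establish the claimed monotonicity or that $\gamma<+\infty$.
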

{
  In particular, it is sufficient to prove~\eqref{eq:ineqstrict} on $[0,\bar x_1)$, being $\bar x_1$ the first point larger than $0$ in which the minimizer $g$ attains the value $1$. Indeed, notice that to prove the inequality \eqref{eq:ineqstrict} on $(\bar x_2,h]$ for the function $g$ is equivalent to prove the same inequality on the interval $[0, h-x_2)$ for the function $\bar g(x)=g(h-x)$, which is also a minimizer of $F_{\tau,\eps}(\gamma,\cdot)$.}

  {By Corollary~\ref{cor:mon}, $g'>0$ on $[0,\bar x_1)$.
  Thus, by \eqref{eq:g2}, for any $x\in[0,\bar x_1)$,  $\eps\tau^{1/\beta}\gamma^2(x)|g'(x)|^2-\frac{W(g(x))}{\eps\tau^{1/\beta}}>0$ whenever  $I_g<0$ on $[x,\bar x_1]$.
  In the following lemmas we will prove the following:
  \begin{itemize}
  	\item  For any $0<\delta\ll1$, let $x_\delta$ be the point such that $g(x_\delta)=\frac12+\delta$. Then, provided  $\eps$ and $\tau$ are sufficiently small, $I_g(x)<0$ for all $x\in[x_\delta,\bar x_1]$. In particular, by \eqref{eq:g2}, \eqref{eq:ineqstrict} is satisfied on $[x_\delta,\bar x_1]$. 
  	\item For any $x\in[0,x_\delta]$, 
  	\[
  	\int_{x}^{x_{\delta}}\Bigl(\gamma(z)g'(z)I_g(z)\Bigr)_+\dz\ll\int_{x_\delta}^{\bar x_1}\Bigl(\gamma(z)g'(z)I_g(z)\Bigr)_-\dz.
  	\]
  Thus, by \eqref{eq:g2}, the equality $\eps\tau^{1/\beta}\gamma^2(x)|g'(x)|^2=\frac{W(g(x))}{\eps\tau^{1/\beta}}$ cannot be reached also in the remaining  interval $[0,x_\delta]$ and \eqref{eq:ineqstrict} is satisfied on the whole $[0,\bar x_1]$. 
\end{itemize}
}
Moreover, from Corollary \ref{cor:mon} one has that the set $\{g=1\}$ is an interval, thus $g'=0$ in its interior.

Before stating and proving all the preliminary lemmas to the proof of Theorem \ref{thm:pos}, we  give a proof of Proposition~\ref{prop:gammalim} and Corollary~\ref{cor:mon}.
\begin{proof}
	[Proof of Proposition~\ref{prop:gammalim}:]
	
	Since $(\gamma_m,g_m)$ are minimizers of $\hat F_m$ and since $\hat F_m(\bar\gamma,\bar g)=F_{\tau,\eps}(\bar{\gamma},\bar g)$ whenever $\bar{\gamma}\leq m$, one has that
	\[
	\sup_m\hat F_m(\gamma_m,g_m)\leq\sup_m\hat F_m(1,g_{1,\tau,\eps})=F_{\tau,\eps}(1,g_{1,\tau,\eps})\leq C<0,
	\]
	where $g_{1,\tau,\eps}$ is a minimizer of $F_{\tau,\eps}(1,\cdot)$. 
	Moreover, since $\gamma_m\geq1$,
	\[ \sup_m\int_0^h|g_m'|^2\dx\leq\sup_m\int_0^h\gamma_m|g_m'|^2\dx<+\infty.
	\] This implies the convergence in~\eqref{eq:conv2}, up to subsequences.
    In particular, $g_m$ converges to $g$ in $C^0$.
	Since $\frac{1}{\gamma_m}\leq1$, up to subsequences $\frac{1}{\gamma_m}$ converges weakly in $L^\infty$ to $\frac{1}{\gamma}$, thus giving~\eqref{eq:conv3}.
    {
      Being $g_m$ a minimizer of $\hat F_m(\gamma_m,\cdot)$, and being $\gamma_m\in L^2$, one can make variations of $\hat F_m(\gamma_m,\cdot)$ around $g_m$ of the form $g_m+t\psi$ with $\psi\leq0$ where $g_m=1$ and obtain the following inequality
    }
    \begin{align}
        3(C_\tau-1)\eps\tau^{1/\beta}(\gamma_m g_m')'&\geq3(C_\tau-1)\frac{W'(g_m)}{2\gamma_m\eps\tau^{1/\beta}}+{2I_{g_m}}.\label{eq:g1mineq}
        \end{align}
    Moreover, making variations of $\hat F_m(\gamma_m,\cdot)$ around $g_m$ of the form $g_m+t\psi$ where the sign of $\psi$ is arbitrary and $\mathrm{supp}\psi\subset\subset\{g_m\neq1\}$ one obtains 
    \begin{align}
         3(C_\tau-1)\eps\tau^{1/\beta}(\gamma_m g_m')'\chi_{\{g_m\neq1\}}&=\Bigl(3(C_\tau-1)\frac{W'(g_m)}{2\gamma_m\eps\tau^{1/\beta}}+{2I_{g_m}}\Bigr)\chi_{\{g_m\neq1\}}.\label{eq:g1m}
      \end{align}
  
      Since $g_m\to g$ uniformly, $\forall\,\delta>0$ there exists $\bar m$ such that, for all $m\geq\bar m$, one has that $\{g<1-\delta\}\subset\{g_m<1-\delta/2\}$.
      Hence, by \eqref{eq:conv2} and \eqref{eq:conv3}
  \begin{align}
  	\Bigl(3(C_\tau-1)\frac{W'(g_m)}{2\gamma_m\eps\tau^{1/\beta}}+{2I_{g_m}}\Bigr)\chi_{\{g<1-\delta\}}\rightharpoonup\Bigl(3(C_\tau-1)\frac{W'(g)}{2\gamma\eps\tau^{1/\beta}}+{2I_{g}}\Bigr)\chi_{\{g<1-\delta\}}\quad\text{ in $L^2$}.\label{eq:6.38}
  \end{align} 
    Hence, $(\gamma_mg'_m)'\rightharpoonup T'$ in  $L^2(\{g<1-\delta\})$ and $\gamma_mg'_m\to T$ in $L^2(\{g<1-\delta\})$. Being $\delta$ arbitrary and the r.h.s. of  \eqref{eq:6.38} bounded in $L^2$ as $\delta\to0$, one has that $\gamma_mg'_m\to T$ in $L^2(\{g\neq1\})$.

	Thus one has that (first by $\eqref{eq:conv2}$ and then by~\eqref{eq:conv3} and the convergence above)
	\begin{align}
	\frac{1}{\gamma_m}(\gamma_mg_m')=g_m'\rightharpoonup g'\text{ in $L^2$}\notag\\
	\frac{1}{\gamma_m}(\gamma_mg_m')\rightharpoonup\frac{T}{\gamma}\text{ in $L^2(\{g\neq1\})$}.
\end{align}
Hence $T=\gamma g'$ on $\{g\neq1\}$ and~\eqref{eq:conv4} holds.
Notice that, multiplying~\eqref{eq:g1m} by $\gamma_mg_m'$ on the set where $g_m\neq1$, and integrating, one obtains
\begin{align}
3(C_\tau-1)\Bigl[\eps\tau^{1/\beta}\gamma_m^2|g_m'|^2-\frac{W(g_m)}{\eps\tau^{1/\beta}}\Bigr]\Big|_{\xi_1}^{\xi_2}&={2}\int_{\xi_1}^{\xi_2}\gamma_m(x)g_m'(x)I_{g_m}(x)\dx\quad\forall\,\xi_1<\xi_2\text{ s.t. }(\xi_1,\xi_2)\subset\{g_m\neq1\}.\label{eq:g2m}
\end{align}
By the convergences~\eqref{eq:conv2}-\eqref{eq:conv4}, both~\eqref{eq:g1m} and~\eqref{eq:g2m} pass to the limit, giving~\eqref{eq:g1} and~\eqref{eq:g2}. Moreover, also~\eqref{eq:conv1} clearly holds.

Let $(\gamma,g)$ be obtained as above.
We want to prove that $(\gamma,g)$ is a minimizer of $F_{\tau,\eps}$. 
	To this aim, for any measurable $\bar\gamma\geq1$ let us define the truncation
	\begin{equation}\label{eq:bargammam}
	\bar\gamma_m(x)=\left\{\begin{aligned}
	&\bar \gamma(x) && &\text{if $\bar\gamma(x)\leq m$}\\
	&m && &\text{if $\bar\gamma(x)>m$}. 
	\end{aligned}\right.
	\end{equation}
	Then, notice that 
	\begin{align}
	\hat F_m(\bar \gamma_m,\bar g)&\leq F_{\tau,\eps}(\bar \gamma,\bar g)+\frac{3(C_\tau-1)}{2h}\int_{ \{\bar \gamma>m\}}\frac{W(\bar g)}{\eps\tau^{1/\beta}m}\notag\\
	&\leq F_{\tau,\eps}(\bar \gamma,\bar g)+\frac{C(h,\eps,\tau)}{m}.
	\end{align}
	Hence, for any $\bar{\gamma},\bar g$ and letting $\bar{\gamma}_m$ as in \eqref{eq:bargammam}
	\begin{align}
	F_{\tau,\eps}(\gamma,g)\leq\lim_{m\to\infty}\hat F_m(\gamma_m,g_m)\leq \limsup_{m\to\infty} \hat F_{m}(\bar{\gamma}_m,\bar g)\leq\limsup_{m\to\infty} \Bigl[F_{\tau,\eps}(\bar \gamma, \bar g) + \frac{C(h,\eps,\tau)}{m}\Bigr]=F_{\tau,\eps}(\bar \gamma, \bar g).
	\end{align}
	{Conditions~\eqref{eq:gamma1} and~\eqref{eq:gamma2} hold since $\gamma$ is a minimizer of $F_{\tau,\eps}(\cdot,g)$ and are obtained performing suitable variations of the form $\gamma+t\gamma_0$  where  $\gamma_0\geq0$ whenever $\gamma=1$.  }
    Being by \eqref{eq:g1} $\gamma g'\in W^{1,2}(\{g\neq1\})$ and thus continuous on the set $\{g\neq1\}$, equation~\eqref{eq:gamma3} follows.
	 
\end{proof}

\begin{proof}[Proof of Corollary~\ref{cor:mon}:]
  {
    Since $\gamma g'$ is continuous on the set $\{g\neq1\}$ and $\gamma\geq1$, we have that at a local minimum/maximum of $g$ in $[0,h]$ with $g\neq1$ one has that $g'=0$. From~\eqref{eq:gamma1}-\eqref{eq:gamma3}, at such a point one has that  $\gamma=+\infty$.
  }
   Indeed, if $\gamma(x)<+\infty$ and $g(x)\neq1$, then $0=\gamma(x)^2|g'(x)|^2\geq W(g(x))/\eps^2\tau^{2/\beta}>0$ which leads to a contradiction.  
   
   Assume then that  there exist points $x\in[0,h]$ such that $g(x)<1$ and $\gamma(x)=+\infty$.
  Hence by~\eqref{eq:gamma3}, at such points 
	\begin{equation}
	\eps\tau^{1/\beta}\gamma^2(x)|g'(x)|^2-\frac{W(g(x))}{\eps\tau^{1/\beta}}=-\frac{W(g(x))
}{\eps\tau^{1/\beta}}<0.	\end{equation}
On the other hand, when $\gamma(x)<+\infty$~\eqref{eq:gamma2} or~\eqref{eq:gamma1} hold and then $\eps\tau^{1/\beta}\gamma^2(x)|g'(x)|^2-\frac{W(g(x))}{\eps\tau^{1/\beta}}\geq0$.  Let $x_0$ be such that $\gamma(x_0)=+\infty$ and $g(x_0)<1$. Then, there exists a neighbourhood $[x_0-\delta,x_0+\delta]$ on which $g<1$ and  the continuous function $\eps\tau^{1/\beta}\gamma^2|g'|^2-\frac{W(g)}{\eps\tau^{1/\beta}}$ is negative. Hence, $\gamma=+\infty$ and $g'=0$ on $[x_0-\delta,x_0+\delta]$, thus the function $\eps\tau^{1/\beta}\gamma^2|g'|^2-\frac{W(g)}{\eps\tau^{1/\beta}}$ is constant on that interval. Applying the same reasoning to the points $x_0\pm\delta$ and interating the procedure, one would have that $\gamma\equiv+\infty$ and $g\equiv g(x_0)$. However, this possibility  is excluded by the fact that in this case $F_{\tau,\eps}(\gamma,g)=0$ and thus $(\gamma,g)$ would  not be optimal since the minimal value of $F_{\tau,\eps}$ must be negative.
Thus, $\gamma<+\infty$ and $g'\neq0$ when $g\neq1$. Thus, the function $g$ attains its maximum value $g=1$ on an interval, which we denote by $[\bar x_1,\bar x_2]$, and is monotone increasing on $[0,\bar x_1]$, monotone decreasing on $[\bar x_2,h]$.

\end{proof}

{Let us define the class of functions
\begin{align}
\mathcal S_h=\Bigl\{g\in BV\Bigl([0,h];\Bigl[\frac12,1\Bigr]\Bigr):\,&g(0)=g(h)=\frac12,\notag\\
&\exists\,\bar x \text{ s.t. }g(\bar x)=1\text{ and $Dg\geq0$ on $[0,\bar x]$, $Dg\leq0$ on  $[\bar x,h]$} \Bigr\}.\label{eq:scal}
\end{align}}

By Corollary~\ref{cor:mon} the minimizers of $F_{\tau,\eps}$ found in Proposition~\ref{prop:gammalim} belong to this class.
{Some of the following lemmas will hold not only for the minimizing pair $(\gamma,g)$ of Proposition~\ref{prop:gammalim} but more in general for functions in $\mathcal S_h$.}

\begin{lemma}
  \label{lemma:I}
	There exists $\bar c_0<0$ and $\bar c_1=\bar c_1(h,\bar c_0)$ such that whenever $g\in\mathcal S_h$ then \begin{equation}\label{eq:stimagi}
	g(x)\geq1-\bar c_1\tau\tau^{1/\beta}\quad\Rightarrow\quad I_g(x)<\bar c_0.
\end{equation}
{Moreover, for $x\in[0,\bar x]$ with $\bar x$ as in \eqref{eq:scal} one has the following estimate}
\begin{align}\label{eq:stimai}
I_g(x)\leq (1-g(x))\int_0^{2x}\widehat K_\tau(z)\dz+(1-2g(x))\int_x^{+\infty}\widehat K_\tau(z)\dz. 
\end{align}
\end{lemma}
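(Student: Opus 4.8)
The plan is to prove the pointwise bound \eqref{eq:stimai} first by a direct manipulation of the definition \eqref{eq:defI_g} of $I_g$, and then to deduce \eqref{eq:stimagi} from it by choosing the threshold appropriately. For \eqref{eq:stimai}, fix $x\in[0,\bar x]$ and write $\widehat K_\tau(y-x) = \widehat K_\tau(z)$ after the substitution $z=y-x$. Using that $g\equiv\varphi[g]$ is built from a profile in $\mathcal S_h$ by odd reflection about the points $(2k+1)h$ (so that $g((2k+1)h+t)=1-g((2k+1)h-t)$) and is $2h$-periodic, one splits $I_g(x) = \int_{\R}(g(x+z)-g(x))\widehat K_\tau(z)\dz$ according to the sign of $z$ and according to whether $x+z$ lands in an ``increasing'' or ``decreasing'' part of the profile. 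The key structural facts are: on $[0,\bar x]$ the function $g$ is nondecreasing with values in $[\tfrac12,1]$; and the odd-reflection symmetry pairs the value at $x+z$ on one side of a reflection point with $1-g$ at the mirror point. Since $\widehat K_\tau$ is even, one can symmetrize the integral over $z>0$ and $z<0$; the contribution from $|z|<2x$ is controlled using $g(x+z)\le 1$ and $g(x-z)\ge\tfrac12\ge$ (monotonicity, since $x-z\le x$), giving a factor $(1-g(x))$ times $\int_0^{2x}\widehat K_\tau(z)\dz$; the contribution from $|z|>2x$ uses that by periodicity and odd symmetry the ``averaged'' value of $g$ over a full period equals $\tfrac12$, so that each such pair contributes at most $(1-2g(x))\int_x^{+\infty}\widehat K_\tau(z)\dz$ after re-indexing. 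Assembling these two pieces yields exactly \eqref{eq:stimai}.

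For \eqref{eq:stimagi}, I would start from \eqref{eq:stimai}. If $g(x)\ge 1-\bar c_1\tau\tau^{1/\beta}$, then in particular $g(x)>\tfrac12$ so $1-2g(x)<0$, and since $\int_x^{+\infty}\widehat K_\tau(z)\dz \ge \int_{h}^{+\infty}\widehat K_\tau(z)\dz$ is bounded below by a strictly positive constant depending only on $h$ (recall $x\le\bar x\le h$ and $\widehat K_\tau\to$ the Laplace transform of a nonnegative function, with $\widehat K_\tau$ decreasing in $\tau$ on bounded sets — more concretely $\int_{h}^\infty\widehat K_\tau \ge \int_h^\infty \widehat K_1 =: 2c(h)>0$ uniformly for $\tau\le 1$), the second term in \eqref{eq:stimai} is $\le (1-2g(x))\cdot 2c(h) \le -\,c(h)$ once $g(x)\ge \tfrac34$, say. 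The first term is nonnegative but controlled: $(1-g(x))\int_0^{2x}\widehat K_\tau(z)\dz \le \bar c_1\tau\tau^{1/\beta}\int_0^\infty\widehat K_\tau(z)\dz = \bar c_1\tau\tau^{1/\beta}\cdot \tilde C_1/(\tau\tau^{1/\beta}) = \bar c_1\tilde C_1$, using the first identity in \eqref{eq:0l}. Hence $I_g(x)\le \bar c_1\tilde C_1 - c(h)$. Choosing $\bar c_1 = \bar c_1(h,\bar c_0)$ small enough that $\bar c_1\tilde C_1 \le \tfrac12 c(h)$ and setting $\bar c_0 := -\tfrac12 c(h) < 0$ gives $I_g(x)<\bar c_0$, as claimed. (One also checks that $\bar c_1\tau\tau^{1/\beta}$ is small enough to force $g(x)\ge\tfrac34$, which holds for $\tau$ small; since $\bar c_1$ depends only on $h$ and $\bar c_0$ and not on $\tau$, this is automatic for $\tau\le\tau(h)$.)

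The main obstacle I anticipate is the careful bookkeeping in deriving \eqref{eq:stimai}: one must track exactly how the reflection-periodic structure of $g=\varphi[g]$ interacts with the splitting of the convolution integral, making sure that the ``far'' contributions ($|z|>2x$) genuinely collapse to the clean coefficient $(1-2g(x))\int_x^{+\infty}\widehat K_\tau(z)\dz$ and that the ``near'' contributions do not produce extra error terms. The monotonicity of $g$ on $[0,\bar x]$ and $[\bar x,h]$ is what makes the near-field estimate $g(x-z)\ge g(x-|z|)$-type comparisons work, and the evenness of $\widehat K_\tau$ is what lets the reflection identity be used symmetrically; once these are set up correctly the rest is elementary. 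The deduction of \eqref{eq:stimagi} is then routine, modulo recording the uniform lower bound on $\int_h^\infty\widehat K_\tau$ and invoking \eqref{eq:0l}.
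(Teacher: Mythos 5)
Your plan inverts the order of the paper's argument: the paper proves \eqref{eq:stimagi} directly (by bounding $\int_{\R}g(y)\widehat K_\tau(y-x)\,\d y$ against the convolution of $\varphi[\chi_{[0,h]}]$ with $\widehat K_\tau$, and then using $g(x)\ge 1-\bar c_1\tau\tau^{1/\beta}$ together with \eqref{eq:0l}), and proves \eqref{eq:stimai} by a separate computation; you instead propose to establish \eqref{eq:stimai} first and deduce \eqref{eq:stimagi} from it. The deduction step is fine and is arguably cleaner than the paper's: bounding the first term in \eqref{eq:stimai} by $\bar c_1\tilde C_1$ via \eqref{eq:0l}, and the second by $(1-2g(x))\int_h^\infty\widehat K_\tau$ (monotone in $\tau$), does yield $I_g(x)<\bar c_0$ for small $\bar c_1$. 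Two small caveats there: \eqref{eq:stimai} as stated only covers $x\in[0,\bar x]$, so you need to invoke the symmetry $g\mapsto g(h-\cdot)$ (which maps $\mathcal S_h$ to itself and satisfies $I_{g(h-\cdot)}(h-x)=I_g(x)$) to treat $x\in(\bar x,h]$, and the bound on the first term uses $\int_0^{2x}\widehat K_\tau\le\int_0^\infty\widehat K_\tau=\tilde C_1/(\tau\tau^{1/\beta})$ which is exactly what makes the $\tau\tau^{1/\beta}$ scaling in the hypothesis the right one.

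The genuine gap is in your sketch of \eqref{eq:stimai}. After symmetrizing, $I_g(x)=\int_0^\infty[g(x+z)+g(x-z)-2g(x)]\widehat K_\tau(z)\,\d z$, and the near-field estimate on $(0,2x)$ is correct: $g(x+z)\le1$ and (using monotonicity for $z<x$ and the odd reflection at $0$ for $x<z<2x$) $g(x-z)\le g(x)$, so the integrand is $\le 1-g(x)$. But your far-field claim fails. For $z>2x$, using $g(x-z)=1-g(z-x)$ the integrand equals $(1-2g(x))+\big(g(x+z)-g(z-x)\big)$; the term $g(x+z)-g(z-x)$ is \emph{not} nonpositive in general — e.g.\ when both $z-x$ and $x+z$ lie in $[0,\bar x]$ the monotonicity of $g$ gives $g(x+z)\ge g(z-x)$, with strict inequality unless $g$ is locally constant. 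Your appeal to "the averaged value of $g$ over a period equals $1/2$" does not repair this: even granting an average cancellation one would at best obtain $(1-2g(x))\int_{2x}^{\infty}\widehat K_\tau$, not the stronger $(1-2g(x))\int_{x}^{\infty}\widehat K_\tau$ needed (recall $1-2g(x)\le 0$, so $\int_x^\infty$ gives a \emph{smaller} upper bound than $\int_{2x}^\infty$). The paper's proof of \eqref{eq:stimai} avoids this by a different algebraic decomposition: after using $g(-y)=1-g(y)$ it first pulls out the constant piece $\int_0^\infty(1-2g(x))\widehat K_\tau(y-x)\,\d y$ in full, and then bounds the remainder $\int_0^\infty(g(y)-1+g(x))\big(\widehat K_\tau(y-x)-\widehat K_\tau(y+x)\big)\,\d y$ by splitting at $y=x$ and using $g(y)\le g(x)$ on $[0,x]$, $g(y)\le1$ on $[x,\infty)$, and the pointwise nonnegativity of $\widehat K_\tau(y-x)-\widehat K_\tau(y+x)$ for $x,y\ge0$. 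Without that rearrangement the crucial extra $\int_x^{2x}\widehat K_\tau$ contribution does not appear, and your sketch does not recover \eqref{eq:stimai}.
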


\begin{proof}
	By~\eqref{eq:0l} one has that
	\begin{align}
	I_g(x)&=\int_{\R}(g(y)-g(x))\widehat K_\tau(y-x)\dy=	\int_{\R}g(y)\widehat K_\tau(y-x)\dy-g(x)\int_{\R}\widehat K_\tau(z)\dz\notag\\
	&\leq\int_{ \R}\varphi[\chi_{[0,h]}](y)\widehat K_\tau(y-x)\dy-g(x)\int_{\R}\widehat K_\tau(z)\dz\notag\\
	&\leq\int_{ \R}\varphi[\chi_{[0,h]}](y)\widehat K_\tau(h/2-y)\dy-g(x)\int_{\R}\widehat K_\tau(z)\dz\notag\\
	&\leq2\Bigl[\int_0^{h/2}\widehat K_{\tau}(z)\dz+\int_{3h/2}^{+\infty}\widehat K_\tau(z)\dz-g(x)\int_0^{+\infty}\widehat K_\tau(z)\dz\Bigr]\notag\\
	&\leq-2\int_{h/2}^{3h/2}\widehat K_\tau(z)\dz+2\bar c_1\tau\tau^{1/\beta}\int_{0}^{+\infty}\widehat K_\tau(z)\dz\notag\\
	&\leq-2\int_{h/2}^{3h/2}\widehat K_\tau(z)\dz+2\bar c_1\tilde C_1,
	\end{align}
	hence~\eqref{eq:stimagi} holds provided $\bar c_1$ is sufficiently small.
	
	As for~\eqref{eq:stimai}, we first obtain the following decomposition:  given that $g(-y) = 1- g(y)$, we have that
	\begin{align}
	\int_{-\infty}^{+\infty}(g(y) - g(x)) \widehat{K}_{\tau}(y-x) \dy &= 
	\int_{0}^{+\infty}(g(y) - g(x)) \widehat{K}_{\tau}(y-x) \dy 
	+\int_{0}^{+\infty}(g(-y) - g(x)) \widehat{K}_{\tau}(y+x) \dy\notag\\
	&= \int_{0}^{+\infty}(g(y) - g(x)) \widehat{K}_{\tau}(y-x) \dy 
	+\int_{0}^{+\infty}(g(-y) - g(x)) \widehat{K}_{\tau}(y-x) \dy\notag\\
	&\ \ +\int_{0}^{+\infty}(g(-y) - g(x)) \big(\widehat{K}_{\tau}(y+x) -  \widehat{K}_{\tau}(y-x)\big) \dy \notag\\
	&= \int_{0}^{+\infty}\big( 1- 2g(x) \big) \widehat{K}_{\tau}(y-x) \dy \notag \\
	&\ \ +\int_{0}^{+\infty}(g(y)-1+g(x)) \big(\widehat{K}_{\tau}(y-x) -  \widehat{K}_{\tau}(y+x)\big) \dy.\notag
	\end{align}
	
	Then using the inequality $g(y)-1\leq0$ when $y\in[x,+\infty)$ and $g(y)\leq g(x)$ when $y\in[0,x]$, $x\leq\bar x$ one has that
	\begin{align}
	\int_{0}^{+\infty}\big( 1- 2g(x) \big) \widehat{K}_{\tau}(y-x)\dy
	&+\int_{0}^{+\infty}(g(y)-1+g(x)) \big(\widehat{K}_{\tau}(y-x) -  \widehat{K}_{\tau}(y+x)\big) \dy\leq\notag\\
	&\leq(1-2g(x))\Bigl(\int_0^{+\infty}\widehat K_\tau(z)\dz+\int_0^x\widehat K_\tau(z)\dz\Bigr)\notag\\
	&+(g(x)-1)\Bigl(\int_0^x\widehat K_\tau(z)\dz-\int_x^{2x}\widehat K_\tau(z)\dz\Bigr)\notag\\
	&+2g(x)\int_0^x\widehat K_\tau(z)\dz\notag\\
	&\leq(1-g(x))\int_0^{2x}\widehat K_\tau(z)\dz+(1-2g(x))\int_x^{+\infty}\widehat K_\tau(z)\dz.
	\end{align}
	
\end{proof}

Thanks to the upper bound~\eqref{eq:stimai} one has the following
\begin{corollary}
	\label{cor:ineg}
	Let $g\in\mathcal S_h\cap W^{1,2}$. Then, the following holds:
	\begin{enumerate}
	\item Let $0<\delta<\frac12$. Then there exists $c_0=c_0(\delta)$ s.t. if $g(x)\geq\frac12+\delta$ and $x\leq c_0\tau^{1/\beta}$, then $I_g(x)<0$.	
	\item Assume $g(x)\geq 1-c\tau$, where $c\tau\ll1$ is such that 
	\begin{equation}\label{eq:tauprecise}
		(1-2c\tau)^{1/(q-1)}\geq\frac12.
	\end{equation}  Then,
	\begin{equation}\label{eq:xctau}
	|x|\leq\frac14\tau^{1/\beta}(c\tau)^{-1/(\beta+1)}\quad\Rightarrow\quad I_g(x)<0.
	\end{equation}
	\end{enumerate}
	\end{corollary}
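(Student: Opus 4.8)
The plan is to derive both inequalities directly from the upper bound~\eqref{eq:stimai} of Lemma~\ref{lemma:I} together with the explicit form of the sliced kernel. The first step is to record that, integrating $K_\tau(\zeta)=(\|\zeta\|_1+\tau^{1/\beta})^{-p}$ over the $d-1$ transverse coordinates and using $p-d+1=\beta+2$, one has $\widehat K_\tau(z)=c_{d,p}(|z|+\tau^{1/\beta})^{-(\beta+2)}$ for a dimensional constant $c_{d,p}>0$, hence the closed forms $\int_s^{+\infty}\widehat K_\tau(z)\dz=\frac{c_{d,p}}{(\beta+1)(s+\tau^{1/\beta})^{\beta+1}}$ for $s\geq0$ (consistent with~\eqref{eq:0l}, $\tilde C_1=c_{d,p}/(\beta+1)$) and $\int_0^{2x}\widehat K_\tau(z)\dz=\frac{c_{d,p}}{\beta+1}\big[(\tau^{1/\beta})^{-(\beta+1)}-(2x+\tau^{1/\beta})^{-(\beta+1)}\big]$. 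Since in both statements the hypothesis on $g(x)$ forces $x$ to be close to $\bar x$, I would restrict to $x\in[0,\bar x]$ — the range in which~\eqref{eq:stimai} is available and in which the corollary is used in Theorem~\ref{thm:pos} — the remaining case being handled by applying the statement to $\bar g(\cdot)=g(h-\cdot)\in\mathcal S_h$, for which $I_{\bar g}(y)=I_g(h-y)$ by evenness of $\widehat K_\tau$.

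For part (1): from $g(x)\geq\tfrac12+\delta$ we get $1-g(x)\leq\tfrac12$ and $1-2g(x)\leq-2\delta$, so \eqref{eq:stimai} gives $I_g(x)\leq\tfrac12\int_0^{2x}\widehat K_\tau-2\delta\int_x^{+\infty}\widehat K_\tau$, and it is enough to show $\int_0^{2x}\widehat K_\tau<4\delta\int_x^{+\infty}\widehat K_\tau$. The left-hand side is nondecreasing and the right-hand side nonincreasing in $x$, so the worst case is $x=c_0\tau^{1/\beta}$; plugging in the formulas above and cancelling the common factor $c_{d,p}(\beta+1)^{-1}(\tau^{1/\beta})^{-(\beta+1)}$, the inequality becomes $1-(2c_0+1)^{-(\beta+1)}<4\delta\,(c_0+1)^{-(\beta+1)}$. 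The left side tends to $0$ as $c_0\downarrow0$ while the right side tends to $4\delta>0$, so one fixes $c_0=c_0(\delta)>0$ small enough for this to hold.

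For part (2): now $1-g(x)\leq c\tau$ and $2g(x)-1\geq1-2c\tau$, and bounding $\int_0^{2x}\widehat K_\tau\leq\int_0^{+\infty}\widehat K_\tau=\frac{c_{d,p}}{(\beta+1)(\tau^{1/\beta})^{\beta+1}}$, estimate~\eqref{eq:stimai} yields $I_g(x)\leq\frac{c_{d,p}}{\beta+1}\big[c\tau\,(\tau^{1/\beta})^{-(\beta+1)}-(1-2c\tau)(x+\tau^{1/\beta})^{-(\beta+1)}\big]$, which is strictly negative precisely when $x+\tau^{1/\beta}<\tau^{1/\beta}\big(\tfrac{1-2c\tau}{c\tau}\big)^{1/(\beta+1)}$. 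Using the choice~\eqref{eq:tauprecise} of $c\tau$ (so that $(1-2c\tau)^{1/(\beta+1)}\geq\tfrac12$) and $c\tau\ll1$ (so that $(c\tau)^{-1/(\beta+1)}\geq4$), the right-hand side is at least $\tfrac12\tau^{1/\beta}(c\tau)^{-1/(\beta+1)}\geq\tfrac14\tau^{1/\beta}(c\tau)^{-1/(\beta+1)}+\tau^{1/\beta}$, whence $|x|\leq\tfrac14\tau^{1/\beta}(c\tau)^{-1/(\beta+1)}$ gives $I_g(x)<0$.

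The computation is essentially routine once~\eqref{eq:stimai} and the explicit form of $\widehat K_\tau$ are in hand; the only point demanding some care is the bookkeeping of the powers of $\tau^{1/\beta}$ and checking that the stated constants come out right — in particular, the factor $\tfrac14$ and the exponent $(c\tau)^{-1/(\beta+1)}$ in part (2) are exactly what the smallness condition~\eqref{eq:tauprecise} is designed to produce, which is why that hypothesis is imposed.
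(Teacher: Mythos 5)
Your proof is correct and follows the same route as the paper: both deduce the sign of $I_g$ directly from the upper bound~\eqref{eq:stimai} of Lemma~\ref{lemma:I}, using the closed form of the one-dimensional kernel $\widehat K_\tau$ (the paper instead bounds $\int_0^{2x}\widehat K_\tau$ crudely by $2x\,\tau^{-q/\beta}$ in part~(1), and in part~(2) substitutes $x=t\tau^{1/\beta}$ and restricts to $t>1$ rather than splitting $\frac12(c\tau)^{-1/(\beta+1)}=\frac14(c\tau)^{-1/(\beta+1)}+\frac14(c\tau)^{-1/(\beta+1)}$, but these are only bookkeeping variations yielding the same constants).
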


\begin{proof}
	By~\eqref{eq:stimai} and assuming that $g(x)\geq\frac12+\delta$ one has that
	\begin{align}
	I_g(x)&\leq (1-g(x))\int_0^{2x}\widehat K_\tau(z)\dz+(1-2g(x))\int_x^{+\infty}\widehat K_\tau(z)\dz\notag\\
	&\leq \Bigl[\Bigl(\frac{1}{2}-\delta\Bigr)\frac{2x}{\tau^{q/\beta}}-\frac{2\delta}{q-1}\frac{1}{(x+\tau^{1/\beta})^{q-1}}\Bigr].\label{eq:6.47}
	\end{align}
	Hence choosing $x\leq c_0\tau^{1/\beta}$ for some $c_0=c_0(\delta)\ll1$, one has the first claim.
	
	Using~\eqref{eq:stimai} and assuming that $g(x)\geq1-c\tau$, one has that
	\begin{align}
	I_g(x)&\leq\frac{1}{q-1}\Bigl[c\tau\Bigl(\frac{1}{\tau^{(q-1)/\beta}}-\frac{1}{(2x+\tau^{1/\beta})^{q-1}}\Bigr)+(2c\tau-1)\frac{1}{(x+\tau^{1/\beta})^{q-1}}\Bigr]\notag\\
	&\leq\frac{1}{q-1}\Bigl[c\tau\frac{1}{\tau^{(q-1)/\beta}}+(2c\tau-1)\frac{1}{(x+\tau^{1/\beta})^{q-1}}\Bigr].\label{eq:6.48}
	\end{align}
	Hence, setting $x=t\tau^{1/\beta}$, by straightforward computations on sees that
	\[
	(1+t)^{q-1}<\frac{1-2c\tau}{c\tau}\quad\Rightarrow\quad I_g(x)<0.
	\]
	 Since $c\tau\ll1$, one can assume that $t>1$ and then one gets the estimate
	\[
	t<\frac{1}{2}\Bigl(\frac{1-2c\tau}{c\tau}\Bigr)^{1/(q-1)},
	\]
	which leads to~\eqref{eq:xctau} by \eqref{eq:tauprecise} and since $q-1=\beta+1$.
\end{proof}

\begin{lemma}
	\label{lemma:gammaconvx}
	Let $x=x(\eps,\tau,g,\gamma)$ be such that the minimal function $g$ of  Proposition~\ref{prop:gammalim} satisfies $g(x)=\frac34$. Then, for all $c>0$ there exist $\eps_0,\tau_0$ such that $\forall\,\eps\leq\eps_0,\tau\leq\tau_0$ it holds
	\begin{equation*}
	|x|\leq c\tau^{1/\beta}.
	\end{equation*}
 \end{lemma}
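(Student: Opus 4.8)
The plan is to argue by contradiction, after rescaling so that only the limit $\eps\to0$ matters, by producing a ``sharpening competitor'' which strictly lowers the energy whenever the transition $\tfrac12\rightsquigarrow\tfrac34$ is spread over more than $c\,\tau^{1/\beta}$.

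Suppose the statement fails: there are $c>0$ and sequences $\eps_n\downarrow0$, $\tau_n\downarrow0$ for which the minimizers $(\gamma_n,g_n)$ of $F_{\tau_n,\eps_n}$ from Proposition~\ref{prop:gammalim} (on $[0,2h_n]$, $h_n=h^*_{\tau_n,\eps_n}$) satisfy $g_n(x_n)=\tfrac34$ with $x_n>c\,\tau_n^{1/\beta}$. One may assume the $h_n$ lie in a fixed compact subset of $(0,+\infty)$, since $h^*_{\tau,\eps}\to h^*_\tau$ as $\eps\to0$ (by Corollary~\ref{cor:gammaconv} and Theorem~\ref{Thm:DRunique}) and $h^*_\tau$ is bounded for $\tau$ small. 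By Corollary~\ref{cor:mon}, $g_n$ is strictly increasing on $[0,\bar x_1^{(n)}]$; thus $x_n\in(0,\bar x_1^{(n)})$ is the unique point below $\bar x_1^{(n)}$ with $g_n=\tfrac34$, $g_n<\tfrac34$ on $[0,x_n)$, and $W(g_n)\ge W(\tfrac34)=:w_0>0$ there.

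Construct an admissible competitor $(\tilde\gamma_n,\tilde g_n)$ (reflection--symmetric as required by $\mathcal C_{h_n}$, $\Gamma_{h_n}$) by sharpening the transition of $g_n$: on an interval of length $c\tau_n^{1/\beta}$ starting at $0$ let $\tilde g_n$ rise from $\tfrac12$ to $\tfrac34$ with $\tilde\gamma_n\equiv1$ (possible in $W^{1,2}$ once $\eps_n$ is small, the natural Modica--Mortola width $\eps_n\tau_n^{1/\beta}$ being $\ll c\tau_n^{1/\beta}$), then $\tilde g_n\equiv\tfrac34$ with $\tilde\gamma_n\equiv+\infty$ until the level $\tfrac34$ is first reached by $g_n$ (this plateau carries \emph{no} local energy, since $\gamma g'=0$), and $\tilde g_n=g_n$ thereafter (mirroring the same modification near $h_n$). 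By the equipartition bound \eqref{eq:momega} the local energy of $(\gamma_n,g_n)$ on $[0,x_n]$ is $\ge 3(C_{\tau_n}-1)\big(\omega(\tfrac34)-\omega(\tfrac12)\big)$, whereas that of $(\tilde\gamma_n,\tilde g_n)$ on the same set is $\le 3(C_{\tau_n}-1)\big(\omega(\tfrac34)-\omega(\tfrac12)+o_{\eps_n}(1)\big)$ (the excess being the cost of connecting $\tfrac12$ to $\tfrac34$ over a finite interval, which $\to0$ as $c/\eps_n\to\infty$). Since $C_{\tau_n}\sim\tilde C_2/\tau_n$ by \eqref{eq:0l}, the increase in local energy is at most $o_{\eps_n}(1)\,\tau_n^{-1}$.

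It remains — and this is the main obstacle — to show that the nonlocal energy increases strictly and quantitatively,
\[
E_{\mathrm{nl}}(\tilde g_n)-E_{\mathrm{nl}}(g_n)\ \ge\ \kappa(c)\,\tau_n^{-1},\qquad E_{\mathrm{nl}}(v):=\int_0^{2h_n}\!\!\int_{\R}\big(v(x)-v(y)\big)^2\,\widehat K_\tau(x-y)\,\dy\,\dx,
\]
for some $\kappa(c)>0$ depending only on $c$, $w_0$, and the bounds on $h_n$. Here the key point is that the perturbation $\varphi[\tilde g_n]-\varphi[g_n]$ is $\ge0$ on $[0,x_n]$, vanishes on $[x_n,2h_n-x_n]$, and is odd about $0$ by the reflection structure — an odd bump of order--one height on an interval of length $\asymp c\tau_n^{1/\beta}$ — so that on that scale $\widehat K_\tau$ is comparable to $\widehat K_\tau(0)\asymp\tau_n^{-(\beta+2)/\beta}$, making the ``diagonal'' part of the gain of order $\tau_n^{-(\beta+2)/\beta}(c\tau_n^{1/\beta})^2=c^2\tau_n^{-1}$, while the cross term with $\varphi[g_n]-\tfrac12$ is controlled via reflection positivity of $\widehat K_\tau$ (the odd perturbation has nonnegative self--interaction and is aligned with $\varphi[g_n]-\tfrac12$ near $0$). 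Granting this and choosing $\eps_n$ small enough that $o_{\eps_n}(1)<\tfrac12\kappa(c)$, we obtain $F_{\tau_n,\eps_n}(\tilde\gamma_n,\tilde g_n)<F_{\tau_n,\eps_n}(\gamma_n,g_n)$, contradicting minimality, which yields $\eps_0(c)$ and then $\tau_0(c)$. The hard part is exactly this nonlocal lower bound: one must show that sharpening the transition strictly decreases $-E_{\mathrm{nl}}$ on the scale $\tau^{1/\beta}$ at which the kernel $\widehat K_\tau$ saturates. (Alternatively, one can run the same contradiction through the rescaling $x\mapsto\tau_n^{1/\beta}x$ and the $\Gamma$--convergence of the rescaled one-dimensional functionals to the sharp interface one — whose minimizers have transitions concentrated at a single point — to conclude $x_n/\tau_n^{1/\beta}\to0$, again contradicting $x_n>c\tau_n^{1/\beta}$.)
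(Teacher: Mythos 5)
The proof is incomplete, and you flag the gap yourself: the entire weight of the argument rests on the quantitative nonlocal lower bound
\[
E_{\mathrm{nl}}(\tilde g_n)-E_{\mathrm{nl}}(g_n)\ \geq\ \kappa(c)\,\tau_n^{-1},
\]
which you explicitly do not prove (``Granting this\dots'', ``The hard part is exactly this nonlocal lower bound''). This is not a side technicality but the core difficulty, and it is genuinely unclear that the bound holds as stated for your competitor. Writing $v=\tilde G_n-G_n$, $w=\tilde G_n+G_n-1$ (both $2h_n$-periodic and odd about $0$), one has
\[
E_{\mathrm{nl}}(\tilde g_n)-E_{\mathrm{nl}}(g_n)=\Lambda(v,v)+2\Lambda(v,G_n-\tfrac12),
\]
where $\Lambda$ is the bilinear form associated with $\widehat K_\tau$. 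The self-interaction $\Lambda(v,v)\geq0$ is fine, but your competitor also \emph{reduces} the near-diagonal variation on the plateau $[c\tau_n^{1/\beta},x_n]$ (where $\tilde g_n'=0$ while $g_n'>0$), and the sign and size of the cross term $\Lambda(v,G_n-\tfrac12)$ are not controlled by the vague appeal to reflection positivity and ``alignment near $0$''. One cannot simply localize to the scale $c\tau^{1/\beta}$ and read off $\widehat K_\tau(0)\cdot(c\tau^{1/\beta})^2$ as a gain, because the integrand is not of order one there and the loss on the plateau competes with it. A convincing argument would have to actually separate the positive and negative contributions and show the former dominates uniformly as $\tau\to0$, which you do not do.

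The paper takes a different and cleaner route that sidesteps this altogether. After the inverse rescaling $\tilde x=\tau^{-1/\beta}x$ it does not compare $g_n$ to a hand-built competitor; instead it bounds the rescaled functional below by the Young-inequality functional $\bar M_\tau$ (replacing the Modica--Mortola integrand by $|( \omega\circ g)'|$, which is constant $=2$ on the class $\mathcal S_{h/\tau^{1/\beta}}$), and then uses strict convexity of the nonlocal term on $\mathcal S_{h/\tau^{1/\beta}}$ to get a uniform quantitative gap $\bar M_{\tau_n}(\tilde g_n)\geq \bar M_{\tau_n}(\chi_{[0,h/\tau_n^{1/\beta}]})+\tilde C_2$ whenever $\|\tilde g_n-\chi\|_{L^1}>c/4$. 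Combined with the upper bound $\tilde F_{\tau_n,\eps_n}(1,\tilde g_{1,n})\to 0$ coming from the one-dimensional $\Gamma$-convergence, this gives a contradiction without ever estimating the energy change under a local modification. Your parenthetical alternative (``rescale and invoke $\Gamma$-convergence to the sharp-interface functional'') is the right idea and is essentially what the paper does, but as you state it, it ignores the fact that the rescaled domain $[0,2h_n/\tau_n^{1/\beta}]$ grows unboundedly, so plain $\Gamma$-convergence on a fixed domain is not applicable; the explicit lower bound $\bar M_\tau$ and the convexity argument are precisely what replace it. To complete your write-up you would either have to prove the nonlocal lower bound for your competitor, or develop the rescaled lower-bound argument in the alternative.
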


\begin{proof}
The proof will be based on a $\Gamma$-convergence argument, namely the fact that minimizers of $F_{\tau,\eps}$ must converge, as $\eps,\tau\to0$ to the  characteristic function  $\varphi[\chi_{[0,h]}]$.
	Assume by contradiction that there exists $c>0$ and sequences $(\eps_n,\tau_n)\to(0,0)$ with minimizers $(\gamma_n,g_n)$ of $F_{\tau_n,\eps_n}$  such that
\begin{equation}\label{eq:gnxn}
g_n(x_n)=\frac34\quad\Rightarrow\quad|x_n|\geq c\tau_n^{1/\beta}.
\end{equation} 

Let us perform the  inverse rescaling of $F_{\tau_n,\eps_n}$ w.r.t. the one performed in the introduction.
More precisely, set $\tilde x=x\tau_n^{-1/\beta}$, $\tilde g_n(\tilde x)=g_n(x)$ and $\tilde\gamma_n(\tilde x)=\gamma_n(x)$. One obtains that
\[
F_{\tau_n,\eps_n}(\gamma_n,g_n)=\frac{1}{\tau_n\tau_n^{1/\beta}}\tilde F_{\tau_n,\eps_n}(\tilde{\gamma}_n,\tilde g_n),
\]  
where
\begin{align*}
\tilde F_{\tau_n,\eps_n}(\tilde{\gamma}_n,\tilde g_n)=\frac{\tau_n^{1/\beta}}{2h}\Bigl[3(J_c&-\tau_n)\int_0^{2h/\tau_n^{1/\beta}}\Bigl(\eps_n\gamma_n|\tilde g_n'|^2+\frac{W(\tilde g_n)}{\eps_n\gamma_n}\Bigr)\dx\\ &-\int_0^{2h/\tau_n^{1/\beta}}\int_{\R}|\tilde g_n(x)-\tilde g_n(y)|^2\widehat K_1(x-y)\dx\dy\Bigr]
\end{align*}
and 
\[
J_c=\int_{ \R}|z|\widehat K_1(z)\dz,\quad \widehat K_1(z)=\frac{1}{(|z|+1)^q}.
\]

Let us now introduce the following functional

\begin{align*}
\bar M_{\tau}(\tilde g)&=\frac{\tau^{1/\beta}}{2h}\Bigl[3(J_c-\tau)\int_0^{2h/\tau^{1/\beta}}|\tilde g'|\sqrt{W(\tilde g)}\dx-\int_0^{2h/\tau^{1/\beta}}\int_{\R}|\tilde g(x)-\tilde g(y)|^2\widehat K_1(x-y)\dx\dy\Bigr]\notag\\
&=\frac{\tau^{1/\beta}}{2h}\Bigl[3(J_c-\tau)\int_0^{2h/\tau^{1/\beta}}|(\omega\circ \tilde g)'(x)|\dx-\int_0^{2h/\tau^{1/\beta}}\int_{\R}|\tilde g(x)-\tilde g(y)|^2\widehat K_1(x-y)\dx\dy\Bigr],
\end{align*}
where $\omega(t)=3t^2-2t^3$.

Notice as usual that 
\begin{equation}\label{eq:tfbf}
\tilde F_{\tau,\eps}(\tilde \gamma, \tilde g)\geq\bar M_\tau(\tilde g)
\end{equation}
for all $\tilde g\in \mathcal S_{h/\tau^{1/\beta}}$.

We claim that, for any $\tilde g\in \mathcal S_{h/\tau^{1/\beta}}$, 
\begin{equation}\label{eq:fbar}
\bar M_\tau(\tilde g)\geq \bar M_\tau(\chi_{[0,h/\tau^{1/\beta}]})
\end{equation}	
and equality holds if and only if $\tilde g=\chi_{[0,h/\tau^{1/\beta}]}$. 

Indeed, first of all notice that for all $ \tilde g\in\mathcal S_{{h/\tau^{1/\beta}}}$, by the fact that there exists $\tilde x$ such that  $\tilde g$ is monotone nondecreasing on $[0,\tilde x]$ and monotone nonincreasing on $[\tilde x,h/\tau^{1/\beta}]$ and $\tilde g$ reaches the value $1$ in $\tilde x$ (thus $0$ in $2h/\tau^{1/\beta}-\tilde x$) it holds   
	\begin{equation}\label{eq:6.60}
	\int_0^{2h/\tau^{1/\beta}}|(\omega\circ \tilde g)'(x)|\dx=2(\omega(1)-\omega(0))=2.
	\end{equation}
	
	Given the $2{h/\tau^{1/\beta}}$-periodicity of the function $\tilde g$, one has that
    \begin{equation*}
      \begin{split}
      \int_{0}^{2h/\tau^{1/\beta}}\int_{0}^{+\infty}\big(\tilde g(x+z) - \tilde g(x)\big)^{2} \widehat K_1(z)\dz\dx\\
      =\sum_{k=0}^{+\infty} \int_{0}^{2h/\tau^{1/\beta}} \int_{0}^{2h/\tau^{1/\beta}}\big( \tilde g(x+z)-\tilde g(x)\big)^2\widehat K_1(z+2kh/\tau^{1/\beta})\dz\dx.
      \end{split}
    \end{equation*}

	Now observe that the functional 
	\[
	\tilde g\mapsto\int_0^{2h/\tau^{1/\beta}}\int_0^{2h/\tau^{1/\beta}}(\tilde g(x)-\tilde g(x+z))^2\widehat K_1(z+2kh/\tau^{1/\beta})\dz\dx
	\]
	is strictly convex, and the set of functions $ \mathcal S_{h/\tau^{1/\beta}}$ is convex as well.
    Hence, by the above and \eqref{eq:6.60} the functional attains its maximum on the extremal points of $\mathcal S_{h/\tau^{1/\beta}}$, namely on the characteristic function $\chi_{[0,{h/\tau^{1/\beta}}]}$. Thus our claim \eqref{eq:fbar} is proved.

By~\eqref{eq:gnxn}, one has that
\begin{equation}
\|\tilde g_n-\chi_{[0,h/\tau_n^{1/\beta}]}\|_{L^1([0,2h/\tau_n^{1/\beta}])}>\frac14c.
\end{equation} 
Hence, by~\eqref{eq:fbar} and the uniqueness of the minimizer $\chi_{[0,h/\tau^{1/\beta}]}$, there exists a constant $\tilde C_2>0$ such that
\begin{equation}\label{eq:c2}
\bar M_{\tau_n}(\tilde g_n)\geq\bar M_{\tau_n}(\chi_{[0,h/\tau_n^{1/\beta}]})+\tilde C_2. 
\end{equation}	

On the other hand, denoting by  $g_{1,n}$ be the minimizers of $F_{\tau_n,\eps_n}(1,\cdot)$, one has that by the $\Gamma$-convergence of the Modica-Mortola term and the continuity in $L^1$ of the nonlocal term as in Corollary~\ref{cor:gammaconv},
\begin{equation}\label{eq:lim0}
\lim_{n\to+\infty}\tilde F_{\tau_n,\eps_n}(1,\tilde g_{1,n})=0.
\end{equation} 

Putting together~\eqref{eq:lim0}, the fact that $(\tilde \gamma_n,\tilde g_n)$ is a minimizer of $\tilde F_{\tau_n,\eps_n}$,~\eqref{eq:tfbf} and~\eqref{eq:c2} one obtains that

\begin{equation*}
  \begin{split}
0\geq\lim_n\tilde F_{\tau_n,\eps_n}(1,\tilde g_{1,n})& \geq\liminf_n\tilde F_{\tau_n,\eps_n}(\tilde \gamma_n,\tilde g_n)\geq \liminf_n\bar M_{\tau_n}(\tilde g_n) \\ &\geq \liminf_n\bar M_{\tau_n}(\chi_{[0,h/\tau_n^{1/\beta}]})+ \tilde C_2\geq\tilde C_2>0,
  \end{split}
\end{equation*}
 thus reaching a contradiction.

	\end{proof}

\begin{lemma}
	\label{lemma:1-ctau}
	Let $0<c\ll1$ and  $g$ as in Proposition~\ref{prop:gammalim}. Then, there exists $c_1\geq1$, $\tau_1>0$ such that  for all $\tau\leq\tau_1$
	\begin{equation}
	g(x)=1-c_1\tau\quad\Rightarrow \quad |x|\leq c\tau^{1/\beta}
	\end{equation}
\end{lemma}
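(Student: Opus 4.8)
The statement of Lemma~\ref{lemma:1-ctau} asserts that the point where the minimizing profile $g$ reaches the level $1-c_1\tau$ (for a suitable $c_1\geq1$) is contained in a window of size $c\tau^{1/\beta}$ around the origin, for $\tau$ small. I would prove this by combining the level-set control at $g=3/4$ coming from Lemma~\ref{lemma:gammaconvx} with the differential/integral identity~\eqref{eq:g2} and the sign information on $I_g$ provided by Corollary~\ref{cor:ineg}. The guiding picture is: once $g$ has climbed past $3/4$ (which happens in a $c\tau^{1/\beta}$-window by Lemma~\ref{lemma:gammaconvx}), it must continue to climb quickly to reach $1-c_1\tau$, because on this range $I_g<0$ (by part (1) of Corollary~\ref{cor:ineg} with $\delta=1/4$), so by~\eqref{eq:g2} the quantity $\eps\tau^{1/\beta}\gamma^2|g'|^2-\frac{W(g)}{\eps\tau^{1/\beta}}$ is strictly increasing, forcing $\eps\tau^{1/\beta}\gamma^2|g'|^2\geq\frac{W(g)}{\eps\tau^{1/\beta}}$, i.e.\ $\gamma|g'|\geq\sqrt{W(g)}/(\eps\tau^{1/\beta})$, which after the change of variables $g\mapsto\omega(g)$ gives a quantitative speed estimate on the monotone branch.

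First I would fix $\delta=1/4$ and invoke Lemma~\ref{lemma:gammaconvx}: for $\eps,\tau$ small, the point $x_{3/4}$ with $g(x_{3/4})=3/4$ satisfies $|x_{3/4}|\leq \frac{c_0(1/4)}{2}\tau^{1/\beta}$, so in particular $x_{3/4}\leq c_0(1/4)\tau^{1/\beta}$ and part~(1) of Corollary~\ref{cor:ineg} gives $I_g(x)<0$ for all $x\in[0,x_{3/4}]$. Next, on the interval $[x_{3/4},\bar x_1]$ (with $\bar x_1$ as in Corollary~\ref{cor:mon}) one has $g\geq 3/4\geq \tfrac12+\tfrac14$, so as long as $x\leq c_0(1/4)\tau^{1/\beta}$ the same corollary keeps $I_g(x)<0$; and by~\eqref{eq:stimai} the estimate $I_g(x)\leq (1-g(x))\int_0^{2x}\widehat K_\tau+(1-2g(x))\int_x^{+\infty}\widehat K_\tau$ shows that for $g(x)$ close to $1$ the sign stays negative on an even larger window (this is exactly part~(2) of Corollary~\ref{cor:ineg}, with the window $|x|\leq \tfrac14\tau^{1/\beta}(c\tau)^{-1/(\beta+1)}$, which for small $\tau$ dwarfs $c\tau^{1/\beta}$). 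Thus, on the relevant range, $g'>0$ (Corollary~\ref{cor:mon}) and $I_g<0$, so by~\eqref{eq:g2} with $\xi_1=x$, $\xi_2=\bar x_1$ and $W(g(\bar x_1))=0$,
\begin{equation*}
\eps\tau^{1/\beta}\gamma^2(x)|g'(x)|^2-\frac{W(g(x))}{\eps\tau^{1/\beta}}\geq -\frac{4}{3(C_\tau-1)}\int_x^{\bar x_1}\gamma g' I_g\,\mathrm{d}z\geq 0,
\end{equation*}
hence $\gamma(x)|g'(x)|\geq \sqrt{W(g(x))}/(\eps\tau^{1/\beta})$, i.e.\ $|(\omega\circ g)'(x)|=6|g'(x)|\sqrt{W(g(x))}\leq 6\gamma(x)|g'(x)|\sqrt{W(g(x))}$ is \emph{not} the right direction — instead I use $|g'(x)|\geq \sqrt{W(g(x))}/(\gamma(x)\eps\tau^{1/\beta})$ together with $\gamma\geq1$ only after absorbing $\gamma$, so more carefully: from $\gamma^2|g'|^2\geq W(g)/(\eps\tau^{1/\beta})^2$ and $\gamma\geq 1$ one does \emph{not} directly bound $|g'|$ from below pointwise; the clean route is to use~\eqref{eq:g2} to get $\eps\tau^{1/\beta}\gamma^2|g'|^2\geq W(g)/(\eps\tau^{1/\beta})$ and then estimate the length of the interval on which $g$ goes from $3/4$ to $1-c_1\tau$ via $\int \gamma|g'|\,\mathrm{d}x\geq \frac{1}{\eps\tau^{1/\beta}}\int\sqrt{W(g)}\,\mathrm{d}x$, controlling $\int\gamma|g'|$ from above by the (bounded) energy.

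The quantitative bookkeeping goes as follows. By the uniform energy bound $F_{\tau,\eps}(\gamma,g)\leq C<0$ established in the proof of Proposition~\ref{prop:gammalim}, one has $\int_0^{2h}\eps\tau^{1/\beta}\gamma|g'|^2+\frac{W(g)}{\eps\tau^{1/\beta}\gamma}\,\mathrm{d}x\leq C'/(C_\tau-1)\sim C'\tau$; combined with~\eqref{eq:g2}-type identities and Young's inequality this controls $\int_{x_{3/4}}^{\bar x_1}\gamma|g'|\,\mathrm{d}x$. On the other hand, on $[x_{3/4},\bar x_1]$ monotonicity gives $\int_{x_{3/4}}^{\bar x_1}|g'|\,\mathrm{d}x = 1-3/4=1/4$, and since $\gamma|g'|\geq |g'|$ the Modica–Mortola slicing inequality~\eqref{eq:momega} plus the equipartition $\eps\tau^{1/\beta}\gamma^2|g'|^2\geq W(g)/(\eps\tau^{1/\beta})$ yields $\gamma|g'|\geq \sqrt{W(g)}/(\eps\tau^{1/\beta})$ pointwise on that interval, hence $\bar x_1-x_{3/4}=\int_{x_{3/4}}^{\bar x_1}\mathrm{d}x \leq \eps\tau^{1/\beta}\int_{x_{3/4}}^{\bar x_1}\frac{\gamma|g'|}{\sqrt{W(g)}}\,\mathrm{d}x$, and bounding $\int \gamma|g'|/\sqrt{W(g)}$ on the sub-range $g\in[3/4,1-c_1\tau]$ (where $\sqrt{W(g)}=g(1-g)\geq \tfrac14\cdot \tfrac{c_1\tau}{?}$... ) one gets a bound of the form $\mathrm{const}\cdot\eps\tau^{1/\beta}\cdot(\text{energy-controlled factor})$, which is $\leq c\tau^{1/\beta}$ once $\eps$ is small enough — after possibly enlarging $c_1$ so that $1-c_1\tau$ is reached before $g$ gets so close to $1$ that $W(g)$ is too small to help. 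Finally, adding $|x_{3/4}|\leq \tfrac{c}{2}\tau^{1/\beta}$ from Lemma~\ref{lemma:gammaconvx} and $|\bar x_1-x_{3/4}|\leq \tfrac{c}{2}\tau^{1/\beta}$ from the above, and noting the point where $g=1-c_1\tau$ lies between $x_{3/4}$ and $\bar x_1$, gives $|x|\leq c\tau^{1/\beta}$. \textbf{The main obstacle} is the last quantitative step: choosing $c_1$ large enough that on the range $g\in[3/4,1-c_1\tau]$ the potential $W(g)=g^2(1-g)^2$ stays bounded below by something of order $\tau^2$ while still allowing the integral $\int \gamma|g'|/\sqrt{W(g)}$ to be controlled — i.e.\ balancing how close to $1$ we demand $g$ to be against the blow-up of $1/\sqrt{W(g)}$ — and making sure the energy bound (which scales like $\tau$ because of the $C_\tau-1\sim 1/\tau$ prefactor) interacts correctly with the $\eps\tau^{1/\beta}$ weights; the sign information $I_g<0$ from Corollary~\ref{cor:ineg} is the crucial input that makes~\eqref{eq:g2} usable in the first place, and checking that its validity window $\tfrac14\tau^{1/\beta}(c_1\tau)^{-1/(\beta+1)}$ genuinely contains $[0,\bar x_1]$ for small $\tau$ is what ties everything together.
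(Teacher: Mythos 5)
Your proposal takes a genuinely different route from the paper, and it has two real gaps.

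The paper proves this lemma by a short, self-contained energy-comparison argument in the spirit of Lemma~\ref{lemma:estimate2}: if the point where $g=1-c_1\tau$ were farther than $c\tau^{1/\beta}$ from the origin, then by the monotonicity of $g$ and the reflection symmetry every pair $s,t$ with $|s-t|<c\tau^{1/\beta}$ would satisfy $|g(s)-g(t)|\leq 1-c_1\tau$, which sharpens the basic inequality $|\zeta|\cdot\mathrm{MM}\geq\int|g(\cdot+\zeta)-g|^2$ by a factor $\frac{1}{1+2c_1\tau}$ on the window $|\zeta|<c\tau^{1/\beta}$. Since $\int_{-c\tau^{1/\beta}}^{c\tau^{1/\beta}}|z|\widehat K_\tau\sim c^2/\tau$, the gain $\frac{2c_1\tau}{1+2c_1\tau}\int_{-c\tau^{1/\beta}}^{c\tau^{1/\beta}}|z|\widehat K_\tau$ exceeds $1$ once $c_1\gtrsim(c+1)^q/c^2$, forcing $F_{\tau,\eps}(\gamma,g)\geq0$ and contradicting minimality. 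No Euler--Lagrange equations, no sign information on $I_g$, and no $\Gamma$-convergence (Lemma~\ref{lemma:gammaconvx}) are needed.

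Your route, via \eqref{eq:g2} and the sign of $I_g$, runs into a circularity/ordering problem. To turn \eqref{eq:g2} into $\eps\tau^{1/\beta}\gamma^2|g'|^2\geq W(g)/(\eps\tau^{1/\beta})$ on $[x_{3/4},\,x^*]$ (where $g(x^*)=1-c_1\tau$), you need $I_g<0$ on $[x,\bar x_1]$ for every such $x$. But Corollary~\ref{cor:ineg}(1) only gives $I_g<0$ for $x\leq c_0\tau^{1/\beta}$, which is exactly what you are trying to prove about $x^*$; and Corollary~\ref{cor:ineg}(2) gives a window $\tfrac14\tau^{1/\beta}(c\tau)^{-1/(\beta+1)}$, which indeed dwarfs $c\tau^{1/\beta}$ but still vanishes as $\tau\to0$ (since $1/\beta-1/(\beta+1)>0$), whereas at this stage the only a priori bound on $\bar x_1$ is $\bar x_1\leq h=O(1)$. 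Proposition~\ref{prop:x1x2}, which would give $\bar x_1<\tau^{1/\beta}$, is stated for minimizers of $F_{\tau,\eps}(1,\cdot)$, i.e.\ after $\gamma\equiv1$ is known, so it is not available here. In the paper, the sign $I_g<0$ on the range $g\in[3/4,1-c_1\tau]$ is established \emph{using} this lemma inside the proof of Theorem~\ref{thm:pos}; your argument inverts that dependency. In addition, the final quantitative step — turning the pointwise equipartition $\gamma|g'|\geq\sqrt{W(g)}/(\eps\tau^{1/\beta})$ into a bound $|x^*-x_{3/4}|\leq\tfrac c2\tau^{1/\beta}$ — is not completed: $\gamma$ appears on the "wrong" side and is not bounded above, the change of variables leaves $\int\gamma/\sqrt{W(g)}\,\mathrm dg$ uncontrolled near $g=1$, and you explicitly flag this as unresolved. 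For both reasons, I would abandon the EL-based approach here and prove the lemma directly from the sharpened kernel inequality, as the paper does.
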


\begin{proof}
  Let $x=c\tau^{1/\beta}$ be such that $g(x)=1-c_1\tau>\frac34$, with $c,c_1$ to be fixed later.
  Then, by \eqref{eq:grefl} and the monotonicity of $g$ on the intervals $[0,\bar x_1]$, $[\bar x_2,h]$, whenever  $|s-t| < c\tau^{1/\beta}$, $s,t\in\R$, it holds $|g(s)- g(t)| \leq 1-c_1\tau$. Hence, as in Lemma~\ref{lemma:estimate2} for the case in which $\gamma\equiv1$,
	\begin{align}\label{eq:estpos}
	F_{\tau,\eps}(\gamma,g)\geq\Bigl[\frac{3}{h}\int_0^h\Big(\eps\tau^{1/\beta}\gamma|g'|^2+\frac{W(g)}{\gamma\eps\tau^{1/\beta}}\Big)\dx\Bigr]\cdot\Bigl[-1+\frac{2c_1\tau}{1+2c_1\tau}\int_{-c\tau^{1/\beta}}^{c\tau^{1/\beta}}|z|\widehat K_{\tau}(z)\dz\Bigr].
	\end{align}
	When $c\ll1$, then 
	\[
	\int_{-c\tau^{1/\beta}}^{c\tau^{1/\beta}}|z|\widehat K_{\tau}(z)\dz\sim\frac{2c^2}{\tau(c+1)^q},
	\]
	thus one has that
	\[
	\frac{2c_1\tau}{1+2c_1\tau}\int_{-c\tau^{1/\beta}}^{c\tau^{1/\beta}}|z|\widehat K_{\tau}(z)\dz\geq 	\frac{4c_1c^2}{(1+2c_1\tau)(c+1)^q}\geq1,
	\]
	provided $c_1\geq(c+1)^q/c^2$ and $\tau$ is sufficiently small so that $1+2c_1\tau\leq2$. In this case by~\eqref{eq:estpos} one has that $F_{\tau,\eps}(\gamma,g)>0$, which contradicts the minimality of $(\gamma,g)$.
\end{proof}

\begin{proof}
	[Proof of Theorem~\ref{thm:pos}: ] Fix $\delta\ll1$ (to be chosen later) and let $c_0=c_0(\delta)$ as in Corollary~\ref{cor:ineg}, namely such that whenever $g(x)\geq\frac12+\delta$ and $x\leq c_0\tau^{1/\beta}$, then $I_g(x)<0$. Choose $\eps_0,\tau_0$ as in Lemma~\ref{lemma:gammaconvx} for $c=c_0$, namely such that whenever $g(x)\leq\frac{3}{4}$ then $|x|\leq c_0\tau^{1/\beta}$. Thus, by such choices 
	\begin{equation}
	g(x)\in\Bigl[\frac{1}{2}+\delta,\frac34\Bigr]\quad\Rightarrow \quad |x|\leq c_0\tau^{1/\beta},\quad I_g(x)<0.
	\end{equation}
	Choosing $c_1=c_1(c_0)$, $\tau_1$ as in Lemma~\ref{lemma:1-ctau} and $\tau_2\leq\min\{\tau_0,\tau_1\}$, we also have that, for any $\tau\leq\tau_2$
	\begin{equation}\label{eq:gineqx2}
	g(x)\in\Bigl[\frac34,1-c_1\tau\Bigr]\quad\Rightarrow\quad|x|\leq c_0\tau^{1/\beta}, \quad I_g(x)<0.
	\end{equation}
	By Lemma~\ref{lemma:I}, we have that
	\begin{equation}
	g(x)\geq 1-\bar c_1\tau\tau^{1/\beta}\quad\Rightarrow\quad I_g(x)<\bar c_0<0.
	\end{equation}
	Now we want to show that 
	\begin{equation}
	g(x)\in\Bigl[1-c_1\tau, 1-\bar c_1\tau\tau^{1/\beta}\Bigr]\quad\Rightarrow\quad I_g(x)<0.
	\end{equation}
	In order to do so, we first apply the second statement of Corollary~\ref{cor:ineg} with $c=c_1$ and we deduce that $I_g(x)<0$ for all $x$ such that  $0\leq x\leq\frac14\tau^{1/\beta}(c_1\tau)^{-1/(\beta+1)}$. Let $x_\tau$ be such that $g(x_\tau)=1-c_1\tau$.
    By~\eqref{eq:gineqx2} and the fact that $c_0\ll1, c_1\tau\ll1$ it holds $|x_\tau|\leq c_0\tau^{1/\beta}\ll\frac14\tau^{1/\beta}(c_1\tau)^{-1/(\beta+1)}$. On $[x_\tau, \frac14\tau^{1/\beta}(c_1\tau)^{-1/(\beta+1)}]$, $I_g<0$
    hence $\gamma=1$ (otherwise by~\eqref{eq:g2} and~\eqref{eq:gamma2} if $\gamma>1$ then $I_g=0$).
	Hence,~\eqref{eq:gamma1} holds and therefore by comparison with the optimal profile function for the Modica-Mortola term $\bar g'=\frac{\sqrt{W(\bar g)}}{\eps\tau^{1/\beta}}$ the function $g$ reaches the value $1-\bar c_1\tau\tau^{1/\beta}$ from $x_\tau$ in an interval of the order $-\eps\tau^{1/\beta}\log(\tau^{1/\beta})$, which is much smaller (for $\eps$ and $\tau$ sufficiently small) than $\frac14\tau^{1/\beta}(c_1\tau)^{-1/(\beta+1)}-c_0\tau^{1/\beta}$, namely of the distance from $x_\tau$ for which we know that $I_g<0$ and $\gamma=1$.
	
	Thus, the only interval on which one could have that $I_g>0$ and in principle $\gamma>1$ (i.e.,~\eqref{eq:gamma2} holds) is $[0,x_\delta]$ where $x_\delta$ is such that $g(x_\delta)=\frac12+\delta$.
	
		Let us now assume that there exists $\hat x\in[0,x_\delta]$ such that 
	$\varepsilon\tau^{1\beta} \gamma^2(\hat x)|g'(\hat x) |^{2} - \frac{W(g(\hat x))}{\varepsilon\tau^{1/\beta}} = 0 $.  Then by~\eqref{eq:g2}  one has that
	\begin{equation}\label{eq:posneg}
	\begin{split}
	\int_{\hat x}^{x_\delta}
	\Bigl( \gamma(z) g'(z) I_g(z)\Bigr)_+\dz \geq 	-\int_{x_\delta}^{\bar x_1}
	g'(z) I_g(z)\dz. 
	\end{split}
	\end{equation}
	In particular, since $\gamma=1$ whenever $I_g\neq0$, we can assume that $\gamma(z)=1$ in the l.h.s. of~\eqref{eq:posneg}.
	
	On the one hand observe that, by~\eqref{eq:6.47} and~\eqref{eq:6.48}, there exists $\bar c_2$ such that 
	\[
	g(x)\in\Bigl[\frac34,(1-c_1\tau)\Bigr]\quad\Rightarrow\quad I_g(x)<-\frac{\bar c_2}{\tau\tau^{1/\beta}}. 
	\]
	Then, denoting by $x_{3/4}$ the point such that $g(x_{3/4})=\frac34$ and by $x_\tau$ the point such that $g(x_\tau)=1-c_1\tau$ one has that 	\begin{align}
	-\int_{x_\delta}^{\bar x_1}g'(x)I_g(x)\dx&\geq\frac{\bar c_2}{\tau\tau^{1/\beta}}\int_{x_{3/4}}^{x_\tau}g'(x)\dx\notag\\
	&\geq\frac{\bar c_2}{\tau\tau^{1/\beta}} \Bigl(\frac14-c_1\tau\Bigr). \label{eq:-}
	\end{align}
	
	On the other hand, using~\eqref{eq:0l} and the fact that $g\leq1$
	\begin{align}
	\int_0^{x_\delta}g'(x)I_g(x)\dx&\leq\int_0^{x_\delta}g'(x)(1-g(x))\frac{\tilde C_1}{\tau\tau^{1/\beta}}\notag\\
	&\leq \frac{\tilde C_1}{2\tau\tau^{1/\beta}}\Bigl((1-g(0))^2-(1-g(x_\delta))^2\Bigr)\notag\\
	&\leq \frac{\tilde C_1\delta}{2\tau\tau^{1/\beta}} +o(\delta).\label{eq:+}
	\end{align}
	Thus, given~\eqref{eq:-} and~\eqref{eq:+},~\eqref{eq:posneg} cannot hold provided $\delta$ is chosen sufficiently small at the beginning of the proof.
    As a consequence, $\eps\tau^{1/\beta}\gamma^2|g'|^2-\frac{W(g)}{\eps\tau^{1/\beta}}>0$ on $[0,\bar x_1)$ (and by symmetry on $(\bar x_2,h]$). In particular, by~\eqref{eq:gamma1}-\eqref{eq:gamma2}, $\gamma=1$ a.e..

\end{proof}

In the following proposition we give an estimate on the size of the interval $[\bar x_1,\bar x_2]$ on which $g\equiv 1$. In particular, we show that such an interval is non-degenerate (i.e. $\bar x_1<\bar x_2$) and thus the Euler-Lagrange equation \eqref{eq:g2} is a free-boundary problem where the function $g$ hits the obstacle $\{g=1\}$.

\begin{proposition}
	\label{prop:x1x2}
	Let $g$ be a minimizer of $F_{\tau,\eps}(1,\cdot)$ and $\bar x_1\leq\bar x_2$ as in Corollary \ref{cor:mon}. The, provided $\eps,\tau$ are sufficiently small, it holds
	\begin{equation}
		\bar x_1<\tau^{1/\beta}<h-\tau^{1/\beta}<\bar x_2.
	\end{equation} 
\end{proposition}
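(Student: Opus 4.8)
The plan is to use Theorem~\ref{thm:pos}, Corollary~\ref{cor:mon} and the one‑dimensional estimates already established in this section. By Theorem~\ref{thm:pos} a minimizer $g$ of $F_{\tau,\eps}(1,\cdot)$ is, together with $\gamma\equiv1$, a minimizing pair as in Proposition~\ref{prop:gammalim}; hence by Corollary~\ref{cor:mon} it is strictly increasing on $[0,\bar x_1]$, equal to $1$ on $[\bar x_1,\bar x_2]$ and strictly decreasing on $[\bar x_2,h]$, and on $\{g\neq1\}$ it solves the Euler--Lagrange equations \eqref{eq:g1}--\eqref{eq:g2} with $\gamma\equiv1$. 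Since $x\mapsto g(h-x)$ is again such a minimizer and it interchanges the two transition intervals, it suffices to prove $\bar x_1<\tau^{1/\beta}$; the inequalities $\tau^{1/\beta}<h-\tau^{1/\beta}<\bar x_2$ then follow by symmetry, once we recall that the optimal period $2h^\ast_{\tau,\eps}=2h$ stays bounded below by a fixed positive constant for $\eps,\tau$ small (so that $h>2\tau^{1/\beta}$ for $\tau$ small), which is the point of the $O(1)$ rescaling in the introduction.

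To bound $\bar x_1$, first apply Lemma~\ref{lemma:1-ctau} with $c=\tfrac14$: there are $c_1\geq1$ and $\tau_1>0$ such that for $\tau\leq\tau_1$ the point $x_\tau$ with $g(x_\tau)=1-c_1\tau$ satisfies $x_\tau\leq\tfrac14\tau^{1/\beta}$. It then remains to show that $g$ climbs from the value $1-c_1\tau$ to the value $1$ within an extra distance $<\tfrac34\tau^{1/\beta}$. On the portion of $[x_\tau,\bar x_1]$ where $g\leq1-\bar c_1\tau\tau^{1/\beta}$ one has, by the second statement of Corollary~\ref{cor:ineg} (applicable since on that portion $x$ stays well below $\tfrac14\tau^{1/\beta}(c_1\tau)^{-1/(\beta+1)}$), that $I_g<0$; on the remaining portion $g>1-\bar c_1\tau\tau^{1/\beta}$ and Lemma~\ref{lemma:I} gives $I_g<\bar c_0<0$ with no restriction on $x$. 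In either case $I_g<0$ forces $\gamma\equiv1$ there (by \eqref{eq:g2} and \eqref{eq:gamma2}), so \eqref{eq:g1} holds with $\gamma\equiv1$; moreover \eqref{eq:stimai} makes $I_g$ \emph{quantitatively} negative, $-I_g\gtrsim1/(\tau\tau^{1/\beta})$, along the part of the transition sitting in $[0,2\tau^{1/\beta}]$. Feeding the negativity of $I_g$ into the energy identity \eqref{eq:g2} (with right endpoint $\bar x_1$, where $W(g)=0$) yields $\eps\tau^{1/\beta}|g'(x)|^2\gtrsim(1-g(x))/\big((C_\tau-1)\eps\tau^{1/\beta}\big)$, and since $C_\tau\sim\tilde C_2/\tau$ by \eqref{eq:0l} this reads $g'(x)\gtrsim\sqrt{1-g(x)}\,/(\sqrt{\eps}\,\tau^{1/\beta})$; integrating $-v'/\sqrt v$ for $v=1-g$ from $x_\tau$ then gives that $g$ reaches $1$ within a distance $O(\sqrt{\eps\tau}\,\tau^{1/\beta})$ of $x_\tau$, whence $\bar x_1\leq\tfrac14\tau^{1/\beta}+O(\sqrt{\eps\tau}\,\tau^{1/\beta})<\tau^{1/\beta}$ for $\tau$ (and $\eps$) small. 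Once $\bar x_1<\bar x_2$ is known, \eqref{eq:g2} is genuinely a free boundary problem as claimed.

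The delicate point is exactly the control of the length of the final stretch of the transition, from $g=1-c_1\tau$ to $g=1$: the naive comparison with the optimal Modica--Mortola profile \eqref{eq:stimaOmega} only gives a length of order $\eps\tau^{1/\beta}\lvert\log\tau\rvert$, which is not $o(\tau^{1/\beta})$ for $\eps$ fixed and $\tau\to0$, so one must exploit the nonlocal forcing. Since $I_g$ is strictly --- and near $g=1$ quantitatively --- negative there, linearizing \eqref{eq:g1} about $g=1$ gives, for $v=1-g$, an inequality of the type $v''\geq v/(\eps\tau^{1/\beta})^2+(\text{positive})$, whose solutions reaching the obstacle do so at finite distance with a $\cosh$‑type profile, which produces the much faster --- and essentially uniform --- decay used above. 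In carrying this out one runs a short bootstrap to ensure the transition never leaves the region in which $I_g$ is quantitatively negative; alternatively one may fix $\tau$ small and let $\eps\to0$, in which case the transition region of the minimizer collapses to a point by Corollary~\ref{cor:gammaconv} together with the monotonicity of $g$ and the estimates above, while $\tau^{1/\beta}$ remains fixed.
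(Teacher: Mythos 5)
Your overall strategy --- exploit the sign of $I_g$ together with the Euler--Lagrange structure to force a fast transition --- is in the same spirit as the paper, but you use the first integral \eqref{eq:g2} (a lower bound on $|g'|$) where the paper uses the second--order equation \eqref{eq:g1} (an upper bound on $g''$) together with a concavity/Taylor argument. The two routes are related, and your reduction to showing $\bar x_1 < \tau^{1/\beta}$ (and invoking symmetry plus $h$ bounded below for the other inequality) matches the paper.

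The genuine gap is in the step ``$I_g$ is quantitatively negative on $[x_\tau,\bar x_1]$''. All of your tools for this --- the second part of Corollary~\ref{cor:ineg} (valid only for $|x|\leq \tfrac14\tau^{1/\beta}(c_1\tau)^{-1/(\beta+1)}$ when $g\geq1-c_1\tau$), and the estimate \eqref{eq:stimai} giving $-I_g\gtrsim 1/(\tau\tau^{1/\beta})$ --- hold only for $x$ up to a multiple of $\tau^{1/\beta}$ (or a slightly larger but still $o(1)$ scale). You need $\bar x_1$ to already be in that range to apply them on the whole interval $[x_\tau,\bar x_1]$, which is precisely what is being proved. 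You acknowledge this (``one runs a short bootstrap''), but you do not carry it out, and the alternative $\Gamma$-convergence argument you offer is too vague to close the loop (plain $L^1$ convergence of monotone profiles does not by itself control $\bar x_1$ without re-using the quantitative energy estimates you are trying to establish). There is also a minor typo: your displayed inequality $\eps\tau^{1/\beta}|g'(x)|^2\gtrsim(1-g(x))/\bigl((C_\tau-1)\eps\tau^{1/\beta}\bigr)$ should have $\tau\tau^{1/\beta}$ rather than $\eps\tau^{1/\beta}$ in the denominator (since $-I_g\gtrsim 1/(\tau\tau^{1/\beta})$), although your final conclusion $g'\gtrsim\sqrt{1-g}/(\sqrt{\eps}\,\tau^{1/\beta})$ is correct.

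The paper sidesteps the bootstrap entirely by arguing by contradiction on a \emph{fixed} interval: it first uses the Modica--Mortola energy bound to place $\bar x_\delta$ (where $g=1-\delta$) at distance $\lesssim\sqrt{\eps}\,\tau^{1/\beta}$ from $0$, then \emph{assumes} $\bar x_1>\tau^{1/\beta}$, so that on $(\bar x_\delta,\tau^{1/\beta})$ one has $g<1$, the first claim of Corollary~\ref{cor:ineg} gives $I_g<-c_3/(\tau\tau^{1/\beta})$, and \eqref{eq:g1} with $W'<0$ yields $g''\leq-c_4/(\eps\tau^{2/\beta})$. Concavity plus a Taylor bound at $\tilde x=\tau^{1/\beta}$ then gives $g(\bar x_\delta)\leq 1-c_5/\eps$, a contradiction with $g(\bar x_\delta)=1-\delta$ for $\eps$ small. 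If you want to rescue your argument, you should run the same contradiction scheme: assume $\bar x_1>\tau^{1/\beta}$ and apply your first-integral estimate only on $[x_\tau,\tau^{1/\beta}]$, using Remark~\ref{lemma:simple_lemma_periodicity}-type arguments to keep the right endpoint fixed; as it stands, the proof is incomplete.
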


\begin{proof}
	We will show that $\bar x_{1}<\tau^{1/\beta}$ and by symmetry one can deduce that $h-\tau^{1/\beta}<\bar x_{2}$.

	As $g$ is a minimizer of $F_{\tau,\eps}(1,\cdot)$, and thus the Modica-Mortola term approximates the perimeter of the set $[0,h]$ when $\varepsilon,\tau$ are sufficiently small, the following holds:  there exists $\varepsilon_{0},\tau_{0}>0$ such that whenever $\varepsilon < \varepsilon_{0}$ and $\tau<\tau_{0}$
	\begin{equation*}
		\begin{split}
			\Big|\frac{3}{h}\int_{0}^{h} \eps\tau^{1/\beta}\, |g'|^{2}\dx + \frac{3}{h}\int_{0}^{h}\frac{W(g)}{\eps\tau^{1/\beta}}\dx -1\Big| <\delta_{0}.
		\end{split}
	\end{equation*}
	
	Moreover, let $\bar x_{\delta}$ be such that such that $g(\bar x_{\delta})= 1-\delta$. For $\varepsilon$ sufficiently small one has that $\bar x_{\delta}\leq \sqrt{\varepsilon}\tau^{1/\beta}$, otherwise the part of the Modica-Mortola term containing the double-well potential $W$ would explode as $\eps\to0$. 
By the first statement of Corollary \ref{cor:ineg}, and in particular \eqref{eq:6.47}, one has then that there exists a constant $c_3>0$ such that $I_g<-\frac{c_3}{\tau\tau^{1/\beta}}$ for every $x\in (\bar x_\delta,\tau^{1/\beta})$.
	Assume now that in the interval $(\bar x_{\delta},\tau^{1/\beta})$ it holds $g < 1$, thus $\bar x_1>\tau^{1/\beta}$. This implies that on this interval $g$ satisfies the Euler-Lagrange equation \eqref{eq:g1}, i.e. 
	\begin{equation*}
		\begin{split}
			3(C_{\tau}-1)\eps\tau^{1/\beta} g''(x) = 3(C_{\tau}-1)\frac{W'(g(x))}{\eps\tau^{1/\beta}} +2 I_g(x),
		\end{split}
	\end{equation*}
	where $C_{\tau}= \frac{\tilde C_2}{\tau}$. 
	Thus, given that $W'<0$, for every $x\in (\bar x_\delta, \tau^{1/\beta})$ one has that
	\begin{equation}
		\label{eq:gstrss2}
		\begin{split}
			g''(x) \leq -\frac{c_4}{\varepsilon \tau^{2/\beta}}
		\end{split}
	\end{equation}
	
	for some constant $c_4>0$.
	Let $\tilde{x}= \argmin\insieme{g'(x):\ x\in [\bar x_{\delta},\tau^{1/\beta}]}$.
	Then from \eqref{eq:gstrss2} we have that
	\begin{equation*}
		\begin{split}
			g(x) \leq g(\tilde{x}) + g'(\tilde{x}) (x-\tilde{x}) - \frac{c_4}{2 \varepsilon\tau^{2/\beta}}(x-\tilde{x})^{2}
		\end{split}
	\end{equation*}
	
	Since $g''<0$ one has that $\tilde{x} = \tau^{1/\beta}$. Thus
	\begin{equation}
		\label{eq:gstrss3}
		\begin{split}
			g(\bar x_{\delta}) \leq 1 - \frac{c_4}{2\varepsilon\tau^{2/\beta}} (\tau^{1/\beta} - \bar x_{\delta})^2 \leq1- \frac{c_4}{2\varepsilon\tau^{2/\beta}} (\tau^{1/\beta} - \sqrt{\varepsilon}\tau^{1/\beta})^{2} \leq 1- \frac{c_5}{\varepsilon}.
		\end{split}
	\end{equation}
	
	Thus for $\varepsilon$ sufficiently small we have a contradiction to the fact $g(\bar x_{\delta}) = 1-\delta$.
	
	 Hence $\bar x_1<\tau^{1/\beta}$ and the proof is concluded.
	
\end{proof}

\section{Proof of Theorem~\ref{Thm:1}}
\label{sec:proof1}

By the $\Gamma$-convergence result of Corollary~\ref{cor:gammaconv} and Theorem~\ref{Thm:DR}, there exist $\eps'_L>0$ and $\tau'_L>0$ such that, for all $0<\eps\leq\eps'_L$, $0<\tau\leq\tau'_L$, then minimizers $u$ of $\Fcalt$ satisfy
\begin{equation}\label{eq:estL1}
   \|u-\chi_S\|_{L^1([0,L)^d)}\leq\bar\sigma,
\end{equation}
with $\bar\sigma$ as in Proposition~\ref{prop:stability} and $S$ periodic union of stripes with boundaries orthogonal to $e_i$ for some $i\in\{1,\dots,d\}$. 
Without loss of generality, let us assume that $i=1$.

Recall now the lower bound for the functional~\eqref{E:F} given in Proposition~\ref{prop:lowbound} by

\begin{align}
   \Fcalt(u)&\geq\frac{1}{L^{d-1}} \int_{[0,L)^{d-1}}\frac{1}{L}\Bigl[-\Mi{0}{L}{1}+\Gcal{1}\Bigr]\dx_1^\perp+\frac{1}{L^d}\mathcal W_{\tau,L,\eps}(u)\label{eq:1d}\\
   &+\sum_{i=2}^d\frac{1}{L^{d-1}}\Bigl\{\int_{[0,L)^{d-1}}\frac1L\Bigl[-\Mi{0}{L}{i}+\Gcal{i}\Bigr]\dx_i^{\perp}+\frac1L\mathcal I^i_{\tau,L}(u)\Bigr\}.\label{eq:multid}
\end{align}

Now notice that, using the definitions of $\overline{\mathcal M}^1_{\at}$, $\overline{\mathcal G}^1_{\at,\tau}$ and $\mathcal W_{\tau,L,\eps}$, the r.h.s. of ~\eqref{eq:1d} can be rewritten as

\begin{align}\label{eq:7.2}
\frac{1}{L^{d-1}}&\int_{[0,L)^{d-1}}\Bigl[\frac{3(C_\tau-1)}{L}\int_0^L\Bigl(\eps\tau^{1/\beta}|u_{x_1^\perp}'(s)|\|\nabla u(x_1^\perp+se_1)\|_1+\frac{W(u_{x_1^\perp}(s))|u_{x_1^\perp}'(s)|}{\eps\tau^{1/\beta}\|\nabla u(x_1^\perp+se_1)\|_1}\Bigr)\ds\notag\\
&-\frac1L\int_0^L\int_{ \R}|u_{x_1^\perp}(s)-u_{x_1^\perp}(s+z)|^2\widehat K_\tau(z)\dz\ds\Bigr]\dx_1^\perp
\end{align}
with the convention that $|u_{x_1^\perp}'(s)|/\|\nabla u(x_1^\perp+se_1)\|_1=1$ whenever $\|\nabla u(x_1^\perp+se_1)\|_1=0$.

Setting $g(s)=u_{x_1^\perp}(s)$ and 
\[
\gamma(s)=\frac{\|\nabla u(x_1^\perp+se_1)\|_1}{|u_{x_1^\perp}'(s)|},\quad s\in[0,L],
\]
the functional inside the integral in~\eqref{eq:7.2} takes the form
\begin{equation}\label{eq:functgamma2}
\Fcalt^{1d}(\gamma,g)=	\frac{3(C_\tau-1)}{L}\int_0^L\Bigl[\eps\tau^{1/\beta}|g'(s)|^2\gamma(s)+\frac{W(g(s))}{\eps\tau^{1/\beta}\gamma(s)}\Bigr]\ds-\frac1L\int_0^L\int_{ \R}|g(s)-g(s+z)|^2\widehat K_\tau(z)\dz\ds
\end{equation} 
as in~\eqref{eq:functgamma0}. By the results of Section \ref{ssec:1d}, such a functional is minimized by periodic functions $(\gamma,g)$ of period $2h^*_{\tau,\eps}$, with $\gamma\in\Gamma_{h^*_{\tau,\eps}}$ and $g=\varphi[g]$ for some function  $g\in \mathcal C_{h^*_{\tau,\eps}}$.

Then, Theorem~\ref{thm:onedimmin} shows that for $\eps\leq\eps_2$ and $\tau\leq\tau_2$ the minimal values of such a functional among all $\gamma\geq1$ and $g\in\mathcal C_{h^*_\tau,\eps}$ is attained for $\gamma=1$ a.e.. This makes the minimal values of the functional in~\eqref{eq:functgamma2} to be equal to the minimal values of the functional  $\Fcal^{1d}_{\tau,L,\eps}(1,\cdot)$.

By Proposition~\ref{prop:stability} we know that, if additionally $0<\tau\leq\tau'$,  each of the $d-1$ terms of the sum in~\eqref{eq:multid} is zero if $u(x)=g(x_1)$ and strictly positive otherwise.
Therefore, if $\eps\leq\eps_L=\min\{\eps_2,\eps_L'\}$ and $\tau\leq\tau_L=\min\{\tau_2,\tau_L',\tau'\}$ then $u(x)=g(x_1)$ minimizes both~\eqref{eq:1d} and~\eqref{eq:multid} and thus the whole functional $\mathcal F_{\tau,L,\eps}$.




\end{document}